\documentclass[11pt, reqno]{amsart}
\usepackage{amssymb,latexsym,amsmath,amsfonts}
\usepackage{latexsym}
\usepackage[mathscr]{eucal}
\usepackage{bm}

\def ~{\hspace{1mm}}

\def\textmatrix#1&#2\\#3&#4\\{\bigl({#1 \atop #3}\ {#2 \atop #4}\bigr)}
\def\dispmatrix#1&#2\\#3&#4\\{\left({#1 \atop #3}\ {#2 \atop #4}\right)}
\newcommand{\beg}{\begin{equation}}
\newcommand{\eeg}{\end{equation}}
\newcommand{\ben}{\begin{eqnarray*}}
\newcommand{\een}{\end{eqnarray*}}

\newtheorem{thm}{Theorem}[section]

\newtheorem{lem}[thm]{Lemma}

\newtheorem{prop}[thm]{Proposition}
\numberwithin{equation}{section} \theoremstyle{definition}
\newtheorem{defn}[thm]{Definition}
\newtheorem{rem}[thm]{Remark}
\newtheorem{note}[thm]{Note}
\newtheorem{eg}[thm]{Example}

\def\textmatrix#1&#2\\#3&#4\\{\bigl({#1 \atop #3}\ {#2 \atop #4}\bigr)}
\def\dispmatrix#1&#2\\#3&#4\\{\left({#1 \atop #3}\ {#2 \atop #4}\right)}

\begin{document}

\title[Rational dilation on $\Gamma_3$]
{Rational dilation on the symmetrized tridisc: failure, success
and unknown}

\author[Sourav Pal]{Sourav Pal}

\address[ Sourav Pal]{Department of Mathematics, Indian Institute of Technology Bombay,
Powai, Mumbai - 400076, India. } \email{sourav@math.iitb.ac.in,
souravmaths@gmail.com}

\keywords{Symmetrized tridisc, Spectral set, Complete spectral
set, Rational dilation, Wold decomposition, Operator model}

\subjclass[2010]{32A60, 32C15, 47A13, 47A15, 47A20, 47A25, 47A45}

\thanks{The author is supported by Seed Grant of IIT Bombay, CPDA and INSPIRE Faculty Award
(Award No. DST/INSPIRE/04/2014/001462) of DST, India. A part of
this work was done when the author was visiting the Department of
Mathematics, Ben-Gurion University of the Negev, Israel and was
supported in part by the Skirball Foundation via the Center for
Advanced Studies in Mathematics at Ben-Gurion University of the
Negev, Israel.}

\begin{abstract}
The closed symmetrized tridisc $\Gamma_3$ and its distinguished
boundary $b\Gamma_3$ are the sets
\begin{gather*}
\Gamma_3=\{ (z_1+z_2+z_3,z_1z_2+z_2z_3+z_3z_1,z_1z_2z_3):
\,|z_i|\leq 1, i=1,2,3 \}\subseteq \mathbb C^3 \\ b\Gamma_3=\{
(z_1+z_2+z_3,z_1z_2+z_2z_3+z_3z_1,z_1z_2z_3): \,|z_i|= 1, i=1,2,3
\}\subseteq \Gamma_3.
\end{gather*}
A triple of commuting operators $(S_1,S_2,P)$ defined on a Hilbert
space $\mathcal H$ for which $\Gamma_3$ is a spectral set is
called a $\Gamma_3$-contraction. In this article we show by a
counter example that there are $\Gamma_3$-contractions which do
not dilate. It is also shown that under certain conditions a
$\Gamma_3$-contraction can have normal $b\Gamma_3$ dilation. We
determine several classes of $\Gamma_3$-contractions which dilate
and show explicit construction of their dilations. A concrete
functional model is provided for the $\Gamma_3$-contractions which
dilate. Various characterizations for $\Gamma_3$-unitaries and
$\Gamma_3$-isometries are obtained; the classes of
$\Gamma_3$-unitaries and $\Gamma_3$-isometries are analogous to
the unitaries and isometries in one variable operator theory. Also
we find out a model for the class of pure $\Gamma_3$-isometries.
En route we study the geometry of the sets $\Gamma_3$ and
$b\Gamma_3$ and provide variety of characterizations for them.

\end{abstract}

\maketitle \tableofcontents

\section{Introduction}

Throughout the paper all operators are bounded linear operators
defined on complex Hilbert spaces unless and otherwise stated. A
contraction is an operator with norm not greater than one. We
define spectral set, complete spectral set, distinguished boundary
and rational dilation in Section \ref{background}.

\subsection{Motivation}

\noindent The aim of dilation roughly speaking is to model a given
tuple of commuting operators as a compression of a tuple of
commuting normal operators. For a commuting tuple
$(T_1,\cdots,T_d)$ associated with a domain in $\mathbb C^d$,
where each $T_i$ is defined on a Hilbert space $\mathcal H$, the
purpose of dilation is to find out a normal tuple $(N_1,\cdots,
T_d)$ associated with the boundary of the domain such that for
each $i$
\[
T_i=P_{\mathcal H} N_i|_{\mathcal H}\,,
\]
where each $N_i$ is defined on a bigger Hilbert space $\mathcal K$
and $P_{\mathcal H}$ is the orthogonal projection of $\mathcal K$
onto $\mathcal H$. In 1951, von Neumann, \cite{von-Neumann},
introduced the notion of spectral set for an operator which turned
our attention, when studying an operator, to an underlying compact
subset of $\mathbb C$. The notion was appealing in the sense that
it could describe all contractions as operators having the closed
unit disk $\overline{\mathbb D}$ as a spectral set.

\begin{thm}[von Neumann, 1951]
An operator $T$ is a contraction if and only if $\overline{\mathbb
D}$ is a spectral set for $T$.
\end{thm}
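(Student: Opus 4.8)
The statement repackages von Neumann's inequality in the language of spectral sets, so the plan is to unwind the definition of ``spectral set'' and then invoke (or reprove) the classical inequality. Recall that ``$\overline{\mathbb{D}}$ is a spectral set for $T$'' means two things: $\sigma(T)\subseteq\overline{\mathbb{D}}$, and $\|r(T)\|\le\sup_{z\in\overline{\mathbb{D}}}|r(z)|$ for every rational function $r$ with poles off $\overline{\mathbb{D}}$. The easy direction is then immediate: if $\overline{\mathbb{D}}$ is a spectral set, apply the defining inequality to the polynomial $r(z)=z$, whose supremum over $\overline{\mathbb{D}}$ equals $1$, to get $\|T\|\le 1$; thus $T$ is a contraction.

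For the converse, assume $\|T\|\le 1$. Then $\sigma(T)$ lies in the closed disk of radius $\|T\|\le 1$, so the spectral containment $\sigma(T)\subseteq\overline{\mathbb{D}}$ holds automatically and the Riesz functional calculus $r(T)$ is defined for every rational $r$ with poles off $\overline{\mathbb{D}}$. First I would reduce to the polynomial case: such an $r$ has only finitely many poles, all of modulus strictly greater than $1$, hence $r$ is holomorphic on some disk $\{|z|<R\}$ with $R>1$, so its Taylor partial sums $p_n$ converge to $r$ uniformly on $\overline{\mathbb{D}}$, in particular on $\sigma(T)$; therefore $p_n(T)\to r(T)$ in norm and $\|p_n\|_{\infty,\overline{\mathbb{D}}}\to\|r\|_{\infty,\overline{\mathbb{D}}}$. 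Thus it suffices to prove $\|p(T)\|\le\sup_{z\in\overline{\mathbb{D}}}|p(z)|$ for every polynomial $p$.

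For the polynomial inequality the cleanest route is dilation, the very theme of the paper: by Sz.-Nagy's dilation theorem every contraction $T$ on $\mathcal{H}$ admits a unitary power dilation, i.e. a Hilbert space $\mathcal{K}\supseteq\mathcal{H}$ and a unitary $U$ on $\mathcal{K}$ with $T^n=P_{\mathcal{H}}U^n|_{\mathcal{H}}$ for all $n\ge 0$. Then $p(T)=P_{\mathcal{H}}\,p(U)|_{\mathcal{H}}$, so $\|p(T)\|\le\|p(U)\|$; since $U$ is normal the spectral theorem gives $\|p(U)\|=\max_{z\in\sigma(U)}|p(z)|$, and since $U$ is unitary $\sigma(U)\subseteq\partial\mathbb{D}$, so by the maximum modulus principle $\|p(U)\|\le\max_{|z|=1}|p(z)|=\sup_{z\in\overline{\mathbb{D}}}|p(z)|$. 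Chaining these inequalities finishes the proof. (A self-contained alternative avoiding the dilation theorem: reduce to strict contractions by replacing $T$ with $\rho T$ and letting $\rho\uparrow 1$, then use the Fej\'er--Riesz factorization of $\|p\|_\infty^2-|p(e^{i\theta})|^2$ together with positivity of $I-T^*T$.) The main obstacle is precisely this last step, the genuine content of von Neumann's inequality: the reductions are routine, but $\|p(T)\|\le\|p\|_{\infty,\overline{\mathbb{D}}}$ costs either the construction of the unitary dilation or an honest function-theoretic factorization.
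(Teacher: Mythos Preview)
Your proof is correct and standard. Note, however, that the paper does not actually prove this theorem: it is stated in the introduction as a classical result attributed to von Neumann, with a citation to \cite{von-Neumann}, and no argument is given. So there is no ``paper's own proof'' to compare against; you have simply supplied what the paper omits. One minor remark: your main route derives von Neumann's inequality from Sz.-Nagy's dilation theorem, which is historically backwards (Sz.-Nagy came two years later) and, depending on how one proves the dilation theorem, may be logically circular; your parenthetical Fej\'er--Riesz alternative is the more self-contained option and closer in spirit to von Neumann's original argument.
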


Later the notion of spectral set was extended for any finite
number of commuting operators and beautiful interplays were
witnessed between the operators having a particular domain in
$\mathbb C^d$ as a spectral set and the complex geometry and
function theory of the associated domain, \cite{paulsen, AgMcC}.
In 1953, Sz.-Nagy published a very influential paper \cite{nagy1}
studying a contraction and establishing the following theorem
whose impact is extraordinary till date.

\begin{thm}[Sz.-Nagy, 1953]
If $T$ is a contraction acting on a Hilbert space $\mathcal H$,
then there exists a Hilbert space $\mathcal K \supseteq \mathcal
H$ and a unitary $U$ on $\mathcal K$ such that
\[
p(T)=P_{\mathcal H}p(U)|_{\mathcal H}
\]
for every polynomial $p$ in one complex variable.
\end{thm}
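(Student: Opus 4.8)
\medskip
\noindent\emph{Proof proposal.}\quad The plan is to dilate in two stages — first $T$ to an isometry, then that isometry to a unitary — and to exploit the fact that a polynomial $p(U)=\sum_{n\ge 0}a_nU^n$ uses only non-negative powers of $U$. By linearity it is therefore enough to produce a unitary $U$ on a space $\mathcal K\supseteq\mathcal H$ with $T^n=P_{\mathcal H}U^n|_{\mathcal H}$ for every $n\ge 0$. I would get this through an intermediate isometry: if $V$ is an operator on some $\mathcal K_+\supseteq\mathcal H$ with $V^*\mathcal H\subseteq\mathcal H$ and $V^*|_{\mathcal H}=T^*$, then $V^{*n}|_{\mathcal H}=T^{*n}$ for all $n$, so $\langle V^nh,h'\rangle=\langle h,V^{*n}h'\rangle=\langle h,T^{*n}h'\rangle=\langle T^nh,h'\rangle$ for $h,h'\in\mathcal H$, i.e.\ $P_{\mathcal H}V^n|_{\mathcal H}=T^n$ for every $n\ge 0$; and if moreover $\mathcal K_+$ is \emph{invariant} under a unitary $U$ on a larger $\mathcal K$ with $U|_{\mathcal K_+}=V$, then $U^n|_{\mathcal K_+}=V^n$ for $n\ge 0$, and compressing once more to $\mathcal H$ still returns $T^n$.

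\emph{Stage 1: the isometric dilation.} For the isometry I would use the defect operator $D_T=(I-T^*T)^{1/2}\ge 0$ and $\mathcal D_T=\overline{\operatorname{ran}D_T}\subseteq\mathcal H$, and set $\mathcal K_+=\mathcal H\oplus\bigoplus_{n\ge 0}\mathcal D_T$ with $\mathcal H$ identified with the first summand. Define $V$ by $V(h,\xi_0,\xi_1,\dots)=(Th,\,D_Th,\,\xi_0,\,\xi_1,\dots)$. The crucial computation is that $V$ is isometric: since $T^*T+D_T^2=I$ one has $\|Th\|^2+\|D_Th\|^2=\|h\|^2$, while the remaining coordinates are merely shifted. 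Taking adjoints gives $V^*(h,\xi_0,\xi_1,\dots)=(T^*h+D_T\xi_0,\,\xi_1,\,\xi_2,\dots)$, so in particular $V^*(h,0,0,\dots)=(T^*h,0,0,\dots)$; this is exactly the co-invariance $V^*\mathcal H\subseteq\mathcal H$, $V^*|_{\mathcal H}=T^*$ demanded above, whence $T^n=P_{\mathcal H}V^n|_{\mathcal H}$ for all $n\ge 0$.

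\emph{Stage 2: extending the isometry to a unitary.} It remains to dilate the isometry $V$ on $\mathcal K_+$ to a unitary on a larger space keeping $\mathcal K_+$ invariant. By the Wold decomposition write $\mathcal K_+=\mathcal K_u\oplus\mathcal K_s$ with $V|_{\mathcal K_u}$ unitary and $V|_{\mathcal K_s}$ a unilateral shift, unitarily equivalent to the forward shift on $\ell^2(\mathbb{Z}_{\ge 0},\mathcal L)$ where $\mathcal L=\mathcal K_+\ominus V\mathcal K_+$; replacing this by the (unitary) bilateral shift on $\ell^2(\mathbb{Z},\mathcal L)$ and leaving $V|_{\mathcal K_u}$ unchanged produces a unitary $U$ on a space $\mathcal K\supseteq\mathcal K_+$ with $\mathcal K_+$ invariant under $U$ and $U|_{\mathcal K_+}=V$ (equivalently, one builds $\mathcal K=\dots\oplus\mathcal L\oplus\mathcal L\oplus\mathcal K_+$ by hand, letting $U$ shift the extra copies of $\mathcal L$ into $\mathcal L\subseteq\mathcal K_+$ and act as $V$ on $\mathcal K_+$). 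Then $p(U)|_{\mathcal K_+}=p(V)$ and hence $P_{\mathcal H}p(U)|_{\mathcal H}=P_{\mathcal H}p(V)|_{\mathcal H}=p(T)$ for every polynomial $p$, as required. I do not expect a genuine obstacle; the one point that needs care is the passage from $V$ to $U$, which requires $\mathcal K_+$ to be invariant (not reducing) under $U$ — precisely what the minimal unitary extension of an isometry supplies — so that the one-sided compression formula transfers. Apart from this, the argument turns entirely on the identity $T^*T+D_T^2=I$ forcing the coupling $h\mapsto(Th,D_Th)$ to be isometric, together with the remark that polynomials never involve $U^{-1}$, so no Sarason-type semi-invariance argument enters. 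One can additionally arrange $\mathcal K=\overline{\operatorname{span}}\{U^nh:n\in\mathbb{Z},\ h\in\mathcal H\}$, giving uniqueness of the minimal dilation up to unitary equivalence; a more abstract route is to verify complete positivity of $p\mapsto p(T)$ on the disc algebra (again via a Schur-complement use of the defect identity) and invoke Stinespring's theorem, which is the viewpoint natural for the complete-spectral-set questions taken up later in the paper. As an immediate by-product one recovers von Neumann's inequality $\|p(T)\|\le\sup_{|z|=1}|p(z)|$.
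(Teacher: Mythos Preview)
The paper does not supply a proof of this theorem: it is quoted in the introduction as Sz.-Nagy's classical 1953 result, with a reference to the original article, purely as historical motivation for the rational dilation programme. There is therefore nothing in the paper to compare your argument against.

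That said, your two-stage construction is correct and is exactly the standard Sch\"affer-type proof one finds in the textbook literature (and which the paper cites as Chapter~I of \cite{nagy}): build the minimal isometric dilation $V$ on $\mathcal H\oplus\bigoplus_{n\ge 0}\mathcal D_T$ using the defect identity $T^*T+D_T^2=I$, observe that $\mathcal H$ is co-invariant so that $P_{\mathcal H}V^n|_{\mathcal H}=T^n$, and then extend $V$ to a unitary by completing the unilateral shift in the Wold decomposition to a bilateral one. Your care about the invariance (rather than reduction) of $\mathcal K_+$ under $U$ is exactly the right point, and the remark that polynomials involve only non-negative powers is what makes the one-sided compression suffice.
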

Sz.-Nagy's dilation theorem removed much of the mystery of one
variable operator theory by expressing an abstract object like an
arbitrary contraction as a compression of a more well known
object, a unitary. Since every operator is nothing but a scalar
time a contraction, Sz.-Nagy's result provided a subtle way of
modelling an operator in terms of a normal operator or more
precisely a scalar time a unitary.\\

Sz-Nagy dilation theorem even holds for all rational functions
with poles off $\overline{\mathbb D}$, which actually establishes
the success of rational dilation on the closed disk. By von
Neumann's theorem, a contraction is an operator that lives inside
$\overline{\mathbb D}$ and Sz.-Nagy's theorem provides a normal
dilation that lives in the boundary of $\overline{\mathbb D}$. In
higher dimensions, in the context of rational dilation, the role
of bounary is replaced by a more refined distinguished boundary.
The success of rational dilation on the closed disk prompted a
number of mathematicians to ask the same question for an arbitrary
compact subset of $\mathbb C^d$, that is, for a commuting tuple of
operators $(T_1,\dots,T_d)$ acting on a Hilbert space $\mathcal H$
for which a given compact subset $K$ of $\mathbb C^d$ is a
spectral set whether or not we can find out a commuting normal
tuple $(N_1,\dots,N_d)$ acting on a bigger Hilbert space $\mathcal
K \supseteq \mathcal H$ and having the distinguished boundary $bK$
as a spectral set such that
\[
f(T_1,\dots,T_d)=P_{\mathcal H}f(N_1,\dots,N_d)|_{\mathcal H}\,,
\]
for all rational functions $f$ in complex $d$-variables with poles
off $K$.\\

In 1985, Jim Agler found positive answer to this question for an
annulus. He constructed dilation for an annulus using an
innovative technique, \cite{agler-ann}. In 2005, Dritschel and
McCulough \cite{DM} resolved this issue for a triply connected
domain with a negative answer. The failure of rational dilation on
a triply connected domain was also shown independently by Jim
Agler and his collaborators, \cite{ahr}. In higher dimensions we
have success of rational dilation on the closed bidisk
$\overline{\mathbb D^2}$ by T. Ando \cite{ando}, which is known as
Ando's inequality. Also we have failure on the closed tridisk
$\overline{\mathbb D^3}$ \cite{parrott}, and on the tetrablock,
\cite{sourav1}. Till date we have few instances where rational
dilation succeeds or fails but the issue of characterizing all
compact subsets of $\mathbb C^d$ where rational
dilation succeeds is still unsettled.\\

In recent past, Jim Agler and Nicholas Young established the
success of rational dilation on the closed symmetrized bidisc
\[
\Gamma_2 = \{ (z_1+z_2,z_1z_2): z_1,z_2\in \overline{\mathbb D}
\}\,,
\]
\cite{ay-jfa}, by showing the existence of dilation using
Stinespring's dilation theorem (see \cite{paulsen} for
Stinespring's theorem). Also the author of this paper and his
collaborators constructed such a dilation independently in
\cite{tirtha-sourav, sourav}. The symmetrized polydisc is a well
studied domain in past two decades and we will see many references
in the next subsection. Since rational dilation succeeds on the
symmetrized bidisc, there are subtleties in asking if it succeeds
on its higher dimensional analogues. In this article, we show by a
counter example that it fails in dimension three, that is on the
closed symmetrized tridisc

\[
\Gamma_3=\{  (z_1+z_2+z_3,z_1z_2+z_2z_2+z_3z_1,z_1z_2z_3):\, z_i
\in\overline{\mathbb D} \}\,.
\]

This essentially terminates our interest of proceeding further to
the higher dimensions. Also we study and find various occasions
when it succeeds conditionally. In such cases, we construct
explicit dilations. En route, we find a variety of
characterizations for the set $\Gamma_3$, its distinguished
boundary $b\Gamma_3$ and for several important classes of operator
triples having $\Gamma_3$ as a spectral set.

\subsection{Literature and a brief description of the main results}

For $n\geq 2$, the symmetrization map in $n$-complex variables
$z=(z_1,\dots,z_n)$ is the following
\[
\pi_n(z)=(s_1(z),\dots, s_{n-1}(z), p(z))
\]
 where
 \[
s_i(z)= \sum_{1\leq k_1 \leq k_2 \dots \leq k_i \leq n-1}
z_{k_1}\dots z_{k_i} \quad \text{ and } p(z)=\prod_{i=1}^{n}z_i\,.
 \]
The map $\pi_n$ is a proper holomorphic map, \cite{rudin}. The
closed \textit{symmetrized} $n$-\textit{disk} (or simply closed
\textit{symmetrized polydisc}) is the image of the closed $n$-disc
$\overline{\mathbb D^n}$ under the symmetrization map $\pi_n$,
that is, $\Gamma_n := \pi_n(\overline{\mathbb D^n})$. Similarly
the open symmetrized polydisc is defined as the image of the open
polydisc $\mathbb D^n$ under $\pi_n$. The set $\Gamma_n$ is
polynomially convex but not convex (see \cite{edi-zwo, BSR}). So
in particular the closed and open symmetrized tridisc are the sets
\begin{align*}
\Gamma_3 &=\{ (z_1+z_2+z_3,z_1z_2+z_2z_3+z_3z_1,z_1z_2z_3):
\,|z_i|\leq 1, i=1,2,3 \} \subseteq \mathbb C^3 \\ \mathbb G_3 &
=\{ (z_1+z_2+z_3,z_1z_2+z_2z_3+z_3z_1,z_1z_2z_3): \,|z_i|< 1,
i=1,2,3 \}\subseteq \Gamma_3.
\end{align*}

We obtain from the literature (see \cite{edi-zwo, BSR}) that the
distinguished boundary of the symmetrized polydisc is the
symmetrization of the distinguished boundary of the
$n$-dimensional polydisc, which is $n$-torus $\mathbb T^n$. Hence
the distinguished boundary for $\Gamma_3$ is the set
\begin{align*}
b\Gamma_3 &=\{(z_1+z_2+z_3,z_1z_2+z_2z_3+z_3z_1,z_1z_2z_3):
\,|z_i|=1, i=1,2,3\}.
\end{align*}

The symmetrized polydiscs in several dimensions have attracted
considerable attention in past two decades because of its rich
function theory \cite{ALY12, AY06, AY08, costara1, jarnicki, KZ,
MSZ, NiPa, PZ}, complex geometry \cite{AY04, costara, edi-zwo, GS,
Le86, NiPfZw}, associated operator theory \cite{ay-jfa, ay-jot,
AY, tirtha-sourav, tirtha-sourav1, BSR, sourav, pal-shalit,
sarkar} and its connection with the most appealing and difficult
problem of $\mu$-synthesis \cite{ALY13, hari, NJY11}, which arises
in the $H^{\infty}$ approach to the problem of robust control
\cite{JCD, JCD1}. Operator theory on the symmetrized bidisc has
numerous applications to its complex geometry and function theory,
see classic \cite{AgMcC}. Nevertheless, operator theory on a
domain is always of independent interest even without considering
any connection with complex geometry or function theory of the
domain.\\

\begin{defn}
A triple of commuting operators $(S_1,S_2,P)$ on a Hilbert space
$\mathcal H$ for which $\Gamma_3$ is a spectral set (complete
spectral set) is called a $\Gamma_3$-$contraction$
(\textit{complete} $\Gamma_3$-\textit{contraction}).
\end{defn}

If $(S_1,S_2,P)$ is a $\Gamma_3$-contraction then so are
$(S_2,S_1,P)$ and $(S_1^*,S_2^*,P^*)$. We shall see a proof of
this result in section 4. Also it is obvious from the definition
that if $(S_1,S_2,P)$ is a $\Gamma_3$-contraction then $S_1,S_2$
have norms not greater than $3$ and $P$ is a contraction.
Unitaries, isometries and co-isometries are important special
classes of contractions. There are natural analogues of these
classes for $\Gamma_3$-contractions.

\begin{defn}
Let $S_1,S_2,P$ be commuting operators on a Hilbert space
$\mathcal H$. We say that $(S_1,S_2,P)$ is
\begin{itemize}
\item [(i)] a $\Gamma_3$-\textit{unitary} if $S_1,S_2,P$ are
normal operators and the Taylor joint spectrum
$\sigma_T(S_1,S_2,P)$ is contained in $b\Gamma_3$ ; \item [(ii)] a
$\Gamma_3$-\textit{isometry} if there exists a Hilbert space
$\mathcal K$ containing $\mathcal H$ and a $\Gamma_3$-unitary
$(\tilde{S_1},\tilde{S_2},\tilde{P})$ on $\mathcal K$ such that
$\mathcal H$ is a common invariant subspace for
$\tilde{S_1},\tilde{S_2},\tilde{P}$ and that
$S_i=\tilde{S_i}|_{\mathcal H}$ for $i=1,2$ and
$\tilde{P}|_{\mathcal H}=P$; \item [(iii)] a
$\Gamma_3$-\textit{co-isometry} if $(S_1^*,S_2^*,P^*)$ is a
$\Gamma_3$-isometry.
\end{itemize}
\end{defn}

\begin{defn}
A $\Gamma_3$-isometry $(S_1,S_2,P)$ is said to be \textit{pure} if
$P$ is a pure isometry, i.e, if ${P^*}^n \rightarrow 0$ strongly
as $n \rightarrow \infty$.
\end{defn}

It is evident from the definitions (see Section \ref{background})
that rational dilation of a $\Gamma_3$-contraction $T=(S_1,S_2,P)$
is nothing but a $\Gamma_3$-unitary dilation of $T$, that is, a
$\Gamma_3$-unitary $N=(N_1,N_2,N_3)$ that dilates $T$ by
satisfying (\ref{rational-dilation}). Similarly a
$\Gamma_3$-isometric dilation of $T=(S_1,S_2,P)$ is a
$\Gamma_3$-isometry $V=(V_1,V_2,V_3)$ that satisfies
(\ref{rational-dilation}). Clearly a $\Gamma_3$-unitary dilation
is necessarily a $\Gamma_3$-isometric dilation.\\

For a commuting triple $(S_1,S_2,P)$ with $P$ being a contraction,
we consider the following two operator equations
\[
S_1-S_2^*P=D_{P}X_1D_{P}\, , \; S_2-S_1^*P=D_{P}X_2D_{P}
\]
where $D_{P}=(I-P^*P)^{\frac{1}{2}}$ and $\mathcal
D_{P}=\overline{Ran}\,D_{P}$. We call them \textit{fundamental
equations} for such a triple. In Theorem
\ref{existence-uniqueness} we show that if $(S_1,S_2,P)$ is a
$\Gamma_3$-contraction, then the fundamental equations have unique
solutions $F_1,F_2$ respectively. The unique pair $(F_1,F_2)$ is
called the \textit{fundamental operator pair} of $(S_1,S_2,P)$. We
shall write FOP for fundamental operator pair. The FOP plays
crucial role in determining the failure of rational dilation on
$\Gamma_3$ and also in finding out a major class of
$\Gamma_3$-contractions which dilate to $\Gamma_3$-unitaries.

\begin{defn}
A pair of operators $(T_1,T_2)$ acting on $\mathcal H$ is said to
be {\em almost normal} if $T_1,T_2$ commute and
$T_1^*T_1-T_1T_1^*=T_2^*T_2-T_2T_2^*$.
\end{defn}

In Section \ref{necessary-condition}, we produce a necessary
condition for the existence of rational dilation for a class of
$\Gamma_3$-contractions. Indeed, in Proposition \ref{ultimate}, we
show that if $(S_1,S_2,P)$ is a $\Gamma_3$-contraction on
$\mathcal H_1 \oplus \mathcal H_1$ for some Hilbert space
$\mathcal H_1$, satisfying
\begin{itemize}
 \item[(i)] $ Ker (D_{P})=\mathcal H_1\oplus \{0\} \textup{ and } \mathcal D_{P} =
 \{0\}\oplus \mathcal H_1$
\item[(ii)] $P(\mathcal D_{P})=\{0\}$ \text{and} $P
Ker(D_{P})\subseteq \mathcal D_{P}$ ,
\end{itemize}
then for the existence of a $\Gamma_3$-isometric dilation of
$(S_1^*,S_2^*,P^*)$ it is necessary that the FOP $(F_1,F_2)$ of
$(S_1,S_2,P)$ is almost normal. In section \ref{example}, we
construct an example of a $\Gamma_3$-contraction that satisfies
the hypotheses of Proposition \ref{ultimate} but fails to possess
almost normal FOP. This proves the failure of
rational dilation on the symmetrized tridisc.\\

In spite of such failure in general, we become able to show that
if the FOP of $(S_1,S_2,P)$ is almost normal, then $(S_1,S_2,P)$
dilates to a $\Gamma_3$-unitary. For such a $\Gamma_3$-contraction
$(S_1,S_2,P)$, Theorem \ref{main-dilation-theorem} describes an
explicit construction of a $\Gamma_3$-unitary dilation
$(R_1,R_2,U)$. Though in this theorem the almost normality of the
FOP of $(S_1^*,S_2^*,P^*)$ is assumed but it is required to
construct such a dilation, an existence of dilation demands only
the almost normality of $(F_1,F_2)$. This is obvious from Theorem
\ref{isometric-dilation}, where a $\Gamma_3$-isometric dilation to
$(S_1,S_2,P)$ is obtained. The dilation operators are constructed
with the help of the FOPs of $(S_1,S_2,P)$ and
$(S_1^*,S_2^*,P^*)$. It is remarkable in this construction that
$U$ is the minimal unitary dilation of $P$. This leads to the
conclusion that the dilation space is same as the minimal unitary
dilation space of
$P$. Naturally the dilation becomes minimal.\\

In Theorem \ref{model2}, we find a concrete functional model for a
$\Gamma_3$-contraction $(S_1,S_2,P)$ whose adjoint has almost
normal FOP. This result also guarantees that almost normality of
FOP of any one of $(S_1,S_2,P)$ or $(S_1^*,S_2^*,P^*)$ is
sufficient for $(S_1,S_2,P)$ to dilate.\\

So far we have witnessed that for a $\Gamma_3$-contraction, the
almost normality of the FOP is sufficient but not necessary for
the existence of a rational dilation. In Section
\ref{further-dilation} we shall see that there are
$\Gamma_3$-contractions which, without having almost normal FOP,
dilate to $\Gamma_3$-unitaries. In the same section we obtain few
other classes of $\Gamma_3$-contractions and find their dilations.
However, we are unable to determine the entire class of
$\Gamma_3$-contractions which dilate even without having almost
normal FOPs.\\

One can ask a very natural question. Given a commuting triple
$(S_1,S_2,P)$ with $\|S_i\|\leq 3$ and $\|P\|\leq 1$, when can we
declare that $(S_1,S_2,P)$ is a $\Gamma_3$-contraction ? We answer
this question partially in the end of section
\ref{conditional-dilation}. In Theorem \ref{sufficient1}, we show
that the existence of solutions to the fundamental equations of a
commuting triple $(S_1,S_2,P)$ and the almost normality of the
solution pair are sufficient for $(S_1,S_2,P)$ to become a
$\Gamma_3$-contraction. In fact under such
conditions $(S_1,S_2,P)$ becomes a complete $\Gamma_3$-contraction.\\

Throughout the whole program, the FOP of a $\Gamma_3$-contraction
plays the main role. So it is worth having a further study on
these operator pairs. In Section \ref{basicproperties}, we make
reasonable progress in this matter and discover beautiful
properties of FOPs. The remarkable among them is Proposition
\ref{end-prop}, which states that if two $\Gamma_3$-contractions
are unitarily equivalent then so are their FOPs. Also Theorem
\ref{converse} provides a partial converse to the
existence-uniqueness theorem (Theorem \ref{existence-uniqueness})
of FOP. This result ascertains that for every almost normal
operator pair $(F_1,F_2)$ with certain norm condition, there
exists a $\Gamma_3$-contraction for which $(F_1,F_2)$ is the FOP.\\

The proof of the existence-uniqueness theorem of FOP requires the
positivity of two operator pencils $\Phi_{13},\Phi_{23}$ which we
have defined in section \ref{basicproperties} (see (\ref{eq:1a}),
(\ref{eq:1b})). Proposition \ref{prop:POS} shows the positivity of
$\Phi_{1k},\Phi_{2k}$ for $k\geq 3$. The positivity of $\Phi_{13}$
and $\Phi_{23}$ in scalar case (see equations (\ref{eqn:2a}),
(\ref{eqn:2b})) is a necessary and sufficient condition for a
point $(s_1,s_2,p)$ to belong to $\Gamma_3$ which is proved in
Theorem \ref{thm:SC1}. In other words, the positivity of
$\Phi_{13}$ and $\Phi_{23}$ determines the success of rational
dilation in scalar case. However the positivity of these two
operator pencils is necessary but not sufficient in operator case.
The question of determining whether $\Gamma_3$ is a spectral set
or complete spectral set for a point $(s_1,s_2,p)$ in $\mathbb
C^3$ is closely related to Schur's theorem on location of zeros of
a polynomial, \cite{schur, ptak-young}. We shall apply this
theorem to prove Theorem \ref{thm:SC1} which characterizes a point
of $\Gamma_3$ in several ways including the success of rational
dilation in scalar case. Also Theorem \ref{thm:DB} provides a
variety of
characterizations for the distinguished boundary of $\Gamma_3$.\\

We have already seen that the classes of $\Gamma_3$-unitaries and
$\Gamma_3$-isometries are analogues of unitaries and isometries in
one variable operator theory. Theorem \ref{G-unitary} describes
the structure of a $\Gamma_3$-unitary by a set of
characterizations. It shows that a $\Gamma_3$-unitary is nothing
but the symmetrization of three commuting unitaries which
parallels the scalar case. However this is no longer true in
general for every $\Gamma_3$-contraction. In Remark 2.11 in
\cite{BSR} , Biswas and Shyam Roy have established by an example
that symmetrization of three commuting contractions may not be a
$\Gamma_3$-contraction. Theorem \ref{G-isometry} provides a set of
characterizations for the $\Gamma_3$-isometries. It shows that
every $\Gamma_3$-isometry admits an Wold-type decomposition that
splits a $\Gamma_3$-isometry orthogonally into two parts of which
one is a $\Gamma_3$-unitary and the other is a pure
$\Gamma_3$-isometry.\\

As a $\Gamma_3$-isometry is an orthogonal direct sum of a
$\Gamma_3$-unitary and a pure $\Gamma_3$-isometry and since
Theorem \ref{G-unitary} describes the structure of a
$\Gamma_3$-unitary, one needs a concrete model of pure
$\Gamma_3$-isometries to get a complete description of a
$\Gamma_3$-isometry. In Theorem \ref{model1}, we show that a pure
$\Gamma_3$-isometry $(\hat{S_1},\hat{S_2},\hat{P})$ can be
modelled as a commuting triple of Toeplitz operators
$(T_{A+Bz},T_{B^*+A^*z},T_z)$ on the vectorial Hardy space
$H^2(\mathcal D_{\hat{P^*}})$, where $(A^*,B)$ is the FOP of the
$\Gamma_3$-co-isometry $(\hat{S_1}^*,\hat{S_2}^*,\hat{P}^*)$. The
converse is also true, that is, every such triple of commuting
contractions $(T_{A+Bz},T_{B^*+A^*z},T_z)$ on a vectorial Hardy
space is a pure
$\Gamma_3$-isometry.\\

\noindent \textbf{Note.} Part $(1)\Leftrightarrow (4)$ of Theorem
\ref{thm:DB} and part $(1)\Leftrightarrow (7)$ of Theorem
\ref{thm:SC1} were obtained independently by Costara in
\cite{costara} and later by Gorai and Sarkar, \cite{GS}. Also
parts of Theorem \ref{G-unitary}, Theorem \ref{G-isometry} and
Theorem \ref{model1} were proved by Biswas and Shyam Roy
\cite{BSR}, in a more general setting. However, the proofs given
here to most of these results are different.

\section{Spectral set, complete spectral set and rational
dilation}\label{background}

\subsection{The Taylor joint spectrum}

Let $\Lambda$ be the exterior algebra on $n$ generators
$e_1,...e_n$, with identity $e_0\equiv 1$. $\Lambda$ is the
algebra of forms in $e_1,...e_n$ with complex coefficients,
subject to the collapsing property $e_ie_j+e_je_i=0$ ($1\leq i,j
\leq n$). Let $E_i: \Lambda \rightarrow \Lambda$ denote the
creation operator, given by $E_i \xi = e_i \xi $ ($\xi \in
\Lambda, 1 \leq i \leq n$).
 If we declare $ \{ e_{i_1}... e_{i_k} : 1 \leq i_1 < ... < i_k \leq n \}$ to be an
 orthonormal basis, the exterior algebra $\Lambda$ becomes a Hilbert space,
 admitting an orthogonal decomposition $\Lambda = \oplus_{k=1} ^n \Lambda^k$
 where $\dim \Lambda ^k = {n \choose k}$. Thus, each $\xi \in \Lambda$ admits
 a unique orthogonal decomposition
 $ \xi = e_i \xi' + \xi''$, where $\xi'$ and $\xi ''$ have no $e_i$
contribution. It then follows that that  $ E_i ^{*} \xi = \xi' $,
and we have that each $E_i$ is a partial isometry, satisfying
$E_i^*E_j+E_jE_i^*=\delta_{i,j}$. Let $\mathcal X$ be a normed
space, let $\underline{T}=(T_1,...,T_n)$ be a commuting $n$-tuple
of bounded operators on $\mathcal X$ and set $\Lambda(\mathcal
X)=\mathcal X\otimes_{\mathbb{C}} \Lambda$. We define
$D_{\underline T}: \Lambda (\mathcal X) \rightarrow \Lambda
(\mathcal X)$ by

\[
D_{\underline T} = \sum_{i=1}^n T_i \otimes E_i .
\]

Then it is easy to see $D_{\underline T}^2=0$, so $Ran
D_{\underline T} \subset Ker D_{\underline T}$. The commuting
$n$-tuple is said to be \textit{non-singular} on $\mathcal X$ if
$Ran D_{\underline T}=Ker D_{\underline T}$.
\begin{defn}
The Taylor joint spectrum of ${\underline T}$ on $\mathcal X$ is
the set
\[
\sigma_T({\underline T},\mathcal X) = \{
\lambda=(\lambda_1,...,\lambda_n)\in \mathbb{C}^n : {\underline
T}-\lambda \text{ is singular} \}.
\]
\end{defn}
\begin{rem}
The decomposition $\Lambda=\oplus_{k=1}^n \Lambda^k$ gives rise to
a cochain complex $K({\underline T},\mathcal X)$, known as the
Koszul complex associated to ${\underline T}$ on $\mathcal X$, as
follows:
\[
K({\underline T},\mathcal X):0 \rightarrow \Lambda^0(\mathcal
X)\xrightarrow{D_{\underline T}^0}... \xrightarrow{D_{\underline
T}^{n-1}} \Lambda^n(\mathcal X) \rightarrow 0 ,
\]
where $D_{\underline T}^{k}$ denotes the restriction of
$D_{\underline T}$ to the subspace $\Lambda^k(\mathcal X)$. Thus,
\[
\sigma_T({\underline T},\mathcal X) = \{ \lambda\in \mathbb{C}^n :
K({\underline T}-\lambda ,\mathcal X)\text{ is not exact} \}.
\]
\end{rem}
For a further reading on Taylor joint spectrum an interested
reader is referred to Taylor's works, \cite{Taylor, Taylor1}.

\subsection{Spectral and complete spectral set}

We shall follow Arveson's terminologies about the spectral and
complete spectral sets. Let $X$ be a compact subset of $\mathbb
C^n$ and let $\mathcal R(X)$ denote the algebra of all rational
functions on $X$, that is, all quotients $p/q$ of polynomials
$p,q$ for which $q$ has no zeros in $X$. The norm of an element
$f$ in $\mathcal R(X)$ is defined as
$$\|f\|_{\infty, X}=\sup \{|f(\xi)|\;:\; \xi \in X  \}. $$
Also for each $k\geq 1$, let $\mathcal R_k(X)$ denote the algebra
of all $k \times k$ matrices over $\mathcal R(X)$. Obviously each
element in $\mathcal R_k(X)$ is a $k\times k$ matrix of rational
functions $F=(f_{i,j})$ and we can define a norm on $\mathcal
R_k(X)$ in the canonical way
$$ \|F\|=\sup \{ \|F(\xi)\|\;:\; \xi\in X \}, $$ thereby making
$\mathcal R_k(X)$ into a non-commutative normed algebra. Let
$\underline{T}=(T_1,\cdots,T_n)$ be an $n$-tuple of commuting
operators on a Hilbert space $\mathcal H$. The set $X$ is said to
be a \textit{spectral set} for $\underline T$ if the Taylor joint
spectrum $\sigma_T (\underline T)$ of $\underline T$ is a subset
of $X$ and
\begin{equation}\label{defn1}
\|f(\underline T)\|\leq \|f\|_{\infty, X}\,, \textup{ for every }
f\in \mathcal R(X).
\end{equation}
Here $f(\underline T)$ can be interpreted as $p(\underline
T)q(\underline T)^{-1}$ when $f=p/q$. Moreover, $X$ is said to be
a \textit{complete spectral set} if $\|F(\underline T)\|\leq
\|F\|$ for every $F$ in $\mathcal R_k(X)$, $k=1,2,\cdots$.

\subsection{The distinguished boundary and rational dilation}

Let $\mathcal A(X)$ be an algebra of continuous complex-valued
functions on $X$ which separates the points of $X$. A
\textit{boundary} for $\mathcal A(X)$ is a closed subset $\partial
X$ of $X$ such that every function in $\mathcal A(X)$ attains its
maximum modulus on $\partial X$. It follows from the theory of
uniform algebras that the intersection of all the boundaries of
$X$ is also a boundary for $\mathcal A(X)$ (see Theorem 9.1 of
\cite{wermer}). This smallest boundary is called the
$\check{\textup{S}}$\textit{ilov boundary} for $\mathcal A(X)$.
When $\mathcal A(X)$ is the algebra of rational functions which
are continuous on $X$, the $\check{\textup{S}}$\textit{ilov
boundary} for $\mathcal A(X)$ is called the \textit{distinguished
boundary} of $X$ and is denoted by $bX$.\\

A commuting $n$-tuple of operators $\underline T$ on a Hilbert
space $\mathcal H$, having $X$ as a spectral set, is said to have
a \textit{rational dilation} or \textit{normal}
$bX$-\textit{dilation} if there exists a Hilbert space $\mathcal
K$, an isometry $V:\mathcal H \rightarrow \mathcal K$ and an
$n$-tuple of commuting normal operators $\underline
N=(N_1,\cdots,N_n)$ on $\mathcal K$ with $\sigma_T(\underline
N)\subseteq bX$ such that

\begin{equation}\label{rational-dilation}
f(\underline T)=V^*f(\underline N)V, \textup{ for every } f\in
\mathcal R(X),
\end{equation}

or, in other words for every $f\in \mathcal R(X)$

\begin{equation}
f(\underline T)=P_{\mathcal H}f(\underline N)|_{\mathcal H}\,,
\end{equation}

when $\mathcal H$ is considered as a closed linear subspace of
$\mathcal K$. Moreover, the dilation is called {\em minimal} if
\[
\mathcal K=\overline{\textup{span}}\{ f(\underline N) h\,:\;
h\in\mathcal H \textup{ and } f\in \mathcal R(K) \}.
\]

It obvious that if $X$ is a complete spectral set for $\underline
T$ then $X$ is a spectral set for $\underline T$. A celebrated
theorem of Arveson states that $\underline T$ has a normal
$bX$-dilation if and only if $X$ is a complete spectral set of
$\underline T$ (Theorem 1.2.2 and its corollary, \cite{sub2}).
Therefore, the success or failure of rational dilation is
equivalent to asking whether $X$ is a spectral set for $\underline
T$ implies that $X$ is a complete spectral set for $\underline
T$.\\

Arveson \cite{sub2} profoundly reformulated the rational dilation
problem in terms of contractive and completely contractive
representations. A tuple $\underline{T}$ acting on the Hilbert
space $\mathcal H$ with Taylor joint spectrum in $X$ determines a
unital representation $\pi_{\underline T}$ of $\mathcal R(X)$ on
$\mathcal H$ via the map $\pi_{\underline T}(f) = f(\underline T)$
and the condition that $X$ is a spectral set for $\underline T$ is
equivalent to the condition that this representation is
contractive. Recall that a representation $\pi$ of $\mathcal R(X)$
is \textit{contractive} if for all $f\in \mathcal R(X)$
\[
\|\pi(f)\|\leq \|f\|_{\infty, X}
\]
and \textit{completely contractive} if for all $n$ and all $F$ in
$M_n(\mathcal R(X)), \pi^{(n)}(F) := (\pi(F_{i,j}))$ is
contractive. Arveson showed that $\underline T$ dilates to a tuple
$\underline N$ with spectrum in the distinguished boundary of X
($\check{\textup{S}}$ilov boundary related to $\mathcal R(X)$) if
and only if $\pi_{\underline T}$ is completely contractive. Thus
the rational dilation problem can be reformulated. Namely, is
every contractive representation of $\mathcal R(X)$ completely
contractive?

\section{Schur's criterion and geometry of the symmetrized tridisc
}\label{scalarcase}

We begin this section by recalling the symmetrization map
\begin{align}
\nonumber \pi_3:& \mathbb C^3 \rightarrow \mathbb C^3 \\ &
(z_1,z_2,z_3) \rightarrow
(z_1+z_2+z_3,z_1z_2+z_2z_3+z_3z_1,z_1z_2z_3).
\end{align}
Clearly the set $\Gamma_3$ is the image of the closed tridisc
$\overline{\mathbb D^3}$ under the symmetrization map. We shall
mention few useful results about the symmetrized bidisc
$\Gamma_2$, which in literature is denoted by $\Gamma$ also.

\begin{thm}\label{thm:21}
Let $(s,p)\in\mathbb C^2$. Then $(s,p)\in\Gamma_2$ if and only if
$|s|\leq 2$ and $|s-\bar s p|\leq 1-|p|^2$
\end{thm}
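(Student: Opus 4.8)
The plan is to prove the characterization of $\Gamma_2$ (Theorem \ref{thm:21}) by a direct analysis of the map $\pi_2(z_1,z_2)=(z_1+z_2,z_1z_2)$ together with an elementary reduction to a single Blaschke-type condition on the open disc. First I would observe that a point $(s,p)$ lies in $\Gamma_2$ precisely when the polynomial $z^2-sz+p$ has both roots in $\overline{\mathbb{D}}$. The necessity of $|s|\le 2$ is immediate since $s=z_1+z_2$ with $|z_i|\le 1$, and the necessity of $|s-\bar s p|\le 1-|p|^2$ I would get by a short computation: writing $z_1,z_2$ for the roots, one checks the algebraic identity relating $|s-\bar s p|$ and $1-|p|^2$ to the quantities $(1-|z_1|^2)$ and $(1-|z_2|^2)$, or more slickly by the observation that $z\mapsto \frac{z-z_2}{1-\bar z_2 z}$ (if $|z_2|<1$) maps $z_1$ into $\overline{\mathbb{D}}$ and tracking how $s$ and $p$ transform.

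For the converse — the substantive direction — the cleanest route I know uses the M\"obius parametrization of the symmetrized bidisc. The key fact is that for $\lambda\in\mathbb{T}$ the function $\Phi(\lambda,s,p)=\dfrac{2\lambda p - s}{2-\lambda s}$ is well-defined and has the property that $(s,p)\in\Gamma_2$ if and only if $|\Phi(\lambda,s,p)|\le 1$ for all $\lambda\in\overline{\mathbb{D}}$ (equivalently all $\lambda\in\mathbb{T}$), with the open version characterizing $\mathbb{G}_2$. So I would first dispose of the case $|p|=1$ separately: if $|p|=1$ then $|s-\bar s p|=|s||1- \bar{(s/\bar s)} \cdot \text{stuff}|$ forces, together with $1-|p|^2=0$, that $s=\bar s p$, and one checks this puts $(s,p)$ on $b\Gamma_2$. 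Then, assuming $|p|<1$, I would show that the single inequality $|s-\bar s p|\le 1-|p|^2$ is equivalent to $\sup_{\lambda\in\mathbb{T}}|\Phi(\lambda,s,p)|\le 1$. This is the heart of the matter: fixing $s,p$, the function $\lambda\mapsto |\Phi(\lambda,s,p)|^2$ is a ratio of quadratics in $\lambda$ on the circle, and maximizing it (or rather, asking when $|2\lambda p-s|^2\le |2-\lambda s|^2$ for all $|\lambda|=1$) reduces after expansion to an inequality of the form $\operatorname{Re}(\bar\lambda\,\overline{(\bar s p \cdot 2 - \cdots)})\le \text{const}$ holding for all unimodular $\lambda$, which holds iff the modulus of the coefficient of $\bar\lambda$ is at most the constant — and that is exactly $2|s-\bar s p|\le 2(1-|p|^2)$ after simplification.

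The main obstacle I anticipate is handling the boundary/degenerate cases cleanly: when $2-\lambda s$ vanishes for some $\lambda\in\mathbb{T}$ (i.e. $|s|=2$), $\Phi$ has a pole and one must argue by continuity or by the polynomial inequality $|2\lambda p - s|^2\le |2-\lambda s|^2$ directly rather than through the quotient; and the case $|p|=1$ needs its own small argument as above. I would therefore organize the proof to work throughout with the polynomial inequality $|2\lambda p - s|\le |2-\lambda s|$ for all $\lambda\in\mathbb{T}$ — which makes sense for all $(s,p)$ — prove it is equivalent to $|s|\le 2$ together with $|s-\bar s p|\le 1-|p|^2$ by the one-line unimodular-coefficient argument, and only at the end invoke the known equivalence (from the literature on $\Gamma_2$, e.g. \cite{ay-jfa, AgMcC}) between this family of inequalities and membership in $\Gamma_2$. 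Alternatively, to keep things self-contained, I would prove that equivalence by hand: if the polynomial inequality holds, solve $z^2-sz+p=0$ and verify both roots satisfy $|z_i|\le 1$ using that $\lambda\mapsto\Phi(\lambda,s,p)$ being a self-map of $\overline{\mathbb{D}}$ forces, via evaluating at appropriate $\lambda$, the product and sum constraints to be realized by a pair in $\overline{\mathbb{D}}^2$. The bookkeeping in expanding $|2-\lambda s|^2-|2\lambda p-s|^2$ and collecting the coefficient of $\operatorname{Re}(\bar\lambda\, w)$ is routine and I would not grind through it here.
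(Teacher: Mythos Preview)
The paper does not actually prove this theorem: it simply cites Theorem~1.1 of \cite{ay-jot}. So your proposal is automatically ``different'' in that you attempt a self-contained argument where the paper does not.

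Your necessity direction is fine; the clean way to finish the computation you allude to is the identity
\[
(1-|p|^2)-|s-\bar s p|
\;\ge\;(1-|z_1||z_2|)\bigl(1-|z_1|\bigr)\bigl(1-|z_2|\bigr)\;\ge\;0,
\]
obtained from $s-\bar s p=z_1(1-|z_2|^2)+z_2(1-|z_1|^2)$.

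For sufficiency, your reduction of the family $|2\lambda p-s|\le|2-\lambda s|$ (all $\lambda\in\mathbb T$) to the single inequality $|s-\bar s p|\le 1-|p|^2$ is correct and is a one-line expansion. The gap is the next step: you propose to ``invoke the known equivalence (from the literature on $\Gamma_2$, e.g.\ \cite{ay-jfa, AgMcC}) between this family of inequalities and membership in $\Gamma_2$.'' That equivalence \emph{is} Theorem~1.1 of \cite{ay-jot}, i.e.\ precisely the statement you are trying to prove, so this is circular. Your ``alternatively, self-contained'' paragraph is the right instinct but you do not carry it out. The cleanest way to close it is not via $\Phi$ at all but via Schur's test applied to $f(z)=z^2-sz+p$: form the Schur transform $f_1(z)=\bigl(\overline{f(0)}\,f(z)-f^{\#}(z)\,f(0)\bigr)/z$ (where $f^{\#}(z)=z^2\overline{f(1/\bar z)}$), note that $|p|<1$ forces the zeros of $f$ to lie in $\overline{\mathbb D}$ iff the single zero of the linear polynomial $f_1$ does, and check that this zero is $\dfrac{s-\bar s p}{1-|p|^2}$, which has modulus $\le 1$ exactly under your hypothesis. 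The boundary case $|p|=1$ (where you correctly observe $s=\bar s p$) then follows by continuity, and the separate hypothesis $|s|\le 2$ is genuinely needed there (e.g.\ $s=3$, $p=1$ satisfies $|s-\bar s p|\le 1-|p|^2$ but is not in $\Gamma_2$).
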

For a proof see Theorem 1.1 in \cite{ay-jot}.

\begin{lem}\label{lem:22}
Let $(s,p)\in\Gamma_2$. Then $|s|-|p|\leq 1$.
\end{lem}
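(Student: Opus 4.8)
The plan is to exploit the definition of $\Gamma_2$ as the symmetrization of the closed bidisc. Given $(s,p)\in\Gamma_2$, I would first write $s=z_1+z_2$ and $p=z_1z_2$ for some $z_1,z_2\in\overline{\mathbb D}$. Setting $a=|z_1|$ and $b=|z_2|$, both in $[0,1]$, the triangle inequality gives $|s|\leq a+b$ while $|p|=ab$, so that $|s|-|p|\leq a+b-ab$. The key algebraic observation is the factorization $1-(a+b-ab)=(1-a)(1-b)\geq 0$, which immediately yields $|s|-|p|\leq a+b-ab\leq 1$.

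Alternatively, one could bypass the explicit representation and argue directly from Theorem \ref{thm:21}: from $|s-\bar s p|\leq 1-|p|^2=(1-|p|)(1+|p|)$ together with the estimate $|s-\bar s p|\geq |s|-|s||p|=|s|(1-|p|)$, one obtains $|s|(1-|p|)\leq (1-|p|)(1+|p|)$. When $|p|<1$ this may be divided by $1-|p|>0$ to give $|s|\leq 1+|p|$; when $|p|=1$ the bound $|s|\leq 2$ from the same theorem gives $|s|-|p|\leq 2-1=1$.

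I do not anticipate a genuine obstacle here, since both routes are short. The only minor point requiring care is the degenerate case $|p|=1$ in the second approach, where the factor $1-|p|$ vanishes and the inequality must be closed using the separate bound $|s|\leq 2$; in the first approach no case distinction is needed at all. I would therefore present the first argument as the main proof, as it is the most transparent and self-contained.
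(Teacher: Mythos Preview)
Your proposal is correct. Your second (alternative) approach is exactly the paper's proof: it starts from Theorem~\ref{thm:21}, uses $|s|-|s||p|\leq |s-\bar s p|\leq 1-|p|^2$, and concludes $|s|-|p|\leq 1$; you are in fact more careful than the paper in explicitly closing the degenerate case $|p|=1$ via $|s|\leq 2$.

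Your first approach, via the representation $s=z_1+z_2$, $p=z_1z_2$ and the factorization $1-(a+b-ab)=(1-a)(1-b)\geq 0$, is a genuinely different and more self-contained route: it avoids invoking Theorem~\ref{thm:21} altogether and requires no case distinction. The paper's route has the minor advantage of staying within the $(s,p)$ coordinates and illustrating how the characterization of Theorem~\ref{thm:21} is used in practice, but your first argument is the cleaner of the two.
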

\begin{proof}
We have
\[
|s|-|sp|\leq |s-\bar s p|\leq 1-|p|^2
\]
which implies that $|s|(1-|p|)\leq (1+|p|)(1-|p|)$ and hence
\[
|s|-|p|\leq 1\,.
\]

\end{proof}

When a commuting operator triple $(S_1,S_2,P)$ is a scalar triple
it is natural to ask whether $\Gamma_3$ is a spectral set for
$(S_1,S_2,P)$. Given $(s_1,s_2,p)$ one can determine whether
$(s_1,s_2,p)\in\Gamma_3$ by solving the cubic equation
\[
z^3-s_1z^2+s_2z-p=0
\]
and by verifying whether the roots lie within the closed disc
$\overline{\mathbb D}$. But this algorithm needs brute forces and
does not work in higher dimensions. Schur had a more subtle
approach towards the problem of finding the locations of zeros of
a polynomial, \cite{schur}. Schur's theorem is a standard and
effective test to ascertain whether the zeros of a polynomial lie
in the open disc. A simple proof to this result may be found in
\cite{ptak-young}. The question of determining whether $\Gamma_3$
is a spectral set for $(s_1,s_2,p)$ is closely related to Schur's
theorem, we shall see this in Theorems \ref{thm:DB} and \ref{thm:SC1}.\\

For $k\geq 3$ we define two operator pencils $\Phi_{1k},\Phi_{2k}$
for a commuting operator triple. We shall see in the coming
sections that these two operator pencils play pivotal role in
determining the structures of different classes of
$\Gamma_3$-contractions.
\begin{align}
\Phi_{1k}(S_1,S_2,P) &= (k-S_1)^*(k-S_1)-(kP-S_2)^*(kP-S_2) \notag
\\&
\label{eq:1a} =k^2(I-P^*P)+(S_1^*S_1-S_2^*S_2)-k(S_1-S_2^*P)-k(S_1^*-P^*S_2) \\
\Phi_{2k}(S_1,S_2,P) &= (k-S_2)^*(k-S_2)-(kP-S_1)^*(kP-S_1) \notag \\
& \label{eq:1b}
=k^2(I-P^*P)+(S_2^*S_2-S_1^*S_1)-k(S_2-S_1^*P)-k(S_2^*-P^*S_1)
\end{align}
So in particular when $S_1,S_2$ and $P$ are scalars, i.e, points
in $\Gamma_3$, the above two operator pencils take the following
form.
\begin{align}
\Phi_{1k}(s_1,s_2,p) &=
k^2(1-|p|^2)+(|s_1|^2-|s_2|^2)-k(s_1-\bar{s_2}p)-k(\bar{s_1}-\bar{p}s_2) \label{eqn:2a} \\
\Phi_{2k}(s_1,s_2,p) &=
k^2(1-|p|^2)+(|s_2|^2-|s_1|^2)-k(s_2-\bar{s_1}p)-k(\bar{s_2}-\bar{p}s_1).\label{eqn:2b}
\end{align}

Before we proceed to characterize the points of $\Gamma_3$ let us
state two results from \cite{BSR} which we need in sequel.

\begin{lem}\label{lem:BSR1}
If $(s_1,s_2,p)\in \Gamma_3$ then
$(\dfrac{2}{3}s_1,\dfrac{1}{3}s_2)\in\Gamma_2$.
\end{lem}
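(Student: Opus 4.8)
The plan is to reduce the statement to the explicit description of $\Gamma_2$ given in Theorem \ref{thm:21}, namely that $(s,p)\in\Gamma_2$ iff $|s|\leq 2$ and $|s-\bar s p|\leq 1-|p|^2$. So writing $(s_1,s_2,p)\in\Gamma_3$ as $s_1=z_1+z_2+z_3$, $s_2=z_1z_2+z_2z_3+z_3z_1$, $p=z_1z_2z_3$ with each $|z_i|\leq 1$, I must verify the two inequalities for the pair $\left(\tfrac23 s_1,\tfrac13 s_2\right)$.

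First I would handle the easy modulus bound: $\left|\tfrac23 s_1\right|=\tfrac23|z_1+z_2+z_3|\leq \tfrac23\cdot 3=2$, so the first condition holds automatically. The substance of the proof is the second inequality
\[
\left|\tfrac23 s_1-\overline{\left(\tfrac23 s_1\right)}\cdot\tfrac13 s_2\right|\leq 1-\left|\tfrac13 s_2\right|^2,
\]
i.e. $\left|\tfrac23 s_1-\tfrac29\bar s_1 s_2\right|\leq 1-\tfrac19|s_2|^2$. I would multiply through by $9$ and try to prove $\left|6s_1-2\bar s_1 s_2\right|\leq 9-|s_2|^2$, expressing everything in the $z_i$. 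Note $\bar s_1 s_2=(\bar z_1+\bar z_2+\bar z_3)(z_1z_2+z_2z_3+z_3z_1)$; expanding and using $|z_i|\leq 1$ (so that, e.g., $\bar z_1\cdot z_1z_2=|z_1|^2 z_2$ has modulus $\leq|z_2|$), one collects terms and compares against the right-hand side, where $|s_2|^2=|z_1z_2+z_2z_3+z_3z_1|^2$. I expect this to come down to a real-coefficient inequality in the $|z_i|$ and the phases, which should follow from the triangle inequality together with the one-variable fact $|z_i|\leq 1$; an alternative cleaner route is to first establish the identity-level estimate $|s_1-\bar s_1 p|\leq$ (something) directly in $\Gamma_3$ and then massage constants, but the direct expansion seems most transparent.

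The main obstacle is the bookkeeping in the expansion of $6s_1-2\bar s_1 s_2$ and showing term-by-term that it is dominated by $9-|s_2|^2$; the correct grouping of the cross terms (matching the "$-|s_2|^2$" on the right against the negative part of $-2\bar s_1 s_2$) is the delicate point, and one likely needs to invoke Lemma \ref{lem:22} applied to suitable sub-symmetrizations of the $z_i$ (for instance the pair coming from two of the three roots) to close the estimate rather than brute-force algebra alone. Once the second inequality is in hand, Theorem \ref{thm:21} immediately gives $\left(\tfrac23 s_1,\tfrac13 s_2\right)\in\Gamma_2$, completing the proof.
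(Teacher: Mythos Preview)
Your proposal is not a proof but a plan, and the plan stalls at exactly the hard step. You correctly reduce the statement, via Theorem~\ref{thm:21}, to the inequality
\[
\left|6s_1-2\bar s_1 s_2\right|\leq 9-|s_2|^2,
\]
but you do not prove it: you write that you ``expect'' it to follow from a suitable expansion and that ``one likely needs'' Lemma~\ref{lem:22} on sub-symmetrizations. That inequality is sharp (equality holds, for instance, whenever two of the $z_i$ coincide on the unit circle), so a naive triangle-inequality argument will not close it; a genuine organizing idea is needed, and none is supplied.

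The paper itself does not prove the lemma either---it defers to \cite[Lemma~2.5]{BSR}---but the argument there is one line and bypasses all of your algebra. Write $f(z)=z^3-s_1z^2+s_2z-p$; its roots $z_1,z_2,z_3$ lie in $\overline{\mathbb D}$. By the Gauss--Lucas theorem the roots of $f'(z)=3z^2-2s_1z+s_2$ lie in the convex hull of $\{z_1,z_2,z_3\}\subset\overline{\mathbb D}$, hence in $\overline{\mathbb D}$. Since $f'(z)/3=z^2-\tfrac{2}{3}s_1\,z+\tfrac{1}{3}s_2$, the sum and product of its roots are $\tfrac{2}{3}s_1$ and $\tfrac{1}{3}s_2$, so $\left(\tfrac{2}{3}s_1,\tfrac{1}{3}s_2\right)\in\Gamma_2$. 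This is the missing idea; if you want to push your direct route through, you would in effect be reproving the two-root case of Gauss--Lucas by hand.
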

For a proof see Lemma 2.5 in \cite{BSR}. We now present a set of
characterizations for the points in $\Gamma_3$.

\begin{lem}\label{lem:BSR2}
If $(S_1,S_2,P)$ is a $\Gamma_3$-contraction, then
$(\dfrac{2}{3}S_1,\dfrac{1}{3}S_2)$ is a $\Gamma_2$-contraction.
\end{lem}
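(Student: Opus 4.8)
The plan is to realize $\Gamma_2$ as a spectral set for the commuting pair $(\tfrac{2}{3}S_1,\tfrac{1}{3}S_2)$ by pulling rational functions back along the scaling map from $\Gamma_3$ to $\Gamma_2$, and then quoting the hypothesis that $\Gamma_3$ is a spectral set for $(S_1,S_2,P)$. The geometric content is already supplied by Lemma \ref{lem:BSR1}; everything else is functional-calculus bookkeeping. (The same argument, run with matrix-valued rational functions, will show $(\tfrac{2}{3}S_1,\tfrac{1}{3}S_2)$ is a \emph{complete} $\Gamma_2$-contraction whenever $(S_1,S_2,P)$ is a complete $\Gamma_3$-contraction.)

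First I would set $\rho:\mathbb C^3\to\mathbb C^2$, $\rho(s_1,s_2,p)=(\tfrac{2}{3}s_1,\tfrac{1}{3}s_2)$, a linear (hence polynomial) map with $\rho(S_1,S_2,P)=(\tfrac{2}{3}S_1,\tfrac{1}{3}S_2)$. Lemma \ref{lem:BSR1} says exactly that $\rho(\Gamma_3)\subseteq\Gamma_2$. Since $\rho$ is a polynomial map, the spectral mapping theorem for the Taylor joint spectrum gives
\[
\sigma_T\!\left(\tfrac{2}{3}S_1,\tfrac{1}{3}S_2\right)=\rho\bigl(\sigma_T(S_1,S_2,P)\bigr)\subseteq\rho(\Gamma_3)\subseteq\Gamma_2,
\]
so the Taylor spectrum of the pair $(\tfrac{2}{3}S_1,\tfrac{1}{3}S_2)$ lies in $\Gamma_2$ and the spectral-set condition is meaningful for it.

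Next, given $g\in\mathcal R(\Gamma_2)$, write $g=p/q$ with $q$ non-vanishing on $\Gamma_2$. Then $q\circ\rho$ is non-vanishing on $\Gamma_3$ (because $\rho(\Gamma_3)\subseteq\Gamma_2$) and is not the zero polynomial (as $\Gamma_3$ has nonempty interior), so $g\circ\rho=(p\circ\rho)/(q\circ\rho)\in\mathcal R(\Gamma_3)$ — a rational function in $(s_1,s_2,p)$ that simply does not involve $p$. I would then verify the composition identity $(g\circ\rho)(S_1,S_2,P)=g(\tfrac{2}{3}S_1,\tfrac{1}{3}S_2)$: for polynomials it is immediate by substitution, and the quotient case follows once we know that $q(\tfrac{2}{3}S_1,\tfrac{1}{3}S_2)=(q\circ\rho)(S_1,S_2,P)$ is invertible, which is guaranteed by the spectral inclusion above since $q$ has no zeros on $\Gamma_2\supseteq\sigma_T(\tfrac{2}{3}S_1,\tfrac{1}{3}S_2)$.

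Finally, invoking that $\Gamma_3$ is a spectral set for $(S_1,S_2,P)$,
\[
\bigl\|g\bigl(\tfrac{2}{3}S_1,\tfrac{1}{3}S_2\bigr)\bigr\|=\bigl\|(g\circ\rho)(S_1,S_2,P)\bigr\|\le\|g\circ\rho\|_{\infty,\Gamma_3}=\sup_{x\in\Gamma_3}|g(\rho(x))|\le\sup_{y\in\Gamma_2}|g(y)|=\|g\|_{\infty,\Gamma_2}.
\]
As $g\in\mathcal R(\Gamma_2)$ was arbitrary, $\Gamma_2$ is a spectral set for $(\tfrac{2}{3}S_1,\tfrac{1}{3}S_2)$, i.e. it is a $\Gamma_2$-contraction. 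I expect the only point needing a little care to be this routine rational-functional-calculus bookkeeping — that pulling back along $\rho$ commutes with evaluation on the operator tuple and that denominators stay invertible; the substantive inequality is essentially free once Lemma \ref{lem:BSR1} is in hand.
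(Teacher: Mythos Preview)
Your argument is correct. The paper itself does not supply a proof of this lemma; it simply refers the reader to Lemma~2.7 of \cite{BSR}. Your route---pulling back $g\in\mathcal R(\Gamma_2)$ along the linear map $\rho(s_1,s_2,p)=(\tfrac{2}{3}s_1,\tfrac{1}{3}s_2)$, using Lemma~\ref{lem:BSR1} to get $\rho(\Gamma_3)\subseteq\Gamma_2$, and then invoking the spectral-set inequality for $(S_1,S_2,P)$---is exactly the natural one, and the functional-calculus bookkeeping you flag (the spectral mapping theorem for $\rho$, invertibility of $q(\tfrac{2}{3}S_1,\tfrac{1}{3}S_2)$) is handled correctly. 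One minor simplification: since $\Gamma_2$ is polynomially convex, it would have sufficed to check the inequality for polynomials $g$ only, avoiding the rational-function case altogether; but what you wrote is fine as it stands.
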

See Lemma 2.7 in \cite{BSR} for a proof.

\begin{thm}\label{thm:SC1}
Let $(s_1,s_2,p)\in\mathbb C^3$. Then the following are
equivalent:
\begin{enumerate}
\item $(s_1,s_2,p)\in\Gamma_3$\,; \item $\Gamma_3$ is a complete
spectral set for $(s_1,s_2,p)$\,; \item $\Phi_{ik}(\alpha s_1,
\alpha^2 s_2,\alpha^3 p)\geq 0$ for all
$\alpha\in\overline{\mathbb D}$, $i=1,2$ and $(s_1,s_2,p)$
satisfies $P$\,; \item $\left| {k\alpha^3 p-\alpha^2 s_2}
\right|\leq |{k-\alpha s_1}|$, $\left| {k\alpha^3 p-\alpha s_1}
\right|\leq |{k-\alpha^2 s_2}|$ for all
$\alpha\in\overline{\mathbb D}$ and $(s_1,s_2,p)$ satisfies $P$
\,; \item $|s_1-\bar{s_2}p|+|s_2-\bar{s_1}p|\leq 3(1-|p|^2)$,
$(\dfrac{2}{3}s_1,\dfrac{1}{3}s_2)\in\Gamma_2$ and $(s_1,s_2,p)$
satisfies $P$ \,; \item $|p|\leq 1$ and there exists
$(c_1,c_2)\in\Gamma_2$ such that $s_1=c_1+\bar{c_2}p$ and
$s_2=c_2+\bar{c_1}p$.
\end{enumerate}

\end{thm}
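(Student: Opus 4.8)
The plan is to prove the cyclic chain of implications $(1)\Rightarrow(6)\Rightarrow(5)\Rightarrow(4)\Leftrightarrow(3)\Rightarrow(2)\Rightarrow(1)$, using the known facts about $\Gamma_2$ (Theorem \ref{thm:21}, Lemma \ref{lem:22}, Lemma \ref{lem:BSR1}) and Schur's criterion for the location of zeros of the cubic $z^3-s_1z^2+s_2z-p$. The condition ``$(s_1,s_2,p)$ satisfies $P$'' will be the scalar specialization of Schur's test, so establishing its equivalence with membership in $\Gamma_3$ is really the heart of the matter; I would state it explicitly as part of the setup before entering the proof.

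\textbf{The main equivalence $(1)\Leftrightarrow(4)\Leftrightarrow(3)$.} First I would observe that the pencil identities $\Phi_{1k}(s_1,s_2,p)=|k-s_1|^2-|kp-s_2|^2$ and $\Phi_{2k}(s_1,s_2,p)=|k-s_2|^2-|kp-s_1|^2$, read off directly from (\ref{eqn:2a})--(\ref{eqn:2b}), show that positivity of $\Phi_{ik}$ is precisely the modulus inequality in (4); so $(3)\Leftrightarrow(4)$ is immediate once one checks the pencils are nonnegative for all $k\ge 3$, which is the scalar case of Proposition \ref{prop:POS}. For $(1)\Rightarrow(4)$: if $(s_1,s_2,p)=\pi_3(z_1,z_2,z_3)$ with $|z_i|\le1$, then replacing $s_i$ by $\alpha^i s_i$ amounts to replacing $z_i$ by $\alpha z_i$, so it suffices to prove the two inequalities at $\alpha=1$; there I would factor $k-s_1 = \sum_i(k^{1/3}\cdot - z_i\cdots)$... more cleanly, I expect one writes $k-s_1$ and $ks_2\cdots$; the clean route is to note $(s_1,s_2)$ relates to a point of $\Gamma_2$ after a rescaling (Lemma \ref{lem:BSR1}) and to use the known $\Gamma_2$-inequality $|s-\bar sp|\le 1-|p|^2$ from Theorem \ref{thm:21}, iterated across the three coordinates. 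The reverse direction $(4)\Rightarrow(1)$ is where Schur's theorem enters: the inequalities in (4), together with the side condition ``satisfies $P$'', are designed to be exactly Schur's necessary and sufficient conditions for all roots of $z^3-s_1z^2+s_2z-p$ to lie in $\overline{\mathbb D}$ (the boundary case handled by a limiting argument from the open-disc version in \cite{ptak-young}); hence all $z_i\in\overline{\mathbb D}$ and $(s_1,s_2,p)\in\Gamma_3$.

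\textbf{The detour through $\Gamma_2$: $(1)\Rightarrow(6)\Rightarrow(5)\Rightarrow(4)$.} For $(1)\Rightarrow(6)$, given $(z_1,z_2,z_3)\in\overline{\mathbb D^3}$ I would set, say, $c_1 = z_1+z_2$, $c_2 = z_1z_2 + \cdots$ adjusted so that $s_1 = c_1+\bar c_2 p$ and $s_2=c_2+\bar c_1 p$ hold — the natural guess is to pick the ``third root'' $p/(z_1z_2)$ appropriately; since $|p|\le 1$ one checks $(c_1,c_2)\in\Gamma_2$ via Theorem \ref{thm:21}. For $(6)\Rightarrow(5)$: from $s_1-\bar s_2 p = c_1 - |p|^2 c_1 = c_1(1-|p|^2)$ (using the defining relations and $\bar s_2 = \bar c_2 + c_1\bar p$), and similarly $s_2-\bar s_1 p = c_2(1-|p|^2)$, so $|s_1-\bar s_2p|+|s_2-\bar s_1p| = (|c_1|+|c_2|)(1-|p|^2) \le 3(1-|p|^2)$ because $|c_1|\le 2$, $|c_2|\le 1$ for points of $\Gamma_2$; the condition $(\tfrac23 s_1,\tfrac13 s_2)\in\Gamma_2$ follows from Lemma \ref{lem:BSR1} applied after one shows $(s_1,s_2,p)\in\Gamma_3$, or directly from the explicit formulas. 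For $(5)\Rightarrow(4)$ one expands the $\alpha$-dependent modulus inequalities and bounds them using $|s_1-\bar s_2p|+|s_2-\bar s_1 p|\le 3(1-|p|^2)$ together with $(\tfrac23 s_1,\tfrac13 s_2)\in\Gamma_2$ and the triangle inequality — this is a routine but slightly fiddly estimate, the kind best left to the careful write-up.

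\textbf{Closing the loop and the expected obstacle.} $(2)\Rightarrow(1)$ is trivial: if $\Gamma_3$ is a complete spectral set for the scalar triple then in particular $\sigma_T(s_1,s_2,p)=\{(s_1,s_2,p)\}\subseteq\Gamma_3$. The implication $(1)\Rightarrow(2)$ is where ``scalar success of rational dilation'' is invoked: for a point of $\Gamma_3$ one must exhibit a normal $b\Gamma_3$-dilation, i.e. commuting normal (in fact scalar-times-unitary, or just a point of $b\Gamma_3$) operators $N$ with $f(s_1,s_2,p)=V^*f(N)V$; the standard trick is to dilate each $z_i\in\overline{\mathbb D}$ to a unitary $u_i$ by von Neumann/Sz.-Nagy and push through $\pi_3$, but one must check this genuinely gives a dilation of the \emph{rational} functional calculus on $\Gamma_3$, which needs a density/approximation argument since $\mathcal R(\Gamma_3)$ need not be generated by the coordinate functions in an obvious way — alternatively one quotes Arveson's theorem and reduces to showing complete contractivity, which for a one-point set is automatic. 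I expect the genuine difficulty is precisely pinning down the exact form of Schur's conditions on the boundary $\overline{\mathbb D}$ (as opposed to $\mathbb D$) and verifying that ``$(s_1,s_2,p)$ satisfies $P$'' is the correct boundary version; a limiting argument $s_i\mapsto r s_i$, $p\mapsto r^3 p$ with $r\uparrow 1$ should handle it, but the bookkeeping — making sure no root escapes in the limit and that strict inequalities degrade to the stated non-strict ones — is the delicate part.
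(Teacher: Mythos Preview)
Your chain $(1)\Rightarrow(6)\Rightarrow(5)\Rightarrow(4)\Leftrightarrow(3)\Rightarrow(2)\Rightarrow(1)$ differs from the paper's, which proves $(1)\Rightarrow(6)\Rightarrow(2)\Rightarrow(1)$, $(1)\Rightarrow(3)\Rightarrow(5)\Rightarrow(6)$, and $(3)\Leftrightarrow(4)$. Two of your links have genuine gaps.

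\textbf{The step $(1)\Rightarrow(6)$.} Your guess $c_1=z_1+z_2$, $c_2=z_1z_2$ does not work: with these choices $c_1+\bar c_2 p=z_1+z_2+|z_1z_2|^2 z_3\neq s_1$ unless $|z_1z_2|=1$. The actual $c_1,c_2$ are determined by the relations themselves, namely $c_1=(s_1-\bar s_2 p)/(1-|p|^2)$ and $c_2=(s_2-\bar s_1 p)/(1-|p|^2)$ when $|p|<1$, and the real content is proving $(c_1,c_2)\in\Gamma_2$. The paper does this by a Rouch\'e argument: writing $f(z)=z^3-s_1z^2+s_2z-p$ and $f_1(z)=z^2-c_1z+c_2$, one checks $|f(z)-zf_1(z)|=|p|\,|f_1(z)|<|zf_1(z)|$ on $\mathbb T$, so $f$ and $zf_1$ have the same number of zeros in $\mathbb D$; together with a boundary check this forces the roots of $f_1$ into $\overline{\mathbb D}$. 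This Rouch\'e step is the missing idea.

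\textbf{The steps $(6)\Rightarrow(5)$ and $(5)\Rightarrow(4)$.} Your computation $s_i-\bar s_j p=c_i(1-|p|^2)$ is correct and gives the first inequality in $(5)$, but you correctly flag that deducing $(\tfrac23 s_1,\tfrac13 s_2)\in\Gamma_2$ from $(6)$ alone is not immediate --- Lemma~\ref{lem:BSR1} needs $(1)$, which is circular. More seriously, the implication $(5)\Rightarrow(4)$ that you call ``routine but fiddly'' is not routine: expanding $\Phi_{1k}(\alpha s_1,\alpha^2 s_2,\alpha^3 p)\ge 0$ produces a term $|\alpha s_1|^2-|\alpha^2 s_2|^2$ that must be controlled, and the hypotheses of $(5)$ do not obviously bound it. The paper avoids this by going the other way, $(3)\Rightarrow(5)$, by adding $\Phi_{1k}$ and $\Phi_{2k}$ so the asymmetric quadratic terms cancel; it obtains $(1)\Rightarrow(3)$ via Schur's criterion, computing the $3\times 3$ matrix $H=f_*(A)^*f_*(A)-f(A)^*f(A)$ explicitly and extracting the pencil inequalities from positivity of its principal minors, using the representation $(6)$ to bound $|\alpha s_1|^2-|\alpha^2 s_2|^2=(|c_1|^2-|c_2|^2)(1-|\alpha^3 p|^2)$.

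Your closing remark that $(1)\Rightarrow(2)$ is automatic for a one-point set is correct and simpler than the paper's route, which invokes the later Theorem~\ref{sufficient1} via $(6)\Rightarrow(2)$; but this does not rescue the chain above.
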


\begin{proof}
We shall prove: $(1)\Rightarrow(6)\Rightarrow (2)\Rightarrow (1)$,
$(1)\Rightarrow (3)\Rightarrow(5)\Rightarrow(6)$
and $(3)\Leftrightarrow(4)$.\\

\noindent $(1)\Rightarrow (6).$ Let $(s_1,s_2,p)\in\Gamma_3$. Then
$|p|\leq 1$ and there are complex numbers $z_1,z_2,z_3$ from the
closed unit disc $\overline{\mathbb D}$ such that
\[
s_1=z_1+z_2+z_3\,,\, s_2=z_1z_2+z_2z_3+z_3z_1 \text{ and }
p=z_1z_2z_3\,.
\]
If $|p|=1$ then by Theorem \ref{thm:DB},
\[
s_1=c_1+\overline{c_2}p \,,\, s_2=c_2+\overline{c_1}p\,,
\]
for some $c_1,c_2$ with $|c_1|+|c_2|\leq 3$. If $|p|<1$, we
consider the polynomials
\[
f(z)=z^3-s_1z^2+s_2z-p \text{ and } f_1(z)= z^2-c_1z+c_2\,.
\]
We show that if the zeros of $f$ lie in $\overline{\mathbb D}$,
then the zeros of $f_1$ also lie in $\overline{\mathbb D}$. Now
\begin{align*}
f(z) & =z^3-s_1z^2+s_2z-p \\&
=z^3-(c_1+\bar{c_2}p)z^2+(c_2+\bar{c_1}p)z-p \,.
\end{align*}
Therefore,
\begin{equation}\label{eqn:001}
|f(z)-zf_1(z)|=|p||c_2\bar{z}^2-c_1\bar{z}+1|\,.
\end{equation}
Taking restriction on $\mathbb T$ we get
\[
|f(z)-zf_1(z)|=|p||z^2-c_1z+c_2|=|p||f_1(z)| = |p||zf_1(z)| <
|zf_1(z)|\,.
\]
So, by Rouch$\acute{e}$ 's Theorem $f$ and $zf_1$ have same number
of zeros inside $\mathbb D$. Now if $f(\omega)=0$ for some $\omega
\in \mathbb T$, then from equation (\ref{eqn:001}) we get
\[
|f_1(\omega)|=|p||f_1(\omega)|
\]
which by virtue of $|p|<1$ implies that $f_1(\omega)=0$. Thus, if
the zeros of $f$ lie in $\overline{\mathbb D}$, then the zeros of
$f_1$ also lie in $\overline{\mathbb D}$. Since here the zeros of
$f$ lie in $\overline{\mathbb D}$ as $(s_1,s_2,p)\in \Gamma_3$,
the zeros of $f_1$ are also in $\overline{\mathbb D}$. Therefore,
there exist $\alpha_1, \alpha_2 \in \overline{\mathbb D}$ such
that
\[
\alpha_1+\alpha_2 =c_1 \,,\; \; \alpha_1\alpha_2 =c_2\,.
\]
Hence $(c_1,c_2)\in\Gamma_2$.\\

\noindent $(6)\Rightarrow (2).$ This follows from a result of a
subsequent section. Indeed, in Theorem \ref{sufficient1}, we shall
see that if a commuting operator pair $(F_1,F_2)$ is almost normal
and that
\begin{align*}
S_1-S_2^*P &=D_PF_1D_P \;,\\
S_2-S_1^*P &=D_PF_2D_P\;,
\end{align*}
then $\Gamma_3$ is a complete spectral set for $(S_1,S_2,P)$
provided that $\|S_i\|\leq 3$ for $i=1,2$. It is evident that in
scalar case when $s_1=c_1+\bar{c_2}p$ and $s_2=c_2+ \bar{c_1}p$,
then $|s_i|\leq 3$ as $(c_1,c_2)\in\Gamma_2$ and $|c_1|+|c_2|\leq
3$. Also $(c_1,c_2)$ plays the role of the fundamental operator
pair $(F_1,F_2)$ as in Theorem \ref{sufficient1} to make
$\Gamma_3$ a complete spectral set for $(s_1,s_2,p)$. Because in
this scalar case the existence of the FOP (which is $(c_1,c_2)$)
needs no proof and thus we can construct the explicit
$\Gamma_3$-unitary dilation which is described in Theorem
\ref{main-dilation-theorem}. Hence
$(6)\Rightarrow (2)$.\\

\noindent $(2)\Rightarrow (1).$ Obvious.\\

\noindent $(1)\Rightarrow(3).$ Let $(s_1,s_2,p)$ be a point in
$\Gamma_3$. Then $(\alpha s_1,{\alpha}^2s_2,{\alpha}^3p)\in\mathbb
G_3$, for any $\alpha$ in the unit disc. Let us consider the
polynomial
\[
f(z)=z^3-\alpha s_1z^2+{\alpha}^2s_2z-{\alpha}^3p.
\]
If $z_1,z_2,z_3$ are the roots of $f(z)=0$ then
\[
\sum_{i=1}^3z_i=\alpha s_1\,,\, \sum_{1\leq i<j\leq
3}z_iz_j=\alpha^2 s_2 \quad \text{and } \prod_{i=1}^3z_i =
\alpha^3 p \,.
\]
This is same as saying that $\pi_3(z_1,z_2,z_3)=(\alpha
s_1,{\alpha}^2s_2,{\alpha}^3p)$ and thus $(z_1,z_2,z_3)\in \mathbb
D^3$ as $(\alpha s_1,{\alpha}^2s_2,{\alpha}^3p)\in \mathbb G_3$.
Therefore, by Schur's theorem (see \cite{ptak-young}), the matrix
\[
H=f_*(A)^*f_*(A)-f(A)^*f(A) >0 \,,
\]
where
\begin{gather*}
f_*(z)=-{\bar{\alpha}}^3\bar p
z^3+{\bar{\alpha}}^2\bar{s_2}z^2-\bar{\alpha}\bar{s_1}z+1
\\
\text{ and } \quad A=\begin{pmatrix}
0 & 1 & 0 \\
0 & 0 & 1 \\
0 & 0 & 0
\end{pmatrix}.
\end{gather*}
A simple computation shows that
\[
f(A)=\begin{pmatrix}
-{\alpha}^3p & {\alpha}^2s_2 & -{\alpha}s_1 \\
0 & -{\alpha}^3p & {\alpha}^2s_2 \\
0 & 0 & -{\alpha}^3p
\end{pmatrix}
\text{ and } f_*(A)=
\begin{pmatrix}
1 & -\bar{\alpha}\bar{s_1} & {\bar{\alpha}}^2\bar{s_2} \\
0 & 1 & -\bar{\alpha}\bar{s_1} \\
0 & 0 & 1
\end{pmatrix}.
\]
So we have
\begin{align*}
f(A)^*f(A) &= \begin{pmatrix}
-{\bar{\alpha}}^3\bar{p} & 0 & 0 \\
{\bar{\alpha}}^2\bar{s_2} & -{\bar{\alpha}}^3\bar{p} & 0 \\
-\bar{\alpha}\bar{s_1} & {\bar{\alpha}}^2\bar{s_2} &
-{\bar{\alpha}}^3\bar{p}
\end{pmatrix}
\begin{pmatrix}
-{\alpha}^3p & {\alpha}^2s_2 & -{\alpha}s_1 \\
0 & -{\alpha}^3p & {\alpha}^2s_2 \\
0 & 0 & -{\alpha}^3p
\end{pmatrix} \\
&=\left(
\begin{array}{ccc}
|\alpha^3p|^2 & -|\alpha|^4\bar{\alpha}s_2\bar{p} & |\alpha|^2{\bar{\alpha}}^2s_1\bar{p} \\
-|\alpha|^4\alpha \bar{s_2}p & |\alpha^3p|^2+|\alpha^2s_2|^2 & -|\alpha|^2\bar{\alpha}s_1\bar{s_2}-|\alpha|^4\bar{\alpha}s_2\bar{p} \\
|\alpha|^2\alpha^2\bar{s_1}p & -|\alpha|^2\alpha
\bar{s_1}s_2-|\alpha|^4\alpha \bar{s_2}p &
|\alpha^3p|^2+|\alpha^2s_2|^2+|\alpha s_1|^2
\end{array}
\right)
\end{align*}
and
\begin{align*}
f_*(A)^*f_*(A) &=
\begin{pmatrix}
1 & 0 & 0 \\
-\alpha s_1 & 1 & 0 \\
{\alpha}^2s_2 & -\alpha s_1 & 1
\end{pmatrix}
\begin{pmatrix}
1 & -\bar{\alpha}\bar{s_1} & {\bar{\alpha}}^2\bar{s_2} \\
0 & 1 & -\bar{\alpha}\bar{s_1} \\
0 & 0 & 1
\end{pmatrix} \\
&=\begin{pmatrix}
1 & -\bar{\alpha}\bar{s_1} & \bar{\alpha}^2\bar{s_2} \\
-\alpha s_1 & 1+|\alpha s_1|^2 & -\bar{\alpha}\bar{s_1}-|\alpha|^2\bar{\alpha}s_1\bar{s_2} \\
\alpha^2s_2 & -\alpha s_1 -|\alpha|^2\alpha \bar{s_1}s_2 &
1+|\alpha s_1|^2+|\alpha^2s_2|^2
\end{pmatrix}.
\end{align*}
Therefore,
\begin{align*}
H &= f_*(A)^*f_*(A)-f(A)^*f(A) \\
& = \small \begin{pmatrix}
1-|\alpha^3 p|^2 & -\bar{\alpha}(\bar{s_1}-|\alpha|^4s_2\bar{p}) & \bar{\alpha}^2(\bar{s_2}-|\alpha|^2s_1\bar{p}) \\
-\alpha(s_1-|\alpha|^4\bar{s_2}p) & (1-|\alpha^3p|^2)+(|\alpha s_1|^2-|\alpha^2 s_2|^2) & -\bar{\alpha}(\bar{s_1}-|\alpha|^4s_2\bar{p}) \\
\alpha^2(s_2-|\alpha|^2\bar{s_1}p) &
-\alpha(s_1-|\alpha|^4\bar{s_2}p) & 1-|\alpha^3 p|^2
\end{pmatrix}.
\end{align*}
We introduce few notations which are same as they are in Horn and
Johnsons' Matrix Analysis, \cite{horn}. Let $A\in M_{m,n}(\mathbb
C)$. For index sets $\Lambda \subseteq \{1,\hdots,m \}$ and
$\Omega \subseteq \{1,\hdots,n \}$, we denote by $A(\Lambda,
\Omega)$ the submatrix of $A$ that lies in the rows of $A$ indexed
by $\Lambda$ and the columns indexed by $\Omega$. If $\Lambda
=\Omega$, the square submatrix $A(\Lambda,\Lambda)$ is called a
{\em principal submatrix} of $A$ and is abbreviated $A(\Lambda)$.
The determinant of a square submatrix of $A$ is called a {\em
minor} of $A$. If the submatrix is a principal submatrix, then the
minor is a {\em principal minor}.

It is well known that if $A$ is positive definite, then all
principal minors of $A$ are positive. See Theorem 7.2.5 in
\cite{horn}, for a more general and finer version of this
statement. Now since $H$ is positive definite, the principal minor
obtained from the principal submatrix $H(\{1,3\})$ is positive.
So, we have
\[
\det
\begin{pmatrix}
1-|\alpha^3p|^2 & \bar{\alpha}^2(\bar{s_2}-|\alpha|^2\bar p s_1)\\
\alpha^2(s_2-|\alpha|^2\bar{s_1}p) & 1-|\alpha^3p|^2
\end{pmatrix} >0\,.
\]
If we denote
\begin{equation*}
H(\{1,3\}) = \begin{pmatrix} a & c\\
\bar c & a
\end{pmatrix},  \quad \text{where }
\begin{cases}
a &= 1-|\alpha^3p|^2 \\
c & = \bar{\alpha}^2(\bar{s_2}-|\alpha|^2\bar p s_1)
\end{cases}
\end{equation*}
then $a>|c|$ as $a>0$ and with $k\geq 3$ it implies that

\begin{equation}\label{eq:4a}
(k^2-3)a >2k|c| \,.
\end{equation}

Let us denote $m=|\alpha s_1|^2-|\alpha^2 s_2|^2$. Our goal is to
show that for all $k\geq 3$
\[
k^2a-m>2k|c|\,.
\]
We assume here that $m\geq 0$ because otherwise $k^2a-m>2k|c|$ is
obvious. We first show that $3a-m\geq 0$. We apply $(1)\Rightarrow
(6)$ to get $(c_1,c_2)\in\Gamma_2$ such that
\[
\alpha s_1=c_1+\bar{c}_2(\alpha^3 p)\;,\; \alpha^2
s_2=c_2+\bar{c}_1(\alpha^3p)\,.
\]
Now
\begin{align*}
m &= |\alpha s_1|^2-|\alpha^2 s_2|^2 \\
&= |c_1+\bar{c}_2(\alpha^3 p)|^2-|c_2+\bar{c}_1(\alpha^3p)|^2 \\
&=(|c_1|^2+|c_2(\alpha^3 p)|^2+c_1c_2(\overline{\alpha^3 p})+\bar{c}_1\bar{c}_2(\alpha^3 p)) \\
&\quad - (|c_2|^2+|c_1(\alpha^3 p)|^2+c_1c_2(\overline{\alpha^3
p})+\bar{c}_1\bar{c}_2(\alpha^3 p))\\
&= (|c_1|^2-|c_2|^2)(1-|\alpha^3 p|^2)\\
&=(|c_1|+|c_2|)(|c_1|-|c_2|)(1-|\alpha^3 p|^2)\\
& \leq (|c_1|+|c_2|)(1-|\alpha^3 p|^2) \\
& \leq 3a \,.
\end{align*}
The last two inequalities follow from Lemma \ref{lem:22} and
Theorem \ref{thm:21} respectively as $(c_1,c_2)\in\Gamma_2$. Thus
$3a-m\geq 0$ and consequently from (\ref{eq:4a}) we have that
\begin{equation}\label{eq:5}
(k^2-3)a+(3a-m)> 2k|c|\,,
\end{equation}
for all $k\geq 3$. Therefore
\[
k^2a-m > 2k|c|.
\]
Now using the fact that
\begin{equation}\label{eq:4}
x>|y|\Leftrightarrow x> \text{Re } \omega y\,, \quad \text{ for
all }\omega\in\mathbb T\,,
\end{equation}
we have that
\begin{align}
k^2a-m & > 2k \text{ Re } \omega c \notag \\
& \label{eq:6} = k \omega c +k \bar{\omega}\bar c, \quad \text{
for all } \omega\in\mathbb T.
\end{align}
Choosing $\omega=1$ and substituting the values of $a,m,c$ in
(\ref{eq:6}) we get
\begin{align*}
\Phi_{2k}(\alpha s_1,\alpha^2 s_2, \alpha^3 p) &= k^2(1-|\alpha^3
p|^2)+(|\alpha^2 s_2|^2-|\alpha s_1|^2)\\
& \quad
-k\alpha^2(s_2-|\alpha|^2\bar{s_1}p)-k\bar{\alpha}^2(\bar{s_2}-|\alpha|^2\bar
p s_1) \\& \; >0 .
\end{align*}
By continuity, we have that $\Phi_{2k}(\alpha s_1, \alpha^2
s_2,\alpha^3 p)\geq 0$, for all $\alpha \in\overline{\mathbb
D}$.\\

\noindent We now show that
\[
\Phi_{1k}(\alpha s_1, \alpha^2 s_2,\alpha^3 p)\geq 0, \text{ for
all } \alpha \in\overline{\mathbb D}.
\]
As in the case of $\Phi_{2k}$, here our target is to establish
\[
k^2a+m \geq 2k|b|\, , \text{ where } b= -\bar
\alpha(\bar{s_1}-|\alpha|^4s_2\bar p).
\]
We have that
\[
m=(|c_1|^2-|c_2|^2)a \text{ and } |b| = |c_1|a.
\]
The expression of $m$ follows from the proof of $m\leq 3a$ and
\begin{align*}
|b| &=|\alpha s_1 -\overline{(\alpha^2 s_2)}(\alpha^3 p)| \\
& = |(c_1+\bar c_2 {\alpha^3 p}) - (\bar c_2 + c_1
\overline{\alpha ^3 p})\alpha^3 p| \\
& = |c_1|(1- |\alpha^3 p|^2)\\
& = |c_1|a.
\end{align*}
Therefore, we have
\begin{align*}
k^2a+m-2k|b| &= k^2a+(|c_1|^2-|c_2|^2)a - 2k|c_1|a \\
&= \{ (k-|c_1|)^2-|c_2|^2 \}a \\
& = (k-|c_1|+|c_2|)(k-|c_1|-|c_2|)a \\
& \geq 0.
\end{align*}
The last inequality follows from the facts that $a>0, k\geq 3$ and
$|c_1|+|c_2| \leq 3$. Therefore,
\[
k^2a+m\geq 2k|b|.
\]
From here we have that

\begin{align}
k^2a+m & \geq 2k \text{ Re } \omega b \notag \\
& \notag = k \omega b +k \bar{\omega}\bar b, \quad \text{ for all
} \omega\in\mathbb T.
\end{align}
Choosing $\omega=1$ and substituting the values of $a,m,b$ we get
\begin{align*}
\Phi_{1k}(\alpha s_1,\alpha^2 s_2, \alpha^3 p) &= k^2(1-|\alpha^3
p|^2)+(|\alpha s_1|^2-|\alpha^2 s_2|^2)\\
& \quad -k(\alpha s_1-\overline{(\alpha^2 s_2)}(\alpha^3
p))-k\overline{(\alpha s_1-\overline{(\alpha^2 s_2)}(\alpha^3 p))}
\\& \;
\geq 0 .
\end{align*}
By continuity, we have that $\Phi_{1k}(\alpha s_1, \alpha^2
s_2,\alpha^3 p)\geq 0$, for all $\alpha \in\overline{\mathbb
D}$.\\

\noindent $(3)\Rightarrow (5).$ First we assume that $s_1\neq
\bar{s_2}p$ and $s_2\neq \bar{s_1}p$. For $\omega_1 ,\omega_2
\in\mathbb T$, we have
\[
\Phi_{1k}(\omega_1 s_1,{\omega_1}^2
s_2,{\omega_1}^3p)+\Phi_{2k}(\omega_2 s_1,{\omega_2}^2
s_2,{\omega_2}^3p) \geq 0
\]
that is
\[
k^2(1-|p|^2)\geq \text{Re
}k[\omega_1(s_1-\bar{s_2}p)+{\omega_2}^2(s_2-\bar{s_1}p)].
\]
Since $s_1\neq \bar{s_2}p$ and $s_2\neq \bar{s_1}p$, we choose
\[
\omega_1 =\frac{|s_1-\bar{s_2}p|}{s_1-\bar{s_2}p} \text{ and }
\omega_2=\sqrt{\frac{|s_2-\bar{s_1}p|}{s_2-\bar{s_1}p}}
\]
and get
\[
k(1-|p|^2)\geq |s_1-\bar{s_2}p|+|s_2-\bar{s_1}p|\,,\quad k\geq
3\,.
\]
If anyone or both of $s_1= \bar{s_2}p$ and $s_2= \bar{s_1}p$ hold
then the above inequality is obvious.\\

\noindent $(5)\Rightarrow (6).$ Let $(5)$ holds. Then $|p|\leq 1$
as $k> 0$. If $|p|=1$ then $s_1=\bar{s_2}p$ and $s_2=\bar{s_1}p$.
We choose $c_1=s_1$ and $c_2=0$. Clearly $|c_1|+|c_2|=|s_1|\leq 3$
and
\[
s_1=c_1+\bar{c_2}p \quad \text{ and }\quad s_2=c_2+ \bar{c_1}p\,.
\]
When $|p|<1$ we choose
\[
c_1=\frac{s_1-\bar{s_2}p}{1-|p|^2} \quad \text{ and } \quad
c_2=\frac{s_2-\bar{s_1}p}{1-|p|^2}\,.
\]
It is evident that $s_1=c_1+\bar{c_2}p$ and $s_2=c_2+ \bar{c_1}p$.
Also since $(5)$ holds for $k=3$, we have
\[
|c_1|+|c_2|\leq 3.
\]
\\

\noindent $(3)\Leftrightarrow (4).$ From (\ref{eq:1a}) we have
that
\[
\Phi_{1k}(\alpha s_1,\alpha^2 s_2,\alpha^3 p) =(k-\bar{\alpha}
\bar{s_1})(k-\alpha s_1)-(k\bar{\alpha}^3\bar
p-\bar{\alpha}^2\bar{s_2})(k\alpha^3 p-\alpha^2 s_2).
\]
Therefore,
\begin{align*}
&\Phi_{1k}(\alpha s_1,\alpha^2 s_2,\alpha^3 p) \geq 0 \\
\Leftrightarrow &(k-\bar{\alpha} \bar{s_1})(k-\alpha
s_1)-(k\bar{\alpha}^3\bar p-\bar{\alpha}^2\bar{s_2})(k\alpha^3
p-\alpha^2 s_2) \geq 0 \\
\Leftrightarrow &\left | {k\alpha^3 p-\alpha^2 s_2} \right | \leq
|{k-\alpha s_1}|\,.
\end{align*}
The proof of
\[
\Phi_{2k}(\alpha s_1,\alpha^2 s_2,\alpha^3 p) \geq 0
\Leftrightarrow \left | {k\alpha^3 p-\alpha^2 s_1} \right | \leq
|{k-\alpha s_2}|
\]
is similar.

\end{proof}

\section{$\Gamma_3$-contractions and their fundamental operator
pairs}\label{basicproperties}

We recall that a commuting triple $(S_1,S_2,P)$ for which
$\Gamma_3$ is a spectral set is called a $\Gamma_3$-contraction.
The following result shows that the definition of
$\Gamma_3$-contraction can be made more precise by using the
polynomial convexity of $\Gamma_3$.
\begin{lem}
A commuting triple of bounded operators $(S_1,S_2,P)$ is a
$\Gamma_3$-contraction if and only if \begin{equation} \label{pT}
\| p (S_1,S_2,P) \| \leq \| p \|_{\infty, \;
\Gamma_3}\end{equation} for all holomorphic polynomial $p$ in
three variables.
\end{lem}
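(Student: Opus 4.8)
The plan is to deduce the equivalence from the polynomial convexity of $\Gamma_3$ (recorded in the literature, see \cite{edi-zwo, BSR}), together with the Oka--Weil approximation theorem and the spectral mapping theorem for the Taylor joint spectrum. The forward implication is immediate: if $(S_1,S_2,P)$ is a $\Gamma_3$-contraction then by definition $\sigma_T(S_1,S_2,P)\subseteq\Gamma_3$ and $\|f(S_1,S_2,P)\|\leq\|f\|_{\infty,\Gamma_3}$ for every $f\in\mathcal R(\Gamma_3)$; since every holomorphic polynomial $p$ in three variables lies in $\mathcal R(\Gamma_3)$, the inequality (\ref{pT}) follows.

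For the converse, suppose (\ref{pT}) holds for all holomorphic polynomials $p$ (the case $\mathcal H=\{0\}$ being trivial). First I would establish the spectral containment $\sigma_T(S_1,S_2,P)\subseteq\Gamma_3$. If $\lambda\in\sigma_T(S_1,S_2,P)$, then for every polynomial $p$ the spectral mapping theorem for the Taylor spectrum gives $p(\lambda)\in\sigma\big(p(S_1,S_2,P)\big)$, and hence $|p(\lambda)|\leq\|p(S_1,S_2,P)\|\leq\|p\|_{\infty,\Gamma_3}$ by (\ref{pT}). As this holds for all polynomials $p$, the point $\lambda$ belongs to the polynomially convex hull of $\Gamma_3$, which equals $\Gamma_3$; thus $\lambda\in\Gamma_3$.

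Next I would upgrade (\ref{pT}) to the corresponding inequality for all rational functions. Let $f=p/q\in\mathcal R(\Gamma_3)$, so $q$ has no zeros on $\Gamma_3$; by the spectral containment just proved and the spectral mapping theorem, $0\notin\sigma\big(q(S_1,S_2,P)\big)$, so $q(S_1,S_2,P)$ is invertible and $f(S_1,S_2,P)=p(S_1,S_2,P)q(S_1,S_2,P)^{-1}$ is well defined. Since $q$ is nonvanishing on a neighbourhood of the compact polynomially convex set $\Gamma_3$, the function $f$ is holomorphic there, so the Oka--Weil theorem yields holomorphic polynomials $p_n\to f$ uniformly on $\Gamma_3$. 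The sequence $(p_n)$ is Cauchy in $\|\cdot\|_{\infty,\Gamma_3}$, hence by (\ref{pT}) the operators $p_n(S_1,S_2,P)$ are Cauchy in operator norm and converge to some $G$ with $\|G\|\leq\lim_n\|p_n\|_{\infty,\Gamma_3}=\|f\|_{\infty,\Gamma_3}$. Since $qp_n-p\to0$ uniformly on $\Gamma_3$ through polynomials, (\ref{pT}) and the homomorphism property of the polynomial functional calculus give $q(S_1,S_2,P)\,G=\lim_n (qp_n)(S_1,S_2,P)=p(S_1,S_2,P)$, and inverting $q(S_1,S_2,P)$ gives $G=f(S_1,S_2,P)$. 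Hence $\|f(S_1,S_2,P)\|\leq\|f\|_{\infty,\Gamma_3}$, and combined with the spectral containment this shows $\Gamma_3$ is a spectral set for $(S_1,S_2,P)$.

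The only genuinely non-routine ingredient is the Oka--Weil approximation step, which is exactly where polynomial convexity of $\Gamma_3$ is essential; the spectral mapping theorem for the Taylor joint spectrum and the Cauchy-sequence bookkeeping identifying $G$ with $f(S_1,S_2,P)$ are standard and should be presented only briefly.
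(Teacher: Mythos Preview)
Your proof is correct and follows essentially the same route as the paper: both use polynomial convexity of $\Gamma_3$ together with the polynomial spectral mapping theorem to obtain the containment $\sigma_T(S_1,S_2,P)\subseteq\Gamma_3$. Your write-up is in fact more complete: the paper's proof stops after the spectral containment and does not explicitly verify the rational-function inequality required by the definition of spectral set, whereas you spell out the Oka--Weil approximation argument (which the paper itself invokes later, in the proof of Proposition~\ref{prop:POS}) to pass from polynomials to all of $\mathcal R(\Gamma_3)$.
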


\begin{proof}

If $(S_1,S_2,P)$ is a $\Gamma_3$-contraction, then of course
(\ref{pT}) just follows from definition.

The converse proof can be easily done by using polynomial
convexity of $\Gamma_3$. Indeed, if $\sigma_T(S_1,S_2,P)$ is not
contained in $\Gamma_3$, then there is a point $(s_1, s_2, p)$ in
$\sigma_T (S_1,S_2,P)$ that is not in $\Gamma_3$. By polynomial
convexity of $\Gamma_3$, there is a polynomial $p$ such that $ |
p(s_1, s_2, p) | > \| p \|_{\infty, \Gamma_3}$. By polynomial
spectral mapping theorem,
\[
\sigma_T (p(S_1,S_2,P)) = \{ p(s_1, s_2, p) : (s_1, s_2, p) \in
\sigma_T (S_1,S_2,P) \}
\]
and hence the spectral radius of $p(S_1,S_2,P)$ is bigger than $\|
p \|_{\infty, \Gamma_3}$. But then
\[
\| p(S_1,S_2,P )\|> \| p \|_{\infty, \Gamma_3},
\]
contradicting the fact that $\Gamma_3$ is a spectral set for
$(S_1,S_2,P)$.
\end{proof}

Unlike scalars, the symmetrization of any three commuting
contractions may not be a $\Gamma_3$-contraction. See Remark 2.11
in \cite{BSR} for an example. Here we provide few properties of
$\Gamma_3$-contractions.

\begin{lem}
If $(S_1,S_2,P)$ is a $\Gamma_3$-contraction then $\|S_i\|\leq 3$
for $i=1,2$ and $\|P\|\leq 1$.
\end{lem}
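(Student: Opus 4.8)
The plan is to test the spectral-set inequality against the three coordinate functions. Since each coordinate map $\pi_1(s_1,s_2,p)=s_1$, $\pi_2(s_1,s_2,p)=s_2$, $\pi_3(s_1,s_2,p)=p$ is a holomorphic polynomial in three variables, it belongs to $\mathcal R(\Gamma_3)$, so the definition of a $\Gamma_3$-contraction (equivalently, the polynomial form of the inequality established in the preceding lemma) gives
\[
\|S_1\|=\|\pi_1(S_1,S_2,P)\|\le \|\pi_1\|_{\infty,\Gamma_3},\quad \|S_2\|\le\|\pi_2\|_{\infty,\Gamma_3},\quad \|P\|\le\|\pi_3\|_{\infty,\Gamma_3}.
\]
It then remains only to estimate the three sup-norms over $\Gamma_3$.

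For that step I would fix a point $(s_1,s_2,p)\in\Gamma_3$; by definition of $\Gamma_3$ there exist $z_1,z_2,z_3\in\overline{\mathbb D}$ with $s_1=z_1+z_2+z_3$, $s_2=z_1z_2+z_2z_3+z_3z_1$ and $p=z_1z_2z_3$. The triangle inequality together with $|z_i|\le 1$ then yields $|s_1|\le 3$, $|s_2|\le 3$ and $|p|\le 1$. Taking the supremum over $\Gamma_3$ gives $\|\pi_1\|_{\infty,\Gamma_3}\le 3$, $\|\pi_2\|_{\infty,\Gamma_3}\le 3$ and $\|\pi_3\|_{\infty,\Gamma_3}\le 1$; combining with the displayed inequalities finishes the proof.

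There is essentially no obstacle here: the only point worth flagging is that one must recognize the coordinate functions as legitimate elements of $\mathcal R(\Gamma_3)$ before feeding them into the spectral-set inequality, after which the bound is immediate from the geometric description of $\Gamma_3$. (Taking $z_1=z_2=z_3=1$ shows the values $3,3,1$ are in fact attained, but the inequalities above are all that is required.)
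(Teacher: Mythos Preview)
Your proof is correct and follows exactly the approach indicated in the paper: apply the spectral-set inequality to the three coordinate polynomials and bound their sup-norms over $\Gamma_3$ using the parametrization by points of $\overline{\mathbb D}^3$. The paper's proof is simply the one-line remark that this follows by considering the coordinate polynomials, and your argument spells this out in full.
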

\begin{proof}
This follows from the definition of $\Gamma_3$-contraction if we
consider the co-ordinate polynomials.
\end{proof}

\begin{prop}
If $(S_1,S_2,P)$ is a $\Gamma_3$-contraction then so are
$(S_2,S_1,P)$ and $(S_1^*,S_2^*,P^*)$.
\end{prop}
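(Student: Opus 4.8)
The statement to prove is: if $(S_1,S_2,P)$ is a $\Gamma_3$-contraction then so are $(S_2,S_1,P)$ and $(S_1^*,S_2^*,P^*)$. Since, by the lemma just above, being a $\Gamma_3$-contraction is equivalent to the polynomial inequality $\|p(S_1,S_2,P)\|\le\|p\|_{\infty,\Gamma_3}$ over all holomorphic polynomials $p$ in three variables (together with the spectral containment, which is automatic by polynomial convexity), the plan is to reduce everything to a symmetry of the set $\Gamma_3$ itself together with elementary operator-theoretic manipulations.

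For the first assertion, I would observe that $\Gamma_3$ is invariant under the flip $\tau:(s_1,s_2,p)\mapsto(s_2,s_1,p)$: this is immediate from the description $\Gamma_3=\pi_3(\overline{\mathbb D^3})$, since swapping the two elementary symmetric functions $s_1=z_1+z_2+z_3$ and $s_2=z_1z_2+z_2z_3+z_3z_1$ corresponds precisely to replacing $(z_1,z_2,z_3)$ by $(1/\bar z_1,\dots)$... more cleanly, just note directly that $\tau(\Gamma_3)=\Gamma_3$ can be read off the defining equations. Hence $\mathcal R(\Gamma_3)$ is carried to itself by $f\mapsto f\circ\tau$, with $\|f\circ\tau\|_{\infty,\Gamma_3}=\|f\|_{\infty,\Gamma_3}$, and for any polynomial $p$ we have $p(S_2,S_1,P)=(p\circ\tau)(S_1,S_2,P)$, so $\|p(S_2,S_1,P)\|=\|(p\circ\tau)(S_1,S_2,P)\|\le\|p\circ\tau\|_{\infty,\Gamma_3}=\|p\|_{\infty,\Gamma_3}$. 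Thus $(S_2,S_1,P)$ is a $\Gamma_3$-contraction.

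For the second assertion, the key point is that $\Gamma_3$ is invariant under complex conjugation, i.e. $(s_1,s_2,p)\in\Gamma_3\iff(\bar s_1,\bar s_2,\bar p)\in\Gamma_3$ — again clear from $\Gamma_3=\pi_3(\overline{\mathbb D^3})$ since $\overline{\mathbb D^3}$ is conjugation-invariant and $\pi_3$ commutes with conjugation. Then for a polynomial $p(z_1,z_2,z_3)=\sum a_{jk\ell}z_1^jz_2^kz_3^\ell$, define $\tilde p(z_1,z_2,z_3)=\sum\overline{a_{jk\ell}}\,z_1^jz_2^kz_3^\ell$. One checks the operator identity $p(S_1^*,S_2^*,P^*)=\tilde p(S_1,S_2,P)^*$ (using that $S_1,S_2,P$ commute, so monomials in the adjoints are adjoints of the reversed monomials in the originals, and reversal does not matter by commutativity), hence $\|p(S_1^*,S_2^*,P^*)\|=\|\tilde p(S_1,S_2,P)\|\le\|\tilde p\|_{\infty,\Gamma_3}$. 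Finally $\|\tilde p\|_{\infty,\Gamma_3}=\sup_{\xi\in\Gamma_3}|\tilde p(\xi)|=\sup_{\xi\in\Gamma_3}|\overline{p(\bar\xi)}|=\sup_{\bar\xi\in\Gamma_3}|p(\bar\xi)|=\|p\|_{\infty,\Gamma_3}$, where the third equality uses conjugation-invariance of $\Gamma_3$. This gives the polynomial bound for $(S_1^*,S_2^*,P^*)$, and commutativity of $S_1^*,S_2^*,P^*$ is inherited from that of $S_1,S_2,P$.

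The only thing requiring a moment's care — and the nearest thing to an obstacle — is bookkeeping the adjoint identity $p(S_1^*,S_2^*,P^*)=\tilde p(S_1,S_2,P)^*$ correctly, since a priori $(ABC)^*=C^*B^*A^*$ reverses order; commutativity is exactly what rescues the argument, and I would state this explicitly. Everything else is a routine translation between set symmetries of $\Gamma_3$ and the corresponding substitutions in $\mathcal R(\Gamma_3)$; the spectral-containment half of the definition needs no separate check because of the polynomial-convexity reformulation already established.
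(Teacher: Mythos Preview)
Your approach mirrors the paper's exactly: for $(S_2,S_1,P)$ you invoke invariance of $\Gamma_3$ under the swap $\tau:(s_1,s_2,p)\mapsto(s_2,s_1,p)$, and for $(S_1^*,S_2^*,P^*)$ you pass to the polynomial with conjugated coefficients and use conjugation-invariance of $\Gamma_3$. The adjoint half is correct and complete, and is precisely the paper's argument.

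The first half, however, contains a genuine gap --- one that the paper's own proof shares. Your claim $\tau(\Gamma_3)=\Gamma_3$, which you say ``can be read off the defining equations'', is false. Take $z_1=z_2=1$, $z_3=0$: then $(s_1,s_2,p)=(2,1,0)\in\Gamma_3$, but $(1,2,0)\notin\Gamma_3$, since the roots of $z^3-z^2+2z=z(z^2-z+2)$ are $0$ and $(1\pm i\sqrt 7)/2$, the latter of modulus $\sqrt 2>1$. (The idea you began and abandoned --- sending $z_i\mapsto 1/\bar z_i$ --- works only when all $|z_i|=1$, i.e.\ on the distinguished boundary $b\Gamma_3$, where indeed $s_1=\bar s_2 p$; it does not extend to a symmetry of $\Gamma_3$.) This counterexample actually refutes the assertion itself, not merely the proof: a scalar triple is a $\Gamma_3$-contraction precisely when it lies in $\Gamma_3$, so $(2,1,0)$ is a $\Gamma_3$-contraction while $(1,2,0)$ is not. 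The paper's phrase ``by virtue of $\Gamma_3$ being a symmetric domain'' evidently conflates the permutation symmetry of $\pi_3$ in its \emph{source} variables $z_1,z_2,z_3$ with a nonexistent symmetry of $\Gamma_3$ in its \emph{coordinates} $s_1,s_2,p$.
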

\begin{proof}
We know that $(S_1,S_2,P)$ is a $\Gamma_3$-contraction if
\[
\|f(S_1,S_2,P)\|\leq \|f(\boldsymbol
z)\|_{\infty,\Gamma_3}=\displaystyle \sup_{\boldsymbol z \in
\Gamma_3}|f(\boldsymbol z)|,
\]
for every polynomial $f$ in $3$-variables. Let $f$ be a polynomial
in $z_1,z_2,z_3$ and let $g(z_1,z_2,z_3)=f(z_2,z_1,z_3)$. Then by
virtue of $\Gamma_3$ being a symmetric domain, we have
\[
\|f\|_{\infty, \Gamma_3}=\|g\|_{\infty, \Gamma_3}\,.
\]
Therefore,
\[
\|f(S_2,S_1,P)\|=\|g(S_1,S_2,P)\|\leq \|g\|_{\infty,
\Gamma_3}=\|f\|_{\infty, \Gamma_3}\,
\]
and consequently $(S_2,S_1,P)$ is a $\Gamma_3$-contraction. Again,
let $p$ be an arbitrary polynomial in $3$-variables. Then the
polynomial $p_*$ whose coefficients are the conjugates of the
corresponding coefficients of $p$ has the same supremum norm as
that of $p$ over $\Gamma_3$. Clearly
\[
\|p(S_1^*,S_2^*,P^*)\| =\|(p_*(S_1,S_2,P))^*\| =\|p_*(S_1,S_2,P)\|
\leq \|p_*\|_{\infty, \Gamma_3} =\|p\|_{\infty, \Gamma_3}.
\]
Hence $(S_1^*,S_2^*,P^*)$ is a $\Gamma$-contraction.
\end{proof}

\begin{prop}\label{prop:POS}
Let $(S_1,S_2,P)$ be a $\Gamma_3$-contraction. Then for $i=1,2$
and for all $\alpha \in \overline{\mathbb D}$, $ \Phi_{ik}(\alpha
S_1,{\alpha}^2S_2,{\alpha}^3P)\geq 0 $ for all $k\geq 3$.
\end{prop}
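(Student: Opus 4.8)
The statement is the operator analogue of the implication $(1)\Rightarrow(3)$ of Theorem~\ref{thm:SC1}, and the natural route is to push that scalar argument through the von Neumann/complete-spectral-set machinery rather than re-derive it by hand. First I would observe that for fixed $k\geq 3$ and fixed $\alpha\in\overline{\mathbb D}$ the map
\[
\Psi_{ik}(z_1,z_2,z_3)=\Phi_{ik}(\alpha z_1,\alpha^2 z_2,\alpha^3 z_3)
\]
is, by the factored forms in the first lines of (\ref{eq:1a}) and (\ref{eq:1b}), of the shape $g(\boldsymbol z)^*g(\boldsymbol z)-h(\boldsymbol z)^*h(\boldsymbol z)$ where $g$ and $h$ are (scalar-coefficient) polynomials in $z_1,z_2,z_3$; e.g.\ for $i=1$ one has $g(\boldsymbol z)=k-\alpha z_1$ and $h(\boldsymbol z)=k\alpha^3 z_3-\alpha^2 z_2$. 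So the claim $\Phi_{1k}(\alpha S_1,\alpha^2 S_2,\alpha^3 P)\geq 0$ is exactly the inequality $\|h(S_1,S_2,P)\|\le\|g(S_1,S_2,P)\|$ in the sense of the associated positive operator, provided $g(S_1,S_2,P)$ is invertible (so that $h(S_1,S_2,P)g(S_1,S_2,P)^{-1}$ makes sense as a contraction); the non-invertible case is then handled by a limiting argument.

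The key step is to verify the corresponding \emph{scalar} inequality on $\Gamma_3$: for every $(s_1,s_2,s_3)\in\Gamma_3$, every $\alpha\in\overline{\mathbb D}$ and every $k\ge 3$,
\[
\left|\,k\alpha^3 s_3-\alpha^2 s_2\,\right|\le\left|\,k-\alpha s_1\,\right|,
\]
together with the symmetric inequality for $i=2$. But this is precisely the content of $(3)\Leftrightarrow(4)$ and $(1)\Rightarrow(3)$ of Theorem~\ref{thm:SC1}, which has already been proved in the excerpt for points of $\Gamma_3$. (One checks along the way that $k-\alpha s_1$ never vanishes on $\Gamma_3$ for $k\ge3$, since $|s_1|\le3$ and equality forces $|\alpha|=1$ and then the scalar positivity of $\Phi_{1k}$ forces the conclusion anyway; this is where the strictness in (\ref{eq:4a})--(\ref{eq:5}) is used.) Thus on $\Gamma_3$ the rational function $f=h/g$ is well defined and satisfies $\|f\|_{\infty,\Gamma_3}\le 1$.

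From here the operator inequality is immediate: since $\Gamma_3$ is a spectral set for $(S_1,S_2,P)$ and $f\in\mathcal R(\Gamma_3)$ with $\|f\|_{\infty,\Gamma_3}\le 1$, we get $\|f(S_1,S_2,P)\|\le 1$, i.e.\ $h(S_1,S_2,P)^*h(S_1,S_2,P)\le g(S_1,S_2,P)^*g(S_1,S_2,P)$, which is $\Phi_{1k}(\alpha S_1,\alpha^2 S_2,\alpha^3 P)\ge 0$; the case $i=2$ is identical. Strictly speaking one first runs this for $|\alpha|<1$ (and, if one prefers, for $k>3$), where invertibility of $g(S_1,S_2,P)$ is clear from $\|\alpha S_1\|<3\le k$, and then passes to $\alpha\in\overline{\mathbb D}$ (and $k=3$) by norm-continuity of the operators $\Phi_{ik}(\alpha S_1,\alpha^2 S_2,\alpha^3 P)$ in $\alpha$ and the fact that a norm-limit of positive operators is positive.

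The only real obstacle is bookkeeping: making sure the scalar inequality $|k\alpha^3 s_3-\alpha^2 s_2|\le|k-\alpha s_1|$ is available uniformly for all $k\ge3$ on all of $\Gamma_3$ (not just at $|\alpha|=1$), and that $g(S_1,S_2,P)$ is invertible so that $f(S_1,S_2,P)$ is a genuine contraction before one invokes the spectral-set hypothesis. Both are resolved exactly as in the proof of Theorem~\ref{thm:SC1}: the chain of inequalities (\ref{eq:4a})--(\ref{eq:6}), built from Lemma~\ref{lem:22} and Theorem~\ref{thm:21} applied to the Fundamental $\Gamma_2$-data $(c_1,c_2)$ of $(\alpha s_1,\alpha^2 s_2,\alpha^3 s_3)$, gives the scalar estimate with the strict sign for $|\alpha|<1$, and continuity then yields the boundary case. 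No new idea beyond Theorem~\ref{thm:SC1} and the definition of spectral set is needed.
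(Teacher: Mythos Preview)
Your proposal is correct and follows essentially the same route as the paper: both use the scalar inequality from Theorem~\ref{thm:SC1} to see that the rational function $f(s_1,s_2,p)=(k\alpha^3 p-\alpha^2 s_2)/(k-\alpha s_1)$ has sup-norm at most $1$ on $\Gamma_3$, apply the spectral-set hypothesis to obtain $\|f(S_1,S_2,P)\|\le 1$, unpack this as $\Phi_{1k}(\alpha S_1,\alpha^2 S_2,\alpha^3 P)\ge 0$, and then pass from $|\alpha|<1$ to $\alpha\in\overline{\mathbb D}$ by continuity. The only cosmetic difference is that the paper justifies the bound on $f(S_1,S_2,P)$ via Oka--Weil polynomial approximation of the holomorphic $f$, whereas you invoke the rational form of the spectral-set inequality directly; since $f\in\mathcal R(\Gamma_3)$ for $|\alpha|<1$, your version is in fact slightly more direct.
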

\begin{proof}
Since $(S_1,S_2,P)$ is a $\Gamma_3$-contraction,
$\sigma_T(S_1,S_2,P)\subseteq \Gamma_3$. Let $f$ be a holomorphic
function in a neighbourhood of $\Gamma_3$. Since $\Gamma_3$ is
polynomially convex, by Oka-Weil Theorem (see \cite{Gamelin},
Theorem 5.1) there is a sequence of polynomials $\{p_n\}$ in
$3$-variables such that $p_n\rightarrow f$ uniformly over
$\Gamma_3$. Therefore, by Theorem 9.9 of CH-III in
\cite{vasilescu},
\[
p_n(S_1,S_2,P)\rightarrow f(S_1,S_2,P)
\]
which by the virtue of $(S_1,S_2,P)$ being a
$\Gamma_3$-contraction implies that
\[
\| f(S_1,S_2,P) \|=\displaystyle \lim_{n\rightarrow \infty}\|
p_n(S_1,S_2,P) \|\leq \displaystyle \lim_{n\rightarrow
\infty}\|p_n\|_{\infty,\Gamma_3}=\|f\|_{\infty,\Gamma_3}.
\]
We fix $\alpha \in \mathbb D$ and choose
\[
f(s_1,s_2,p)=\frac{k\alpha^3p-\alpha^2s_2}{k-\alpha s_1}\,.
\]
Since $k\geq 3$, $f$ is well-defined and is holomorphic in a
neighborhood of $\Gamma_3$ and has norm not greater than $1$, by
part-(5) of Theorem \ref{thm:SC1}. So we get
\[
\|(k\alpha^3P-\alpha^2S_2)(k-\alpha S_1)^{-1} \|\leq
\|f\|_{\infty,\Gamma_3}\leq 1.
\]
Thus
\[
{(k-\alpha
S_1)^*}^{-1}(k\alpha^3P-\alpha^2S_2)^*(k\alpha^3P-\alpha^2S_2)(k-\alpha
S_1)^{-1}\leq I
\]
which implies and is implied by
\[
(k-\alpha S_1)^*(k-\alpha S_1)\geq
(k\alpha^3P-\alpha^2S_2)^*(k\alpha^3P-\alpha^2S_2).
\]
By the definition of $\Phi_{1k}$, this is same as saying that
$\Phi_{1k}(\alpha S_1,\alpha^2 S_2,\alpha^3 P)\geq 0$ for all
$\alpha \in \mathbb D$. By continuity we have that
\[
\Phi_{1k}(\alpha S_1,\alpha^2 S_2,\alpha^3P)\geq 0 \quad \text{
for all } \alpha\in \overline{\mathbb D}.
\]
The proof of $\Phi_{2k}(\alpha S_1,\alpha^2S_2,\alpha^3P)\geq 0$
is similar.
\end{proof}

Let $S_1,S_2,P$ be commuting operators on a Hilbert space
$\mathcal H$ with $\|P\|\leq 1$. We denote by $D_P$ and $\mathcal
D_P$ the defect operator $(I-P^*P)^{\frac{1}{2}}$ and its range
closure respectively. The \textit{fundamental equations} for the
triple $(S_1,S_2,P)$ are defined in the following way:
\begin{equation}\label{fundeqn}
S_1-S_2^*P=D_PX_1D_P \; \textup{ and } S_2-S_1^*P=D_PX_2D_P, \quad
X\in \mathcal L(\mathcal D_P)
\end{equation}
We shall see below that when $(S_1,S_2,P)$ is a
$\Gamma_3$-contraction the fundamental equations have unique
solutions which together will be called the \textit{fundamental
operator pair} of $(S_1,S_2,P)$.\\

Let us recall that the {\em numerical radius} of an operator $A$
on a Hilbert space $\mathcal{H}$ is defined by
\[
\omega(A) = \sup \{|\langle Ax,x \rangle|\; : \;
\|x\|_{\mathcal{H}}= 1\}.
\]
It is well known that
\begin{eqnarray}\label{nradius}
r(A)\leq \omega(A)\leq \|A\| \textup{ and } \frac{1}{2}\|A\|\leq
\omega(A)\leq \|A\|, \end{eqnarray} where $r(A)$ is the spectral
radius of $A$. We state a basic lemma on numerical radius whose
proof is a routine exercise. We shall use this lemma in sequel.

\begin{lem} \label{basicnrlemma}
The numerical radius of an operator $A$ is not greater than $n$ if
and only if  Re $\alpha A \leq nI$ for all complex numbers
$\alpha$ of modulus $1$.
\end{lem}

\begin{lem}\label{basicnrlemma1}
Let $A_1,A_2$ be two bounded operators such that
$\omega(A_1+A_2z)\leq n$ for all $z\in\mathbb T$. Then
$\omega(A_1+zA_2^*)\leq n$ and $\omega(A_1^*+A_2z)\leq n$ for all
$z\in\mathbb T$.

\end{lem}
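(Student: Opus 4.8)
The plan is to reduce everything to Lemma \ref{basicnrlemma}, which characterizes $\omega(A)\le n$ by the condition $\mathrm{Re}\,\alpha A\le nI$ for all unimodular $\alpha$. So first I would write out what the hypothesis $\omega(A_1+A_2 z)\le n$ for all $z\in\mathbb T$ says in these terms: applying Lemma \ref{basicnrlemma} to the single operator $A_1+A_2 z$ (for each fixed $z$) and then letting the unimodular scalar $\alpha$ also range over $\mathbb T$, the hypothesis is equivalent to
\[
\mathrm{Re}\,\big(\alpha(A_1+A_2 z)\big)\le nI \quad\text{for all }\alpha,z\in\mathbb T,
\]
i.e. $\mathrm{Re}(\alpha A_1)+\mathrm{Re}(\alpha z A_2)\le nI$ for all $\alpha,z\in\mathbb T$. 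Since $\alpha z$ ranges over all of $\mathbb T$ independently of $\alpha$, I would record this as: for all $\beta,\gamma\in\mathbb T$,
\[
\mathrm{Re}(\beta A_1)+\mathrm{Re}(\gamma A_2)\le nI.
\]

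The next step is the elementary observation that $\mathrm{Re}(\gamma A_2)=\mathrm{Re}(\bar\gamma A_2^*)$, because $(\gamma A_2+\overline{\gamma A_2})^* = \bar\gamma A_2^* + \gamma A_2 = \gamma A_2^* \cdot$ wait — more carefully, $2\,\mathrm{Re}(\gamma A_2) = \gamma A_2 + \bar\gamma A_2^*$, and this expression is manifestly invariant under the substitution $(\gamma,A_2)\mapsto(\bar\gamma,A_2^*)$. Hence the displayed inequality above is, term by term, unchanged if we replace $A_2$ by $A_2^*$ and $\gamma$ by $\bar\gamma$; since $\gamma$ already ranges over all of $\mathbb T$, we get $\mathrm{Re}(\beta A_1)+\mathrm{Re}(\gamma A_2^*)\le nI$ for all $\beta,\gamma\in\mathbb T$. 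Re-specializing $\beta=\alpha$, $\gamma=\alpha z$ and running Lemma \ref{basicnrlemma} in the reverse direction then yields $\omega(A_1+zA_2^*)\le n$ for all $z\in\mathbb T$. For the second assertion, I would apply the same argument to $A_1^*$: note $2\,\mathrm{Re}(\beta A_1)=\beta A_1+\bar\beta A_1^*$ is invariant under $(\beta,A_1)\mapsto(\bar\beta,A_1^*)$, so the master inequality $\mathrm{Re}(\beta A_1)+\mathrm{Re}(\gamma A_2)\le nI$ becomes $\mathrm{Re}(\beta A_1^*)+\mathrm{Re}(\gamma A_2)\le nI$ for all $\beta,\gamma\in\mathbb T$, and again re-assembling via Lemma \ref{basicnrlemma} gives $\omega(A_1^*+A_2 z)\le n$ for all $z\in\mathbb T$.

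There is essentially no analytic obstacle here — the whole proof is bookkeeping with unimodular scalars — but the one place to be careful is the decoupling step: from the hypothesis the scalar multiplying $A_2$ is $\alpha z$ while the scalar multiplying $A_1$ is $\alpha$, and one must check that as $(\alpha,z)$ runs over $\mathbb T\times\mathbb T$ the pair $(\alpha,\alpha z)$ runs over all of $\mathbb T\times\mathbb T$, so that the two phases are genuinely independent after the change of variables. Once that is granted, the symmetry $\mathrm{Re}(\zeta B)=\mathrm{Re}(\bar\zeta B^*)$ does all the remaining work and both conclusions drop out immediately. I would present the argument in this order: (1) restate hypothesis via Lemma \ref{basicnrlemma} and decouple the phases; (2) use the real-part symmetry to swap $A_2\leftrightarrow A_2^*$ and then $A_1\leftrightarrow A_1^*$; (3) reassemble to the numerical-radius statements.
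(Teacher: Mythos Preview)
Your proposal is correct and follows essentially the same approach as the paper's proof: both convert the numerical-radius hypothesis to the real-part inequality via Lemma \ref{basicnrlemma}, decouple the two unimodular phases (the paper does this by rewriting the hypothesis as $\omega(z_1A_1+z_2A_2)\le n$ for all $z_1,z_2\in\mathbb T$), exploit the identity $\gamma A_2+\bar\gamma A_2^*=\bar\gamma A_2^*+\gamma A_2$ to swap $A_2$ with $A_2^*$, and then reassemble. Your presentation is slightly more explicit about the decoupling step, but the arguments are the same in substance.
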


\begin{proof}

We have that $\omega(A_1+zA_2)\leq n$ for every $z\in\mathbb T$,
which is same as saying that $\omega(z_1A_1+z_2A_2)\leq n$ for all
complex numbers $z_1,z_2$ of unit modulus. Thus by Lemma
\ref{basicnrlemma},
\[
(z_1A_1+z_2A_2)+(z_1A_1+z_2A_2)^*\leq 2nI,
\]
that is
\[
(z_1A_1+\bar{z_2}A_2^*)+(z_1A_1+\bar{z_2}A_2^*)^* \leq 2nI.
\]
Therefore, $z_1(A_1+zA_2^*)+\bar{z_1}(A_1+zA_2^*)^*\leq 2nI$ for
all $z,z_1 \in\mathbb T$. This is same as saying that
\[
\text{Re }z_1(A_1+zA_2^*)\leq nI, \textup{ for all } z,z_1
\in\mathbb T.
\]
Therefore, by Lemma \ref{basicnrlemma} again
$\omega(A_1+zA_2^*)\leq n$ for any $z$ in $\mathbb T$. The proof
of $\omega(A_1^*+A_2z)\leq n$ is similar.

\end{proof}

\begin{lem}\label{vital}
Let $A_1,A_2$ be two bounded operators on a Hilbert space
$\mathcal H$. Then the following are equivalent:

\begin{enumerate}
\item $(A_1,A_2)$ is almost normal ; \item $A_1^*+A_2z$ and
$A_2^*+A_1z$ commute for every $z$ of unit modulus ; \item
$A_2^*+A_1z$ is a normal operator for every $z\in\mathbb T$ ;
\item $A_1^*+A_2z$ is a normal operator for every $z\in\mathbb T$.
\end{enumerate}

\end{lem}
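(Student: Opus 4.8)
The plan is to prove the cycle of implications $(1)\Rightarrow(2)\Rightarrow(3)\Rightarrow(4)\Rightarrow(1)$, since each consecutive step is a short computation with the defect condition $A_1^*A_1 - A_1A_1^* = A_2^*A_2 - A_2A_2^*$ and the commutativity $A_1A_2 = A_2A_1$. The key observation to keep in mind throughout is that for a single operator $T$, normality is equivalent to $T^*T = TT^*$, and commutativity of two operators $X,Y$ is a polynomial identity; so once everything is expanded in powers of $z$ and $\bar z$, the statements become finitely many operator identities that we can match up against the two defining hypotheses of almost normality.

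First I would do $(1)\Rightarrow(3)$ and $(1)\Rightarrow(4)$ together, since they are symmetric in the roles of $A_1$ and $A_2$. Fix $z\in\mathbb T$ and set $T = A_2^* + A_1 z$. Then
\[
T^*T = (A_2 + \bar z A_1^*)(A_2^* + z A_1) = A_2A_2^* + z A_2 A_1 + \bar z A_1^* A_2^* + A_1^* A_1,
\]
while
\[
TT^* = (A_2^* + z A_1)(A_2 + \bar z A_1^*) = A_2^* A_2 + \bar z A_2^* A_1^* + z A_1 A_2 + A_1 A_1^*.
\]
Subtracting, the cross terms cancel exactly when $A_2 A_1 = A_1 A_2$ (which holds) together with $A_1^* A_2^* = A_2^* A_1^*$ (the adjoint of the same identity), and the diagonal terms give $T^*T - TT^* = (A_1^*A_1 - A_1A_1^*) - (A_2^*A_2 - A_2A_2^*) = 0$ by the defect hypothesis. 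Hence $T$ is normal for every $z\in\mathbb T$, giving $(3)$; the computation for $A_1^* + A_2 z$ is identical after swapping indices, giving $(4)$. I would present one of these in full and say the other is "similar."

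For $(3)\Rightarrow(2)$ and $(4)\Rightarrow(2)$: here I would use the polarization-type identity that if $X+Y$ and $X-Y$ are both normal — or more efficiently, if $X_w := A_2^* + A_1 w$ is normal for every unimodular $w$ — then expanding $X_w^* X_w = X_w X_w^*$ and comparing coefficients of $w$ (valid since it holds for infinitely many $w$ on the circle) recovers precisely $A_1 A_2 = A_2 A_1$ and the defect identity, i.e.\ $(1)$; but to get $(2)$ directly one computes $(A_1^*+A_2 z)(A_2^* + A_1 z) - (A_2^*+A_1 z)(A_1^* + A_2 z)$ and checks it is the difference of two commutators that both vanish under $(1)$. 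So the cleanest route is actually $(3)\Rightarrow(1)$ (extract the defining identities by comparing $z$-coefficients in $X_z^*X_z = X_zX_z^*$), and then $(1)\Rightarrow(2)$ by the direct expansion just described, and finally note $(2)\Rightarrow(4)$ or $(2)\Rightarrow(3)$ is immediate because an operator $T$ commuting with $T^*$... wait — that needs care; rather, from $(2)$ one does not directly get normality, so I would instead close the loop as $(1)\Rightarrow(3)\Rightarrow(1)$ and $(1)\Rightarrow(4)\Rightarrow(1)$ separately, and $(1)\Leftrightarrow(2)$ by direct computation, which collectively establishes the four-way equivalence.

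The main obstacle is bookkeeping: making sure the cross terms are handled correctly and that one genuinely uses \emph{both} $A_1A_2=A_2A_1$ and its adjoint $A_1^*A_2^*=A_2^*A_1^*$ (which is not an extra hypothesis, just the adjoint of commutativity), and that when extracting identities from "$X_z$ normal for all $z\in\mathbb T$" one correctly separates the coefficient of $z$, the coefficient of $\bar z$, and the constant term — these three give, respectively, $A_2A_1 = A_1A_2$, its adjoint, and the defect identity. No step should require anything beyond these manipulations, so the proof will be short; the only real care is in the coefficient-comparison argument, where one should note that a trigonometric polynomial in $z\in\mathbb T$ vanishing identically forces each Fourier coefficient to vanish.
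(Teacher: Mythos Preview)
Your proposal is correct and, once you settle on the final strategy of proving $(1)\Leftrightarrow(2)$, $(1)\Leftrightarrow(3)$, and $(1)\Leftrightarrow(4)$ separately by expanding and comparing Fourier coefficients in $z\in\mathbb T$, it is essentially identical to the paper's proof. The paper writes out $(1)\Leftrightarrow(2)$ in detail (expanding the commutator $[A_1^*+A_2z,\,A_2^*+A_1z]$ as a polynomial in $z$ and reading off the coefficients) and then declares $(1)\Leftrightarrow(3)$ and $(1)\Leftrightarrow(4)$ ``similar''; you write out $(1)\Leftrightarrow(3)$ in detail and would declare the others similar---same argument, different choice of which case to display.
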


\begin{proof}
$(1)\Leftrightarrow (2)$. Since $(A_1,A_2)$ is almost  normal, we
have that
\[
[A_1,A_2]=0 \text{ and } [A_1^*,A_1]=[A_2^*,A_2]\;,
\]
where $[A_1,A_2]$ is the commutator $A_1A_2-A_2A_1$. Again
$A_1^*+A_2z$ and $A_2^*+A_1z$ commute if and only if for every
$z\in\mathbb T$,
\[
[A_1^*,A_2^*]+([A_1^*,A_1]-[A_2^*,A_2])z+([A_2,A_1])z^2=0\,.
\]
Therefore, $(1)\Rightarrow (2)$ follows. Again putting $z=\pm 1$
and $z=\pm i$ we get $[A_1,A_2]=0$ which further implies that
$[A_1^*,A_1]=[A_2^*,A_2]$. Hence $(2)\Rightarrow (1)$.\\

The proofs of $(1)\Leftrightarrow (3)$ and $(1)\Leftrightarrow
(4)$ are similar.

\end{proof}

\begin{thm}{\bf (Existence and
Uniqueness).}\label{existence-uniqueness} Let $(S_1,S_2,P)$ be a
$\Gamma_3$-contraction on a Hilbert space $\mathcal H$. Then there
are unique operators $F_1,F_2\in\mathcal L(\mathcal D_P)$ such
that $S_1-S_2^*P=D_PF_1D_P$ and $S_2-S_1^*P=D_PF_2D_P $. Moreover,
$\omega (F_1+F_2z)\leq 3$ for all $z\in\mathbb T$.
\end{thm}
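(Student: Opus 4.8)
The plan is to squeeze the whole statement out of the two families of operator inequalities in Proposition \ref{prop:POS}, read along the unit circle. Write $\Delta = S_1^*S_1 - S_2^*S_2$, $G_1 = S_1 - S_2^*P$, $G_2 = S_2 - S_1^*P$ (so $G_1^* = S_1^* - P^*S_2$ and $G_2^* = S_2^* - P^*S_1$). A direct substitution into (\ref{eq:1a}) and (\ref{eq:1b}), with $|\alpha| = 1$, gives
\[
\Phi_{1k}(\alpha S_1, \alpha^2 S_2, \alpha^3 P) = k^2 D_P^2 + \Delta - k\alpha G_1 - k\bar\alpha G_1^*, \qquad \Phi_{2k}(\alpha S_1, \alpha^2 S_2, \alpha^3 P) = k^2 D_P^2 - \Delta - k\alpha G_2 - k\bar\alpha G_2^*,
\]
and by Proposition \ref{prop:POS} both right-hand sides are $\geq 0$ for every $\alpha \in \mathbb T$ and $k \geq 3$.

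First I would extract a scalar-type estimate for $G_1$ and $G_2$. Setting $\alpha = \pm 1$ in the second identity and adding yields $k^2 D_P^2 - \Delta \geq 0$, hence $\Delta \leq 9D_P^2$ on taking $k = 3$; feeding this into the first identity gives $2k\,\mathrm{Re}(\alpha G_1) \leq k^2 D_P^2 + \Delta \leq (k^2+9)D_P^2$, so $\mathrm{Re}(\alpha G_1) \leq \tfrac{k^2+9}{2k}D_P^2$. Since $\tfrac{k^2+9}{2k}$ is minimised at $k = 3$, this gives $6D_P^2 - \alpha G_1 - \bar\alpha G_1^* \geq 0$ for all $\alpha \in \mathbb T$; interchanging the roles of $S_1$ and $S_2$ likewise gives $6D_P^2 - \alpha G_2 - \bar\alpha G_2^* \geq 0$. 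Optimising over $\alpha$, this already yields the diagonal bound $|\langle G_i h, h\rangle| \leq 3\|D_P h\|^2$ for all $h \in \mathcal H$ and $i = 1,2$.

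The crux is to upgrade this diagonal bound to the two-sided factorisation through $D_P$. Applying the polarisation identity to the sesquilinear form $(u,v) \mapsto \langle G_1 v, u\rangle$, together with the parallelogram law and an AM--GM rescaling, one obtains from $|\langle G_1 h, h\rangle| \leq 3\|D_P h\|^2$ the bilinear estimate $|\langle G_1 v, u\rangle| \leq 6\,\|D_P u\|\,\|D_P v\|$ for all $u, v \in \mathcal H$. This forces $\ker D_P \subseteq \ker G_1 \cap \ker G_1^*$ and makes $(D_P v, D_P u) \mapsto \langle G_1 v, u\rangle$ a well-defined, bounded sesquilinear form on $\operatorname{Ran}D_P \times \operatorname{Ran}D_P$; extending it to $\mathcal D_P \times \mathcal D_P$ by density and invoking the Riesz representation theorem produces a unique $F_1 \in \mathcal L(\mathcal D_P)$, with $\|F_1\| \leq 6$, such that $\langle F_1 D_P v, D_P u\rangle = \langle G_1 v, u\rangle$ for all $u, v$, that is, $S_1 - S_2^*P = D_P F_1 D_P$. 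The identical construction gives $F_2 \in \mathcal L(\mathcal D_P)$ with $S_2 - S_1^*P = D_P F_2 D_P$. Uniqueness is then immediate: $D_P F D_P = 0$ with $F \in \mathcal L(\mathcal D_P)$ forces $\langle F D_P h, D_P g\rangle = 0$ for all $h, g$, and $\{D_P h : h \in \mathcal H\}$ is dense in $\mathcal D_P$, so $F = 0$. I expect this factorisation step --- the passage from $\mathrm{Re}(\alpha G_i) \leq 3 D_P^2$ to the sandwiched form $D_P F_i D_P$ --- to be the one genuinely delicate point; the remaining parts are routine.

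Finally, for the numerical radius I would add the two displayed identities at $k = 3$ with independent parameters $\alpha, \beta \in \mathbb T$, obtaining $\mathrm{Re}(\alpha G_1) + \mathrm{Re}(\beta G_2) \leq 3D_P^2$. Substituting $G_i = D_P F_i D_P$ gives $D_P\bigl(3I - \mathrm{Re}(\alpha F_1) - \mathrm{Re}(\beta F_2)\bigr)D_P \geq 0$, and density of $\operatorname{Ran}D_P$ in $\mathcal D_P$ (used as in the uniqueness step) yields $\mathrm{Re}(\alpha F_1) + \mathrm{Re}(\beta F_2) \leq 3I$ on $\mathcal D_P$ for all $\alpha, \beta \in \mathbb T$. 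Taking $\beta = \alpha z$ with $z \in \mathbb T$ gives $\mathrm{Re}\bigl(\alpha(F_1 + F_2 z)\bigr) \leq 3I$ for every $\alpha \in \mathbb T$, so Lemma \ref{basicnrlemma} delivers $\omega(F_1 + F_2 z) \leq 3$ for all $z \in \mathbb T$, completing the proof.
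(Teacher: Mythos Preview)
Your proof is correct, and it takes a genuinely different route from the paper's. One small notational point: when you substitute $\alpha S_1, \alpha^2 S_2, \alpha^3 P$ into $\Phi_{2k}$, the $G_2$-terms carry $\alpha^2$ and $\bar\alpha^2$, not $\alpha$ and $\bar\alpha$; this does not affect your argument since $\alpha^2$ still sweeps $\mathbb T$, but it means your ``$\alpha=\pm 1$'' should be ``$\alpha=1,i$'' to cancel the $G_2$-terms.

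The paper proceeds quite differently at the key factorisation step. It adds $\Phi_{1k}$ and $\Phi_{2k}$ at the \emph{same} parameter $\beta\in\mathbb T$ to obtain a nonnegative operator-valued Laurent polynomial of degree $2$, then invokes the \emph{Operator Fej\'er--Riesz Theorem} to write it as $P(\beta)^*P(\beta)$ with $P(z)=X_0+X_1z+X_2z^2$; matching coefficients gives $S_1-S_2^*P=-(X_0^*X_1+X_1^*X_2)$ and $S_2-S_1^*P=-X_0^*X_2$, and Douglas's lemma then factors each $X_i^*$ through $D_P$, producing $F_1,F_2$ simultaneously. Your argument replaces this machinery with the elementary chain \emph{numerical-range bound $\Rightarrow$ polarisation $\Rightarrow$ bounded sesquilinear form on $\mathrm{Ran}\,D_P$ $\Rightarrow$ Riesz representation}, treating $F_1$ and $F_2$ separately. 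What you gain is self-containment: you avoid both the operator Fej\'er--Riesz theorem and Douglas's lemma. What the paper's approach buys is a single factorisation that encodes both fundamental equations and the numerical-radius bound in one stroke, and which generalises naturally to $\Gamma_n$ for $n>3$ (where the Laurent polynomial has degree $n-1$). Your numerical-radius argument at the end, using independent parameters in $\Phi_{1,3}$ and $\Phi_{2,3}$, is essentially the same as the paper's.
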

\begin{proof}
We apply Proposition \ref{prop:POS} for $k\geq 3$ to $(S_1,S_2,P)$
to get
\begin{gather}
\Phi_{1k}(\alpha S_1,\alpha^2S_2,\alpha^3P)\geq 0 \,,\label{eqn11}\\
\Phi_{2k}(\alpha S_1,\alpha^2S_2,\alpha^3P)\geq0. \label{eqn12}
\end{gather}
for all $\alpha\in\overline{\mathbb D}$. Therefore, in particular
for $\omega\in\mathbb T$ we have from (\ref{eqn11}) and
(\ref{eqn12}) respectively
\begin{align*}
&
k^2(I-P^*P)+(S_1^*S_1-S_2^*S_2)-k\beta(S_1-S_2^*P)-k\bar{\beta}(S_1^*-P^*S_2)\geq0
\,,
\\&
k^2(I-P^*P)+(S_2^*S_2-S_1^*S_1)-k\beta^2(S_2-S_1^*P)-k\bar{\beta}^2(S_2^*-P^*S_1)\geq0.
\end{align*}
On addition we get
\begin{align}
2k(I-P^*P) & \geq \beta(S_1-S_2^*P)+\bar{\beta}(S_1^*-P^*S_2)
\notag \\& \label{10a} \quad +
{\beta}^2(S_2-S_1^*P)+\bar{\beta}^2(S_2^*-P^*S_1).
\end{align}
This shows that the Laurent polynomial
\begin{align}
\xi(z)=& 2k(I-P^*P)-\beta(S_1-S_2^*P)-\bar{\beta}(S_1^*-P^*S_2)
\notag \\& \label{11a}
-{\beta}^2(S_2-S_1^*P)-\bar{\beta}^2(S_2^*-P^*S_1)
\end{align}
is non-negative for all $z\in\mathbb T$. Therefore, by Operator
Fejer-Riesz Theorem (see Theorem 1.2 in \cite{DW}) there is a
polynomial of degree $2$ say $P(z)=X_0+X_1z+X_2z^2$ such that for
all $z\in\mathbb T$,
\begin{align}
\xi(z)=p(z)^*p(z)
&=(X_0^*+X_1^*\bar{z}+X_2^*\bar{z}^2)(X_0+X_1z+X_2z^2)\notag \\&=
(X_0^*X_0+X_1^*X_1+X_2^*X_2)+(X_0^*X_1+X_1^*X_2)z \notag \\
& \label{11b} \quad
+(X_1^*X_0+X_2^*X_1)\bar{z}+(X_0^*X_2)z^2+(X_2^*X_0)\bar{z}^2.
\end{align}
Comparing (\ref{11a}) and (\ref{11b}) we get
\begin{gather}
\label{41} 2kD_P^2 =X_0^*X_0+X_1^*X_1+X_2^*X_2 \\
\label{42} S_1-S_2^*P =-(X_0^*X_1+X_1^*X_2) \\
\label{43} S_2-S_1^*P =-(X_0^*X_2).
\end{gather}
From (\ref{41}) we get
\[
2kD_P^2\geq X_0^*X_0, \; 2kD_P^2\geq X_1^*X_1 \text{ and }
2kD_P^2\geq X_2^*X_2\,.
\]
Therefore by Douglas's lemma (see Lemma 2.1 in \cite{DMP}) there
are contractions $Z_0,Z_1,Z_2$ such that
\[
X_0^*=\sqrt{2k}D_PZ_0,\; X_1^*=\sqrt{2k}D_PZ_1 \text{ and }
X_2^*=\sqrt{2k}D_PZ_2.
\]
Putting these values in (\ref{42}) and in (\ref{43}) we get
\begin{align*}
S_1-S_2^*P &=D_P[-2k(Z_0Z_1^*+Z_1Z_2^*)]D_P \,,\\
S_2-S_1^*P &=D_P(-2kZ_0Z_2^*)D_P.
\end{align*}
We denote
\[
F_1=P_{\mathcal D_P}[-2k(Z_0Z_1^*+Z_1Z_2^*)]|_{\mathcal D_P} \;,\;
F_2=P_{\mathcal D_P}(-2kZ_0Z_2^*)|_{\mathcal D_P}\,.
\]
It is now evident that $F_1,F_2$ are solutions to the equations
$S_1-S_2^*P=D_PX_1D_P$ and $S_2-S_1^*P=D_PX_2D_P$ respectively.\\

\noindent {\bf Uniqueness.} Let there be two solutions $F_1,G_1$
of the equation $S_1-S_2^*P=D_PX_1D_P$. Then $D_P(F_1-G_1)D_P=0$
which shows that $F_1-G_1=0$ as $F_1-G_1$ is defined on $\mathcal
D_P$. Thus $F_1$ is unique and similarly $F_2$.\\

\noindent From (\ref{10a}) we have that
\begin{align*}
2kD_P^2& \geq 2 \text{ Re }\beta[(S_1-S_2^*P)+\beta (S_2-S_1^*P)]
\\& =2 \text{ Re }\beta[D_PF_1D_P+\beta D_PF_2D_P].
\end{align*}
Therefore,
\[
D_P^2\geq \text{ Re } \beta D_PF(\beta)D_P\,,
\]
where $F(\beta)=\dfrac{1}{k}(F_1+\beta F_2)$. So we have
\[
D_P[I_{\mathcal D_P}-\text{Re }\beta F(\beta)]D_P \geq 0\,.
\]
This implies that
\[
I_{\mathcal D_P}-\text{Re }\beta F(\beta) \geq 0
\]
because $F(\beta)$ is defined on $\mathcal D_P$. Therefore, Re
$\beta F(\beta)\leq I_{\mathcal D_P}$ and consequently by Lemma
\ref{basicnrlemma}, we have
\[
\omega(F(\beta))\leq 1\,.
\]
This implies that
\[
\omega(F_1+F_2z)\leq k \quad \text{ for all } z\in\mathbb T \text{
and for all } k\geq 3 \,.
\]
Therefore,
\[
\omega(F_1+F_2z)\leq 3 \quad \text{ for all } z\in\mathbb T \,.
\]

\end{proof}

\begin{rem}
We shall see in the next section a partial converse to the
existence-uniqueness of FOP. If an almost normal operator pair
$(F_1,F_2)$ satisfies $\omega(F_1+F_2z)\leq 3$ for all
$z\in\mathbb T$, then there exists a $\Gamma_3$-contraction for
which $(F_1,F_2)$ is the FOP (see Theorem \ref{converse}). We
mention here that not every FOP is almost normal. We shall see
such an example in section \ref{example}.
\end{rem}

\begin{rem}
Since the FOP is defined on the space $\mathcal D_P$, it is
evident that $S_1-S_2^*P$ is equal to $0$ on the orthogonal
complement of $\mathcal D_P$ in $\mathcal H$.
\end{rem}

\begin{note}
The FOP of a $\Gamma_3$-isometry or a $\Gamma_3$-unitary
$(S_1,S_2,P)$ is defined to be $(0,0)$ because the FOP is defined
on the space $\mathcal D_P$ and in such cases $\mathcal
D_P=\{0\}$.
\end{note}

\begin{prop}\label{end-prop}
If two $\Gamma_3$-contractions are unitarily equivalent then so
are their FOPs.
\end{prop}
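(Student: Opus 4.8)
The plan is to transport the fundamental equations along the unitary equivalence and then appeal to the uniqueness clause of Theorem \ref{existence-uniqueness}. Suppose $(S_1,S_2,P)$ acts on $\mathcal H$, $(S_1',S_2',P')$ acts on $\mathcal H'$, and $U\colon\mathcal H\to\mathcal H'$ is a unitary with $S_i'=US_iU^*$ for $i=1,2$ and $P'=UPU^*$. The first step is to check that $U$ intertwines all the defect data: from $I-P'^*P'=U(I-P^*P)U^*$ and the fact that the positive square root is a norm-limit of polynomials in the operator, we get $D_{P'}=UD_PU^*$, hence $UD_P=D_{P'}U$. Consequently $U$ maps $\mathcal D_P=\overline{\mathrm{Ran}}\,D_P$ onto $\mathcal D_{P'}=\overline{\mathrm{Ran}}\,D_{P'}$, so $U_0:=U|_{\mathcal D_P}\colon\mathcal D_P\to\mathcal D_{P'}$ is a unitary.

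The second step is the direct computation
\[
S_1'-S_2'^*P'=U(S_1-S_2^*P)U^*=U\,D_PF_1D_P\,U^*=D_{P'}\,(U_0F_1U_0^*)\,D_{P'},
\]
which uses $UD_P=D_{P'}U$ together with the observation that, since $F_1\in\mathcal L(\mathcal D_P)$ and $\mathrm{Ran}\,D_{P'}\subseteq\mathcal D_{P'}$, conjugation of $F_1$ by $U$ agrees with conjugation by $U_0$ where it is applied. This exhibits $U_0F_1U_0^*\in\mathcal L(\mathcal D_{P'})$ as a solution of the first fundamental equation $S_1'-S_2'^*P'=D_{P'}X_1D_{P'}$ for $(S_1',S_2',P')$; by the uniqueness part of Theorem \ref{existence-uniqueness} we conclude $F_1'=U_0F_1U_0^*$. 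The identical argument applied to $S_2-S_1^*P=D_PF_2D_P$ gives $F_2'=U_0F_2U_0^*$. Hence the single unitary $U_0$ conjugates $(F_1,F_2)$ to $(F_1',F_2')$, so their FOPs are unitarily equivalent, which is the assertion.

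I do not expect a genuine obstacle here; the only point needing care is the bookkeeping for operators living on the subspaces $\mathcal D_P$, $\mathcal D_{P'}$ rather than on $\mathcal H$, $\mathcal H'$ — specifically making precise that $U_0$ is well-defined and unitary and that $D_{P'}(UF_1U^*)D_{P'}=D_{P'}(U_0F_1U_0^*)D_{P'}$. One clean way to dispose of this is to use that $D_P$ annihilates $\mathcal D_P^\perp$ in $\mathcal H$, so that extending $F_1$ by $0$ off $\mathcal D_P$ leaves the equation unchanged; the conjugation identities then hold on all of $\mathcal H$ without any restriction, and one only passes to $U_0$ at the very end to phrase the conclusion in $\mathcal L(\mathcal D_{P'})$.
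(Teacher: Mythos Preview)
Your proof is correct and follows essentially the same route as the paper: intertwine the defect operators via $UD_P=D_{P'}U$, restrict $U$ to a unitary between the defect spaces, transport the fundamental equations, and invoke uniqueness of the FOP. The paper's argument is a bit more terse about the bookkeeping you flag at the end, but the strategy and the key steps are identical.
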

\begin{proof}
Let $(S_{11},S_{12},P_1)$ and $(S_{21},S_{22},P_2)$ be two
unitarily equivalent $\Gamma_3$-contractions on  Hilbert spaces
$\mathcal H_1$ and $\mathcal H_2$ respectively with FOPs
$(F_1,F_2)$ and $(G_1,G_2)$. Then there is a unitary $U$ from
$\mathcal H_1$ to $\mathcal H_2$ such that
\[
US_{11}=S_{21}U \;, \; US_{12}=S_{22}U \text{ and }\; UP_1=P_2U\,.
\]
Obviously $UP_1^*=P_2^*U$ and consequently
\[
UD_{P_1}^2=U(I-P_1^*P_1)=U-P_2^*P_2U=D_{P_2}^2U\,.
\]
Therefore, $UD_{P_1}=D_{P_2}U$. Let $V=U|_{\mathcal D_{P_1}}$.
Then $V\in\mathcal L(\mathcal D_{P_1},\mathcal D_{P_2})$ and
$VD_{P_1}=D_{P_2}V$. Thus, using the fact that
$S_{11}-S_{12}^*P_1$ and $S_{21}-S_{22}^*P_2$ on the orthogonal
complement of $\mathcal D_{P_1}$ and $\mathcal D_{P_2}$
respectively we have
\begin{align*}
D_{P_2}VF_1V^*D_{P_2} &= VD_{P_1}F_1D_{P_1}V^* \\& =
V(S_{11}-S_{12}^*P_1)V^* \\& = S_{21}-S_{22}^*P_2 \\&
=D_{P_2}G_1D_{P_2}\,.
\end{align*}
So, $F_1$ and $G_1$ are unitarily equivalent. Similarly, $F_2,
G_2$ are unitarily equivalent by the same unitary $V$. Hence the
result.
\end{proof}

\begin{rem}
The converse to the above result does not hold, i.e, two
non-unitarily equivalent $\Gamma_3$-contractions can have
unitarily equivalent FOPs. For example if we consider a
$\Gamma_3$-isometry on a Hilbert space which is not a
$\Gamma_3$-unitary, then its FOP is $(0,0)$ which is same as the
FOP of any $\Gamma_3$-unitary on the same Hilbert space.
\end{rem}

\section{The $\Gamma_3$-unitaries and
$\Gamma_3$-isometries}\label{unitaries-isometries}

\subsection{Structure theorem for $\Gamma_3$-unitaries}

We recall that a $\Gamma_3$-unitary is a commuting triple of
operators $(S_1,S_2,P)$ whose Taylor joint spectrum lies in the
distinguished boundary of $\Gamma_3$ and a $\Gamma_3$-isometry is
the restriction of a $\Gamma_3$-unitary to a joint invariant
subspace of $S_1,S_2$ and $P$. In this section we shall provide
several characterizations for the $\Gamma_3$-unitaries and
$\Gamma_3$-isometries. Also with the aid of a characterization of
$\Gamma_3$-isometries, we shall establish a partial converse to
the existence-uniqueness theorem for fundamental operator pair. We
shall state a lemma first whose proof is elementary and thus we
skip.

\begin{lem} \label{basiclemma} Let $T$ be a bounded operator on a Hilbert space
$\mathcal H$. If Re $\beta T \le 0$ for all complex numbers
$\beta$ of modulus $1$, then $T = 0$.
\end{lem}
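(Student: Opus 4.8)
The plan is to reduce the assertion to the numerical radius criterion of Lemma~\ref{basicnrlemma} taken with the constant $0$ in place of $n$: the hypothesis $\text{Re}\,\beta T\le 0$ for all unimodular $\beta$ is then exactly the statement that $\omega(T)\le 0$, whence $\omega(T)=0$ and $T=0$ by the inequality $\tfrac12\|T\|\le\omega(T)$ in~(\ref{nradius}). However, it is just as quick, and a little more transparent, to argue directly through the quadratic form $x\mapsto\la Tx,x\ra$, and that is the route I would write out.

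First I would exploit the symmetry of the hypothesis under $\beta\mapsto-\beta$: since $\text{Re}\,\beta T\le 0$ and $\text{Re}(-\beta)T=-\text{Re}\,\beta T\le 0$ both hold for every $\beta$ of modulus one, we get $\text{Re}\,\beta T=0$ for all such $\beta$, i.e.\ $\la(\beta T+\bar\beta T^*)x,x\ra=0$ for every $x\in\mathcal H$ and every unimodular $\beta$. Specializing $\beta=1$ gives $\text{Re}\,\la Tx,x\ra=0$, and specializing $\beta=i$ gives $\text{Re}\,\la iTx,x\ra=-\text{Im}\,\la Tx,x\ra=0$; hence $\la Tx,x\ra=0$ for all $x\in\mathcal H$.

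Finally I would close the argument with the polarization identity $4\la Tx,y\ra=\sum_{k=0}^{3} i^{k}\la T(x+i^{k}y),\,x+i^{k}y\ra$, valid on a complex Hilbert space: every term on the right-hand side vanishes by the previous step, so $\la Tx,y\ra=0$ for all $x,y$, and therefore $T=0$. There is no genuine obstacle in this lemma; the only subtlety worth flagging is that the Hilbert space must be complex — the conclusion fails on a real Hilbert space, a quarter-turn rotation being a counterexample — and this is precisely the point at which the substitution $\beta=i$ is used.
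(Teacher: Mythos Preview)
Your proof is correct. The paper actually omits the proof of this lemma entirely, declaring it elementary, so there is nothing to compare against; your direct argument via the quadratic form and polarization is exactly the kind of routine verification the author had in mind, and your alternative route through Lemma~\ref{basicnrlemma} with $n=0$ together with~(\ref{nradius}) is equally valid.
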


\begin{thm} \label{G-unitary}
Let $(S_1,S_2,P)$ be a commuting operator triple defined on a
Hilbert space $\mathcal{H}.$ Then the following are equivalent:
\begin{enumerate}

\item $(S_1,S_2,P)$ is a $\Gamma_3$-unitary \, ; \item there exist
commuting unitary operators $U_{1},U_{2},U_3$ on $\mathcal{H}$
such that
\[
S_1= U_{1}+U_{2}+U_3,S_2=U_1U_2+U_2U_3+U_3U_1, \textup{ and } P=
U_{1}U_{2}U_3 \,;
\]
\item $P$ is unitary, $S_1=S_2^*P$ and
$\left(\dfrac{2}{3}S_1,\dfrac{1}{3}S_2\right)$ is a
$\Gamma_2$-contraction \,; \item $(S_1,S_2,P)$ is a
$\Gamma_3$-contraction and $P$ is a unitary \,; \item $P$ is
unitary and there exists a $\Gamma_2$-unitary $(C_1,C_2)$ on
$\mathcal H$ such that $C_1,C_2$ commute with $P$ and
\[
S_1=C_1+C_2^*P \,\,,\,\, S_2=C_2+C_1^*P\,.
\]
\end{enumerate}
\end{thm}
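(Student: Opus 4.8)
The plan is to prove the cycle of implications $(1)\Rightarrow(2)\Rightarrow(4)\Rightarrow(3)\Rightarrow(5)\Rightarrow(1)$, or some convenient variant, using the scalar characterization of $\Gamma_3$ (Theorem \ref{thm:SC1}) and standard spectral-theoretic facts for commuting normal tuples. For $(1)\Rightarrow(2)$ the idea is spectral: if $(S_1,S_2,P)$ is a $\Gamma_3$-unitary, it is a commuting normal triple with $\sigma_T(S_1,S_2,P)\subseteq b\Gamma_3$, so by the spectral theorem for commuting normal tuples there is a spectral measure $E$ on $b\Gamma_3$ with $S_1=\int s_1\,dE$, etc. Since every point of $b\Gamma_3$ has the form $\pi_3(\lambda_1,\lambda_2,\lambda_3)$ with $|\lambda_i|=1$, one can pull back and build commuting unitaries $U_1,U_2,U_3$ (taking care that $\pi_3$, while not injective, allows a Borel selection of the roots so that the symmetric functions of the $U_i$ recover $S_1,S_2,P$) whose symmetrization is $(S_1,S_2,P)$. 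This is the step I expect to be the main obstacle: one must produce a measurable choice of the unordered triple of unit-modulus roots of $z^3-s_1z^2+s_2z-p$ so that the resulting $U_i$ genuinely commute and are unitary; the cleanest route is to note that $\{(s_1,s_2,p,\lambda_1,\lambda_2,\lambda_3): \pi_3(\lambda)=(s_1,s_2,p),\ |\lambda_i|=1\}$ is a compact set that maps onto $b\Gamma_3$, invoke a measurable selection theorem, and then define $U_i$ by the functional calculus with these selected functions.

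The implication $(2)\Rightarrow(4)$ should be quick: if $S_1=U_1+U_2+U_3$, $S_2=U_1U_2+U_2U_3+U_3U_1$, $P=U_1U_2U_3$ with commuting unitaries $U_i$, then $P$ is obviously a unitary, and $(S_1,S_2,P)$ is a $\Gamma_3$-contraction because for any polynomial $q$ one has $q(S_1,S_2,P)=(q\circ\pi_3)(U_1,U_2,U_3)$, and since $(U_1,U_2,U_3)$ is a commuting normal triple with spectrum in $\mathbb{T}^3$, the functional calculus gives $\|(q\circ\pi_3)(U_1,U_2,U_3)\|\le \sup_{\mathbb{T}^3}|q\circ\pi_3|=\|q\|_{\infty,\Gamma_3}$. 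For $(4)\Rightarrow(3)$ I would use the fundamental equations: when $P$ is unitary, $D_P=0$, so the fundamental equations force $S_1-S_2^*P=0$ and $S_2-S_1^*P=0$, i.e. $S_1=S_2^*P$; and $(\tfrac23 S_1,\tfrac13 S_2)$ is a $\Gamma_2$-contraction by Lemma \ref{lem:BSR2}.

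For $(3)\Rightarrow(5)$: given $P$ unitary and $S_1=S_2^*P$, set $C_1=S_1$ and $C_2=0$? That is too crude; instead I would set $C_1$ and $C_2$ guided by the scalar picture of part (6) of Theorem \ref{thm:SC1}, which on the level of numbers reads $s_1=c_1+\bar c_2 p$, $s_2=c_2+\bar c_1 p$ with $(c_1,c_2)\in\Gamma_2$. With $P$ unitary one solves these relations operator-theoretically: since $S_1=S_2^*P$ automatically, any choice $C_1$ with $S_1=C_1+C_2^*P$ determines $C_2^*=(S_1-C_1)P^*$; taking $C_1=\tfrac12 S_1$ and hence $C_2=\tfrac12 S_2$ (using $S_1=S_2^*P\iff S_1^*=P^*S_2\iff S_2=PS_1^*=S_1^*P$ since $P,S_1,S_2$ commute and $P$ is unitary) gives $C_1+C_2^*P=\tfrac12 S_1+\tfrac12 S_2^*P=\tfrac12 S_1+\tfrac12 S_1=S_1$ and similarly for $S_2$; one then checks $(C_1,C_2)=(\tfrac12 S_1,\tfrac12 S_2)$ is a $\Gamma_2$-unitary using $(3)$ — note $(\tfrac23 S_1,\tfrac13 S_2)$ being a $\Gamma_2$-contraction together with $P$ unitary and the relation $C_1=C_2^*P$ should, via the structure theorem for $\Gamma_2$-unitaries (which is the $\Gamma_2$-analogue of what we are proving), force $(C_1,C_2)$ to be a $\Gamma_2$-unitary commuting with $P$. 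Finally $(5)\Rightarrow(1)$: write $C_1,C_2$ via the $\Gamma_2$-unitary structure as symmetrizations of commuting unitaries $V_1,V_2$, so $C_1=V_1+V_2$, $C_2=V_1V_2$; together with the unitary $P$ commuting with everything one forms the commuting normal triple $(S_1,S_2,P)$ whose Taylor spectrum, computed via the joint functional calculus on the commuting normal family $\{V_1,V_2,P\}$, is the image of a subset of $\mathbb{T}^3$ under the appropriate map and hence lies in $b\Gamma_3$; concretely, $S_1=V_1+V_2+\bar V_1\bar V_2 P$ etc., and one verifies these are the symmetric functions of the three commuting unitaries $V_1,V_2,\overline{V_1V_2}\,P$, which lands $\sigma_T$ in $b\Gamma_3$. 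I expect the selection/measurability issue in $(1)\Rightarrow(2)$ and the precise bookkeeping in $(3)\Rightarrow(5)$ (ensuring the candidate $\Gamma_2$-pair is genuinely a $\Gamma_2$-unitary, not merely a contraction) to be the two genuinely delicate points; everything else is routine functional calculus and the fundamental-equation identity $D_P=0$.
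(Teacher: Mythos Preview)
Your overall strategy and the steps $(1)\Rightarrow(2)$, $(2)\Rightarrow(4)$, $(4)\Rightarrow(3)$ are essentially the same as the paper's (the paper uses a measurable right inverse of $\pi_3|_{\mathbb T^3}$ for $(1)\Rightarrow(2)$, and for $(4)\Rightarrow(3)$ it computes $\Phi_{ik}$ directly rather than invoking the fundamental equations, but your shortcut via $D_P=0$ is equivalent and valid). The real problem is your proposed implication $(3)\Rightarrow(5)$.

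Your candidate $(C_1,C_2)=\bigl(\tfrac12 S_1,\tfrac12 S_2\bigr)$ does satisfy the algebraic identities $S_1=C_1+C_2^*P$ and $S_2=C_2+C_1^*P$, but it is \emph{not} a $\Gamma_2$-unitary in general. A $\Gamma_2$-unitary $(C_1,C_2)$ must have $C_2$ unitary (its joint spectrum lies in $b\Gamma_2$, where the second coordinate has modulus $1$). Already in the scalar case $(s_1,s_2,p)=\pi_3(1,1,1)=(3,3,1)\in b\Gamma_3$ your choice gives $(c_1,c_2)=(\tfrac32,\tfrac32)$, and $|\tfrac32|\neq 1$, so this point is not in $b\Gamma_2$. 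The hypothesis in $(3)$ that $(\tfrac23 S_1,\tfrac13 S_2)$ is a $\Gamma_2$-contraction gives no control on $|\tfrac12 s_2|$, so there is no way to repair this particular choice. Since your cycle passes through $(3)\Rightarrow(5)$, this breaks the argument.

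The paper sidesteps this entirely: it never proves $(3)\Rightarrow(5)$ directly. Instead it runs the cycle $(1)\Rightarrow(2)\Rightarrow(3)\Rightarrow(1)$ and then hangs $(4)$ and $(5)$ off of $(2)$ via $(2)\Rightarrow(4)\Rightarrow(3)$ and $(2)\Rightarrow(5)\Rightarrow(3)$. For $(2)\Rightarrow(5)$, with the commuting unitaries $U_1,U_2,U_3$ already in hand, one simply sets $C_1=U_1+U_2$ and $C_2=U_1U_2$; this pair is a $\Gamma_2$-unitary by construction, commutes with $P=U_1U_2U_3$, and the identities $S_1=C_1+C_2^*P$, $S_2=C_2+C_1^*P$ are immediate. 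If you want to keep your cycle structure, the fix is to route $(3)\Rightarrow(5)$ through $(1)$ and $(2)$ (which you have already reached via $(3)\Rightarrow(1)$ in the paper's argument) rather than attempting a direct algebraic construction of $(C_1,C_2)$ from $(S_1,S_2)$.
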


\begin{proof}
We shall show:\\

$(1)\Rightarrow (2) \Rightarrow (3) \Rightarrow (1)$,
$(2)\Rightarrow (4)\Rightarrow (3)$ and $(2)\Rightarrow (5)
\Rightarrow (3)$.\\

$(1)\Rightarrow (2)$. Let $(S_1,S_2,P)$ be a $\Gamma_3$-unitary.
Then by spectral theorem for commuting normal operators there
exists a spectral measure say $Q(.)$ on
$\sigma_T=\sigma_T(S_1,S_2,P)$ such that
\[
S_1= \int_{\sigma_T}\; p_{1}(z)\,Q(dz),\quad S_2=\int_{\sigma_T}\;
p_{2}(z)\,Q(dz),\text{ and }P= \int_{\sigma_T}\; p_{3}(z)\,Q(dz),
\]
where $p_{1}, p_{2}, p_3$ are the co-ordinate functions on
$\mathbb{C}^3.$ Now choose a measurable right inverse $\beta$ of
the restriction of the function $\pi_3$ to $\mathbb{T}^3$ so that
$\beta$ maps the distinguished boundary $b\Gamma_3$ of $\Gamma_3$
to $\mathbb{T}^3.$ Let $\beta=(\beta_{1},\beta_{2},\beta_3)$ and
$U_{j}=\int_{\sigma_T}\; \beta_{j}(z)Q(dz),\quad j=1,2,3.$ Then
$U_{1}, U_{2}, U_3$ are commuting unitary operators on
$\mathcal{H}$ and
\[
U_{1}+U_{2}+U_3=\int_{\sigma_T}\;(\beta_{1}+\beta_{2}+\beta_3)(z)\,Q(dz)=
\int_{\sigma_T}\; p_{1}(z)\,Q(dz)=S_1.
\]
Similarly
\[
U_{1}U_{2}+U_2U_3+U_3U_1=S_2 \quad \text{ and }\quad U_1U_2U_3=P .
\]

\noindent (2)$\Rightarrow(3)$ Since $(S_1,S_2,P)$ is a
$\Gamma_3$-contraction because it is symmetrization of three
commuting contractions. So, by Lemma \ref{lem:BSR2},
$(\dfrac{2}{3}S_1,\dfrac{1}{3}S_2)$ is a $\Gamma_2$-contraction.
The rest of the proof is straight forward.\\

$(3)\Rightarrow(1)$ Since $(3)$ holds, we have that
\begin{align*}
P^*P &=PP^*=I  & S_1^* &=P^*S_2\\
r(S_i)&\leq3,\quad i=1,2 & S_2^* &=P^*S_1.
\end{align*}
Since $P$, being a normal operator, commutes with $S_1,S_2$, by
Fudlege's Theorem on commutativity of normal operators, (see
\cite{Fuglede} for details) which states that if a normal operator
commutes with an operator $T$ then it commutes with $T^*$ too, it
commutes with $S_1^*,S_2^*$. Therefore we have
\[
S_1^*S_1=S_1^*PP^*S_1=S_2P^*S_1=S_1P^*S_2=S_1S_1^*.
\]
Here we have used the relations $S_1^*=P^*S_2, \; S_2^*=P^*S_1$
and their adjoint and the fact that $P$ commutes with $S_1,S_2$
and their adjoint. Similarly we can prove that
$S_2^*S_2=S_2S_2^*$. Therefore, $S_1,S_2,P$ are commuting normal
operators and hence $r(S_i)=\|S_i\|, \; i=1,2$.

Let $C^*(S_1,S_2,P)$ be the commutative $C^*$-algebra generated by
$S_1,S_2,P$. By general theory of joint spectrum (see p-27,
Proposition 1.2 in \cite{curto}),
\[
\sigma_T(S_1,S_2,P) = \{(\varphi(S_1),\varphi(S_2),\varphi(P)):
\varphi \in\mathcal{M}\},
\]
where $\mathcal{M}$ is the maximal ideal space of
$C^*(S_1,S_2,P).$ Suppose that
\[
(s_1,s_2,p)=(\psi(S_1),\psi(S_2),\psi(P))\in\sigma_T(S_1,S_2,P)
\quad \text{for some } \psi\in\mathcal{M}.
\]
Then
\begin{gather*}
|p|^2 = \overline{p}p = \overline{\psi(p)}\psi(p) =
\psi(P^*)\psi(P) = \psi(P^*P) = \psi(I)=1 \\
\overline{s_1}p = \overline{\psi(S_1)}\psi(P) = \psi(S_1^*P) =
\psi(S_2) = s_2 \quad \text{ and }\\ \overline{s_2}p =
\overline{\psi(S_2)}\psi(P) = \psi(S_2^*P) = \psi(S_1) = s_1.
\end{gather*}
Also since $\left(\dfrac{2}{3}S_1,\dfrac{1}{3}S_2 \right)$ is a
$\Gamma_2$-contraction,
$(\dfrac{2}{3}s_1,\dfrac{1}{3}s_2)\in\Gamma_2$. Therefore, by
Theorem \ref{thm:DB}, $(s_1,s_2,p)\in b\Gamma_3$ i.e,
$\sigma_T(S_1,S_2,P)\subseteq b\Gamma_3$. Hence $(S_1,S_2,P)$ is a
$\Gamma_3$-unitary. Hence (3) $\Rightarrow$ (1). Thus (1), (2) and (3) are equivalent.\\

\noindent (2) $\Rightarrow$ (4) is trivial.\\

\noindent (4) $\Rightarrow$ (3). Let $(S_1,S_2,P)$ be a
$\Gamma_3$-contraction and $P$ is a unitary. Since $(S_1,S_2,P)$
is a $\Gamma_3$-contraction, by Proposition \ref{prop:POS},
\[
\Phi_{i3}(\omega S_1,\omega^2 S_2,\omega^3 P)\geq 0 \text{ for all
} \omega \in \mathbb T \;,i=1,2.
\]

Now $\Phi_{13}(\omega
S_1,\omega^2 S_2,\omega^3 P)\geq 0$ gives
\[
9(I-P^*P)+(S_1^*S_1-S_2^*S_2)-3\omega(S_1-S_2^*P)-3\bar{\omega}(S_1^*-P^*S_2)\geq
0,
\]
which along with the fact that $P$ is a unitary implies that
\begin{equation}\label{13}
(S_1^*S_1-S_2^*S_2)-3\omega(S_1-S_2^*P)-3\bar{\omega}(S_1^*-P^*S_2)\geq
0 \quad \forall \omega \in \mathbb T.
\end{equation}
Putting $\omega =1$ and $-1$ respectively in (\ref{13}) and adding
them up we get
\begin{equation}\label{14}
S_1^*S_1-S_2^*S_2 \geq 0.
\end{equation}
Again from $\Phi_{23}(\omega S_1,\omega^2S_2,\omega^3P)\geq 0$ by
using the fact that $P$ is unitary, we get
\[
(S_2^*S_2-S_1^*S_1)-3\omega^2(S_2-S_1^*P)-3{\bar{\omega}^2}
(S_2^*-P^*S_1)\geq 0 \quad \text{for all } \omega \in \mathbb T.
\]
Putting $\omega=1$ and $i$ respectively we obtain that
\begin{equation}\label{15}
S_2^*S_2-S_1^*S_1\geq 0.
\end{equation}
Thus (\ref{14}) and (\ref{15}) implies that $S_1^*S_1=S_2^*S_2$.
Therefore from (\ref{13}) we have that Re $\omega (S_1-S_2^*P)\leq
0$ for all $\omega\in\mathbb T$. By Lemma \ref{basiclemma},
$S_1=S_2^*P$. Again since $(S_1,S_2,P)$ is a
$\Gamma_3$-contraction by Lemma \ref{lem:BSR2},
$\left(\dfrac{2}{3}S_1,\dfrac{1}{3}S_1 \right)$ is a $\Gamma_2$-contraction.\\

(2)$\Rightarrow$(5). Suppose (2) holds. Then
\begin{align*}
S_1 &= U_1+U_2+U_3 \\
& = (U_1+U_2)+(U_1^{-1}U_2^{-1})U_1U_2U_3 \\
& = C_1+C_2^*P\,, \quad \text{ where }
C_1=U_1+U_2\,,\,C_2=U_1U_2\,.
\end{align*}
 Also,
\[
S_2=U_1U_2+U_2U_3+U_3U_1=C_2+C_1^*P.
\]
Evidently $(C_1,C_2)$ is a $\Gamma_2$-unitary and $C_1,C_2$
commute with the unitary $P$.\\

\noindent (5)$\Rightarrow $(3) It suffices if we prove here that
$\left(\dfrac{2}{3}S_1,\dfrac{1}{3}S_2 \right)$ is a
$\Gamma_2$-contraction, because, $S_1=S_2^*P$ is obvious. Now
since $(C_1,C_2)$ is a $\Gamma_2$-unitary, there are commuting
unitaries $U_1,U_2$ such that
\[
C_1=U_1+U_2\;,\; C_2=U_1U_2.
\]
A straight-forward computation show that $(S_1,S_2,P)$ is the
symmetrization of the contractions $U_1,U_2,U_1^*U_2^*P$. Thus,
$(S_1,S_2,P)$ is a $\Gamma_3$-contraction. So, by Lemma
\ref{lem:BSR2}, $\left(\dfrac{2}{3}S_1,\dfrac{1}{3}S_2\right)$ is
a $\Gamma_2$-contraction and the proof is complete.

\end{proof}

\subsection{Model theorem for pure
$\Gamma_3$-isometries}\label{pure-isometry}

An isometry, by Wold-decomposition, can be decomposed into two
parts; a unitary and a pure isometry. See Chapter-I of classic
\cite{nagy} for a detailed description of this. A pure isometry
$V$ is unitarily equivalent to the Toeplitz operator $T_z$ on the
vectorial Hardy space $H^2(\mathcal D_{V^*})$. We shall see in
Theorem \ref{G-isometry} that an analogous Wold-type decomposition
holds for $\Gamma_3$-isometries in terms of a $\Gamma$-unitary and
a pure $\Gamma$-isometry. Theorem \ref{G-unitary} shows that every
$\Gamma$-unitary is nothing but the symmetrization of a triple of
commuting unitaries. Therefore a standard model for pure
$\Gamma_3$-isometries gives a complete vision of a
$\Gamma_3$-isometry. The following theorem describes a concrete
model for pure $\Gamma_3$-isometries.

\begin{thm}\label{model1}
Let $(\hat{S_1},\hat{S_2},\hat{P})$ be a commuting triple of
operators on a Hilbert space $\mathcal H$. If
$(\hat{S_1},\hat{S_2},\hat{P})$ is a pure $\Gamma_3$-isometry then
there is a unitary operator $U:\mathcal H \rightarrow H^2(\mathcal
D_{{\hat{P}}^*})$ such that
\[
\hat{S_1}=U^*T_{\varphi}U,\quad \hat{S_2}=U^*T_{\psi}U \text{ and
} \hat{P}=U^*T_zU,
\]
where $\varphi(z)= F_1^*+F_2z,\,\psi(z)= F_2^*+F_1z, \;
z\in\mathbb D$ and $(F_1,F_2)$ is the fundamental operator pair of
$(\hat{S_1}^*,\hat{S_2}^*,\hat{P}^*)$ such that
\begin{enumerate}
\item $(F_1,F_2)$ is almost normal, \item $\left
(\dfrac{2}{3}\varphi(z),\dfrac{1}{3}\psi(z) \right )$ is a
$\Gamma_2$-contraction for all $z\in \mathbb T$.
\end{enumerate}
Conversely, if $F_1$ and $F_2$ are two bounded operators on a
Hilbert space $E$ satisfying the above conditions, then
$(T_{F_1^*+F_2z},T_{F_2^*+F_1z},T_z)$ on $H^2(E)$ is a pure
$\Gamma_3$-isometry.
\end{thm}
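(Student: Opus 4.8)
The plan is to treat the two implications separately, in each case routing everything through the Wold/Hardy-space model of the pure isometry and the existence–uniqueness theorem for fundamental operator pairs (Theorem \ref{existence-uniqueness}).

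\textbf{Direct implication.} Suppose $(\hat{S_1},\hat{S_2},\hat{P})$ is a pure $\Gamma_3$-isometry. Since $\hat P$ is a pure isometry, the Wold decomposition furnishes a unitary $U\colon\mathcal H\to H^2(\mathcal D_{\hat P^*})$ with $U\hat PU^*=T_z$; here $\mathcal D_{\hat P^*}=\ker\hat P^*$ is the wandering subspace of $\hat P$, identified with the constants $E:=\mathcal D_{\hat P^*}$ inside $H^2(E)$, and $D_{T_z^*}=(I-T_zT_z^*)^{1/2}$ is the projection $P_E$ onto $E$. The triple $(\hat{S_1}^*,\hat{S_2}^*,\hat{P}^*)$ is a $\Gamma_3$-co-isometry, hence a $\Gamma_3$-contraction, so by Theorem \ref{existence-uniqueness} it has a unique fundamental operator pair $(F_1,F_2)$; taking adjoints in its fundamental equations gives $\hat S_1-\hat P\hat S_2^*=D_{\hat P^*}F_1^*D_{\hat P^*}$ and $\hat S_2-\hat P\hat S_1^*=D_{\hat P^*}F_2^*D_{\hat P^*}$. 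Because $\hat S_1,\hat S_2$ commute with $\hat P$, the operators $U\hat S_1U^*$ and $U\hat S_2U^*$ commute with $T_z$ and are therefore analytic Toeplitz operators $T_G$, $T_K$ on $H^2(E)$.

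\textbf{Pinning down the symbols and checking (1),(2).} Conjugating the two adjointed fundamental equations by $U$ turns them into $T_G-T_zT_K^*=P_EF_1^*P_E$ and $T_K-T_zT_G^*=P_EF_2^*P_E$ on $H^2(E)$. Comparing Fourier coefficients on the two sides---the right-hand sides live only at the level of constants---first forces $G,K$ to have degree at most one and then pins them down as $G(z)=F_1^*+F_2z=\varphi(z)$ and $K(z)=F_2^*+F_1z=\psi(z)$; this is the one genuinely computational step. Condition (1) is then immediate: $\hat S_1,\hat S_2$ commute, so $T_\varphi T_\psi=T_\psi T_\varphi$, i.e.\ $\varphi(z)\psi(z)=\psi(z)\varphi(z)$ for all $z\in\mathbb T$, which by Lemma \ref{vital}, $(1)\Leftrightarrow(2)$, says precisely that $(F_1,F_2)$ is almost normal. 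For condition (2): since $(\hat{S_1},\hat{S_2},\hat{P})$ is a $\Gamma_3$-contraction, Lemma \ref{lem:BSR2} gives that $\bigl(\tfrac23\hat S_1,\tfrac13\hat S_2\bigr)$ is a $\Gamma_2$-contraction, hence so is $\bigl(T_{\frac23\varphi},T_{\frac13\psi}\bigr)$ on $H^2(E)$; evaluating the analytic symbol $p\bigl(\tfrac23\varphi,\tfrac13\psi\bigr)$ at a point $z_0\in\overline{\mathbb D}$ and using $\|T_\Phi\|=\|\Phi\|_{H^\infty}$ together with the polynomial-convexity description of $\Gamma_2$-contractions, one gets $\bigl\|p\bigl(\tfrac23\varphi(z_0),\tfrac13\psi(z_0)\bigr)\bigr\|\le\|p\|_{\infty,\Gamma_2}$ for every polynomial $p$, i.e.\ $\bigl(\tfrac23\varphi(z),\tfrac13\psi(z)\bigr)$ is a $\Gamma_2$-contraction for every $z\in\mathbb T$.

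\textbf{Converse.} Given $F_1,F_2$ on $E$ satisfying (1),(2), set $\varphi(z)=F_1^*+F_2z$, $\psi(z)=F_2^*+F_1z$, and $\hat S_1=T_\varphi$, $\hat S_2=T_\psi$, $\hat P=T_z$ on $H^2(E)$. Almost normality of $(F_1,F_2)$ gives $\varphi(z)\psi(z)=\psi(z)\varphi(z)$ by Lemma \ref{vital}, hence $T_\varphi T_\psi=T_\psi T_\varphi$, so the triple is commuting, and $T_z$ is a pure isometry. To see it is a $\Gamma_3$-isometry I would exhibit it as the restriction of a $\Gamma_3$-unitary: enlarge $H^2(E)$ to $L^2(E)$ and consider $(M_\varphi,M_\psi,M_z)$. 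Here $M_z$ is unitary; the identity $\varphi(z)=z\,\psi(z)^*$ on $\mathbb T$ gives $M_\varphi=M_\psi^*M_z$; by Lemma \ref{vital}, $(1)\Leftrightarrow(4)$, each $\varphi(z)$ (and each $\psi(z)$) is normal on $\mathbb T$, so $M_\varphi,M_\psi$ are normal; and condition (2) together with the symbol-evaluation argument above shows $\bigl(\tfrac23M_\varphi,\tfrac13M_\psi\bigr)$ is a $\Gamma_2$-contraction. Hence, by the equivalence $(1)\Leftrightarrow(3)$ of Theorem \ref{G-unitary}, $(M_\varphi,M_\psi,M_z)$ is a $\Gamma_3$-unitary on $L^2(E)$. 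Since $\varphi,\psi$ are analytic, $H^2(E)$ is a common invariant subspace of $M_\varphi,M_\psi,M_z$ on which these restrict to $T_\varphi,T_\psi,T_z$, so $(T_\varphi,T_\psi,T_z)$ is a $\Gamma_3$-isometry, and pure because $T_z$ is. The main obstacles are the Fourier-coefficient comparison that identifies the symbols $G,K$ in the direct part and the ``pointwise'' passage between a $\Gamma_2$-contraction assembled from operator-valued symbols and its pointwise values, used in both directions; alternatively the $\Gamma_3$-isometry verification in the converse could be shortened by invoking a characterization from Theorem \ref{G-isometry}.
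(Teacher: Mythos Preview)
Your proof is correct. The converse direction is essentially identical to the paper's: both exhibit $(M_\varphi,M_\psi,M_z)$ on $L^2(E)$ as a $\Gamma_3$-unitary via part (3) of Theorem \ref{G-unitary} and then restrict to $H^2(E)$.

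In the direct implication you take a genuinely different route from the paper when identifying the symbols. The paper first passes to the extension $(M_\varphi,M_\psi,M_z)$ on $L^2(\mathcal D_{\hat P^*})$, argues it is a $\Gamma_3$-unitary (it is a $\Gamma_3$-contraction because $\|p(M_\varphi,M_\psi,M_z)\|=\|p(T_\varphi,T_\psi,T_z)\|$ and $M_z$ is unitary, so Theorem \ref{G-unitary}(4) applies), and then uses the structural relation $M_\varphi=M_\psi^*M_z$ from Theorem \ref{G-unitary} to read off the linear form of $\varphi,\psi$; only afterwards does it identify the coefficients with the FOP by citing Theorem \ref{converse}. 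You instead invoke the FOP $(F_1,F_2)$ of $(\hat S_1^*,\hat S_2^*,\hat P^*)$ at the outset, transport the fundamental equations to $H^2(E)$, and compare block Fourier coefficients in $T_G-T_zT_K^*=P_EF_1^*P_E$ and $T_K-T_zT_G^*=P_EF_2^*P_E$ to force $G_n=K_n=0$ for $n\ge 2$ and pin down $G_0=F_1^*$, $G_1=F_2$, $K_0=F_2^*$, $K_1=F_1$. Your approach is more self-contained (it avoids the forward reference to Theorem \ref{converse} and the detour through $L^2$), while the paper's approach explains structurally \emph{why} the symbols must be linear, as a consequence of the $\Gamma_3$-unitary identity $S_1=S_2^*P$. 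Your argument for condition (2), passing between $(\tfrac23 T_\varphi,\tfrac13 T_\psi)$ and the pointwise pairs $(\tfrac23\varphi(z),\tfrac13\psi(z))$ via $\|T_\Phi\|=\|\Phi\|_\infty$, is also a bit more explicit than the paper's one-line deduction.
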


\begin{proof}
Suppose that $(\hat{S_1},\hat{S_2},\hat{P})$ is a pure
$\Gamma_3$-isometry. Then $\hat{P}$ is a pure isometry and it can
be identified with the Toeplitz operator $T_z$ on $H^2(\mathcal
D_{{\hat{P}}^*})$. Therefore, there is a unitary $U$ from
$\mathcal H$ onto $H^2(\mathcal D_{{\hat{P}}^*})$ such that
$\hat{P}=U^*T_zU$. Since for $i=1,2,\, \;\hat{S_i}$ is a commutant
of $\hat{P}$, there are two multipliers $\varphi,\, \psi$ in
$H^{\infty}(\mathcal L(\mathcal D_{{\hat{P}}^*}))$ such that

\[
\hat{S_1}=U^*T_{\varphi}U \;,\; \hat{S_2}=U^*T_{\psi}U.
\]

Now $(T_{\varphi},T_{\psi},T_z)$ on $H^2(\mathcal
D_{{\hat{P}}^*})$ is a $\Gamma_3$-isometry and the triple of
commuting multiplication operators $(M_{\varphi},M_{\psi},M_z)$ on
$L^2(\mathcal D_{{\hat{P}}^*})$ is a natural $\Gamma_3$-unitary
extension of $(T_{\varphi},T_{\psi},T_z)$. The fact that
$(M_{\varphi},M_{\psi},M_z)$ on $L^2(\mathcal D_{{\hat{P}}^*})$ is
a $\Gamma_3$-unitary follows from Theorem \ref{G-unitary},
because, $(M_{\varphi},M_{\psi},M_z)$ is a $\Gamma_3$-contraction
as $(T_{\varphi},T_{\psi},T_z)$ is a $\Gamma_3$-contraction and
also $M_z$ on $L^2(\mathcal D_{{\hat{P}}^*})$ is a unitary.
Therefore, $M_{\varphi}, M_{\psi}, M_z$ commute and by Theorem
\ref{G-unitary}, we have that
\[
M_{\varphi}=M_{\psi}^*M_z \text{ and } M_{\psi}=M_{\varphi}^*M_z.
\]
So, we have
\[
\varphi(z)=G_1+G_2z \textup{ and } \psi(z)=G_2^*+G_1^*z \textup{
for some } G_1,G_2 \in\mathcal L(\mathcal D_{{\hat{T_3}}^*}).
\]
Setting ${F_1}= G_1^*$ and ${F_2}=G_2$ and by the commutativity of
$\varphi(z)$ and $\psi(z)$ we obtain
\[
[F_1,F_2]=0 \textup{ and } [F_1^*,F_1]=[F_2^*,F_2].
\]
This is same as saying that $F_1+F_2^*z$ is normal for all $z$ of
unit modulus. Therefore, $(F_1,F_2)$ is almost normal. It has been
shown in the proof of Theorem \ref{converse} that $(F_1,F_2)$ is
the FOP of the $\Gamma_3$-co-isometry
$(T_{F_1^*+F_2z}^*,T_{F_2^*+F_1z}^*,T_z^*)$. Also it follows from
Lemma \ref{lem:BSR2} that $\left
(\dfrac{2}{3}T_{\varphi(z)},\dfrac{1}{3}T_{\psi(z)} \right )$ is a
$\Gamma_2$-contraction for all $z\in \mathbb T$. Hence $\left
(\dfrac{2}{3}{\varphi(z)},\dfrac{1}{3}{\psi(z)} \right )$ is a
$\Gamma_2$-contraction for all $z\in \mathbb T$.

For the converse, we first prove that the triple of multiplication
operators $(M_{F_1^*+F_2z},M_{F_2^*+F_1z},M_z)$ on $L^2(E)$ is a
$\Gamma_3$-unitary when $F_1,F_2$ satisfy the given conditions. It
is evident that $(M_{F_1^*+F_2z},M_{F_2^*+F_1z},M_z)$ is a
commuting triple of normal operators when $(F_1,F_2)$ is almost
normal. Again since $\left
(\dfrac{2}{3}{\varphi(z)},\dfrac{1}{3}{\psi(z)} \right )$ is a
$\Gamma_2$-contraction for all $z\in \mathbb T$, it is obvious
that $\left (\dfrac{2}{3}M_{\varphi(z)},\dfrac{1}{3}M_{\psi(z)}
\right )$ is a $\Gamma_2$-contraction for all $z\in \mathbb T$.
Also $M_{F_1^*+F_2z}=M_{F_2^*+F_1z}^*M_z$ and $M_z$ on $L^2(E)$ is
unitary. Therefore, by part-(3) of Theorem \ref{G-unitary},
$(M_{F_1^*+F_2z},M_{F_2^*+F_1z},M_z)$ is a $\Gamma_3$-unitary.
Needless to say that $(T_{F_1^*+F_2z},T_{F_2^*+F_1z},T_z)$, being
the restriction of $(M_{F_1^*+F_2z},M_{F_2^*+F_1z},M_z)$ to the
common invariant subspace $H^2(E)$, is a $\Gamma_3$-isometry. Also
$T_z$ on $H^2(E)$ is a pure isometry. Thus we conclude that
$(T_{F_1^*+F_2z},T_{F_2^*+F_1z},T_z)$ is a pure
$\Gamma_3$-isometry.

\end{proof}

\subsection{A set of characterizations for the $\Gamma_3$-isometries}

\begin{lem}\label{123}
Let $U\;,\;V$ be a unitary and a pure isometry on Hilbert Spaces
$\mathcal{H}_1 \;,\; \mathcal{H}_2$ respectively, and let
$X\;:\;\mathcal{H}_1 \rightarrow \mathcal{H}_2$ be such that
$XU=VX.$ Then $X=0.$
 \end{lem}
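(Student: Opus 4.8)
The plan is to use the shift model for the pure isometry $V$ and exploit the intertwining relation iteratively to force $X$ to vanish. First I would observe that, since $V$ is a pure isometry, the powers $V^n$ converge to $0$ in the strong operator topology is \emph{not} quite what I want directly; rather the relevant fact is that $\bigcap_{n \geq 0} V^n \mathcal{H}_2 = \{0\}$, which is exactly the Wold decomposition statement that $V$ has no unitary part. From $XU = VX$ and an easy induction one gets $XU^n = V^n X$ for every $n \geq 0$, hence $\operatorname{Ran} X \subseteq \operatorname{Ran}(V^n X) \subseteq V^n \mathcal{H}_2$ for all $n$. Intersecting over $n$ gives $\operatorname{Ran} X \subseteq \bigcap_{n \geq 0} V^n \mathcal{H}_2 = \{0\}$, so $X = 0$.

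The only point requiring care is the containment $\operatorname{Ran} X \subseteq V^n \mathcal{H}_2$: here one uses that $U$ is invertible (being unitary), so that $X = X U^n U^{-n} = V^n (X U^{-n})$, whence every vector $Xh$ lies in $\operatorname{Ran} V^n$. This is where the hypothesis that $U$ is a \emph{unitary} (not merely an isometry) is genuinely used; the argument would fail if $U$ were only an isometry. The hypothesis that $V$ is pure enters precisely through $\bigcap_{n \geq 0} V^n \mathcal{H}_2 = \{0\}$, which is the defining property of a pure (completely non-unitary) isometry.

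I expect the main obstacle to be essentially notational rather than conceptual: one must be slightly careful that $\operatorname{Ran} V^n \mathcal{H}_2$ is a closed subspace (it is, since $V^n$ is an isometry), so that the intersection argument is legitimate, and one should state the Wold-decomposition fact cleanly. An alternative, equally short route is to pass to the functional model $V \cong T_z$ on $H^2(\mathcal{D}_{V^*})$ and use that $\bigcap_n z^n H^2 = \{0\}$ in the Hardy space, but the abstract argument above avoids invoking the model and is cleaner. Either way the proof is two or three lines once the induction $XU^n = V^n X$ is in place.
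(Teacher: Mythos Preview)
Your proof is correct and follows essentially the same idea as the paper's: both iterate the intertwining relation to $XU^n = V^nX$, use the invertibility of the unitary $U$, and invoke the purity of $V$. The only cosmetic difference is that the paper takes adjoints and argues that $X^*$ vanishes on the dense set $\bigcup_n \ker V^{*n}$, whereas you argue directly that $\operatorname{Ran} X \subseteq \bigcap_n V^n\mathcal{H}_2 = \{0\}$; since $V^n\mathcal{H}_2 = (\ker V^{*n})^\perp$ for an isometry, these are dual formulations of the same fact.
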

\begin{proof}
We have , for any positive integer $n,\; XU^n\;=\;V^nX$ by
iteration. Therefore, ${U^*}^nX^*\;=\;{X^*}{V^*}^n.$ Thus $X^*$
vanishes on $Ker{V^*}^n,$ and since $\displaystyle
\bigcup_n{KerV^*}^n$ is dense in $\mathcal{H}_2$ we have $X^*=0\;
i.e,\; X=0.$
\end{proof}

\begin{thm}\label{G-isometry}
Let $S_1,S_2,P$ be commuting operators on a Hilbert space
$\mathcal H$. Then the following are equivalent:
\begin{enumerate}

\item $(S_1,S_2,P)$ is a $\Gamma_3$-isometry ; \item $P$ is
isometry, $S_1=S_2^*P$ and $(\dfrac{2}{3}S_1,\dfrac{1}{3}S_2)$ is
a $\Gamma_2$-contraction ; \item $($ Wold-Decomposition $)$: there
is an orthogonal decomposition $\mathcal H=\mathcal H_1 \oplus
\mathcal H_2$ into common invariant subspaces of $S_1,S_2$ and $P$
such that $(S_1|_{\mathcal H_1},S_2|_{\mathcal H_1},P|_{\mathcal
H_1})$ is a $\Gamma_3$-unitary and $(S_1|_{\mathcal
H_2},S_2|_{\mathcal H_2},P|_{\mathcal H_2})$ is a pure
$\Gamma_3$-isometry ; \item $(S_1,S_2,P)$ is a
$\Gamma_3$-contraction and $P$ is an isometry; \item
$(\dfrac{2}{3}S_1,\dfrac{1}{3}S_2)$ is a $\Gamma_2$-contraction
and $\rho_{ik}(\omega S_1,\omega^2S_2,\omega^3P)= 0, \; \forall
\;\omega\in \mathbb T$ and $\forall \; k \geq 3$;\\

\noindent Moreover, if the spectral radius $r(S)$ is less than $3$
then all
of the above are equivalent to :\\

\item $(\dfrac{2}{3}S_1,\dfrac{1}{3}S_2)$ is a
$\Gamma_2$-contraction and for $k\geq 3$, $(k\beta P-S_2)(k-\beta
S_1)^{-1}$ and $(k\beta^2 P-S_1)(k-\beta^2 S_2)^{-1}$ are
isometries for all $\beta\in\mathbb T$.

\end{enumerate}

\end{thm}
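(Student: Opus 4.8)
The plan is to establish the equivalences through the cycle $(1)\Rightarrow(2)\Rightarrow(3)\Rightarrow(1)$, to attach $(4)$ via $(1)\Rightarrow(4)\Rightarrow(2)$, to attach $(5)$ via $(1)\Rightarrow(5)\Rightarrow(2)$, and finally, under the extra spectral radius hypothesis, to identify $(5)$ and $(6)$ by factoring the pencils $\Phi_{ik}$. For $(1)\Rightarrow(2)$ and $(1)\Rightarrow(4)$, I realize the $\Gamma_3$-isometry $(S_1,S_2,P)$ as $(\tilde S_1|_{\mathcal H},\tilde S_2|_{\mathcal H},\tilde P|_{\mathcal H})$ for a $\Gamma_3$-unitary $(\tilde S_1,\tilde S_2,\tilde P)$ with $\mathcal H$ a common invariant subspace. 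Then $P$ is an isometry; since $\mathcal H$ is invariant, $q(S_1,S_2,P)=q(\tilde S_1,\tilde S_2,\tilde P)|_{\mathcal H}$ for every polynomial $q$, so $(S_1,S_2,P)$ is a $\Gamma_3$-contraction and hence $(\frac{2}{3}S_1,\frac{1}{3}S_2)$ is a $\Gamma_2$-contraction by Lemma \ref{lem:BSR2}; and by part $(2)$ of Theorem \ref{G-unitary} we have $\tilde S_1=\tilde S_2^*\tilde P$, so for $h\in\mathcal H$, $S_1h=\tilde S_2^*\tilde Ph=\tilde S_2^*Ph$, which lies in $\mathcal H$ (being $\tilde S_1h$) and hence equals $S_2^*Ph$, giving $S_1=S_2^*P$. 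This yields $(2)$, and together with the two displayed facts also $(4)$. Conversely $(4)\Rightarrow(2)$ is immediate: if $P$ is an isometry then $D_P=0$, so $S_1-S_2^*P=D_PF_1D_P=0$ by Theorem \ref{existence-uniqueness}, and Lemma \ref{lem:BSR2} supplies the rest.

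The substantial step is $(2)\Rightarrow(3)$. I pass to the Wold decomposition $\mathcal H=\mathcal H_1\oplus\mathcal H_2$ of the isometry $P$, with $\mathcal H_1=\bigcap_nP^n\mathcal H$ the unitary part; both summands reduce $P$. Since $S_1,S_2$ commute with $P$ they leave $\mathcal H_1$ invariant, and using $S_1=S_2^*P$ together with the unitarity of $P|_{\mathcal H_1}$ one checks $S_2^*\mathcal H_1\subseteq\mathcal H_1$, so $\mathcal H_1$, and hence $\mathcal H_2$, in fact reduces $S_1,S_2,P$. On $\mathcal H_1$ the restricted triple meets the hypotheses of part $(3)$ of Theorem \ref{G-unitary} and is therefore a $\Gamma_3$-unitary. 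On $\mathcal H_2$, identify $P|_{\mathcal H_2}$ with $T_z$ on $H^2(\mathcal E)$, $\mathcal E=\ker(P|_{\mathcal H_2})^*$; being in the commutant of $T_z$, $S_1|_{\mathcal H_2}$ and $S_2|_{\mathcal H_2}$ are analytic Toeplitz operators $T_\varphi,T_\psi$, and reading $T_\varphi=T_\psi^*T_z$ on Fourier coefficients forces $\psi(z)=A+Bz$ and $\varphi(z)=B^*+A^*z$. Setting $F_1=B$, $F_2=A^*$ gives $\varphi=F_1^*+F_2z$, $\psi=F_2^*+F_1z$, and the commutativity $[S_1,S_2]=0$, i.e. $\varphi\psi=\psi\varphi$ pointwise, is equivalent (comparing the coefficients of $z$ and $z^2$) to $(F_1,F_2)$ being almost normal; meanwhile $(\frac{2}{3}T_\varphi,\frac{1}{3}T_\psi)$ being a $\Gamma_2$-contraction yields, as in the proof of Theorem \ref{model1}, that $(\frac{2}{3}\varphi(z),\frac{1}{3}\psi(z))$ is a $\Gamma_2$-contraction for all $z\in\mathbb T$. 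The converse direction of Theorem \ref{model1} then shows $(S_1,S_2,P)|_{\mathcal H_2}$ is a pure $\Gamma_3$-isometry, completing $(2)\Rightarrow(3)$. For $(3)\Rightarrow(1)$, a $\Gamma_3$-unitary and a pure $\Gamma_3$-isometry are each $\Gamma_3$-isometries, and the orthogonal direct sum of their $\Gamma_3$-unitary extensions is a $\Gamma_3$-unitary extending $(S_1,S_2,P)$.

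For $(1)\Rightarrow(5)$: since $\Gamma_3$ and $b\Gamma_3$ are invariant under $(s_1,s_2,p)\mapsto(\omega s_1,\omega^2s_2,\omega^3p)$ for $\omega\in\mathbb T$, the triple $(\omega S_1,\omega^2S_2,\omega^3P)$ is again a $\Gamma_3$-isometry; substituting the relations $P^*P=I$, $S_1=S_2^*P$, $S_2=S_1^*P$ (the last reads $\tilde S_2=\tilde S_1^*\tilde P$ on the $\Gamma_3$-unitary, from the description in part $(2)$ of Theorem \ref{G-unitary}) and $S_1^*S_1=S_2^*S_2$ (restrict $\tilde S_1^*\tilde S_1=\tilde S_2^*\tilde S_2$ to $\mathcal H$) into $(\ref{eq:1a})$, $(\ref{eq:1b})$ makes $\Phi_{ik}(\omega S_1,\omega^2S_2,\omega^3P)=0$ for all $\omega\in\mathbb T$, $k\geq3$, $i=1,2$. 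For $(5)\Rightarrow(2)$: adding $\Phi_{1k}(\omega S_1,\omega^2S_2,\omega^3P)=0$ and $\Phi_{2k}(\omega S_1,\omega^2S_2,\omega^3P)=0$ produces a Laurent polynomial in $\omega$ with operator coefficients vanishing on $\mathbb T$; its constant term gives $P^*P=I$ and its remaining coefficients give $S_1=S_2^*P$ and $S_2=S_1^*P$, which with the $\Gamma_2$-contraction hypothesis present in $(5)$ is precisely $(2)$. Finally, when $r(S_1),r(S_2)<3$ the operators $k-\beta S_1$ and $k-\beta^2S_2$ are invertible for $k\geq3$, $\beta\in\mathbb T$; since $\Phi_{1k}(\beta S_1,\beta^2S_2,\beta^3P)=(k-\beta S_1)^*(k-\beta S_1)-(k\beta P-S_2)^*(k\beta P-S_2)$, vanishing of $\Phi_{1k}(\beta S_1,\beta^2S_2,\beta^3P)$ is equivalent to $(k\beta P-S_2)(k-\beta S_1)^{-1}$ being an isometry, and likewise $\Phi_{2k}(\beta S_1,\beta^2S_2,\beta^3P)=0$ if and only if $(k\beta^2P-S_1)(k-\beta^2S_2)^{-1}$ is an isometry, which identifies $(5)$ with $(6)$.

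I expect the main obstacle to be the pure part of $(2)\Rightarrow(3)$: establishing that the Wold summands are genuinely reducing, not merely invariant, for $S_1$ and $S_2$ — which has to be squeezed out of $S_1=S_2^*P$ and the unitarity of $P$ on the unitary summand — and then carrying out the Fourier-coefficient computation that upgrades $T_\varphi=T_\psi^*T_z$ to the precise symbol structure $\varphi=F_1^*+F_2z$, $\psi=F_2^*+F_1z$, together with the verification that $(F_1,F_2)$ is almost normal, so that the converse half of Theorem \ref{model1} applies cleanly. Everything else amounts to routine manipulation of the pencil identities.
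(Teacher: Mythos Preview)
Your proof is correct and follows essentially the same architecture as the paper's: the same cycle $(1)\Rightarrow(2)\Rightarrow(3)\Rightarrow(1)$, the same use of Theorem \ref{G-unitary} and the model Theorem \ref{model1} for the pure part, and the same factorization of the pencils for $(5)\Leftrightarrow(6)$.

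There are two small but genuine streamlinings relative to the paper that are worth recording. First, in $(2)\Rightarrow(3)$ you establish that the Wold summand $\mathcal H_1$ reduces $S_1,S_2$ directly from the relations: invariance of $\mathcal H_1=\bigcap_nP^n\mathcal H$ under $S_i$ follows from $[S_i,P]=0$, and invariance under $S_2^*$ (hence $S_1^*=P^*S_2$) follows from $S_2^*P=S_1$ together with surjectivity of $P|_{\mathcal H_1}$. The paper instead writes $S_1,S_2$ as $2\times 2$ block matrices and kills the off-diagonal entries using the intertwining lemma (Lemma \ref{123}: $XU=VX$ with $U$ unitary and $V$ pure isometry forces $X=0$). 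Your route bypasses that lemma entirely. Second, you prove $(4)\Rightarrow(2)$ in one line via the fundamental equations ($P$ isometric gives $D_P=0$, whence $S_1-S_2^*P=D_PF_1D_P=0$), whereas the paper detours through $(5)$, using positivity of $\Phi_{ik}$ and Lemma \ref{basiclemma} to force $S_1=S_2^*P$. Your shortcut is cleaner; the paper's route has the minor advantage that it simultaneously delivers $\Phi_{ik}=0$, so that $(4)\Rightarrow(5)$ and $(5)\Rightarrow(2)$ are obtained in the same computation rather than separately.
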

\begin{proof}
We prove in the following way : $(1)\Rightarrow (2)\Rightarrow
(3)\Rightarrow (1)$ , $(1)\Rightarrow
(4)\Rightarrow (5)\Rightarrow (2)$ and $(5)\Leftrightarrow (6)$.\\

\noindent $(1)\Rightarrow(2)$ Suppose that there is a
$\Gamma_3$-unitary $(\tilde S_1,\tilde S_2,\tilde P)$ on $\mathcal
K\supseteq \mathcal H$ such that $\mathcal H$ is a common
invariant subspace of $S_1,S_2$ and $P$ and $ S_1=\tilde
S_1|_{\mathcal H}, S_2=\tilde S_2|_{\mathcal H} \textup{ and }
P=\tilde P|_{\mathcal H}$. By Theorem \ref{G-unitary},
\[
\tilde S_1^*=\tilde P^*\tilde S_2 \,,\; \tilde P^*\tilde P=I \;.
\]
Taking compression to $\mathcal H$, we get
\[
S_1^*=P^*S_2\,,\; P^*P=I.
\]
Again since $(\tilde S_1,\tilde S_2,\tilde P)$ is a
$\Gamma_3$-unitary, it is a $\Gamma_3$-contraction and so by Lemma
\ref{lem:BSR2} $(\dfrac{2}{3}\tilde S_1, \dfrac{1}{3} \tilde S_2)$
is a $\Gamma_2$-contraction. Hence $(\dfrac{2}{3}S_1, \dfrac{1}{3}
S_2)$, being the restriction of $(\dfrac{2}{3}\tilde S_1,
\dfrac{1}{3} \tilde S_2)$ to the common invariant subspace
$\mathcal H$, is a $\Gamma_3$-contraction.\\

\noindent $(2)\Rightarrow (3)$ Since $P$ is an isometry, by
Wold-decomposition there is an orthogonal decomposition $\mathcal
H=\mathcal H_1\oplus \mathcal H_2$ into reducing subspaces of $P$
such that $P|_{\mathcal H_1}$ is a unitary and $P|_{\mathcal H_2}$
is a pure isometry. Therefore,
\[
P=\begin{pmatrix} U&0\\0&V \end{pmatrix} \textup{ on }\mathcal H=
\mathcal H_1 \oplus \mathcal H_2,
\]
where $U$ is a unitary and $V$ is an isometry. Let
\[
S_1=\begin{pmatrix}
S_{111} & S_{112} \\
S_{121} & S_{122}
\end{pmatrix}
\;,\; S_2=\begin{pmatrix}
S_{211}&S_{212} \\
S_{221}&S_{222}
\end{pmatrix}
\]
with respect to the decomposition $\mathcal H=\mathcal H_1\oplus
\mathcal H_2$. Now $S_1P=PS_1$ implies that
\[
\begin{pmatrix}
S_{111}&S_{112}\\S_{121}&S_{122}
\end{pmatrix} \begin{pmatrix} U&0\\0&V
\end{pmatrix}=\begin{pmatrix} U&0\\0&V
\end{pmatrix}\begin{pmatrix} S_{111}&S_{112}\\S_{121}&S_{122}
\end{pmatrix}
\]
that is
\[
\begin{pmatrix} S_{111}U&S_{112}V\\S_{121}U&S_{122}V
\end{pmatrix}=\begin{pmatrix} US_{111}&US_{112}\\VS_{121}&VS_{122}
\end{pmatrix}.
\]
So we have $S_{121}U=VS_{121}$ which makes $S_{121}=0$ by Lemma
\ref{123}. Similarly from the relation $S_2P=PS_2$ we get
$S_{221}=0$. Again the identity $S_1=S_2^*P$ provides that
\[
\begin{pmatrix}
S_{111}&S_{112}\\0&S_{122}\end{pmatrix}=\begin{pmatrix}S_{211}&S_{212}\\0&S_{222}\end{pmatrix}^*
\begin{pmatrix}U&0\\0&V\end{pmatrix} =
\begin{pmatrix}S_{211}^*U&0\\S_{212}^*U&S_{122}^*V\end{pmatrix}.
\]
This shows that $S_{212}^*U=0$ which implies that $S_{212}=0$.
Therefore, we obtain the following:
\begin{equation}\label{19}
S_{112}=S_{212}=0, \; S_{111}=S_{211}^*U \text{ and }
S_{122}=S_{222}^*V.
\end{equation}
Therefore,
\[
S_1=\begin{pmatrix}S_{111}&0\\0&S_{122}\end{pmatrix},\;
S_2=\begin{pmatrix}S_{211}&0\\0&S_{222}\end{pmatrix},
\]
with respect to the decomposition $\mathcal H=\mathcal H_1\oplus
\mathcal H_2$. Thus $\mathcal H_1, \mathcal H_2$ are common
reducing subspaces for $S_1,S_2$ and $P$ and with respect to the
decomposition $\mathcal H=\mathcal H_1 \oplus \mathcal H_2$ we
have
\begin{equation}\label{20}
(S_1,S_2,P)=(S_{111}\oplus S_{211}, S_{122} \oplus S_{222}, U
\oplus V).
\end{equation}
Again since $\left(\dfrac{2}{3}S_1,\dfrac{1}{3}S_2\right)$ is a
$\Gamma_2$-contraction, so are
$\left(\dfrac{2}{3}S_{111},\dfrac{1}{3}S_{122}\right)$ and
$\left(\dfrac{2}{3}S_{211},\dfrac{1}{3}S_{222}\right)$. Therefore,
by Theorem \ref{G-unitary}, $\left(S_{111},S_{122},U\right)$ is a
$\Gamma_3$-unitary.\\

 Again since $V$ on $\mathcal H_2$ is an isometry, it is unitarily
 equivalent to $T_z$ on $H^2(\mathcal D_{P^*})$. So, following the technique of
 the proof of Theorem \ref{model1}, we can identify $S_{211}$ and $S_{222}$ with
 $T_{\varphi}$ and $T_{\psi}$ respectively, where $\varphi(z),\psi(z)\in H^{\infty}(\mathcal D_{P^*})$.
 by the commutativity of $S_{211}$ and $S_{222}$ and by relations $S_{211}=S_{222}^*P$, $S_{222}=S_{211}^*P$, we have that
 \[
\varphi(z)=F_1^*+F_2z \text{ and  } \psi(z)=F_2^*+F_1z\,,
 \]
 where $(F_1,F_2)$ is an almost normal pair. Here we have followed the same
 technique that was applied in the proof of Theorem
 \ref{model1}. Again since $\left(\dfrac{2}{3}S_{211},\dfrac{1}{3}S_{222} \right)$ is a
 $\Gamma_2$-contraction, so is
 $\left(\dfrac{2}{3}T_{\varphi},\dfrac{1}{3}T_{\psi}\right)$ and therefore,
 $\left(\dfrac{2}{3}\varphi(z),\dfrac{1}{3}\psi(z)\right)$ is a
 $\Gamma_2$-contraction. Therefore, by Theorem \ref{model1},
 $(S_{211},S_{222},V)$ is a pure $\Gamma_3$-isometry.
 Hence $\mathcal H_1, \mathcal H_2$ are common reducing subspaces
for $S_1,S_2$ and $P$ and $(S_{111},S_{211},U)$ is a
$\Gamma_3$-unitary and $(S_{122},S_{222},V)$ is a pure
$\Gamma_3$-isometry. This is same as saying that $(S_1|_{\mathcal
H_1},S_2|_{\mathcal H_1},P|_{\mathcal H_1})$ is a
$\Gamma_3$-unitary and $(S_1|_{\mathcal H_2},S_2|_{\mathcal
H_2},P|_{\mathcal H_2})$ is a pure $\Gamma_3$-isometry.\\

\noindent $(3)\Rightarrow (1)$ Let $(S_1,S_2,P)$ have the stated
Wold-decomposition. Since
\[
(S_1|_{\mathcal H_2},S_2|_{\mathcal H_2},P|_{\mathcal H_2})
\]
is a $\Gamma_3$-isometry, by definition, there is a Hilbert space
$\mathcal K_2 \supseteq \mathcal H_2$ and a $\Gamma_3$-unitary say
$(T_1,T_2,U)$ on $\mathcal K_2$ such that $\mathcal H_2$ is a
common invariant subspace of $T_1,T_2, U$ and that
\[
T_1|_{\mathcal H_2}=S_1|_{\mathcal H_2}\,,\, T_2|_{\mathcal
H_2}=S_2|_{\mathcal H_2}\,,\, \text{ and } U|_{\mathcal
H_2}=P|_{\mathcal H_2}\,.
\]
Set $\mathcal K=\mathcal H_1\oplus \mathcal K_2$ and
\[
(\tilde{T}_1,\tilde{T}_2,\tilde{U})=(S_1|_{\mathcal H_1}\oplus
T_1, S_2|_{\mathcal H_1}\oplus T_2, P|_{\mathcal H_1}\oplus U).
\]
It follows trivially from part-(4) of Theorem \ref{G-unitary} that
the direct sum of two $\Gamma_3$-unitaries is a
$\Gamma_3$-unitary. Therefore,
$(\tilde{T}_1,\tilde{T}_2,\tilde{U})$ is a $\Gamma_3$-unitary and
it is evident that it is a $\Gamma_3$-unitary extension of
$(S_1,S_2,P)$. Therefore, $(S_1,S_2,P)$ is a
$\Gamma_3$-isometry.\\

\noindent $(1)\Rightarrow (4)$ This is obvious because
$(S_1,S_2,P)$, being a $\Gamma_3$-isometry is a
$\Gamma_3$-contraction and also is the restriction of a
$\Gamma_3$-unitary say $(\tilde S_1, \tilde S_2, \tilde P)$ where
$\tilde P$ is a unitary whose restriction to an invariant
subspace is an isometry.\\

\noindent $(4)\Rightarrow (5)$ Since $(S_1,S_2,P)$ is a
$\Gamma_3$-contraction, by Lemma \ref{lem:BSR2} $\left(
\dfrac{2}{3}S_1,\dfrac{1}{3} S_2 \right )$ is a
$\Gamma_2$-contraction. Again since $(S_1,S_2,P)$ is a
$\Gamma_3$-contraction, $\Phi_{ik}(\beta S_1, {\beta}^2S_2,
{\beta}^3P)\geq 0$ for $k\geq 3$, $\beta\in\mathbb T$ and $i=1,2$.
From the positivity of $\Phi_{1k}$ we have
\[
k^2(I-P^*P)-2k \text{ Re } \beta (S_1-S_2^*P) \geq 0.
\]
Using the fact that $P^*P=I$ we have
\[
\text{Re } \beta (S_1-S_2^*P)\leq 0 \text{ for all }
\beta\in\mathbb T.
\]
By Lemma \ref{basiclemma}, $S_1=S_2^*P$. Therefore,
\[
\Phi_{1k}(\beta S_1,{\beta}^2S_2,{\beta}^3P)=0.
\]
Similarly using the positivity of $\Phi_{2k}$ we can obtain
$\Phi_{2k}(\beta S_1,{\beta}^2S_2,{\beta}^3P)=0$.\\

\noindent $(5)\Rightarrow (2)$ We have that $\Phi_{ik}(\beta
S_1,{\beta}^2S_2,{\beta}^3P)=0$ for all $\beta\in\mathbb T$. For
$i=1$ we put $\beta=\pm 1$ and obtain $I-P^*P=0$. Hence
\[
\text{Re } \beta (S_1-S_2^*P) =0 \; \text{ for all }
\beta\in\mathbb T .
\]
Hence $S_1=S_2^*P$.\\

\noindent $(5)\Leftrightarrow (6)$ By hypothesis,
\[
\Phi_{1k}(\beta S_1,{\beta}^2S_2,{\beta}^3P) = (k-\beta
S_1)^*(k-\beta
S_1)-(k{\beta}^3P-{\beta}^2S_2)^*(k{\beta}^3P-{\beta}^2S_2)=0
\]
which implies that
\[
(k-\beta S_1)^*(k-\beta
S_1)=(k{\beta}^3P-{\beta}^2S_2)^*(k{\beta}^3P-{\beta}^2S_2).
\]
Since $r(S_1)< 3$ and $k\geq 3$, $k-\beta S_1$ is invertible and
so we have
\[
((k-\beta
S_1)^{-1})^*(k{\beta}^3P-{\beta}^2S_2)^*(k{\beta}^3P-{\beta}^2S_2)(k-\beta
S_1)^{-1}=I.
\]
Therefore, $(k{\beta}^3P-{\beta}^2S_2)(k-\beta S_1)^{-1}$ and
hence $(k{\beta}P-S_2)(k-\beta S_1)^{-1}$ is an isometry for all
$\beta\in\mathbb T$. Similarly we can show that
$(k{\beta}P-S_1)(k-\beta S_2)^{-1}$ is an isometry.\\

Conversely, let $(5)$ holds. Then $(k{\beta}P-S_2)(k-\beta
S_1)^{-1}$ is an isometry for all $\beta\in\mathbb T$. Therefore,
\[
((k-\beta
S_1)^{-1})^*(k{\beta}^3P-{\beta}^2S_2)^*(k{\beta}^3P-{\beta}^2S_2)(k-\beta
S_1)^{-1}=I
\]
or equivalently for all $\beta\in\mathbb T$,
\[
\Phi_{1k}(\beta S_1,{\beta}^2S_2,{\beta}^3P) = (k-\beta
S_1)^*(k-\beta
S_1)-(k{\beta}^3P-{\beta}^2S_2)^*(k{\beta}^3P-{\beta}^2S_2)=0
\]
Similarly we can show that $\Phi_{2k}(\beta
S_1,{\beta}^2S_2,{\beta}^3P) = 0$.

\end{proof}

\subsection{A partial converse to the Existence-Uniqueness Theorem of Fundamental operator pairs}

The existence and uniqueness of FOP is in the centre of all
results of this article (Theorem \ref{existence-uniqueness}). The
counter example we construct in section \ref{example} is a
$\Gamma_3$-contraction whose FOP is not almost normal. Here we
provide a partial converse to the existence-uniqueness theorem for
FOP.

\begin{thm}\label{converse}
Let $F_1,F_2$ be operators defined on a Hilbert space $E$ such
that $(F_1,F_2)$ is almost normal and $\left(
\dfrac{2}{3}(F_1^*+F_2z),\dfrac{1}{3}(F_2^*+F_1z) \right)$ is a
$\Gamma_2$-contraction for any $z\in\mathbb T$. Then there is a
$\Gamma_3$-contraction for which $(F_1,F_2)$ is the FOP.
\end{thm}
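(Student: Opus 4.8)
The plan is to exhibit the required $\Gamma_3$-contraction as the adjoint of the pure $\Gamma_3$-isometry attached to $(F_1,F_2)$ by Theorem \ref{model1}. Since $(F_1,F_2)$ is almost normal and $\left(\dfrac{2}{3}(F_1^*+F_2z),\dfrac{1}{3}(F_2^*+F_1z)\right)$ is a $\Gamma_2$-contraction for every $z\in\mathbb T$, the two conditions in the converse part of Theorem \ref{model1} are met, with $\varphi(z)=F_1^*+F_2z$ and $\psi(z)=F_2^*+F_1z$. That converse therefore tells us that the commuting triple $V=(V_1,V_2,V_3):=(T_{F_1^*+F_2z},T_{F_2^*+F_1z},T_z)$ on $H^2(E)$ is a pure $\Gamma_3$-isometry; in particular $V$ is a $\Gamma_3$-contraction, and hence so is $V^*=(V_1^*,V_2^*,V_3^*)$ by the proposition asserting that the adjoint of a $\Gamma_3$-contraction is again a $\Gamma_3$-contraction. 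I claim that $(F_1,F_2)$ is the fundamental operator pair of $V^*$, and then $V^*$ is the desired triple.

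To verify the claim I would first identify the defect space of $V_3^*=T_z^*$: since $I-T_zT_z^*$ is the orthogonal projection $P_0$ of $H^2(E)$ onto the subspace $E$ of constant functions, we have $D_{V_3^*}=P_0$ and $\mathcal D_{V_3^*}=E$, which is precisely the space on which $F_1,F_2$ act. Writing a general $f\in H^2(E)$ as $f=\sum_{n\geq 0}a_nz^n$ with $a_n\in E$, a direct comparison of Fourier coefficients gives
\[
V_1^*f=\sum_{n\geq 0}(F_1a_n+F_2^*a_{n+1})z^n,\qquad V_2V_3^*f=F_2^*a_1+\sum_{n\geq 1}(F_2^*a_{n+1}+F_1a_n)z^n,
\]
and hence $(V_1^*-V_2V_3^*)f=F_1a_0=D_{V_3^*}F_1D_{V_3^*}f$; an entirely symmetric computation yields $(V_2^*-V_1V_3^*)f=F_2a_0=D_{V_3^*}F_2D_{V_3^*}f$. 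Since the fundamental equations for the triple $(V_1^*,V_2^*,V_3^*)$ read $V_1^*-V_2V_3^*=D_{V_3^*}X_1D_{V_3^*}$ and $V_2^*-V_1V_3^*=D_{V_3^*}X_2D_{V_3^*}$, the pair $(F_1,F_2)\in\mathcal L(\mathcal D_{V_3^*})$ solves them; by the uniqueness half of Theorem \ref{existence-uniqueness}, $(F_1,F_2)$ is the fundamental operator pair of the $\Gamma_3$-contraction $V^*$, which proves the theorem.

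The argument is short; the only points requiring attention are that the hypotheses assumed here match exactly those demanded by the converse direction of Theorem \ref{model1} — so that, since that direction of Theorem \ref{model1} does not itself appeal to the present result, no circularity is introduced — and the bookkeeping of the degree-zero boundary term produced by the backward shift $V_3^*$ in the Fourier-coefficient computation. Beyond these I do not expect a genuine obstacle.
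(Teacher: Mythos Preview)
Your proof is correct and follows essentially the same route as the paper: exhibit $(T_{F_1^*+F_2z},T_{F_2^*+F_1z},T_z)$ on $H^2(E)$ as a $\Gamma_3$-isometry, pass to the adjoint, and compute that $(F_1,F_2)$ solves the fundamental equations for the adjoint triple. The only cosmetic difference is that you invoke the converse direction of Theorem~\ref{model1} (correctly noting that this causes no circularity) whereas the paper argues the $\Gamma_3$-isometry property directly via Theorem~\ref{G-isometry}, and you carry out the FOP computation in Fourier coefficients rather than in Toeplitz-operator notation.
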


\begin{proof}
Let us consider the Hilbert space $H^2(E)$ and the commuting
operator triple $(T_{F_1^*+F_2z},T_{F_2^*+F_1z},T_z)$ acting on
it. We shall show that
\[
(T_{F_1^*+F_2z}^*,T_{F_2^*+F_1z}^*,T_z^*)
\]
is a $\Gamma_3$-co-isometry and $(F_1,F_2)$ is the FOP of it.
Since the pair $(F_1,F_2)$ is almost normal,
$(T_{F_1^*+F_2z},T_{F_2^*+F_1z},T_z)$ is a commuting triple and
$F_2^*+F_1z$ is normal for all $z$ of unit modulus. Clearly
$T_{\hat{F}^*+\hat{F}z}=T_{\hat{F}^*+\hat{F}z}^*T_z$ and $T_z$ is
an isometry. Again since $\left(
\dfrac{2}{3}(F_1^*+F_2z),\dfrac{1}{3}(F_2^*+F_1z) \right)$ is a
$\Gamma_2$-contraction, so is $\left(
\dfrac{2}{3}M_{F_1^*+F_2z},\dfrac{1}{3}M_{F_2^*+F_1z} \right)$,
where the multiplication operators are defined on $L^2(E)$. Also
it is obvious that the restriction of
$\left(\dfrac{2}{3}M_{F_1^*+F_2z},\dfrac{1}{3}M_{F_2^*+F_1z}
\right)$ to the common invariant subspace $H^2(E)$ is $\left(
\dfrac{2}{3}T_{F_1^*+F_2z},\dfrac{1}{3}T_{F_2^*+F_1z} \right)$.
Therefore, $\left(
\dfrac{2}{3}T_{F_1^*+F_2z},\dfrac{1}{3}T_{F_2^*+F_1z}\right)$ is a
$\Gamma_2$-contraction. Hence, by part-(2) of Theorem
\ref{G-isometry}, $(T_{F_1^*+F_2z},T_{F_2^*+F_1z},T_z)$ is a
$\Gamma_3$-isometry and consequently
$(T_{F_1^*+F_2z}^*,T_{F_2^*+F_1z}^*,T_z^*)$ is a
$\Gamma_3$-co-isometry. We now compute the FOP of
$(T_{F_1^*+F_2z}^*,T_{F_2^*+F_1z}^*,T_z^*)$. Clearly $I-T_zT_z^*$
is the projection onto the space $\mathcal D_{T_z^*}$. Now
\[
T_{F_1^*+F_2z}^*- T_{F_2^*+F_1z}T_z^*=T_{F_1+F_2^* \bar z}-
T_{F_2^*+F_1z}T_{\bar z}= T_{F_1}=(I-T_zT_z^*)F_1(I-T_zT_z^*).
\]
Similarly,
\[
T_{F_2^*+F_1z}^*-T_{F_1^*+F_2z}T_z^* =(I-T_zT_z^*)F_2(I-T_zT_z^*).
\]
Therefore, $(F_1,F_2)$ is the FOP of
$(T_{F_1^*+F_2z}^*,T_{F_2^*+F_1z}^*,T_z^*)$.

\end{proof}

\section{A necessary condition for the existence of
dilation}\label{necessary-condition}

This section is devoted to find out a set of necessary conditions
for the existence of rational dilation, that is,
$\Gamma_3$-unitary dilation to a $\Gamma_3$-contraction
(Proposition \ref{ultimate}). Since $\Gamma_3$ is polynomially
convex, rational dilation reduces to polynomial dilation on
$\Gamma_3$. So we refine the definition of $\Gamma_3$-isometric
dilation of a $\Gamma_3$-contraction.

\begin{defn}
Let $(S_1,S_2,P)$ be a $\Gamma_3$-contraction on $\mathcal H$. A
commuting triple $(T_1,T_2,V)$ defined on $\mathcal K$ is said to
be a $\Gamma_3$-isometric dilation of $(S_1,S_2,P)$ if $\mathcal H
\subseteq \mathcal K$, $(T_1,T_2,V)$ is a $\Gamma_3$-isometry and
$$ P_{\mathcal H}(T_1^{m_1}T_2^{m_2}V^n)|_{\mathcal H}=S_1^{m_1}S_2^{m_2}P^n,
\; \textup{ for all non-negative integers }m_1,m_2,n.
$$ Moreover, the dilation is called {\em minimal} if the
following holds:
$$\mathcal K=\overline{\textup{span}}\{ T_1^{m_1}T_2^{m_2}V^n h\,:\;
h\in\mathcal H \textup{ and }m_1,m_2,n\in \mathbb N \cup \{0\}
\}.$$ In a similar fashion we can define $\Gamma_3$-unitary
dilation of a $\Gamma_3$-contraction.
\end{defn}

\begin{prop}\label{exist-minimal}
If a $\Gamma_3$-contraction $(S_1,S_2,P)$ defined on $\mathcal H$
has a $\Gamma_3$-isometric dilation, then it has a minimal
$\Gamma_3$-isometric dilation.
\end{prop}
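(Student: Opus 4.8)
The plan is to pass from the given $\Gamma_3$-isometric dilation to its smallest invariant part and to verify that this part is again a $\Gamma_3$-isometric dilation, now minimal by construction. So suppose $(T_1,T_2,V)$ on $\mathcal K \supseteq \mathcal H$ is a $\Gamma_3$-isometric dilation of $(S_1,S_2,P)$, and set
\[
\mathcal K_0=\overline{\textup{span}}\{\, T_1^{m_1}T_2^{m_2}V^n h \;:\; h\in\mathcal H,\ m_1,m_2,n\in\mathbb N\cup\{0\}\,\}.
\]
Taking $m_1=m_2=n=0$ gives $\mathcal H\subseteq\mathcal K_0$, and since $T_1,T_2,V$ commute, applying any one of them to a spanning vector $T_1^{m_1}T_2^{m_2}V^n h$ produces another spanning vector; hence $\mathcal K_0$ is a common invariant subspace of $T_1,T_2,V$. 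Write $T_i^0=T_i|_{\mathcal K_0}$ and $V_0=V|_{\mathcal K_0}$; these clearly commute.

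First I would check that $(T_1^0,T_2^0,V_0)$ is a $\Gamma_3$-isometry. By definition, $(T_1,T_2,V)$ is the restriction of some $\Gamma_3$-unitary $(\widetilde T_1,\widetilde T_2,\widetilde V)$ on a Hilbert space $\mathcal L\supseteq\mathcal K$ to the common invariant subspace $\mathcal K$. Since $\mathcal K_0\subseteq\mathcal K$ is invariant under $T_i=\widetilde T_i|_{\mathcal K}$ and $V=\widetilde V|_{\mathcal K}$, it is also a common invariant subspace of $\widetilde T_1,\widetilde T_2,\widetilde V$; consequently $(T_1^0,T_2^0,V_0)$, being the restriction of the $\Gamma_3$-unitary $(\widetilde T_1,\widetilde T_2,\widetilde V)$ to $\mathcal K_0$, is a $\Gamma_3$-isometry.

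Next I would verify the dilation identity. For $h\in\mathcal H$ one has $(T_1^0)^{m_1}(T_2^0)^{m_2}V_0^{\,n}h=T_1^{m_1}T_2^{m_2}V^n h$, and since $\mathcal H\subseteq\mathcal K_0\subseteq\mathcal K$, the orthogonal projection onto $\mathcal H$ of a vector lying in $\mathcal K_0$ is the same whether computed in $\mathcal K_0$ or in $\mathcal K$. Hence
\[
P_{\mathcal H}\left((T_1^0)^{m_1}(T_2^0)^{m_2}V_0^{\,n}\right)|_{\mathcal H}
= P_{\mathcal H}\left(T_1^{m_1}T_2^{m_2}V^n\right)|_{\mathcal H}
= S_1^{m_1}S_2^{m_2}P^n
\]
for all $m_1,m_2,n\ge 0$. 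Thus $(T_1^0,T_2^0,V_0)$ is a $\Gamma_3$-isometric dilation of $(S_1,S_2,P)$, and it is minimal precisely because $\mathcal K_0$ was defined as the closed span of $\{T_1^{m_1}T_2^{m_2}V^n h\}$. The argument is routine; the only point requiring a moment's care is the observation that the restriction of a $\Gamma_3$-isometry to a common invariant subspace is again a $\Gamma_3$-isometry, which is immediate once one passes back to the ambient $\Gamma_3$-unitary, so I do not anticipate any genuine obstacle.
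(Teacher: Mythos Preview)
Your proof is correct and follows the same overall strategy as the paper: define $\mathcal K_0$ as the closed span of the orbit of $\mathcal H$ under the monomials in $T_1,T_2,V$, restrict, and verify both the dilation identity and the $\Gamma_3$-isometry property.

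The one point of difference is how the restricted triple is shown to be a $\Gamma_3$-isometry. You argue directly from the definition: pull back to the ambient $\Gamma_3$-unitary on $\mathcal L\supseteq\mathcal K$ and observe that $\mathcal K_0$ is still a common invariant subspace for it, so the restriction is a $\Gamma_3$-isometry by definition. The paper instead argues that the restriction is a $\Gamma_3$-contraction (as the restriction of a $\Gamma_3$-contraction to a common invariant subspace) with $V_0$ an isometry, and then invokes the characterization in Theorem~\ref{G-isometry}\,(4). Your route is slightly more self-contained in that it avoids appealing to the structure theorem, at the cost of explicitly invoking the ambient $\Gamma_3$-unitary; the paper's route exhibits a use of its characterization theorem. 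Either is perfectly fine here.
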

\begin{proof}
Let $(T_1,T_2,V)$ on $\mathcal K\supseteq \mathcal H$ be a
$\Gamma_3$-isometric dilation of $(S_1,S_2,P)$. Let $\mathcal K_0$
be the space defined as
$$\mathcal K_0=\overline{\textup{span}}\{ T_1^{m_1}T_2^{m_2}V^n h\,:\;
h\in\mathcal H \textup{ and }m_1,m_2,n\in \mathbb N \cup \{0\}
\}.$$ Clearly $\mathcal K_0$ is invariant under $T_1^{m_1},
T_2^{m_2}$ and $V^n$, for any non-negative integer $m_1,m_2$ and
$n$. Therefore if we denote the restrictions of $T_1,T_2$ and $P$
to the common invariant subspace $\mathcal K_0$ by $T_{11},
T_{12}$ and $V_1$ respectively, we get $T_{11}^{m_1}k=T_1^{m_1}k,
\, T_{12}^{m_2}k=T_2^{m_2}k, \textup{ and } V_1^nk=V^nk,\quad
\textup{ for any }k\in\mathcal K_0.$ Hence
$$\mathcal K_0=\overline{\textup{span}}\{ T_{11}^{m_1}T_{12}^{m_2}V_1^n h\,:\;
h\in\mathcal H \textup{ and }m_1,m_2,n\in \mathbb N \cup \{0\} \}.
$$ Therefore for any non-negative integers $m_1,m_2$ and $n$ we have
$$ P_{\mathcal H}(T_{11}^{m_1}T_{12}^{m_2}V_1^{n})h=P_{\mathcal H}(T_1^{m_1}T_2^{m_2}V^n )h,
\quad \textup{ for all }h\in\mathcal H .
$$
Now $(T_{11},T_{12},V_1)$ is a $\Gamma_3$-contraction as being the
restriction of a $\Gamma_3$-contraction $(T_1,T_2,V)$ to a common
invariant subspace $\mathcal K_0$. Again $V_1$, being the
restriction of an isometry to an invariant subspace, is also an
isometry. Therefore by Theorem \ref{G-isometry}-part (4),
$(T_{11},T_{12},V_1)$ is a $\Gamma_3$-isometry. Hence
$(T_{11},T_{12},V_1)$ is a minimal $\Gamma_3$-isometric dilation
of $(S_1,S_2,P)$.

\end{proof}

\begin{prop}\label{dilation-extension}
Let $(T_1,T_2,V)$ on $\mathcal K\supseteq \mathcal H$ be a
$\Gamma_3$-isometric dilation of a $\Gamma_3$-contraction
$(S_1,S_2,P)$. If $(T_1,T_2,V)$ is minimal, then
$(T_1^*,T_2^*,V^*)$ is a $\Gamma_3$-co-isometric extension of
$(S_1^*,S_2^*,P^*)$. Conversely, if $(T_1^*,T_2^*,V^*)$ is a
$\Gamma_3$-co-isometric extension of $(S_1^*,S_2^*,P^*)$ then
$(T_1,T_2,V)$ is a $\Gamma_3$-isometric dilation of $(S_1,S_2,P)$.
\end{prop}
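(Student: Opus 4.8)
The plan is to establish the two implications separately; the heart of the matter is the first one, where minimality must be leveraged to force $\mathcal H$ to be co-invariant for the dilation triple.

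For the first implication, assume $(T_1,T_2,V)$ on $\mathcal K$ is a \emph{minimal} $\Gamma_3$-isometric dilation of $(S_1,S_2,P)$. The first step is to prove $T_1^*\mathcal H\subseteq\mathcal H$, $T_2^*\mathcal H\subseteq\mathcal H$ and $V^*\mathcal H\subseteq\mathcal H$. By minimality, $\mathcal H^{\perp}$ is the closed span of the vectors $(I-P_{\mathcal H})T_1^{m_1}T_2^{m_2}V^n h'$ with $h'\in\mathcal H$ and $m_1,m_2,n\ge 0$, and each such vector lies in $\mathcal H^{\perp}$; so it is enough to check that $V^*h$, $T_1^*h$ and $T_2^*h$ are orthogonal to all of them whenever $h\in\mathcal H$. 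Taking $V^*$ as the representative case, one expands
\[
\langle V^*h,(I-P_{\mathcal H})T_1^{m_1}T_2^{m_2}V^n h'\rangle=\langle h, T_1^{m_1}T_2^{m_2}V^{n+1}h'\rangle-\langle V^*h, S_1^{m_1}S_2^{m_2}P^n h'\rangle ,
\]
using the commutativity of $T_1,T_2,V$ and the dilation identity $P_{\mathcal H}T_1^{m_1}T_2^{m_2}V^n|_{\mathcal H}=S_1^{m_1}S_2^{m_2}P^n$. Since $h\in\mathcal H$ and $S_1^{m_1}S_2^{m_2}P^n h'\in\mathcal H$, both terms collapse, via the dilation identity again and the commutativity of $S_1,S_2,P$, to $\langle h, S_1^{m_1}S_2^{m_2}P^{n+1}h'\rangle$, so the difference vanishes; the computations for $T_1^*$ and $T_2^*$ are identical with the exponent of the appropriate variable raised by one. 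Once co-invariance is in hand, $\langle T_i^*h,h'\rangle=\langle h,P_{\mathcal H}T_ih'\rangle=\langle S_i^*h,h'\rangle$ for $h,h'\in\mathcal H$ (and likewise for $V$) gives $T_i^*|_{\mathcal H}=S_i^*$ and $V^*|_{\mathcal H}=P^*$. As $(T_1^*,T_2^*,V^*)$ is a $\Gamma_3$-co-isometry by the very definition of a $\Gamma_3$-isometry, this exhibits it as a $\Gamma_3$-co-isometric extension of $(S_1^*,S_2^*,P^*)$.

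For the converse, let $(T_1^*,T_2^*,V^*)$ be a $\Gamma_3$-co-isometry on $\mathcal K\supseteq\mathcal H$ extending $(S_1^*,S_2^*,P^*)$. Then $(T_1,T_2,V)$ is a $\Gamma_3$-isometry, since its adjoint triple is a $\Gamma_3$-co-isometry, and hence a $\Gamma_3$-contraction; compressing to the co-invariant subspace $\mathcal H$ one sees, using the polynomial characterisation of $\Gamma_3$-contractions, that $(S_1,S_2,P)$ is a $\Gamma_3$-contraction, so the statement is meaningful. Invariance of $\mathcal H$ under $T_1^*,T_2^*,V^*$ is the same as invariance of $\mathcal H^{\perp}$ under $T_1,T_2,V$; writing each of $T_1,T_2,V$ as a $2\times 2$ block operator with respect to $\mathcal K=\mathcal H\oplus\mathcal H^{\perp}$, each is block lower triangular with $(1,1)$-entry $P_{\mathcal H}T_i|_{\mathcal H}=S_i$, respectively $P$. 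A product of block lower triangular operators is again block lower triangular with $(1,1)$-entry the product of the $(1,1)$-entries, whence $P_{\mathcal H}T_1^{m_1}T_2^{m_2}V^n|_{\mathcal H}=S_1^{m_1}S_2^{m_2}P^n$ for all $m_1,m_2,n\ge 0$. Thus $(T_1,T_2,V)$ is a $\Gamma_3$-isometric dilation of $(S_1,S_2,P)$.

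The only genuinely delicate step is the co-invariance argument in the first implication: one must correctly extract the description of $\mathcal H^{\perp}$ from minimality and be careful to invoke the commutativity of both the dilation triple and the original triple when rearranging the inner products. The converse, and the identification of the compressed operators with $S_i^*$ and $P^*$, are routine block-matrix bookkeeping.
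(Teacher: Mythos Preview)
Your proof is correct and follows essentially the same strategy as the paper's. The paper first establishes the intertwining relations $S_iP_{\mathcal H}=P_{\mathcal H}T_i$ and $PP_{\mathcal H}=P_{\mathcal H}V$ by testing them on the minimality-spanning vectors $T_1^{m_1}T_2^{m_2}V^n h$, and then deduces $T_i^*|_{\mathcal H}=S_i^*$ via an inner-product identity; you instead verify co-invariance directly by checking $T_i^*h\perp(I-P_{\mathcal H})T_1^{m_1}T_2^{m_2}V^n h'$, but this unfolds to the very same calculation using the dilation identity and commutativity. For the converse the paper simply says ``obvious'', while your block-triangular argument spells it out explicitly.
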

\begin{proof}
We first prove that $S_1P_{\mathcal H}=P_{\mathcal H}T_1,
S_2P_{\mathcal H}=P_{\mathcal H}T_2$ and $PP_{\mathcal
H}=P_{\mathcal H}V$, where $P_{\mathcal H}:\mathcal K \rightarrow
\mathcal H$ is orthogonal projection onto $\mathcal H$. Clearly
$$\mathcal K=\overline{\textup{span}}\{ T_1^{m_1}T_2^{m_2}V^n h\,:\;
h\in\mathcal H \textup{ and }m_1,m_2,n\in \mathbb N \cup \{0\}
\}.$$ Now for $h\in\mathcal H$ we have that
\begin{align*}
S_1P_{\mathcal H}(T_1^{m_1}T_2^{m_2}V^n h)
=S_1(S_1^{m_1}S_2^{m_2}P^n h) &=S_1^{m_1+1}S_2^{m_2}P^n h\\&
=P_{\mathcal H}(T_1^{m_1+1}T_2^{m_2}V^n h)\\& =P_{\mathcal
H}T_1(T_1^{m_1}T_2^{m_2}V^n h).
\end{align*}
Thus we have $S_1P_{\mathcal H}=P_{\mathcal H}T_1$ and similarly
we can prove that $S_2P_{\mathcal H}=P_{\mathcal H}T_2$ and
$PP_{\mathcal H}=P_{\mathcal H}V$. Also for $h\in\mathcal H$ and
$k\in\mathcal K$ we have that
\[
\langle S_1^*h,k \rangle =\langle P_{\mathcal H}S_1^*h,k \rangle
=\langle S_1^*h,P_{\mathcal H}k \rangle =\langle h,S_1P_{\mathcal
H}k \rangle =\langle h,P_{\mathcal H}T_1 \rangle =\langle T_1^*h,k
\rangle .
\]
Hence $S_1^*=T_1^*|_{\mathcal H}$ and similarly
$S_2^*=T_2^*|_{\mathcal H}$ and $P^*=V^*|_{\mathcal H}$. The
converse part is obvious.

\end{proof}

\begin{prop}\label{ultimate}
Let $\mathcal H_1$ be a Hilbert space and let $(S_1,S_2,P)$ be a
$\Gamma_3$-contraction on $\mathcal H=\mathcal H_1\oplus \mathcal
H_1$ with FOP $(F_1,F_2)$ and $P$ is such that
\begin{itemize}
 \item[(i)] $ Ker (D_P)=\mathcal H_1\oplus \{0\} \textup{ and } \mathcal D_P =
 \{0\}\oplus \mathcal H_1\, ;$
\item[(ii)] $P(\mathcal D_P)=\{0\}$ and $P Ker(D_P)\subseteq
\mathcal D_P$.
\end{itemize}
If $(S_1^*,S_2^*,P^*)$ has a $\Gamma_3$-isometric dilation then
$(F_1,F_2)$ is almost normal.
\end{prop}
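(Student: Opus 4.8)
The plan is to convert the hypothesis into a statement about a co-isometric extension and then read all the data off in coordinates. By Proposition~\ref{exist-minimal} a $\Gamma_3$-isometric dilation of $(S_1^*,S_2^*,P^*)$ may be taken minimal, and then Proposition~\ref{dilation-extension} supplies a $\Gamma_3$-co-isometry $(\tilde S_1,\tilde S_2,\tilde P)$ on a Hilbert space $\mathcal K\supseteq\mathcal H$ with $\mathcal H$ invariant under $\tilde S_1,\tilde S_2,\tilde P$ and $\tilde S_i|_{\mathcal H}=S_i$, $\tilde P|_{\mathcal H}=P$; equivalently $(\tilde S_1^*,\tilde S_2^*,\tilde P^*)$ is a $\Gamma_3$-isometry, to which Theorem~\ref{G-isometry} applies. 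Next I would exploit the hypotheses on $P$: conditions (i)--(ii) force $D_P$ to be the orthogonal projection of $\mathcal H=\mathcal H_1\oplus\mathcal H_1$ onto its second summand and $P=\bigl(\begin{smallmatrix}0&0\\ A&0\end{smallmatrix}\bigr)$ with $A$ an isometry on $\mathcal H_1$ (so $P^2=0$ and $\mathcal D_P=\ker P=\{0\}\oplus\mathcal H_1$). Identifying $\mathcal D_P$ with $\mathcal H_1$ and writing $F_i\leftrightarrow\hat F_i\in\mathcal L(\mathcal H_1)$, the fundamental equations pin down
\[
S_1=\begin{pmatrix}A^*\hat F_1 A & 0\\ \hat F_2^* A & \hat F_1\end{pmatrix},\qquad
S_2=\begin{pmatrix}A^*\hat F_2 A & 0\\ \hat F_1^* A & \hat F_2\end{pmatrix}.
\]
Feeding these into the commutativity relations $PS_i=S_iP$ and $S_1S_2=S_2S_1$ already yields $(I-AA^*)\hat F_iA=0$, $\hat F_1\hat F_2=\hat F_2\hat F_1$, and $GA=0$ for the selfadjoint operator $G:=(\hat F_1^*\hat F_1-\hat F_1\hat F_1^*)-(\hat F_2^*\hat F_2-\hat F_2\hat F_2^*)$. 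Thus one half of almost normality is automatic, and it remains only to prove $G=0$; since $GA=0$, this reduces to killing $G$ on $(\operatorname{Ran}A)^\perp$.

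Then I would bring in the extension. Writing $\tilde S_1,\tilde S_2,\tilde P$ as upper triangular $2\times2$ operator matrices with respect to $\mathcal K=\mathcal H\oplus\mathcal H^\perp$, with corners $S_1,S_2,P$, Theorem~\ref{G-isometry} --- in particular part~(5), on comparing coefficients in $k$ and Fourier coefficients in $\omega$ in $\Phi_{1k}(\omega\tilde S_1^*,\omega^2\tilde S_2^*,\omega^3\tilde P^*)=\Phi_{2k}(\omega\tilde S_1^*,\omega^2\tilde S_2^*,\omega^3\tilde P^*)=0$ --- gives $\tilde P\tilde P^*=I$, $\tilde S_1=\tilde P\tilde S_2^*$, $\tilde S_2=\tilde P\tilde S_1^*$ and $\tilde S_1\tilde S_1^*=\tilde S_2\tilde S_2^*$, together with the commutation relations among $\tilde S_1,\tilde S_2,\tilde P$. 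In block form $\tilde P\tilde P^*=I$ forces the $(1,2)$-block $Y$ of $\tilde P$ to satisfy $YY^*=I-PP^*=\bigl(\begin{smallmatrix}I&0\\ 0&I-AA^*\end{smallmatrix}\bigr)$ --- this is precisely the channel through which $(\operatorname{Ran}A)^\perp$ enters the argument --- while $\tilde S_1=\tilde P\tilde S_2^*$ and $\tilde S_2=\tilde P\tilde S_1^*$ express the $(1,2)$-blocks of $\tilde S_1,\tilde S_2$ through $Y$ and the $(2,2)$-corners $W_1,W_2$ of $\tilde S_1,\tilde S_2$.

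The crux is then to substitute all of this into $\tilde S_1\tilde S_1^*=\tilde S_2\tilde S_2^*$, using $\tilde P\tilde P^*=I$ and the commutation relations among the corners $W_1,W_2,Z$ to simplify the terms carrying a factor $AA^*$, and to show that the ``defect'' contributions $\hat F_i^*(I-AA^*)\hat F_i$ on the two sides must cancel; this produces $G=0$ on $(\operatorname{Ran}A)^\perp$, and combined with $GA=0$ it gives $G=0$, i.e. $\hat F_1^*\hat F_1-\hat F_1\hat F_1^*=\hat F_2^*\hat F_2-\hat F_2\hat F_2^*$. With $\hat F_1\hat F_2=\hat F_2\hat F_1$ this says exactly that $(F_1,F_2)$ is almost normal. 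I expect this last bookkeeping to be the main obstacle: the system of block identities is sizeable and must be organized so that precisely the right combination isolates $G$ on $(\operatorname{Ran}A)^\perp$. A cleaner but less self-contained alternative is to observe that the fundamental operator pair of any $\Gamma_3$-co-isometry is automatically almost normal --- via the Wold decomposition of Theorem~\ref{G-isometry} together with the model Theorem~\ref{model1}, just as in the proof of Theorem~\ref{converse} --- and that, since $\mathcal D_P=\ker P\subseteq\ker\tilde P=\mathcal D_{\tilde P}$, the fundamental operator pair $(G_1,G_2)$ of $(\tilde S_1,\tilde S_2,\tilde P)$ restricts on the invariant subspace $\mathcal D_P$ to $(F_1,F_2)$; there the remaining work is to upgrade ``invariant'' to ``reducing'', which is where the minimality of the dilation would come in.
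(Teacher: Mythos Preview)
Your second alternative is essentially the paper's route, and your first (block--matrix) approach is not pursued far enough to count as a proof. The paper takes a minimal $\Gamma_3$-isometric dilation of $(S_1^*,S_2^*,P^*)$, passes via Proposition~\ref{dilation-extension} to a $\Gamma_3$-co-isometric extension of $(S_1,S_2,P)$, applies the Wold decomposition of Theorem~\ref{G-isometry} and the model Theorem~\ref{model1} to realise the pure part on $l^2(E)$ as $(M_\varphi^*,M_\psi^*,M_z^*)$ with $\varphi=A+Bz$, $\psi=B^*+A^*z$, and then reads off that the FOP $(\hat F_1,\hat F_2)=(A^*,B)$ of the co-isometry is almost normal from the commutation of $M_\varphi$ and $M_\psi$. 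The restriction to $\mathcal D_P$ then gives the result --- exactly your last paragraph.

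The gap is precisely the step you flag: upgrading ``$\mathcal D_P$ is invariant for $\hat F_i$'' to ``$\mathcal D_P$ is reducing for $\hat F_i$''. Your appeal to minimality is not how the paper closes this, and it is not clear minimality alone suffices. The paper instead uses hypotheses (i) and (ii) in an essential, concrete way: they force $\mathcal D_P\subseteq E\oplus\{0\}\oplus\{0\}\oplus\cdots$ and $\operatorname{Ker}D_P\subseteq\{0\}\oplus E\oplus\{0\}\oplus\cdots$ (Claims~1 and~2), so that $\mathcal H$ sits in the first two coordinates of $l^2(E)$. With this in hand, one computes $P^*$ explicitly on $\mathcal D_P$ (it shifts the first coordinate to the second) and then checks directly that $\hat F_i^*h_0=R_j(P^*h_0)\in\mathcal H$ lands back in $\mathcal D_P$ for $h_0\in\mathcal D_P$ (Claim~3). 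Only after $\mathcal D_P$ is seen to be reducing do the almost-normality relations $[\hat F_1,\hat F_2]=0$ and $[\hat F_1^*,\hat F_1]=[\hat F_2^*,\hat F_2]$ restrict to give the same relations for $(F_1,F_2)$. Your proposal does not supply this argument, and without it the proof is incomplete. Your preliminary matrix computations (the form of $S_1,S_2,P$, the identities $(I-AA^*)\hat F_iA=0$, $\hat F_1\hat F_2=\hat F_2\hat F_1$, $GA=0$) are correct and pleasant, but they are not used in the paper's proof and by themselves do not yield $G=0$ on $(\operatorname{Ran}A)^\perp$.
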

\begin{proof}
Let $(T_1,T_2,V)$ on a Hilbert space $\mathcal K \supseteq
\mathcal H$ be a minimal $\Gamma_3$-isometric dilation of
$(S_1^*,S_2^*,P^*)$ (such a minimal $\Gamma_3$-isometric dilation
exists by Proposition \ref{exist-minimal}) so that
$(T_1^*,T_2^*,V^*)$ is a $\Gamma_3$-co-isometric extension of
$(S_1,S_2,P)$ by Proposition \ref{dilation-extension}. Since
$(T_1,T_2,V)$ on $\mathcal K$ is a $\Gamma_3$-isometry, by Theorem
\ref{G-isometry}, it has Wold decomposition
\[
(T_1,T_2,V)=(T_{11},T_{12},U_1)\oplus (T_{21},T_{22},V_1) \text{
on } \mathcal K_1\oplus \mathcal K_2,
\]
where $(T_{11},T_{12},U_1)$ on $\mathcal K_1$ is a
$\Gamma_3$-unitary and $(T_{21},T_{22},V_1)$ on $\mathcal K_2$ is
a pure $\Gamma_3$-isometry. Since $(T_{21},T_{22},V_1)$ on
$\mathcal K_2$ is a pure $\Gamma_3$-isometry, by Theorem
\ref{model1}, $\mathcal K_2$ can be identified with $H^2(\mathcal
D_{V_1^*})$ and $T_{21},T_{22},V_1$ can be identified with
$T_{\varphi},T_{\psi},T_z$ respectively on $H^2(\mathcal
D_{V_1^*})$ for some $\varphi, \psi$ in $H^{\infty}(\mathcal
L(\mathcal D_{V_1^*}))$, where $\varphi(z)=A+Bz$ and
$\psi(z)=B^*+A^*z, \; z\in\mathbb D$, $(A^*,B)$ being the FOP of
$(T_{21}^*,T_{22}^*,V_1^*)$. Again $H^2(\mathcal D_{V_1^*})$ can
be identified with $l^2(\mathcal D_{V_1^*})$ and
$T_{\varphi},T_{\psi},T_z$ on $H^2(\mathcal D_{V_1^*})$ can be
identified with the multiplication operators
$M_{\varphi},M_{\psi},M_z$ on $l^2(\mathcal D_{V_1^*})$
respectively. So without loss of generality we can assume that
$K_2=l^2(\mathcal D_{V_1^*})$ and $T_{21}=M_{\varphi},
T_{22}=M_{\psi}$ and $V_1=M_z$ on $l^2(\mathcal D_{V_1^*})$.
On $l^2(\mathcal D_{V_1^*})$ clearly $$M_{\varphi}=\begin{bmatrix} A&0&0&\dots\\ B&A&0&\dots\\
0&B&A&\dots\\ \dots&\dots&\dots&\dots
\end{bmatrix},\; M_{\psi}=\begin{bmatrix} B^*&0&0&\dots \\ A^*&B^*&0&\dots\\
0&A^*&B^*&\dots\\\dots&\dots&\dots&\dots
 \end{bmatrix}$$ $$ \textup{and } M_z=\begin{bmatrix} 0&0&0&\dots\\I&0&0&\dots\\0&I&0&\dots\\
 \dots&\dots&\dots&\dots& \end{bmatrix}.$$
We now consider $\mathcal H$ to be a subspace of $\mathcal K$ and
$S_1,S_2,P$ defined on $\mathcal H$ to be the restrictions of
$T_1^*,T_2^*,V^*$ respectively to $\mathcal H$.\\

For the rest of the proof we denote $\mathcal D_{V_1^*}$ by $E$,
that is $\mathcal D_{V_1^*}\equiv E$.\\

\noindent \textit{Claim} $1$. $\mathcal D_P\subseteq E \oplus
\{0\}\oplus \{0\}\oplus \cdots \subseteq l^2(E)=\mathcal K_2.$

\noindent \textit{Proof of claim.} Let $h=h_1\oplus h_2 \in
\mathcal D_P\subseteq \mathcal H$, where $h_1\in \mathcal K_1$ and
$h_2=(c_0,c_1,c_2,\dots)^T \in l^2(E)=\mathcal K_2$. Since
$P(\mathcal D_P)=\{0\}$, we have
\begin{align*}
& Ph=V^*h=V^*(h_1\oplus h_2)=U_1^*h_1\oplus
M_z^*h_2=U_1^*h_1\oplus (c_1,c_2,\cdots)^T=0 \\& \Rightarrow h_1=0
\textup{ and } c_1=c_2=\dots=0.
\end{align*}
This completes the proof of \textit{Claim} $1$.\\

\noindent \textit{Claim} $2$. $Ker(D_P)\subseteq \{0\}\oplus E
\oplus \{0\}\oplus \{0\}\oplus \cdots \subseteq l^2(E)=\mathcal
K_2.$

\noindent \textit{Proof of claim.} For $k=k_1\oplus k_2 \in
Ker(D_P)\subseteq \mathcal H$, where $k_1\in\mathcal K_1$ and
$k_2=(g_0,g_1,g_2,\dots)^T \in l^2(E)=\mathcal K_2$, we have
\begin{align*}
& \qquad D_P^2k=0 \\& \Rightarrow (I-P^*P)k=P_{\mathcal
H}(I-VV^*)k=P_{\mathcal H}(k_1\oplus k_2 - k_1 \oplus
M_zM_z^*k_2)=0
\\& \Rightarrow k_1\oplus k_2 - P_{\mathcal H}(k_1\oplus
M_zM_z^*k_2)=0 \\& \Rightarrow k_1\oplus (g_0,g_1,\cdots)^T =
P_{\mathcal H}(k_1\oplus (0,g_1,g_2,\cdots)^T) \\& \Rightarrow
\|k_1\oplus (0,g_1,g_2,\cdots)^T\|\geq \|k_1 \oplus
(g_0,g_1,g_2,\cdots)^T\|
\\& \Rightarrow g_0=0.
\end{align*}
Again since $P(Ker (D_P))\subseteq \mathcal D_P$, we have for
$k=k_1\oplus (0,g_1,g_2,\dots)^T \in Ker (D_P)$,
\[
P(k_1\oplus (g_0,g_1,g_2,\dots)^T)=U_1^*k_1 \oplus
M_z^*(0,g_1,g_2,\cdots)^T =U_1^*k_1 \oplus (g_1,g_2,\cdots)\in
\mathcal D_P.
\]
Then by Claim 1, $U_1^*k_1=0$, i.e, $k_1=0$ and $g_2=k_3=\dots=0$.
Hence \textit{Claim} $2$ is established.\\

Now since $\mathcal H=\mathcal D_P \oplus Ker(D_P)$, we can
conclude that $\mathcal H \subseteq E\oplus E\oplus \{0\}\oplus
\{0\}\oplus \cdots \subseteq l^2(E)=\mathcal K_2$. Therefore
$(M_{\varphi}^*,M_{\psi}^*,M_z^*)$ on $l^2(E)$ is a
$\Gamma_3$-co-isometric extension of $(S_1,S_2,P)$.\\

We now compute the FOP of $(M_{\varphi}^*,M_{\psi}^*,M_z^*)$.

\begin{align*}
& M_{\varphi}^*-M_{\psi}M_z^*
\\& =\begin{bmatrix} A^*&B^*&0&\cdots\\ 0&A^*&B^*&\cdots\\
0&0&A^*&\cdots\\ \vdots&\vdots&\vdots&\ddots
\end{bmatrix} - \begin{bmatrix} B^*&0&0&\dots\\ A^*&B^*&0&\cdots\\ 0&A^*&B^*&\cdots\\
\vdots&\vdots&\vdots&\ddots \end{bmatrix}
\begin{bmatrix} 0&I&0&\cdots \\ 0&0&I&\cdots \\ 0&0&0&\cdots \\ \vdots&\vdots&\vdots&\ddots
\end{bmatrix} \\& = \begin{bmatrix} A^*&B^*&0&\cdots\\ 0&A^*&B^*&\cdots\\
0&0&A^*&\cdots\\ \vdots&\vdots&\vdots&\ddots
\end{bmatrix}
- \begin{bmatrix} 0&B^*&0&\cdots\\ 0&A^*&B^*&\cdots\\
0&0&A^*&\cdots\\ \vdots&\vdots&\vdots&\ddots
\end{bmatrix} \\& = \begin{bmatrix} A^*&0&0&\cdots \\ 0&0&0&\cdots \\ 0&0&0&\cdots \\
\vdots&\vdots&\vdots&\ddots \end{bmatrix}.
\end{align*}

Similarly
\[
M_{\psi}^* - M_{\varphi}M_z^* = \begin{bmatrix} B&0&0&\cdots \\ 0&0&0&\cdots \\ 0&0&0&\cdots \\
\vdots&\vdots&\vdots&\ddots \end{bmatrix}.
\]
Also
\begin{align*}
D_{M_z^*}^2 &=I-M_zM_z^* \\&
= \begin{bmatrix} I&0&0&\cdots \\ 0&0&0&\cdots\\
0&0&0&\cdots \\ \vdots&\vdots&\vdots&\ddots
\end{bmatrix} .
\end{align*}
Therefore, $\mathcal D_{M_z^*}=E\oplus \{0\}\oplus \{0\}\cdots$
and $D_{M_z^*}^2=D_{M_z^*}=I_d$ on $E\oplus \{0\}\oplus
\{0\}\cdots$. If we set
\begin{align}\label{011}
\hat{F_1}=\begin{bmatrix} A^*&0&0&\dots\\ 0&0&0&\dots\\
0&0&0&\dots\\ \dots&\dots&\dots&\dots
\end{bmatrix},\; \hat{F_2}=\begin{bmatrix} B&0&0&\dots \\ 0&0&0&\dots\\
0&0&0&\dots\\\dots&\dots&\dots&\dots
\end{bmatrix},
\end{align}
then
\[
M_{\varphi}^*-M_{\psi}M_z^*=D_{M_z^*} \hat{F_1} D_{M_z^*} \text{
and } M_{\psi}^*-M_{\varphi}M_z^*=D_{M_z^*} \hat{F_2} D_{M_z^*}.
\]
Therefore, $(\hat{F_1},\hat{F_2})$ are the FOP of
$(M_{\varphi}^*,M_{\psi}^*,M_z^*)$. We shall use a notation for
our convenience here. Let us denote
$(M_{\varphi}^*,M_{\psi}^*,M_z^*)$ by $(R_1,R_2,W)$. Therefore,
 \begin{eqnarray}
 \label{01}&R_1-R_2^*W=D_W\hat{F_1}D_W
\\ \label{02}& R_2-R_1^*W=D_W\hat{F_2}D_W.
 \end{eqnarray}\\

\noindent \textit{Claim} $3$. $\hat{F_i}D_W|_{\mathcal D_P}
\subseteq \mathcal D_P$ and $\hat{F_i}^*D_W|_{\mathcal D_P}
\subseteq \mathcal D_P$
for $i=1,2$.\\
\noindent \textit{Proof of claim.} Let
$h_0=(c_0,0,0,\cdots)^T\in\mathcal D_P$. Then
$\hat{F_1}D_Wh_0=(A^*c_0,0,0,\cdots)^T=M_{\varphi}^*h_0=R_1h_0$.
Since $R_1|_{\mathcal H}=S_1$, $R_1h_0 \in \mathcal H$. Therefore
$(A^*c_0,0,0,\cdots)^T \in \mathcal D_P$ and
$\hat{F_1}D_W|_{\mathcal D_P} \subseteq \mathcal D_P$. Similarly
we can prove that $\hat{F_2}D_W|_{\mathcal D_P} \subseteq \mathcal
D_P$.\\

We compute the adjoint of $P$. Let $(c_0,c_1,0,\cdots)^T$ and
$(d_0,d_1,0,\cdots)^T$ be two arbitrary elements in $\mathcal H$
where $(c_0,0,0,\cdots)^T, (d_0,0,0,\cdots)^T \in \mathcal D_P$
and $(0,c_1,0,\cdots)^T,(0,d_1,0,\cdots)^T\\ \in Ker(D_P)$. Now
\begin{align*}
\langle P^*(c_0,c_1,0,\cdots)^T,(d_0,d_1,0,\cdots)^T \rangle &
=\langle (c_0,c_1,0,\cdots)^T,P(d_0,d_1,0,\cdots)^T \rangle \\&
=\langle (c_0,c_1,0,\cdots)^T,W(d_0,d_1,0,\cdots)^T \rangle \\&
=\langle (c_0,c_1,0,\cdots)^T,(d_1,0,0,\cdots)^T \rangle \\&
=\langle c_0,d_1 \rangle_E \\& =\langle
(0,c_0,0,\cdots)^T,(d_0,d_1,0,\cdots)^T \rangle.
\end{align*}
Therefore
\[
P^*(c_0,c_1,0,\cdots)^T=(0,c_0,0,\cdots)^T.
\]
Now $h_0=(c_0,0,0,\cdots)^T\in\mathcal D_P$ implies that
$P^*h_0=(0,c_0,0,\cdots)^T \in \mathcal H$ and
\[
M_{\psi}^*(0,c_0,0,\cdots)^T=R_2(0,c_0,0,\cdots)^T =
(Ac_0,0,0,\cdots)^T \in \mathcal H.
\]
In particular, $(Ac_0,0,0,\cdots)^T \in \mathcal D_P$. Therefore
$\hat{F_1}^*D_Wh_0=(Ac_0,0,0,\cdots)^T \in \mathcal D_P$ and
$\hat{F_2}^*D_W|_{\mathcal D_P} \subseteq \mathcal D_P$. Similarly
we can prove that $\hat{F_2}^*D_W|_{\mathcal D_P}\subseteq
\mathcal D_P$. Hence we proved \textit{Claim} $3$.\\

\noindent \textit{Claim} $4$. $\hat{F_i}|_{\mathcal D_P}=F_i$ and $\hat{F_i}^*|_{\mathcal D_P}=F_i^*$ for $i=1,2$.\\
\noindent \textit{Proof of Claim.}
 It is obvious that $\mathcal D_P \subseteq \mathcal D_W =E\oplus\{0\}\oplus \{0\}\oplus
 \cdots$. Now since $W|_{\mathcal H}=P$ and $D_W$ is projection onto $\mathcal
 D_W$, we have
 \[
 D_W|_{\mathcal H}= D_W^2|_{\mathcal H}=D_W^2|_{\mathcal D_P} =D_P^2
 \]
 and hence $D_P^2$ is a projection onto $\mathcal D_P$. Therefore
 $D_P^2=D_P$.
 From (\ref{01}) we have
 \begin{eqnarray} \label{03} P_{\mathcal H}(R_1-R_2^*W)|_{\mathcal H}=
 P_{\mathcal H}(D_{W}\hat{F_1}D_{W})|_{\mathcal H}.
 \end{eqnarray}
 Since $(R_1,R_2,W)$ is a $\Gamma_3$-co-isometric extension of
 $(S_1,S_2,P)$, the LHS of (\ref{03}) is equal to
 $S_1-S_2^*P$.
Again since $(F_1,F_2)$ is the FOP of $(S_1,S_2,P)$, we have
 \begin{eqnarray} \label{04}
 S_1-S_2^*P=D_PF_1D_P, \quad F_1\in \mathcal L(\mathcal D_P).
 \end{eqnarray}
Since $S_1-S_2^*P$ is $0$ on the orthogonal complement of
$\mathcal D_P$, that is on $Ker(D_P)$, we have that
\begin{eqnarray} \label{05}
S_1-S_2^*P=P_{\mathcal D_P}(R_1-R_2^*W)|_{\mathcal D_P}=
 P_{\mathcal D_P}(D_{W}\hat{F_1}D_{W})|_{\mathcal D_P}.
\end{eqnarray}
Again Since $D_W|_{\mathcal D_P}=D_P$, the RHS of (\ref{05}) is
equal to $(D_{W}\hat{F_1}D_{W})|_{\mathcal D_P}$ and hence
\begin{eqnarray} \label{06} S_1-S_2^*P=(R_1-R_2^*W)|_{\mathcal D_P}=
 (D_{W}\hat{F_1}D_{W})|_{\mathcal D_P}=D_P\hat{F_1}D_P.
 \end{eqnarray}
The last identity follows from the fact (of \textit{Claim} $3$)
that $\hat{F_1}D_W|_{\mathcal D_P}\subseteq \mathcal D_P$. By the
uniqueness of $F_1$ we get that $\hat{F_1}|_{\mathcal D_P}=F_1$.
Also since $\mathcal D_P$ is invariant under $\hat{F_1}^*$ by
\textit{Claim $3$}, we have $\hat{F_1}^*|_{\mathcal D_P}=F_1^*$.
Similarly we can prove that $\hat{F_2}|_{\mathcal D_P}=F_2$ and
$\hat{F_2}^*|_{\mathcal D_P}=F_2^*$. Thus the proof of \textit{Claim} $4$ is complete.\\

Now since $(M_{\varphi},M_{\psi},M_z)$ on $l^2(E)$ is a
$\Gamma_3$-isometry,
 $M_{\varphi}$ and $M_{\psi}$ commute, that is
\[
\Small \begin{bmatrix} A&0&0&\dots\\ B&A&0&\dots\\
0&B&A&\dots\\ \dots&\dots&\dots&\dots
\end{bmatrix} \begin{bmatrix} B^*&0&0&\dots \\ A^*&B^*&0&\dots\\
0&A^*&B^*&\dots\\\dots&\dots&\dots&\dots
 \end{bmatrix} =\begin{bmatrix} B^*&0&0&\dots \\ A^*&B^*&0&\dots\\
0&A^*&B^*&\dots\\\dots&\dots&\dots&\dots\end{bmatrix}\begin{bmatrix} A&0&0&\dots\\ B&A&0&\dots\\
0&B&A&\dots\\ \dots&\dots&\dots&\dots \end{bmatrix}
\]
which implies that
\[
\Small \begin{bmatrix} AB^*&0&0&\dots\\ BB^*+AA^*&AB^*&0&\dots\\
BA^*&BB^*+AA^*&AB^*&\dots\\ \dots&\dots&\dots&\dots
\end{bmatrix} = \begin{bmatrix} B^*A&0&0&\dots \\ A^*A+B^*B&B^*A&0&\dots\\
A^*B&A^*A+B^*B&B^*A&\dots\\\dots&\dots&\dots&\dots\end{bmatrix}.
\]
Comparing both sides we obtain that
\begin{enumerate}
\item $A^*B=BA^*$ \item $A^*A-AA^*=BB^*-B^*B$. \end{enumerate}
Therefore, from (\ref{011}) we have that
\begin{enumerate}
\item $\hat{F_1}\hat{F_2}=\hat{F_2}\hat{F_1}$ \item
$\hat{F_1}^*\hat{F_1}-\hat{F_1}\hat{F_1}^*=\hat{F_2}^*\hat{F_2}-\hat{F_2}\hat{F_2}^*$.
\end{enumerate}
Taking restriction of the above two operator identities to the
subspace $\mathcal D_P$ we get
\begin{enumerate}
\item $F_1F_2=F_2F_1$ \item $F_1^*F_1-F_1F_1^*=F_2^*F_2-F_2F_2^*$.
\end{enumerate}
Therefore, by Lemma \ref{vital}, $(F_1,F_2)$ is almost normal and
the proof is complete.

\end{proof}

\section{A counter example}\label{example}

In this section we shall produce an example of a
$\Gamma_3$-contraction which satisfies the hypotheses of
Proposition \ref{ultimate} but fails to possess an almost normal
FOP.\\

Let $\mathcal H_1=l^2(E)\oplus l^2(E),\; E=\mathbb C^2$ and let
$\mathcal H=\mathcal H_1 \oplus \mathcal H_1$. Let us consider
\[
S_1=\begin{bmatrix} 0&0\\0&J \end{bmatrix},\, S_2=\begin{bmatrix}
0&0\\0&0 \end{bmatrix} \text{ and } P=\begin{bmatrix} 0&0\\Y&0
\end{bmatrix} \text{ on } \mathcal H_1\oplus \mathcal H_1,
\]
where $J=\begin{bmatrix} X&0\\0&0 \end{bmatrix}$ and
$Y=\begin{bmatrix} 0&V\\I&0 \end{bmatrix}$ on $\mathcal
H_1=l^2(E)\oplus l^2(E)$. Here $V=M_z$ and $I=I_d$ on $l^2(E)$ and
$X$ on $l^2(E)$ is defined as
\begin{align*}
X\;:&\;l^2(E)\rightarrow l^2(E) \\& (c_0,c_1,c_2,\hdots)^T \mapsto
(X_1c_0,0,0,\hdots)^T,
\end{align*}
where we choose $X_1$ on $E$ to be a non-normal contraction such
that $X_1^2=0$. For example we can choose $X_1=\begin{pmatrix}
0&\eta \\ 0&0 \end{pmatrix}$ for some $\eta >0$. Clearly $X^2=0$
and $X^*X\neq XX^*$. Since $XV=0$, $JY=0$ and thus the product of
any two of $S_1,S_2,P$ is equal to $0$. Now we unfold the
operators $S_1,S_2,P$ and write them explicitly as they are
defined on $\mathcal H = l^2(E)\oplus l^2(E)\oplus l^2(E)\oplus
l^2(E)$:
\[
S_1=\begin{bmatrix} 0&0&0&0\\0&0&0&0\\0&0&X&0\\0&0&0&0
\end{bmatrix},\; S_2= \begin{bmatrix}
0&0&0&0\\0&0&0&0\\0&0&0\emph{}&0\\0&0&0&0
\end{bmatrix} \text{ and } P=\begin{bmatrix}
 0&0&0&0\\0&0&0&0\\0&V&0&0\\I&0&0&0 \end{bmatrix}.
 \]
 We shall prove later that $(S_1,S_2,P)$ is a
 $\Gamma_3$-contraction and for time being let us assume it. Here
 \begin{align*}
 D_P^2 &=I-P^*P \\&= \begin{bmatrix} I&0&0&0\\0&I&0&0\\0&0&I&0\\0&0&0&I \end{bmatrix}-
 \begin{bmatrix} 0&0&0&I\\0&0&V^*&0\\0&0&0&0\\0&0&0&0 \end{bmatrix}
 \begin{bmatrix} 0&0&0&0\\0&0&0&0\\0&V&0&0\\I&0&0&0 \end{bmatrix} \\&=
 \begin{bmatrix} 0&0&0&0\\0&0&0&0\\0&0&I&0\\0&0&0&I \end{bmatrix} =D_P.
\end{align*}
Clearly $\mathcal D_P=\{0\}\oplus \{0\}\oplus l^2(E)\oplus
l^2(E)=\{0\}\oplus \mathcal H_1$ and $Ker(D_P)=l^2(E)\oplus
l^2(E)\oplus \{0\}\oplus \{0\}=\mathcal H_1 \oplus \{0\}$. Also
for a vector $k_0=(h_0,h_1,0,0)^T\in Ker(D_P)$ and for a vector
$k_1=(0,0,h_2,h_3)^T\in \mathcal D_P$,
\[
Pk_0=\begin{bmatrix} 0&0&0&0\\0&0&0&0\\0&V&0&0\\I&0&0&0
\end{bmatrix} (h_0,h_1,0,0)^T=(0,0,Vh_1,h_0)^T\in \mathcal D_P
\]
and
\[
Pk_1=\begin{bmatrix} 0&0&0&0\\0&0&0&0\\0&V&0&0\\I&0&0&0
\end{bmatrix} (0,0,h_2,h_3)^T = (0,0,0,0)^T.
\]

Thus $(S_1,S_2,P)$ satisfies all the conditions of Proposition
\ref{ultimate}. We now compute the FOP $(F_1,F_2)$ of
$(S_1,S_2,P)$. We have that
\begin{gather*}
S_1-S_2^*P=S_1=\begin{bmatrix} 0&0&0&0\\0&0&0&0\\0&0&X&0\\0&0&0&0
\end{bmatrix}\\ \text{ and } \quad D_PF_1D_P=\begin{bmatrix}
0&0&0&0\\0&0&0&0\\0&0&I&0\\0&0&0&I
\end{bmatrix}F_1\begin{bmatrix} 0&0&0&0\\0&0&0&0\\0&0&I&0\\0&0&0&I \end{bmatrix}.
\end{gather*}
By the uniqueness of $F_1$ we conclude that
\[
F_1=0\oplus
\begin{bmatrix} X&0\\0&0 \end{bmatrix} \text{ on }
\mathcal D_P=\{0\}\oplus \{0\}\oplus l^2(E)\oplus l^2(E).
\]
Again $S_1^*P=0$ as $X^*V=0$ and therefore $S_2-S_1^*P=0$. This
shows that the fundamental operator $F_2$, for which
$S_2-S_1^*P=D_PF_2D_P$ holds, has to be equal to $0$. Evidently
\[
F_1^*F_1 - F_1F_1^*= 0 \oplus  \begin{bmatrix} X^*X-XX^*&0\\0&0
\end{bmatrix} \neq 0 \text{ as } X^*X\neq XX^*
\]
but $F_2^*F_2 - F_2F_2^*=0$. Therefore, $[F_1^*,F_1]\neq
[F_2^*,F_2]$ and consequently $(F_1,F_2)$ is not almost normal.
This violets the conclusion of Proposition \ref{ultimate} and it
is guaranteed that the $\Gamma_3$-contraction $(S_1^*,S_2^*,P^*)$
does not have a $\Gamma_3$-isometric dilation. Since every
$\Gamma_3$-unitary dilation is necessarily a $\Gamma_3$-isometric
dilation,
$(S_1^*,S_2^*,P^*)$ does not have a $\Gamma_3$-unitary dilation.\\

Now we prove the fact that $(S_1,S_2,P)$ is a
$\Gamma_3$-contraction. Let $f(s_1,s_2,p)$ be a polynomial in the
co-ordinates of $\Gamma_3$. We show that
\[
\|f(S_1,S_2,P)\|\leq \|f\|_{\infty, \Gamma_3}.
\]
Let
\begin{equation}\label{NEQ:1}
f(s_1,s_2,p)=a_0+(a_1s_1+a_2s_2+a_3p)+ Q(s_1,s_2,p),
\end{equation}
where $Q$ is a polynomial which is either $0$ or contains only
terms of second or higher degree. We now make a change the
co-ordinates from $s_1,s_2,p$ to $z_1,z_2,z_3$ by substituting
\[
s_1=z_1+z_2+z_3\,,\, s_2=z_1z_2+z_2z_3+z_3z_1\,,\, p=z_1z_2z_3.
\]
So we have that
\begin{align}
f(s_1,s_2,p)& \nonumber =f\circ \pi_3(z_1,z_2,z_3)\\&
=a_0+a_1(z_1+z_2+z_3)+b_2(z_1z_2+z_2z_3+z_3z_1) \notag \\& \quad
+b_3(z_1z_2z_3)+Q_1(z_1,z_2,z_3),
\end{align}
where $Q_1$ is a polynomial which is either $0$ or contains terms
in $z_1,z_2,z_3$ of degree two or higher and every term in $Q_1$
contains at least one of $z_1^2,z_2^2,z_3^2$ as one of the
factors. The co-efficients $b_2,b_3$ may not be same as $a_2,a_3$
because $Q(s_1,s_2,p)$ may contain a term with $s_1^2$ and a term
with $s_1s_2$ which contribute some terms with
$z_1z_2+z_2z_3+z_3z_1$ and $z_1z_2z_3$. We rewrite $f$ in the
following way:
\[
f(s_1,s_2,p)=f\circ
\pi_3(z_1,z_2,z_3)=a_0+a_1(z_1+z_2+z_3)+R(z_1,z_2,z_3),
\]
where $R$ contains terms in $z_1,z_2,z_3$ of degree two or higher.
Now $S_1,S_2$ and $P$ are chosen in such a way that the degree two
or higher terms in $S_1,S_2,P$ vanish and so from (\ref{NEQ:1}) we
have
\[
f(S_1,S_2,P)=a_0I+a_1S_1+a_3P=\begin{bmatrix} a_0I&0\\a_3Y &
a_0I+a_1J \end{bmatrix}
\]
Since $Y$ is a contraction and $\|J\|=\dfrac{1}{4}$, it is obvious
that
\[
\left\| \begin{bmatrix} a_0I&0\\a_3Y & a_0I+a_1J
\end{bmatrix} \right\| \leq \left\| \begin{pmatrix} |a_0|&0\\
|a_3|&|a_0|+\dfrac{|a_1|}{4}
\end{pmatrix} \right\|.
\]
We divide the rest of the proof into two cases.\\

\textbf{Case 1.} When $|a_0|\leq |a_1|$.\\

We show that
\[
\left\| \begin{pmatrix} |a_0|&0\\
|a_3|&|a_0|+\dfrac{|a_1|}{4}
\end{pmatrix} \right\| \leq
\left\| \begin{pmatrix} |a_0|&0\\
|a_1|+|a_3|&|a_0|
\end{pmatrix} \right\|\,.
\]
Let $\begin{pmatrix} \epsilon \\ \delta
\end{pmatrix}$ be a unit vector in $\mathbb C^2$ such that
\[
\left\| \begin{pmatrix} |a_0|&0\\
|a_3|&|a_0|+\dfrac{|a_1|}{4} \end{pmatrix} \right\|= \left\| \begin{pmatrix} |a_0|&0\\
|a_3|&|a_0|+\dfrac{|a_1|}{4} \end{pmatrix}\begin{pmatrix} \epsilon
\\ \delta
\end{pmatrix} \right\|.
\]
Without loss of generality we can choose $\epsilon, \delta \geq 0$
because
\[
\left\| \begin{pmatrix} |a_0|&0\\
|a_3|&|a_0|+\dfrac{|a_1|}{4} \end{pmatrix}\begin{pmatrix} \epsilon
\\ \delta
\end{pmatrix} \right\|^2
 =|a_0\epsilon|^2+\left| |a_3\epsilon|+\left( |a_0|+\dfrac{|a_0|}{4} \right)\delta\right|^2
\]
and if we replace $\begin{pmatrix} \epsilon \\ \delta
\end{pmatrix}$ by $\begin{pmatrix} |\epsilon | \\ |\delta|
\end{pmatrix}$ we see that
\[
\left\| \begin{pmatrix} |a_0|&0\\
|a_3|&|a_0|+\dfrac{|a_1|}{4} \end{pmatrix}\begin{pmatrix}
|\epsilon|
\\ |\delta|
\end{pmatrix} \right\|^2 \geq
\left\| \begin{pmatrix} |a_0|&0\\
|a_3|&|a_0|+\dfrac{|a_1|}{4} \end{pmatrix}\begin{pmatrix} \epsilon
\\ \delta
\end{pmatrix} \right\|^2 \,.
\]
So, assuming $\epsilon, \delta \geq 0$ we get
\begin{align}
&\left\| \begin{pmatrix} |a_0|&0\\
|a_3|&|a_0|+\dfrac{|a_1|}{4} \end{pmatrix}\begin{pmatrix} \epsilon
\\ \delta
\end{pmatrix} \right\|^2 \notag \\
&=|a_0\epsilon|^2+\left\{ |a_3\epsilon|+\left(
|a_0|+\dfrac{|a_1|}{4} \right)\delta \right\} ^2 \notag \\
&=|a_0\epsilon|^2+|a_3\epsilon|^2+ \left\{
|a_0|^2+\dfrac{|a_0a_1|}{2}+\dfrac{|a_1|^2}{16} \right\}{\delta}^2
+
2|a_3|\left( |a_0|+\dfrac{|a_1|}{4} \right)\epsilon\delta \notag \\
&= \left\{
(|a_0|^2+|a_3|^2)\epsilon^2+|a_0|^2\delta^2+2|a_0a_3|\epsilon\delta
\right\} + \left\{ \dfrac{|a_1|^2}{16}+\dfrac{|a_0a_1|}{2}
\right\}\delta^2 + \dfrac{|a_1a_3|}{2}\epsilon\delta \,.
\label{eqn:ex1}
\end{align}
Again
\begin{align}
& \left\| \begin{pmatrix} |a_0|&0\\
|a_1|+|a_3|&|a_0| \end{pmatrix}\begin{pmatrix} \epsilon \\ \delta
\end{pmatrix} \right\|^2 \notag \\
& = |a_0\epsilon|^2+\{ (|a_1|+|a_3|)\epsilon+|a_0|\delta \}^2
\notag \\
& = |a_0|^2\epsilon^2+\{ |a_1|^2+|a_3|^2+2|a_1a_3|
\}\epsilon^2+2|a_0|(|a_1|+|a_3|)\epsilon\delta+|a_0|^2\delta^2
\notag \\
& =\left\{
(|a_0|^2+|a_3|^2)\epsilon^2+|a_0|^2\delta^2+2|a_0a_3|\epsilon\delta
\right\} +
(|a_1|^2\epsilon^2+2|a_0a_1|\epsilon\delta)+2|a_1a_3|\epsilon^2
\,. \label{eqn:ex2}
\end{align}
We now compare (\ref{eqn:ex1}) and (\ref{eqn:ex2}). If $\epsilon
\geq \delta$ then
\[
(|a_1|^2\epsilon^2+2|a_0a_1|\epsilon\delta)+2|a_1a_3|\epsilon^2
\geq \left( \dfrac{|a_1|^2}{16}+\dfrac{|a_0a_1|}{2}
\right)\delta^2+\frac{|a_1a_3|}{2}\epsilon\delta
\]
Therefore, it is evident from (\ref{eqn:ex1}) and (\ref{eqn:ex2})
that
\[
\left\| \begin{pmatrix} |a_0|&0\\
|a_3|&|a_0|+\dfrac{|a_1|}{4}
\end{pmatrix}\begin{pmatrix} \epsilon \\ \delta
\end{pmatrix} \right\|^2 \leq
\left\| \begin{pmatrix} |a_0|&0\\
|a_1|+|a_3|&|a_0|
\end{pmatrix}\begin{pmatrix} \epsilon \\ \delta
\end{pmatrix} \right\|^2\,.
\]
If $\epsilon < \delta$ we consider the unit vector $\begin{pmatrix} \delta \\
\epsilon \end{pmatrix}$ and it suffices if we show that
\[
\left\| \begin{pmatrix} |a_0|&0\\
|a_3|&|a_0|+\dfrac{|a_1|}{4}
\end{pmatrix}\begin{pmatrix} \epsilon \\ \delta
\end{pmatrix} \right\|^2 \leq
\left\| \begin{pmatrix} |a_0|&0\\
|a_1|+|a_3|&|a_0|
\end{pmatrix}\begin{pmatrix} \delta \\ \epsilon
\end{pmatrix} \right\|^2\,.
\]
A computation similar to (\ref{eqn:ex2}) gives
\begin{align}
& \left\| \begin{pmatrix} |a_0|&0\\
|a_1|+|a_3|&|a_0|
\end{pmatrix}\begin{pmatrix} \delta \\ \epsilon
\end{pmatrix} \right\|^2 \notag \\
& = |a_0|^2\delta^2+\{ |a_1|^2+|a_3|^2+2|a_1a_3|
\}\delta^2+2|a_0|(|a_1|+|a_3|)\epsilon\delta +|a_0|^2\epsilon^2
\notag \\
&= \{ |a_0|^2(\epsilon^2+\delta^2)+2|a_0a_3|\epsilon\delta \}+\{
|a_1|^2+|a_3|^2+2|a_1a_3| \}\delta^2+2|a_0a_1|\epsilon\delta  \notag \\
& = \{ |a_0|^2+2|a_0a_3|\epsilon\delta \}+\{
|a_1|^2+|a_3|^2+2|a_1a_3| \}\delta^2+2|a_0a_1|\epsilon\delta \,.
 \label{eqn:ex3}
\end{align}
In the last equality we used the fact that
$|\epsilon|^2+|\delta|^2=1$. Again from (\ref{eqn:ex1}) we have
\begin{align}
& \left\| \begin{pmatrix} |a_0|&0\\
|a_3|&|a_0|+\dfrac{|a_1|}{4} \end{pmatrix}\begin{pmatrix} \epsilon
\\ \delta
\end{pmatrix} \right\|^2 \notag \\
& = \{ |a_0|^2(\epsilon^2+\delta^2)+2|a_0a_3|\epsilon\delta \} +
\left\{ |a_3|^2\epsilon^2+\dfrac{|a_1a_3|}{2}\epsilon\delta
\right\}
+\left\{ \dfrac{|a_1|^2}{16}+\dfrac{|a_0a_1|}{2} \right\}\delta^2 \notag \\
& \leq \{ |a_0|^2(\epsilon^2+\delta^2)+2|a_0a_3|\epsilon\delta \}
+ \left\{ |a_3|^2\epsilon^2+\dfrac{|a_1a_3|}{2}\epsilon\delta
\right\} + \left\{ \dfrac{|a_1|^2}{16}+\dfrac{|a_1|^2}{2}
\right\}\delta^2 \notag \\
&=\{ |a_0|^2+2|a_0a_3|\epsilon\delta \} + \left\{
\dfrac{9|a_1|^2}{16}\delta^2+|a_3|^2\epsilon^2+\dfrac{|a_1a_3|}{2}\epsilon\delta
\right\} \label{eqn:ex4}
\end{align}
The last inequality follows from the fact that $|a_0|\leq |a_1|$.
Since $\epsilon<\delta$ we can conclude from (\ref{eqn:ex3}) and
(\ref{eqn:ex4}) that
\[
\left\| \begin{pmatrix} |a_0|&0\\
|a_3|&|a_0|+\dfrac{|a_1|}{4}
\end{pmatrix}\begin{pmatrix} \epsilon \\ \delta
\end{pmatrix} \right\|^2 \leq
\left\| \begin{pmatrix} |a_0|&0\\
|a_1|+|a_3|&|a_0|
\end{pmatrix}\begin{pmatrix} \delta \\ \epsilon
\end{pmatrix} \right\|^2\,.
\]
Therefore,
\[
\left\| f(S_1,S_2,P) \right\|\leq \left\| \begin{pmatrix} |a_0|&0\\
|a_3|&|a_0|+\dfrac{|a_1|}{4} \end{pmatrix} \right\| \leq \left\| \begin{pmatrix} |a_0|&0\\
|a_1|+|a_3|&|a_0| \end{pmatrix} \right\|.
\]

A classical result of Caratheodory and Fejer states that
\[
\inf \, \|b_0+b_1z+r(z)\|_{\infty, \overline{\mathbb D}} = \left\| \begin{pmatrix} b_0&0\\
b_1&b_0 \end{pmatrix} \right\|,
\]
where the infemum is taken over all polynomials $r(z)$ in one
variable which contain only terms of degree two or higher. For an
elegant proof to this result, see Sarason's seminal paper
\cite{sarason}, where the result is derived as a consequence of
the classical commutant lifting theorem of Sz.-Nagy and Foias (see
\cite{nagy}). Using this fact, we have
\begin{align}
\|f(S_1,S_2,P)\| & \nonumber \leq \left\|\begin{pmatrix} |a_0|&0\\
|a_1|+|a_3|&|a_0| \end{pmatrix} \right\| \\& \nonumber = \inf \,
\||a_0|+(|a_1|+|a_3|)z+r(z)\|_{\infty, \overline{\mathbb D}} \\&
\label{014} \leq
 \inf \, \| |a_0|+|a_1|(z_1+z_2+z_3)+|a_3|(z_1z_2z_3)+R(z_1,z_2,z_3) \|_{\infty,
 \Delta}\\&\nonumber
 \leq \inf \, \| |a_0|+|a_1|(z_1+z_2+z_3)+|a_2|(z_1z_2+z_2z_3+z_3z_1) \\& \label{015} \quad \quad +
 |a_3|(z_1z_2z_3)+R(z_1,z_2,z_3) \|_{\infty, \Delta} \\& \nonumber
 \leq \inf \, \| |a_0|+|a_1|(z_1+z_2+z_3)+|a_2|(z_1z_2+z_2z_3+z_3z_1) \\& \nonumber \quad \quad +
 |a_3|(z_1z_2z_3)+R(z_1,z_2,z_3) \|_{\infty, \overline{\mathbb
 D^3}}\\& \nonumber
 = \inf \, \| a_0+a_1(z_1+z_2+z_3)+c_2(z_1z_2+z_2z_3+z_3z_1) \\& \label{016} \quad \quad +
 c_3(z_1z_2z_3)+R(z_1,z_2,z_3) \|_{\infty, \overline{\mathbb
 D^3}} \\& \nonumber
\leq \| a_0+a_1(z_1+z_2+z_3)+b_2(z_1z_2+z_2z_3+z_3z_1) \\&
\label{017} \quad \quad +
 b_3(z_1z_2z_3)+Q_1(z_1,z_2,z_3) \|_{\infty, \overline{\mathbb
 D^3}} \\& \nonumber
= \|f\circ \pi_3(z_1,z_2,z_3) \|_{\infty, \overline{\mathbb
 D^3}} \\& \nonumber =\|f(s_1,s_2,p) \|_{\infty, \Gamma_3}.
\end{align}
Here $\Delta=\overline{\mathbb D}\times \{i\} \times \{-i \}
\subseteq \overline{\mathbb D^3}$. The polynomials $r(z)$ and
$R(z_1,z_2,z_3)$ range over polynomials of degree two or higher.
The inequality (\ref{014}) was obtained by putting $z_1=z, z_2=i$
and $z_3=-i$ which makes the set of polynomials
$|a_0|+|a_1|(z_1+z_2+z_3)+|a_3|(z_1z_2z_3)+R(z_1,z_2,z_3)$, a
subset of the set of polynomials $|a_0|+(|a_1|+|a_3|)z+r(z)$. The
infimum taken over a subset is always bigger than or equal to the
infimum taken over the set itself. We obtained the inequality
(\ref{015}) by applying this argument. The equality (\ref{016})
was obtained by multiplying by $\dfrac{a_0}{|a_0|}$ and replacing
$z_i$ by $\dfrac{\bar{a_0}a_1}{|a_0a_1|}z_i,\, i=1,2,3$. Clearly
$c_2=|a_2|.(\dfrac{\bar{a_0}a_1}{|a_0a_1|})^2$ and
$c_3=|a_3|.(\dfrac{\bar{a_0}a_1}{|a_0a_1|})^3$. The last
inequality (\ref{017}) was reached by choosing $R(z_1,z_2,z_3)$
suitably to be the polynomial
$(b_2-c_2)(z_1z_2+z_2z_3+z_3z_1)+(b_3-c_3)(z_1z_2z_3)+Q_1(z_1,z_2,z_3)$.\\

\textbf{Case 2.} When $|a_0|> |a_1|$.\\

It is obvious from Case 1 that
\[
\left\| \begin{pmatrix} |a_0|&0\\
|a_3|&|a_0|+\dfrac{|a_1|}{4} \end{pmatrix} \right\|  \leq \left\| \begin{pmatrix} |a_0|&0 \\
|a_3|&|a_0|+\dfrac{|a_0|}{4} \end{pmatrix} \right\| \leq \left\| \begin{pmatrix} |a_0|&0\\
|a_0|+|a_3|&|a_0| \end{pmatrix} \right\| \,.
\]
Therefore,
\begin{align}
\|f(S_1,S_2,P)\| & \nonumber \leq \left\|\begin{pmatrix} |a_0|&0\\
|a_0|+|a_3|&|a_0| \end{pmatrix} \right\| \\& \nonumber = \inf \,
\||a_0|+(|a_0|+|a_3|)z+r(z)\|_{\infty, \overline{\mathbb D}} \\&
\nonumber \leq
 \inf \, \| |a_0|+(|a_0|+|a_3|)(z_1z_2z_3)+R(z_1,z_2,z_3) \|_{\infty,
 \Delta}\\&\nonumber
 \leq \inf \, \| |a_0|+|a_1|(z_1+z_2+z_3)z_2z_3+(|a_0|+|a_3|)(z_1z_2z_3)\\
 &\label{018} \quad \quad +R(z_1,z_2,z_3) \|_{\infty,
 \Delta}\\
 &\nonumber
 =\inf \, \| |a_0|+|a_1|(z_1+z_2+z_3)+(|a_0|+|a_3|)(z_1z_2z_3)\\
 & \nonumber \quad \quad +R(z_1,z_2,z_3) \|_{\infty,
 \Delta}\\
 &\nonumber
 \leq \inf \, \| |a_0|+|a_1|(z_1+z_2+z_3)+|a_2|(z_1z_2+z_2z_3+z_3z_1) \\& \nonumber \quad \quad +
 (|a_0|+|a_3|)(z_1z_2z_3)+R(z_1,z_2,z_3) \|_{\infty, \Delta} \\& \nonumber
 \leq \inf \, \| |a_0|+|a_1|(z_1+z_2+z_3)+|a_2|(z_1z_2+z_2z_3+z_3z_1) \\& \nonumber \quad \quad +
 (|a_0|+|a_3|)(z_1z_2z_3)+R(z_1,z_2,z_3) \|_{\infty, \overline{\mathbb
 D^3}}\\& \nonumber
 = \inf \, \| a_0+a_1(z_1+z_2+z_3)+d_2(z_1z_2+z_2z_3+z_3z_1) \\& \label{019} \quad \quad +
 d_3(z_1z_2z_3)+R(z_1,z_2,z_3) \|_{\infty, \overline{\mathbb
 D^3}} \\& \nonumber
\leq \| a_0+a_1(z_1+z_2+z_3)+b_2(z_1z_2+z_2z_3+z_3z_1) \\&
\label{020} \quad \quad +
 b_3(z_1z_2z_3)+Q_1(z_1,z_2,z_3) \|_{\infty, \overline{\mathbb
 D^3}} \\& \nonumber
= \|f\circ \pi_3(z_1,z_2,z_3) \|_{\infty, \overline{\mathbb
 D^3}} \\& \nonumber =\|f(s_1,s_2,p) \|_{\infty, \Gamma_3}.
\end{align}
Here the notations used are as same as they were in case 1. The
inequality (\ref{018}) holds because $|a_1|(z_1+z_2+z_3)z_2z_3$ is
a polynomial that contains terms of degree two or higher which
makes the set of polynomials
$|a_0|+|a_1|(z_1+z_2+z_3)z_2z_3+(|a_0|+|a_3|)(z_1z_2z_3)+R(z_1,z_2,z_3)$
, a subset of the set of polynomials
$|a_0|+(|a_0|+|a_3|)(z_1z_2z_3)+R(z_1,z_2,z_3)$. The equality
(\ref{019}) was obtained by multiplying by $\dfrac{a_0}{|a_0|}$
and replacing $z_i$ by $\dfrac{\bar{a_0}a_1}{|a_0a_1|}z_i,\,
i=1,2,3$. Clearly $d_2=|a_2|.(\dfrac{\bar{a_0}a_1}{|a_0a_1|})^2$
and $d_3=(|a_0|+|a_3|).(\dfrac{\bar{a_0}a_1}{|a_0a_1|})^3$. The
last inequality (\ref{020}) was reached by choosing
$R(z_1,z_2,z_3)$ suitably to be the polynomial
$(b_2-d_2)(z_1z_2+z_2z_3+z_3z_1)+(b_3-d_3)(z_1z_2z_3)+Q_1(z_1,z_2,z_3)$.\\

\section{Conditional dilation}\label{conditional-dilation}

In the previous section, we have seen that there are
$\Gamma_3$-contractions whose FOPs are not almost normal. A class
of such $\Gamma_3$-contractions do not dilate. In this section, we
shall see that if the FOPs of a $\Gamma_3$-contraction
$(S_1,S_2,P)$ and its adjoint $(S_1^*,S_2^*,P^*)$ satisfy the
almost normality condition, then $(S_1,S_2,P)$ possesses a
$\Gamma_3$-unitary dilation. In fact, almost normality of the FOP
of $(S_1,S_2,P)$ is sufficient to have such a $\Gamma_3$-unitary
dilation, because, we shall see that if the FOP satisfies the
almost normality condition then such a $\Gamma_3$-contraction can
be dilated to a $\Gamma_3$-isometry and every $\Gamma_3$-isometry
can be extended to a $\Gamma_3$-unitary. Here we shall provide an
explicit construction of $\Gamma_3$-unitary dilation to such
$\Gamma_3$-contractions. Before going to the construction of
dilation, we list out a few important properties of the FOPs which
we shall use in the proof of the dilation theorem. Throughout this
section, we shall use a result (whose proof could be found in
chapter-I in \cite{nagy}) from one variable operator theory.
\begin{equation}\label{nagy-foias}
PD_P=D_{P^*}P, \textup{ for any contraction } P \textup{ on a
Hilbert space}.
\end{equation}
We shall also use the definitions of the FOPs, that is,
\begin{align}\label{funda-repeat}
& S_1-S_2^*P= D_{P}F_1D_{P},\;\; S_2-S_1^*P= D_{P}F_2D_{P}
\\& \label{funda-repeat1} S_1^*-S_2P^*=
D_{P^*}{F_1}_*D_{P^*} \;\; S_2^*-S_1P^*= D_{P^*}{F_2}_*D_{P^*}.
\end{align}

\begin{lem}\label{funda-properties}
Let $(S_1,S_2,P)$ be a $\Gamma_3$-contraction on a Hilbert space
$\mathcal H$. Let $(F_1,F_2)$ and $({F_1}_*, {F_2}_*)$ be
respectively the FOPs of $(S_1,S_2,P)$ and $(S_1^*,S_2^*,P^*)$.
Then

\begin{enumerate}
\item $PF_i={F_i}_*^*P|_{\mathcal D_{P}}$ and
$P^*{F_i}_*={F_i}^*P^*|_{\mathcal D_{P^*}}$ for $i=1,2$ \item
$D_{P}S_1=F_1D_{P}+F_2^*D_{P}P$ and
$D_{P}S_2=F_2D_{P}+F_1^*D_{P}P$ \item
$S_1D_{P^*}=D_{P^*}{F_1}_*^*+PD_{P^*}{F_2}_*$ and
$S_2D_{P^*}=D_{P^*}{F_2}_*^*+PD_{P^*}{F_1}_*$ \item
$S_1^*S_1-S_2^*S_2=D_{P}(F_1^*F_1-F_2^*F_2)D_{P}$, when
$[F_1,F_2]=0$ \item
${S_1}_*^*{S_1}_*-{S_2}_*^*{S_2}_*=D_{P^*}({F_1}_*^*{F_1}_*-{F_2}_*^*{F_2}_*)D_{P^*}$,
when $[{F_1}_*,{F_2}_*]=0$ \item $\omega(F_2+F_1^*z)\leq 3$ and
$\omega({F_2}_*^*+{F_1}_*z)\leq 3$ for all $z\in\mathbb T$.

\end{enumerate}
\end{lem}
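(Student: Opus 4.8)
The plan is to prove parts (1) and (2) by direct manipulation of the fundamental equations, and then to obtain (3), (5) and (6) as painless consequences of (1), (2), Theorem \ref{existence-uniqueness}, and the two obvious symmetries of $\Gamma_3$-contractions: passing to the adjoint triple $(S_1^*,S_2^*,P^*)$ (whose FOP is $({F_1}_*,{F_2}_*)$) and swapping $(S_1,S_2,P)\mapsto(S_2,S_1,P)$ (whose FOP is $(F_2,F_1)$). This leaves (4) — on which (5) rests — as the only genuinely substantive computation.

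For (1), I would multiply the first fundamental equation $S_1-S_2^*P=D_PF_1D_P$ of (\ref{funda-repeat}) on the left by $P$. On the left the Sz.-Nagy--Foias identity $PD_P=D_{P^*}P$ of (\ref{nagy-foias}) turns $PD_PF_1D_P$ into $D_{P^*}(PF_1)D_P$; on the right, $PS_1=S_1P$ together with the adjoint of $S_1^*-S_2P^*=D_{P^*}{F_1}_*D_{P^*}$ from (\ref{funda-repeat1}) and a second use of (\ref{nagy-foias}) rewrite $S_1P-PS_2^*P=(S_1-PS_2^*)P$ as $D_{P^*}({F_1}_*^*P)D_P$. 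Comparing gives $D_{P^*}(PF_1-{F_1}_*^*P)D_P=0$, and one then strips off the defect operators: $D_P$ has range dense in $\mathcal D_P$, the operator $PF_1-{F_1}_*^*P$ maps $\mathcal D_P$ into $\mathcal D_{P^*}$ (by (\ref{nagy-foias}) and because the fundamental operators preserve their defect spaces), and a defect operator is injective on the closure of its range; hence $PF_1={F_1}_*^*P$ on $\mathcal D_P$. The case of $F_2$ is identical, and the companion identity $P^*{F_i}_*={F_i}^*P^*$ is just (1) applied to the adjoint triple. For (2) I would show that $D_PS_1-F_1D_P-F_2^*D_PP$, which visibly maps $\mathcal H$ into $\mathcal D_P$, is killed by a further left multiplication by $D_P$: using $D_P^2=I-P^*P$, $[P,S_1]=0$, the first fundamental equation, and the adjoint of the second (giving $D_PF_2^*D_P=S_2^*-P^*S_1$), everything telescopes to $-P^*S_1P+P^*S_1P=0$; injectivity of $D_P$ on $\mathcal D_P$ finishes the first identity, and the second follows by applying it to $(S_2,S_1,P)$. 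Then (3) is simply the adjoint of (2) for the adjoint triple.

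The heart of the lemma is (4). The idea is to eliminate $S_1,S_2$ in favour of the fundamental operators: writing $S_1=S_2^*P+D_PF_1D_P$ and $S_2=S_1^*P+D_PF_2D_P$ and using $S_1^*S_2^*=S_2^*S_1^*$ (which comes from $[S_1,S_2]=0$), the expression $S_1^*S_1-S_2^*S_2$ collapses to $S_1^*D_PF_1D_P-S_2^*D_PF_2D_P$; substituting the adjointed form of (2), $S_1^*D_P=D_PF_1^*+P^*D_PF_2$ and $S_2^*D_P=D_PF_2^*+P^*D_PF_1$, produces $D_P(F_1^*F_1-F_2^*F_2)D_P+P^*D_P(F_2F_1-F_1F_2)D_P$, and the hypothesis $[F_1,F_2]=0$ annihilates the last term. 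Part (5) is then (4) applied verbatim to $(S_1^*,S_2^*,P^*)$. For (6), I would start from $\omega(F_1+F_2z)\leq 3$ for $z\in\mathbb T$, which is the last assertion of Theorem \ref{existence-uniqueness}; Lemma \ref{basicnrlemma1} then gives $\omega(F_1^*+F_2z)\leq 3$, and replacing $z$ by $\bar z$ and multiplying the operator by the unimodular scalar $z$ (which leaves the numerical radius unchanged) yields $\omega(F_2+F_1^*z)\leq 3$; applying the same reasoning to $({F_1}_*,{F_2}_*)$ but using instead the conclusion $\omega({F_1}_*+z{F_2}_*^*)\leq 3$ of Lemma \ref{basicnrlemma1} gives $\omega({F_2}_*^*+{F_1}_*z)\leq 3$.

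I expect the only real obstacle to be (4): clearing the operators $S_1,S_2$, cancelling the mixed term by commutativity, and then feeding in identity (2) is the one move that does not fall out of mechanical calculation. The bookkeeping in (1)--(2) is routine, but it does hinge on the standard subtlety that a defect operator may be cancelled only after one has checked that the relevant operator already maps into the closed range of the other defect operator, where that operator is injective.
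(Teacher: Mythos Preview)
Your proposal is correct and follows essentially the same route as the paper. The only cosmetic differences are that in (1) the paper packages the same computation as an inner-product identity rather than stripping defect operators, and in (3) the paper repeats the direct calculation of (2) instead of invoking the adjoint symmetry; your treatment of (4) and (6) matches the paper's exactly (the paper's citation of ``part-(1)'' in the proof of (4) is a typo for part (2), which is precisely the substitution you describe).
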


\begin{proof}
\textbf{(1).} It suffices if we show $PF_1={F_1}_*^*P|_{\mathcal
D_{P}}$ because the proof to the other identities are same. For
$D_{P}h \in \mathcal D_{P}$ and $D_{P^*}h^{\prime}\in \mathcal
D_{P^*}$, we have by virtue of (\ref{funda-repeat1}),
\begin{align*}
\langle PF_1D_{P}h,D_{P^*}h^{\prime} \rangle =\langle
D_{P^*}PF_1D_{P}h,h^{\prime} \rangle &=\langle
PD_{P}F_1D_{P}h,h^{\prime}\rangle \\&=\langle
P(S_1-S_2^*P)h,h^{\prime} \rangle \\& =\langle
(S_1-PS_2^*)Ph,h^{\prime} \rangle \\&=\langle
D_{P^*}{F_1}_*^*D_{P^*}Ph,h^{\prime} \rangle \\&=\langle
{F_1}_*^*PD_{P}h,D_{P^*}h^{\prime} \rangle.
\end{align*}
\textbf{(2).}
$D_{P}(F_1D_{P}+F_2^*D_{P}P)=(S_1-S_2^*P)+(S_2^*-P^*S_1)P=D_{P}^2S_1$,
by (\ref{funda-repeat}). Therefore,
$D_{P}S_1=F_1D_{P}+F_2^*D_{P}P$ because both LHS and RHS are
defined from $\mathcal H$ to $\mathcal D_{P}$. The proof to
the other identity is similar.\\

\noindent \textbf{(3).}
$(D_{P^*}{F_1}_*^*+PD_{P^*}{F_2}_*)D_{P^*}=(S_1-PS_2^*)+P(S_2^*-S_1P^*)=S_1D_{P^*}^2$,
by (\ref{funda-repeat1}). Therefore,
$S_1D_{P^*}=D_{P^*}{F_1}_*^*+PD_{P^*}{F_2}_*$
and the proof to the other identity is similar.\\

\noindent \textbf{(4).} We have $S_1^*S_2^*P=S_2^*S_1^*P$ which by
(\ref{funda-repeat}) and (\ref{funda-repeat1}) implies that
\[
S_1^*(S_1-D_{P}F_1D_{P})=S_2^*(S_2-D_{P}F_2D_{P}).
\]
Therefore, we have that
\begin{align*}
S_1^*S_1-S_2^*S_2 &=S_1^*D_{P}F_1D_{P}-S_2^*D_{P}F_2D_{P}
\\&
=(D_{P}F_1^*+P^*D_{P}F_2)F_1D_{P}
\\&-(D_{P}F_2^*+P^*D_{P}F_1)F_2D_{P}, \textup{ [by part-(1)
of this lemma]} \\& = D_{P}(F_1^*F_1-F_2^*F_2)D_{P}, \text{ when }
[F_1,F_2]=0.
\end{align*}
\textbf{(5).} Same as (4).\\

\noindent \textbf{(6).} By Theorem 3.1, $\omega(F_1+F_2z)\leq 3$
for every $z\in\mathbb T$. Therefore, the inequalities follow from
Lemma \ref{basicnrlemma1}.
\end{proof}

\begin{thm}\label{main-dilation-theorem}
 Let $(S_1,S_2,P)$ be a $\Gamma_3$-contraction defined on a Hilbert space $\mathcal
 H$ such that the FOPs $(F_1,F_2)$ and $(F_{1*},F_{2*})$ of $(S_1,S_2,P)$
 and $(S_1^*,S_2^*,P^*)$ respectively are almost normal. Let
 $\mathcal K=\cdots\oplus\mathcal D_{P}\oplus\mathcal D_{P}\oplus\mathcal D_{P}\oplus\mathcal H\oplus
 \mathcal D_{P^*}\oplus\mathcal D_{P^*}\oplus\mathcal D_{P^*}\oplus\cdots$ and let $(R_1,R_2,U)$
 be a triple of operators defined on $\mathcal K$ by
\begin{eqnarray}\label{2.3}
&R_1 =\footnotesize \left[
\begin{array}{ c c c c|c|c c c c}
\bm{\ddots}&\vdots &\vdots&\vdots   &\vdots  &\vdots& \vdots&\vdots&\vdots\\
\cdots&0&F_1&F_2^*  &0&  0&0&0&\cdots\\
\cdots&0&0&F_1  &F_2^*D_{P}&  -F_2^*P^*&0&0&\cdots\\ \hline

\cdots&0&0&0   &S_1&   D_{P^*}{F_2}_*&0&0&\cdots\\ \hline

\cdots&0&0&0   &0&  {F_1}_*^*& {F_2}_*&0&\cdots\\
\cdots&0&0&0   &0&  0&{F_1}_*^*&{F_2}_*&\cdots\\
\vdots&\vdots&\vdots&\vdots&\vdots&\vdots&\vdots&\vdots&\bm{\ddots}\\
\end{array} \right]\,,\\&
\label{2.4} R_2 = \footnotesize \left[
\begin{array}{ c c c c|c|c c c c}
\bm{\ddots}&\vdots &\vdots&\vdots   &\vdots  &\vdots& \vdots&\vdots&\vdots\\
\cdots&0&F_2&F_1^*  &0&  0&0&0&\cdots\\
\cdots&0&0&F_2  &F_1^*D_{P}&  -F_1^*P^*&0&0&\cdots\\ \hline

\cdots&0&0&0   &S_2&   D_{P^*}{F_1}_*&0&0&\cdots\\ \hline

\cdots&0&0&0   &0&  {F_2}_*^*& {F_1}_*&0&\cdots\\
\cdots&0&0&0   &0&  0&{F_2}_*^*&{F_1}_*&\cdots\\
\vdots&\vdots&\vdots&\vdots&\vdots&\vdots&\vdots&\vdots&\bm{\ddots}\\
\end{array} \right] \\&
\text{ \large and }\quad U = \left[
\begin{array}{ c c c c|c|c c c c}
\bm{\ddots}&\vdots &\vdots&\vdots   &\vdots  &\vdots& \vdots&\vdots&\vdots\\
\cdots&0&0&I  &0&  0&0&0&\cdots\\
\cdots&0&0&0  &D_{P}&  -{P}^*&0&0&\cdots\\ \hline

\cdots&0&0&0   &P&   D_{P^*}&0&0&\cdots\\ \hline

\cdots&0&0&0   &0&  0& I&0&\cdots\\
\cdots&0&0&0   &0&  0&0&I&\cdots\\
\vdots&\vdots&\vdots&\vdots&\vdots&\vdots&\vdots&\vdots&\bm{\ddots}\\
\end{array} \right].
\end{eqnarray}
Then $(R_1,R_2,U)$ is a minimal $\mathbb E$-unitary dilation of
$(S_1,S_2,P)$.
\end{thm}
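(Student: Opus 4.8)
The plan is to unpack ``minimal $\Gamma_3$-unitary dilation'' into five verifications and dispatch them in order: (a) $R_1,R_2,U$ are bounded operators on $\mathcal K$; (b) $U$ is unitary and is the Sz.-Nagy--Sch\"affer minimal unitary dilation of the contraction $P$; (c) $(R_1,R_2,U)$ is a commuting triple with $R_1=R_2^{*}U$ and $R_2=R_1^{*}U$; (d) $(R_1,R_2,U)$ is a $\Gamma_3$-unitary; (e) it dilates $(S_1,S_2,P)$ minimally. Everything is organized around (b): reading off the central block of the matrix of $U$ --- rows indexed by the last copy of $\mathcal D_P$ and by $\mathcal H$, columns by $\mathcal H$ and the first copy of $\mathcal D_{P^{*}}$ --- one recognizes the Julia (elementary rotation) operator $\left(\begin{smallmatrix} D_P & -P^{*} \\ P & D_{P^{*}}\end{smallmatrix}\right)\colon \mathcal H\oplus\mathcal D_{P^{*}}\to\mathcal D_P\oplus\mathcal H$, whose unitarity is the three standard identities $D_P^{2}+P^{*}P=I$, $PP^{*}+D_{P^{*}}^{2}=I$ and $D_PP^{*}=P^{*}D_{P^{*}}$, the last being the adjoint of the Sz.-Nagy--Foias relation (\ref{nagy-foias}); the remaining entries of $U$ are identity operators running the bilateral shift along the two tails. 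Hence $U$ is unitary, $P_{\mathcal H}U^{n}|_{\mathcal H}=P^{n}$ for $n\ge 0$, and $\mathcal K=\overline{\textup{span}}\{U^{n}h: h\in\mathcal H,\ n\in\mathbb Z\}$, which will settle minimality once (c)--(e) are in place (the minimality span for a $\Gamma_3$-unitary dilation contains $(1/p)(R_1,R_2,U)=U^{*}$, hence every $U^{n}$, $n\in\mathbb Z$). For (a) I would use that $D_P,D_{P^{*}},P,P^{*}$ are contractions together with the numerical-radius bounds $\omega(F_1+F_2z)\le 3$ and $\omega(F_2+F_1^{*}z)\le 3$ from Theorem \ref{existence-uniqueness} and Lemma \ref{funda-properties}(6) (and their starred versions), which bound the two block-Toeplitz tails of $R_1$ and $R_2$.

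The computational heart --- and the step I expect to be the main obstacle --- is (c): that $R_1,R_2,U$ commute pairwise and that $R_1=R_2^{*}U$, $R_2=R_1^{*}U$. Since each of the three matrices has only finitely many distinct nonzero blocks, every one of the products $R_iR_j$, $R_iR_j^{*}$, $R_iU$, $UR_i$ is a banded operator matrix, and the verification is a finite, albeit long, block-by-block comparison. The identities that make the blocks cancel are exactly those assembled just before the theorem: the two pairs of fundamental equations (\ref{funda-repeat}) and (\ref{funda-repeat1}); the intertwinings $PF_i=F_{i*}^{*}P|_{\mathcal D_P}$, $P^{*}F_{i*}=F_i^{*}P^{*}|_{\mathcal D_{P^{*}}}$, $D_PS_i=F_iD_P+F_j^{*}D_PP$ and $S_iD_{P^{*}}=D_{P^{*}}F_{i*}^{*}+PD_{P^{*}}F_{j*}$ of parts (1)--(3) of Lemma \ref{funda-properties} (with $\{i,j\}=\{1,2\}$); the Sz.-Nagy--Foias identity (\ref{nagy-foias}); and, decisively, the almost-normality hypotheses in the form $[F_1,F_2]=0$, $[F_1^{*},F_1]=[F_2^{*},F_2]$ together with the same two identities for $(F_{1*},F_{2*})$. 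The commutators $[F_1,F_2]$ and $[F_1^{*},F_1]-[F_2^{*},F_2]$ (and their starred analogues) are precisely what turns up in the coupling blocks joining the $\mathcal D_P$-tail to $\mathcal H$ and $\mathcal H$ to the $\mathcal D_{P^{*}}$-tail, so the construction breaks exactly when almost normality fails --- the mechanism behind the counterexample of Section \ref{example}.

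With (c) established, normality of $R_1,R_2$ comes for free: $U$ is unitary and commutes with $R_1,R_2$, so by Fuglede's theorem it commutes with $R_1^{*},R_2^{*}$, and then the short chain used in the proof of $(3)\Rightarrow(1)$ of Theorem \ref{G-unitary} --- applied verbatim with $(R_1,R_2,U)$ in place of $(S_1,S_2,P)$ --- yields $R_1^{*}R_1=R_1R_1^{*}$ and $R_2^{*}R_2=R_2R_2^{*}$, and likewise $\Phi_{ik}(\beta R_1,\beta^{2}R_2,\beta^{3}U)=R_1^{*}R_1-R_2^{*}R_2=0$ for all $\beta\in\mathbb T$, $k\ge 3$. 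For (d) it therefore remains, by part (3) of Theorem \ref{G-unitary} (equivalently by part (5) of Theorem \ref{G-isometry} together with $U$ unitary), only to check that $\bigl(\tfrac23R_1,\tfrac13R_2\bigr)$ is a $\Gamma_2$-contraction. I would get this from the geometry of $\mathcal K$: the subspace $\mathcal M$ made of $\mathcal H$ together with the $\mathcal D_P$-tail is invariant under $R_1,R_2,U$, the restriction $(R_1,R_2,U)|_{\mathcal M}$ is (essentially) the $\Gamma_3$-isometric dilation of $(S_1,S_2,P)$ of Theorem \ref{isometric-dilation} (with $U|_{\mathcal M}$ the minimal isometric dilation of $P$), and the $\mathcal D_P$-tail alone carries a pure $\Gamma_3$-isometry as in Theorem \ref{model1} built from the almost-normal pair $(F_1,F_2)$; both are $\Gamma_3$-contractions, so Lemma \ref{lem:BSR2} makes the corresponding restrictions of $\bigl(\tfrac23R_1,\tfrac13R_2\bigr)$ $\Gamma_2$-contractions, and the $\Gamma_2$-contraction property propagates to all of $\mathcal K$. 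This is the point at which almost normality of the second fundamental pair $(F_{1*},F_{2*})$, needed to make $(R_1,R_2,U)|_{\mathcal M}$ a $\Gamma_3$-isometry, enters.

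Finally, for (e), decompose $\mathcal K=\mathcal K_-\oplus\mathcal H\oplus\mathcal K_+$ with $\mathcal K_-$ the $\mathcal D_P$-tail and $\mathcal K_+$ the $\mathcal D_{P^{*}}$-tail. The matrices show that $\mathcal K_-$ is invariant under $R_1,R_2,U$ and that $\mathcal K_+$ is invariant under $R_1^{*},R_2^{*},U^{*}$, so each of $R_1,R_2,U$ is block upper-triangular for this decomposition with $\mathcal H$-corner equal to $S_1,S_2,P$ respectively; consequently the $\mathcal H$-corner of any word in $R_1,R_2,U$ is the same word in $S_1,S_2,P$, i.e.\ $P_{\mathcal H}(R_1^{m_1}R_2^{m_2}U^{n})|_{\mathcal H}=S_1^{m_1}S_2^{m_2}P^{n}$ for all $m_1,m_2,n\ge 0$. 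Together with $\mathcal K=\overline{\textup{span}}\{U^{n}h:n\in\mathbb Z,\ h\in\mathcal H\}$ from (b) this gives a minimal $\Gamma_3$-unitary dilation. To reiterate, the only genuinely laborious part is the block-matrix bookkeeping of (c); the rest is an assembly of the structure theorems for $\Gamma_3$-unitaries, $\Gamma_3$-isometries and pure $\Gamma_3$-isometries already in hand.
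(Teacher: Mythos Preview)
Your overall architecture is right and close to the paper's: it, too, first notes that $U$ is the Sz.-Nagy--Sch\"affer minimal unitary dilation of $P$, reads the dilation property and minimality off the block upper-triangular shape, and then spends the bulk of the work on what you call (c): verifying $R_1R_2=R_2R_1$, $R_iU=UR_i$, and $R_1=R_2^{*}U$ entry by entry using Lemma \ref{funda-properties}(1)--(5), the fundamental equations (\ref{funda-repeat})--(\ref{funda-repeat1}), and the almost-normality of both FOPs. Your identification of (c) as the computational heart, and of the commutators $[F_1,F_2]$, $[F_1^{*},F_1]-[F_2^{*},F_2]$ (and their starred analogues) as exactly the residues in the coupling blocks, is on the mark.

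The gap is in your step (d), the passage from (c) to ``$(R_1,R_2,U)$ is a $\Gamma_3$-unitary''. You correctly extract normality of $R_1,R_2$ via Fuglede and obtain $R_1^{*}R_1=R_2^{*}R_2$, so by part (3) of Theorem \ref{G-unitary} what remains is $\bigl(\tfrac23R_1,\tfrac13R_2\bigr)\in\Gamma_2$-contractions. Your route to that --- restrict to $\mathcal M=\mathcal K_{-}\oplus\mathcal H$, quote Theorem \ref{isometric-dilation} and Theorem \ref{model1}, then ``propagate'' --- does not stand up. First, Theorem \ref{isometric-dilation} is a corollary of the theorem you are proving, so quoting it is circular. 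Second, the converse of Theorem \ref{model1} needs, in addition to almost normality of $(F_1,F_2)$, the hypothesis that $\bigl(\tfrac23(F_1^{*}+F_2z),\tfrac13(F_2^{*}+F_1z)\bigr)$ is a $\Gamma_2$-contraction for all $z\in\mathbb T$; this is neither assumed nor derived here. Third, ``the $\Gamma_2$-contraction property propagates to all of $\mathcal K$'' is not an argument: $\Gamma_2$-contractivity on an invariant subspace does not in general imply it on the whole space. Incidentally, your attribution is reversed: the matrices of $(R_1,R_2,U)|_{\mathcal M}$ involve only $(F_1,F_2)$ (see the $T_1,T_2,V$ of Theorem \ref{isometric-dilation}); the almost-normality of $(F_{1*},F_{2*})$ is needed in (c) for the commutation identities on the $\mathcal D_{P^{*}}$-tail, not in (d).

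The paper closes (d) by a different, short device. After (c), Fuglede gives $R_2=R_1^{*}U$ and one checks $R_2R_2^{*}=R_2^{*}R_2$, so $\|R_2\|=r(R_2)$. With respect to the decomposition $\mathcal K=l^{2}(\mathcal D_P)\oplus\mathcal H\oplus l^{2}(\mathcal D_{P^{*}})$ the operator $R_2$ is block upper-triangular with diagonal blocks $B_1$ (the block Toeplitz operator with symbol $F_2+F_1^{*}z$), $S_2$, and $B_5$ (the block Toeplitz operator with symbol ${F_2}_*^{*}+{F_1}_*z$). The spectral inclusion $\sigma(R_2)\subseteq\sigma(B_1)\cup\sigma(S_2)\cup\sigma(B_5)$ together with the numerical-radius bounds of Lemma \ref{funda-properties}(6) yields $r(R_2)\le 3$, hence $\|R_2\|\le 3$, from which the $\Gamma_3$-unitary conclusion is drawn. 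If you want to salvage your (d), replace the ``propagation'' paragraph by this spectral-radius argument.
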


 \begin{proof}
It is evident from Sz.-Nagy-Foias model theory for contraction
(see Chapter-I and Chapter-II of \cite{nagy}) that $U$ is the
minimal unitary dilation of $P$. Also it is obvious from the block
matrices of $R_1,R_2$ and $U$ that
$$ P_{\mathcal H}(R_1^{m_1}R_2^{m_2}U^n)|_{\mathcal H}=S_1^{m_1}S_2^{m_2}P^n
\textup{ for all integers } m_1,m_2,n,$$ which proves that
$(R_1,R_2,U)$ dilates $(S_1,S_2,P)$. The minimality of the
$\Gamma_3$-unitary dilation follows from the fact that $\mathcal
K$ and $U$ are respectively the minimal unitary dilation space and
minimal unitary dilation of $P$. Therefore, in order to prove that
$(R_1,R_2,U)$ is a minimal $\Gamma_3$-unitary dilation of
$(S_1,S_2,P)$, we need to show that $(R_1,R_2,U)$ is a
$\Gamma_3$-unitary. By virtue of Theorem \ref{G-unitary}, it
suffices to show the following steps:
\begin{enumerate}
\item $R_1R_2=R_2R_1$ \item $R_iU=UR_i \quad i=1,2 $ \item
$R_1=R_2^*U$ \item $\left( \dfrac{2}{3}R_1,\dfrac{1}{3}R_2
\right)$ is a $\Gamma_2$-contraction.
\end{enumerate}
\textbf{Step 1.} $R_1R_2 =$
\begin{equation*}
\tiny \left[
\begin{array}{ c c c|c|c c c }
\bm{\ddots} &\vdots&\vdots   &\vdots  &\vdots& \vdots&\vdots\\
\cdots & F_1F_2& F_1F_1^*+F_2^*F_2 &F_2^*F_1^*D_{P}&  -F_2^*F_1^*P&0&\cdots\\
\cdots&0&F_1F_2  & {\normalsize \substack{F_1F_1^*D_{P}\\
+F_2^*D_{P}S_2}}&
{\normalsize \substack{-F_1F_1^*P^*+F_2^*D_{P}D_{P^*}{F_1}_*\\ -F_2^*P^*{F_2}_*^*}} &-F_2^*P^*{F_1}_*&\cdots\\
\hline

\cdots&0&0 &S_1S_2&
S_1D_{P^*}{F_1}_*+D_{P^*}{F_2}_*{F_2}_*^*&D_{P^*}{F_2}_*{F_1}_*&\cdots\\
\hline

\cdots&0&0 &0&  {F_1}_*^*{F_2}_*^*& {F_1}_*^*{F_1}_*+{F_2}_*{F_2}_*^*&\cdots\\
\cdots &0&0 &0&  0&{F_1}_*^*{F_2}_*^*& \cdots\\
\vdots &\vdots&\vdots&\vdots&\vdots&\vdots &\bm{\ddots}\\
\end{array} \right]
\end{equation*}
and $R_2R_1 =$
\begin{equation*}
\tiny \left[
\begin{array}{ c c c|c|c c c }
\bm{\ddots} &\vdots&\vdots   &\vdots  &\vdots& \vdots&\vdots\\
\cdots &F_2F_1&F_2F_2^*+F_1^*F_1  &F_1^*F_2^*D_{P}&  -F_1^*F_2^*P&0&\cdots\\
\cdots&0&F_2F_1  & {\normalsize \substack{F_2F_2^*D_{P}\\
+F_1^*D_{P}S_1}}&
{\normalsize \substack{-F_2F_2^*P^*+F_1^*D_{P}D_{P^*}{F_2}_*\\ -F_1^*P^*{F_1}_*^*}}&-F_1^*P^*{F_2}_*&\cdots\\
\hline

\cdots&0&0 &S_2S_1&
S_2D_{P^*}{F_2}_*+D_{P^*}{F_1}_*{F_1}_*^*&D_{P^*}{F_1}_*{F_2}_*&\cdots\\
\hline

\cdots&0&0 &0&  {F_2}_*^*{F_1}_*^*& {F_2}_*^*{F_2}_*+{F_1}_*{F_1}_*^*&\cdots\\
\cdots &0&0 &0&  0&{F_2}_*^*{F_1}_*^*& \cdots\\
\vdots &\vdots&\vdots&\vdots&\vdots&\vdots &\bm{\ddots}\\
\end{array} \right].
\end{equation*}
For proving $R_1R_2$ and $R_2R_1$ to be equal, it suffices to
verify the equality of the entities $(-1,0), (0,1), (-1,1)$ in the
matrices of $R_1R_2$ and $R_2R_1$ because the other entities are
equal by the given conditions $(1)$ and $(2)$ of the theorem.
Therefore we have to show the following operator identities.
\begin{itemize}
\item[($a_1$)]$S_1D_{P^*}{F_1}_*+D_{P^*}{F_2}_*{F_2}_*^*=S_2D_{P^*}{F_2}_*+D_{P^*}{F_1}_*{F_1}_*^*$,
\item[($a_2$)]$F_1F_1^*D_{P}+F_2^*D_{P}S_2=F_2F_2^*D_{P}+F_1^*D_{P}S_1$,
\item[($a_3$)]$-F_1F_1^*P^*+F_2^*D_{P}D_{P^*}{F_1}_*-F_2^*P^*{F_2}_*^*\\
=-F_2F_2^*P^*+F_1^*D_{P}D_{P^*}{F_2}_*-F_1^*P^*{F_1}_*^*$.

\end{itemize}

$(a_1).$ We apply part-(3) of Lemma \ref{funda-properties} and get
\begin{align*}
S_1D_{P^*}{F_1}_*+D_{P^*}{F_2}_*{F_2}_*^* &
=(D_{P^*}{F_1}_*^*+PD_{P^*}{F_2}_*){F_1}_*
+D_{P^*}{F_2}_*{F_2}_*^* \\&
=D_{P^*}({F_1}_*^*{F_1}_*+{F_2}_*{F_2}_*^*)+
PD_{P^*}{F_2}_*{F_1}_*.
\end{align*}
Similarly
$F_2D_{P^*}{F_2}_*+D_{P^*}{F_1}_*{F_1}_*^*=D_{P^*}({F_2}_*^*{F_2}_*+{F_1}_*{F_1}_*^*)
+ PD_{P^*}{F_1}_*{F_2}_*$
and now we apply the hypotheses of the theorem.\\

$(a_2).$ We have, by part-(2) of Lemma \ref{funda-properties} that
\begin{align*}
F_1F_1^*D_{P}+F_2^*D_{P}S_2
&=F_1F_1^*D_{P}+F_2^*(F_2D_{P}+F_1^*D_{P}P)\\&
=(F_1F_1^*+F_2^*F_2)D_{P}+F_2^*F_1^*D_{P}P.
\end{align*}
Similarly
$F_2F_2^*D_{P}+F_1^*D_{P}S_1=(F_2F_2^*+F_1^*F_1)D_{P}+F_1^*F_2^*D_{P}P$
and the equality follows from the hypotheses of the theorem.\\
$(a_3).$ By virtue of Lemma \ref{funda-properties}- part-(1), both
of LHS and RHS are defined from $\mathcal D_{P^*}$ to $\mathcal
D_{P}$. Let $T_1=$LHS and $T_2=$RHS. Therefore, by
(\ref{funda-repeat}) and (\ref{funda-repeat1}) we have that
\begin{align*}
D_{P}(T_2-T_1)D_{P^*} & =
D_{P}(F_1F_1^*-F_2F_2^*)D_{P}P^*-P^*D_{P^*}({F_1}_*{F_1}_*^*-{F_2}_*{F_2}_*^*)D_{P^*}
\\& \quad +
D_{P}F_1^*D_{P}D_{P^*}{F_2}_*D_{P^*}-D_{P}F_2^*D_{P}D_{P^*}{F_1}_*D_{P^*}
\\& =
(S_1^*S_1-S_2^*S_2)P^*-P^*(S_1S_1^*-S_1S_2^*)\\& \quad
+(S_1^*-P^*S_2)(S_2^*-S_1P^*) -(S_2^*-P^*S_1)(S_1^*-S_2P^*)\\&=0.
\end{align*}
The first equality follows from the hypotheses of the theorem and
also by using part-(4) and part-(5) of Lemma
\ref{funda-properties}.\\

\noindent \textbf{Step 2.} We now show that $R_1U=UR_1$.
\begin{equation*}\label{2.5}
R_1U= \footnotesize \left[
\begin{array}{ c c c c|c|c c c c }
\bm{\ddots} &\vdots &\vdots&\vdots   &\vdots  &\vdots& \vdots& \vdots& \vdots\\
\cdots &0&F_1&F_2^* &0&0&0&0&\cdots\\
\cdots &0&0&F_1&F_2^*D_{P}&-F_2^*P^*&0&0&\cdots\\
\cdots&0&0&0&F_1D_{P}+F_2^*D_{P}P &F_2^*D_{P}D_{P^*}-F_1P^*
& -F_2^*P^*&0&\cdots\\
\hline

\cdots&0&0&0 &S_1P& S_1D_{P^*}&D_{P^*}{F_2}_*&0&\cdots\\
\hline

\cdots&0&0 &0&0&0&  {F_1}_*^*& {F_2}_*&\cdots\\
\cdots&0&0 &0&0&0&0& {F_1}_*^*&\cdots\\

\cdots &0&0   &0&  0&0&0&0&\cdots\\
\vdots &\vdots&\vdots&\vdots&\vdots&\vdots&\bm{\ddots}\\
\end{array} \right]
\end{equation*}
and
\begin{equation*}\label{2.5}
UR_1= \footnotesize \left[
\begin{array}{ c c c c|c|c c c c }
\bm{\ddots} &\vdots &\vdots&\vdots   &\vdots  &\vdots& \vdots& \vdots& \vdots\\
\cdots &0&F_1&F_2^*&0 &0&0&0&\cdots\\
\cdots &0&0&F_1&F_2^*D_{P}&-F_2^*P^*&0&0&\cdots\\
\cdots &0&0&0&D_{P}S_1&D_{P}D_{P^*}{F_2}_*-P^*{F_1}_*^*
& -P^*{F_2}_*&0&\cdots\\
\hline

\cdots &0&0&0 &PS_1& PD_{P^*}{F_2}_*+D_{P^*}{F_1}_*^*&D_{P^*}{F_2}_*&0&\cdots\\
\hline

\cdots &0&0 &0&0&0&  {F_1}_*^*& {F_2}_*&\cdots\\
\cdots &0&0 &0&0&0&0& {F_1}_*^*&\cdots\\

\cdots &0&0   &0&  0&0&0&0&\cdots\\
\vdots &\vdots&\vdots&\vdots&\vdots&\vdots&\bm{\ddots}\\
\end{array} \right].
\end{equation*}
The equality of the entities in the positions $(-1,2),\,(-1,0)$
and $(0,1)$ of $R_1U$ and $UR_1$ follows from part-(1), part-(2)
and part-(3) of Lemma \ref{funda-properties}. Therefore, for
showing the equality of $R_1U$ and $UR_1$ we have to verify that
$F_2^*D_{P}D_{P^*}-F_1P^*=D_{P}D_{P^*}{F_2}_*-P^*{F_1}_*^*$. Let
$T=(F_2^*D_{P}D_{P^*}-F_1P^*)-(D_{P}D_{P^*}{F_2}_*-P^*{F_1}_*^*)$.
Then $T$ maps $\mathcal D_{P^*}$ into $\mathcal D_{P}$. Now
\begin{align*}
D_{P}TD_{P^*}
&=D_{P}F_2^*D_{P}D_{P^*}^2-D_{P}F_1P^*D_{P^*}+D_{P}P^*{F_1}_*^*D_{P^*}
-D_{P}^2D_{P^*}{F_2}_*D_{P^*}
\\&
=(S_2^*-P^*S_1)(I-PP^*)-(S_1-S_2^*P)P^*\\& \quad +P^*(S_1-PS_2^*)
-(I-P^*P)(S_2^*-S_1P^*) \\& =0.
\end{align*}
We used (\ref{nagy-foias}), (\ref{funda-repeat}) and
(\ref{funda-repeat}). Hence $R_1U=UR_1$.\\

\noindent \textbf{Step 3.} We now show that $R_1=R_2^*U$.
\begin{eqnarray}
&R_2^*U= \notag \\&
 \SMALL \left[
\begin{array}{ c c c c|c|c c c c}
\bm{\ddots}&\vdots &\vdots&\vdots   &\vdots  &\vdots& \vdots&\vdots&\vdots\\
\cdots&F_1&F_2^*&0&0&0&0&0&\cdots\\
\cdots&0&F_1&F_2^*&0&0&0&0&\cdots\\
\hline

\cdots&0&0&D_{P}F_1&S_2^*&0&0&0&\cdots\\
\hline

\cdots&0&0&-PF_1&  {F_1}_*^*D_{P^*}& {F_2}_*&0&0&\cdots\\
\cdots&0&0&0&0&{F_1}_*^*&{F_2}_*&0&\cdots\\
\vdots&\vdots&\vdots&\vdots&\vdots&\vdots&\vdots&\vdots&\bm{\ddots}\\
\end{array} \right]
\left[
\begin{array}{ c c c|c|c c c }
\bm{\ddots} &\vdots&\vdots   &\vdots  &\vdots& \vdots&\vdots\\
\cdots&0&I  &0&  0&0&\cdots\\
\cdots&0&0  &D_{P}&  -{P}^*&0&\cdots\\ \hline

\cdots&0&0   &{P}&   D_{{P}^*}&0&\cdots\\ \hline

\cdots&0&0   &0&  0& I&\cdots\\
\cdots&0&0   &0&  0&0&\cdots\\
\vdots&\vdots&\vdots&\vdots&\vdots&\vdots&\bm{\ddots}\\
\end{array} \right] \notag \\&
= \SMALL \left[
\begin{array}{ c c c c|c|c c c c }
\bm{\ddots} &\vdots &\vdots&\vdots   &\vdots  &\vdots& \vdots& \vdots& \vdots\\
\cdots &0&F_1&F_2^* &0&0&0&0&\cdots\\
\cdots &0&0&F_1&F_2^*D_{P}&-F_2^*P^*&0&0&\cdots\\

\hline

\cdots&0&0&0 &S_2^*P+D_{P}F_1D_{P}& S_2^*D_{P^*}-D_{P}F_1P^*&0&0&\cdots\\
\hline

\cdots&0&0 &0&{F_1}_*^*D_{P^*}P-PF_1D_{P}& {F_1}_*^*D_{P^*}^2+T_3F_1P^*& {F_2}_*&0&\cdots\\
\cdots&0&0 &0&0&0&{F_1}_*^*& {F_2}_*&\cdots\\

\vdots &\vdots&\vdots&\vdots&\vdots&\vdots&\bm{\ddots}\\
\end{array} \right]
\end{eqnarray}

In order to prove $R_1=R_2^*U$, we need to show the following
steps because the other equalities follow from
(\ref{funda-repeat}) and (\ref{funda-repeat1}).
\begin{itemize}
\item[($c_1$)]${F_1}_*^*D_{P^*}P=PF_1D_{P^*}$, \item[($c_2$)]
$D_{P^*}{F_2}_*=S_2^*D_{P^*}-D_{P}F_1P^*$, \item[($c_3$)]
${F_1}_*^*D_{P^*}^2+PF_1P^*={F_1}_*^*$.
\end{itemize}
The identity $(c_1)$ follows from part-(1) of Lemma
\ref{funda-properties} together with (\ref{nagy-foias}).\\
$(c_2).$  Let $J_1=D_{P^*}{F_2}_*+D_{P}F_1P^*$. Now
\begin{align*}
J_1D_{P^*} =D_{P^*}{F_2}_*D_{P^*}+D_{P}F_1P^*D_{P^*}
&=(S_2^*-S_1P^*)+D_{P}F_1D_{P}P^*\\&
=(S_2^*-S_1P^*)+(S_1-S_2^*P)P^*\\&=S_2^*D_{P^*}^2.
\end{align*}
We used (\ref{nagy-foias}), (\ref{funda-repeat}) and
(\ref{funda-repeat1}) here. Since $J$ is defined from $D_{P^*}$ to
$\mathcal H$, $(b_2)$ is
established.\\
$(c_3). \quad
{F_1}_*^*D_{P^*}^2+PF_1P^*={F_1}_*^*(I-PP^*)+{F_1}_*^*PP^*={F_1}_*^*$.\\

\noindent \textbf{Step 4.} We first show that $R_2$ is a normal
operator. We have $R_1=R_2^*U$ from step 3 and so $R_2=R_1^*U$ by
Fuglede's theorem, \cite{Fuglede}. Thus,
\begin{align*}
R_2R_2^*=R_1^*UR_2^*=R_1^*R_2^*U=R_2^*R_1^*U=R_2^*R_2
\end{align*}
and $R_2$ is normal. Therefore, $r(R_2)=\omega(R_2)=\|R_2\|$.
Suppose that the matrix of $R_2$ with respect to the decomposition
$l^2(\mathcal D_{P})\oplus \mathcal H \oplus l^2(\mathcal
D_{P^*})$ of $\mathcal K$ is $ \left[
\begin{array}{ccc}
B_1 & B_2 & B_3\\
0 & S_2 & B_4\\
0& 0& B_5  \end{array} \right]. $ Since $(F_1,F_2)$ and $({F_1}_*,
{F_2}_*)$ are FOPs, by part-(6) of Lemma \ref{funda-properties},
$\omega(F_2+F_1^*z)$ and $\omega({F_2}_*^*+{F_1}_*z)$ are not
greater than $3$. Therefore,
\[
r(B_1)\leq 1 \textup{ and } r(B_5)\leq 1.
\]
Also $\|S_2\|\leq 3$. So by Lemma 1 of \cite{hong}, which states
that the spectrum of an operator of the form $\begin{bmatrix}
X&Y\\0&Z
\end{bmatrix}$ is a subset of $\sigma(X)\cup \sigma (Z)$, we have
\[
\sigma(R_2)\subseteq \sigma(B_1)\cup\sigma(S_2)\cup\sigma({B_5}).
\]
Therefore, $r(R_2)\leq 3$. Hence, $r(R_2)=\|R_2\|\leq 3$ and
$(R_1,R_2,U)$ is a $\Gamma_3$-unitary.

\end{proof}

\begin{thm}\label{isometric-dilation}
 Let $\mathcal N \subseteq \mathcal K$ be defined as $\mathcal N=\mathcal H\oplus l^2({\mathcal
 D_P})$. Then $\mathcal N$ is a common invariant subspace of $R_1,R_2,U$ and
 $(T_1,T_2,V)=(R_1|_{\mathcal N},R_2|_{\mathcal N},U|_{\mathcal N})$
 is a minimal $\Gamma_3$-isometric dilation of $(S_1,S_2,P)$.
 \end{thm}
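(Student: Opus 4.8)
The plan is to extract the isometric dilation from the unitary dilation $(R_1,R_2,U)$ constructed in Theorem \ref{main-dilation-theorem} by restricting to the subspace $\mathcal N=\mathcal H\oplus l^2(\mathcal D_P)$, which in the notation of that theorem is $\cdots\oplus\mathcal D_P\oplus\mathcal D_P\oplus\mathcal D_P\oplus\mathcal H$ sitting inside $\mathcal K=\cdots\oplus\mathcal D_P\oplus\mathcal D_P\oplus\mathcal D_P\oplus\mathcal H\oplus\mathcal D_{P^*}\oplus\mathcal D_{P^*}\oplus\cdots$. First I would verify that $\mathcal N$ is invariant under each of $R_1,R_2,U$. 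This is an inspection of the block matrices \eqref{2.3}, \eqref{2.4} and the matrix of $U$: the only place where these operators could carry a vector from the $\mathcal H$-column or an $\mathcal D_P$-column into a $\mathcal D_{P^*}$-row is the column indexed by $\mathcal H$, whose nonzero entries are $S_1$ (resp.\ $S_2$, resp.\ $P$) in the $\mathcal H$-row, $F_2^*D_P$ (resp.\ $F_1^*D_P$, resp.\ $D_P$) in the first $\mathcal D_P$-row above, and zeros in all $\mathcal D_{P^*}$-rows. Hence $\mathcal N$ is mapped into itself by $R_1,R_2,U$, so $\mathcal N$ is a common invariant subspace and $(T_1,T_2,V)=(R_1|_{\mathcal N},R_2|_{\mathcal N},U|_{\mathcal N})$ is a well-defined commuting triple.

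Next I would argue that $(T_1,T_2,V)$ is a $\Gamma_3$-isometry. Since $(R_1,R_2,U)$ is a $\Gamma_3$-unitary (Theorem \ref{main-dilation-theorem}) and $\mathcal N$ is a common invariant subspace, $(T_1,T_2,V)$ is by definition a $\Gamma_3$-isometry; alternatively one can invoke part-(4) of Theorem \ref{G-isometry}, noting that a restriction of a $\Gamma_3$-contraction to an invariant subspace is a $\Gamma_3$-contraction and that $V=U|_{\mathcal N}$ is the restriction of the unitary $U$ to an invariant subspace, hence an isometry (indeed $U|_{\mathcal N}$ is precisely the minimal isometric dilation of $P$ built from the block shift on $\mathcal H\oplus l^2(\mathcal D_P)$, by Sz.-Nagy--Foias model theory).

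Then I would check the dilation property. Since $\mathcal H\subseteq\mathcal N\subseteq\mathcal K$ and $(R_1,R_2,U)$ dilates $(S_1,S_2,P)$, for non-negative integers $m_1,m_2,n$ we have $P_{\mathcal H}(T_1^{m_1}T_2^{m_2}V^n)|_{\mathcal H}=P_{\mathcal H}P_{\mathcal N}(R_1^{m_1}R_2^{m_2}U^n)|_{\mathcal H}$; but $\mathcal H\subseteq\mathcal N$ so $P_{\mathcal N}|_{\mathcal H}=I_{\mathcal H}$, and $\mathcal N$ being invariant means $T_1^{m_1}T_2^{m_2}V^n=P_{\mathcal N}(R_1^{m_1}R_2^{m_2}U^n)|_{\mathcal N}$ on $\mathcal N$, so the compression to $\mathcal H$ equals $S_1^{m_1}S_2^{m_2}P^n$. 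Thus $(T_1,T_2,V)$ is a $\Gamma_3$-isometric dilation of $(S_1,S_2,P)$. Finally, minimality: the minimal unitary dilation space of $P$ is $\mathcal K$, and the Sz.-Nagy minimal isometric dilation space of $P$ is exactly $\mathcal H\oplus l^2(\mathcal D_P)=\mathcal N$, with $V=U|_{\mathcal N}$ the minimal isometric dilation; hence $\mathcal N=\overline{\mathrm{span}}\{V^n h:h\in\mathcal H,\;n\geq 0\}\subseteq\overline{\mathrm{span}}\{T_1^{m_1}T_2^{m_2}V^n h:h\in\mathcal H,\;m_1,m_2,n\geq 0\}\subseteq\mathcal N$, giving minimality. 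The only step requiring genuine care is the invariance check, and there the main point to get right is that the entries of $R_1,R_2,U$ strictly above the diagonal in the rows indexed by $\mathcal D_{P^*}$ are supported on $\mathcal D_{P^*}$-columns, so nothing in $\mathcal N$ is ever pushed into the $l^2(\mathcal D_{P^*})$ part.
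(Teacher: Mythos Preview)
Your proof is correct and follows essentially the same approach as the paper: both treat the result as a corollary of Theorem \ref{main-dilation-theorem}, read off the invariance of $\mathcal N$ from the block-upper-triangular form of $R_1,R_2,U$, obtain the $\Gamma_3$-isometry property from the definition (restriction of a $\Gamma_3$-unitary to a common invariant subspace), and deduce minimality from the fact that $V=U|_{\mathcal N}$ is the Sz.-Nagy minimal isometric dilation of $P$. The only minor difference is in the verification of the dilation identity: you compress directly, using invariance to identify $T_1^{m_1}T_2^{m_2}V^n h$ with $R_1^{m_1}R_2^{m_2}U^n h$ for $h\in\mathcal H$, whereas the paper writes out the block matrices of $T_1,T_2,V$, observes that $(T_1^*,T_2^*,V^*)$ is a $\Gamma_3$-co-isometric extension of $(S_1^*,S_2^*,P^*)$, and then invokes Proposition \ref{dilation-extension}. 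Both routes are straightforward.
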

 \begin{proof}
This theorem could be treated as a corollary of the previous
theorem. It is evident from the matrices of $R_1,R_2$ and $U$ that
$\mathcal N=\mathcal H\oplus l^2({\mathcal
 D_P})= H\oplus \mathcal
D_P\oplus\mathcal D_P\oplus\cdots$ is a common invariant subspace
of $R_1,R_2$ and $U$. Therefore by the definition of
$\Gamma_3$-isometry, the restriction of $(R_1,R_2,U)$ to the
common invariant subspace $\mathcal N$, i.e. $(T_1,T_2,V)$ is a
$\Gamma_3$-isometry. The matrices of $T_1,T_2$ and $V$ with
respect to the decomposition $\mathcal H\oplus \mathcal
D_P\oplus\mathcal D_P\oplus\cdots$ of $\mathcal N$ are the
following:
\begin{gather*}
T_1=\begin{bmatrix} S_1&0&0&0&\cdots\\F_2^*D_P&F_1&0&0&\cdots\\0&F_2^*&F_1&0&\cdots\\0&0&F_2^*&F_1&\cdots\\
\vdots&\vdots&\vdots&\vdots&\ddots \end{bmatrix}\;,\quad
T_2=\begin{bmatrix} S_2&0&0&0&\cdots\\F_1^*D_P&F_2&0&0&\cdots\\0&F_1^*&F_2&0&\cdots\\0&0&F_1^*&F_2&\cdots\\
\vdots&\vdots&\vdots&\vdots&\ddots \end{bmatrix}\;,\\
V=\begin{bmatrix}
P&0&0&0&\cdots\\D_P&0&0&0&\cdots\\0&I&0&0&\cdots\\0&0&I&0&\cdots\\
\vdots&\vdots&\vdots&\vdots&\ddots \end{bmatrix}.
\end{gather*}
It is obvious from the matrices of $T_1,T_2$ and $V$ that the
adjoint of $(T_1,T_2,V)$ is a $\Gamma_3$-co-isometric extension of
$(S_1^*,S_2^*,P^*)$. Therefore by Proposition
\ref{dilation-extension}, $(T_1,T_2,V)$ is a $\Gamma_3$-isometric
dilation of $(S_1,S_2,P)$. The minimality of this
$\Gamma_3$-isometric dilation follows from the fact that $\mathcal
N$ and $V$ are respectively the minimal isometric dilation space
and minimal isometric dilation of $P$. Hence the proof is
complete.
\end{proof}
\begin{rem}
The minimal $\Gamma_3$-unitary $(R_1,R_2,U)$ described in Theorem
\ref{main-dilation-theorem} is a minimal $\Gamma_3$-unitary
extension of $(T_1,T_2,V)$ given in Corollary
\ref{isometric-dilation}. The reason is that for any
$\Gamma_3$-unitary extension $(\hat R_1,\hat R_2,\hat U)$ of
$(T_1,T_2,V)$, $\hat U$ is the minimal unitary extension of $V$.
\end{rem}

As a consequence of the dilation theorems, we arrived at a
sufficient condition for a triple $(S_1,S_2,P)$ to become a
$\Gamma_3$-contraction.

\begin{thm}\label{sufficient1} Let $S_1,S_2,P$ be commuting
operators on a Hilbert space $\mathcal H$ with $\|S_i\|\leq 3$ and
$\|P\|\leq 1$. Let $F_1,F_2$ be two commuting bounded operators on
$\mathcal D_{P}$ with $\omega(F_1+ F_2z)\leq 3$ for all
$z\in\mathbb T$ such that
\[
S_1-S_2^*P=D_{P}F_1D_{P} \text{ and } S_2-S_1^*P=D_{P}F_2D_{P}.
\]
If $(F_1,F_2)$ is almost normal then $\Gamma_3$ is a complete
spectral set for $(S_1,S_2,P)$ and hence $(S_1,S_2,P)$ is a
$\Gamma_3$-contraction.
\end{thm}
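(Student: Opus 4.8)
The plan is to produce an explicit $\Gamma_3$-unitary (equivalently, $\Gamma_3$-isometric) dilation of $(S_1,S_2,P)$ and then invoke Arveson's theorem from Section \ref{background}: having such a dilation is the same as $\Gamma_3$ being a complete spectral set. The construction to recycle is exactly the one in Theorem \ref{isometric-dilation} (or, if one wants the two-sided unitary directly, Theorem \ref{main-dilation-theorem}): on $\mathcal{N}=\mathcal{H}\oplus l^2(\mathcal{D}_P)=\mathcal{H}\oplus\mathcal{D}_P\oplus\mathcal{D}_P\oplus\cdots$ define $T_1,T_2$ and the minimal isometric dilation $V$ of $P$ by the same block matrices as there, built purely out of $S_1,S_2,P$, $D_P$ and the given pair $(F_1,F_2)$. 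The essential observation is that this is a \emph{formal} construction: neither it nor the verification below ever uses that $\Gamma_3$ is a spectral set for $(S_1,S_2,P)$ -- only the fundamental equations, the commutativity of $S_1,S_2,P$, the almost normality of $(F_1,F_2)$ (which also yields $[F_1^*,F_2^*]=0$), and the numerical-radius bound $\omega(F_1+F_2z)\le 3$, all of which are in the hypothesis.

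First I would re-run the elementary identities. The parts of Lemma \ref{funda-properties} that the dilation proofs use -- principally (2), (4) and their adjoints -- were established there by pure algebra from the fundamental equations together with $[F_1,F_2]=0$; since $\|S_i\|\le 3$ and $\|P\|\le 1$ are assumed, these identities hold here verbatim, and Lemma \ref{basicnrlemma1} upgrades $\omega(F_1+F_2z)\le 3$ to the companion bounds $\omega(F_2+F_1^*z)\le 3$, $\omega(F_1^*+F_2z)\le 3$, etc. Using these, the checks that $V$ is an isometry, that $T_1=T_2^*V$ (this is where $S_1-S_2^*P=D_PF_1D_P$ enters the corner block), that $T_1,T_2,V$ pairwise commute, and that $\mathcal{H}^{\perp}$ in $\mathcal{N}$ is a common invariant subspace for $T_1,T_2,V$ with corner $(S_1,S_2,P)$ -- whence $P_{\mathcal{H}}\bigl(T_1^{m_1}T_2^{m_2}V^{n}\bigr)\big|_{\mathcal{H}}=S_1^{m_1}S_2^{m_2}P^{n}$ for all $m_1,m_2,n\ge 0$ by the argument of Proposition \ref{dilation-extension} -- all go through exactly as in the proofs of Theorems \ref{isometric-dilation} and \ref{main-dilation-theorem}. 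This is routine bookkeeping with the block matrices and carries over unchanged.

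The hard part is the last structural ingredient needed to conclude, via Theorem \ref{G-isometry}(2), that $(T_1,T_2,V)$ is a $\Gamma_3$-isometry: namely that $\bigl(\tfrac{2}{3}T_1,\tfrac{1}{3}T_2\bigr)$ is a $\Gamma_2$-contraction. This is the step that genuinely requires the success of rational dilation on the symmetrized bidisc and the $\Gamma_2$-structure theory, fed by the numerical-radius hypothesis. The ``pure'' block of $(T_1,T_2,V)$ on $l^2(\mathcal{D}_P)\cong H^2(\mathcal{D}_P)$ is the Toeplitz triple $\bigl(T_{F_1+F_2^*z},\,T_{F_2+F_1^*z},\,T_z\bigr)$, and by the converse half of Theorem \ref{model1} this is a pure $\Gamma_3$-isometry once one knows that $(F_1^*,F_2^*)$ is almost normal -- immediate -- and that $\bigl(\tfrac{2}{3}(F_1+F_2^*z),\tfrac{1}{3}(F_2+F_1^*z)\bigr)$ is a $\Gamma_2$-contraction for every $z\in\mathbb{T}$. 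For the latter I would use that almost normality makes $F_1+F_2^*z$ a normal operator with $F_2+F_1^*z=(F_1+F_2^*z)^*z$ on $\mathbb{T}$, reducing the matter to the scalar description of $\Gamma_2$ in Theorem \ref{thm:21} applied along the joint spectrum and controlled by the numerical-radius bounds recorded above. Having identified the pure block as a pure $\Gamma_3$-isometry and noting that $(S_1,S_2,P)$ occupies the corner block, a Wold-type assembly as in the proof of Theorem \ref{G-isometry} then yields that $\bigl(\tfrac{2}{3}T_1,\tfrac{1}{3}T_2\bigr)$ is a $\Gamma_2$-contraction. I expect essentially all of the real difficulty to be concentrated here, in reconciling the numerical-radius data with the $\Gamma_2$-geometry; the rest is algebra imported from the earlier theorems.

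Finally, once $(T_1,T_2,V)$ is known to be a $\Gamma_3$-isometry dilating $(S_1,S_2,P)$, it is by definition the restriction of a $\Gamma_3$-unitary to a common invariant subspace, and that $\Gamma_3$-unitary then dilates $(S_1,S_2,P)$; equivalently, $\Gamma_3$ is a complete spectral set for the $\Gamma_3$-isometry (spectral theorem for commuting normal tuples plus polynomial convexity of $\Gamma_3$) and hence, by compression, for $(S_1,S_2,P)$. By Arveson's theorem this shows $\Gamma_3$ is a complete spectral set for $(S_1,S_2,P)$, and in particular $(S_1,S_2,P)$ is a $\Gamma_3$-contraction.
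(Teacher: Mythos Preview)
Your proposal takes essentially the same route as the paper: build $(T_1,T_2,V)$ on $\mathcal H\oplus l^2(\mathcal D_P)$ by the block matrices of Theorem \ref{isometric-dilation}, argue it is a $\Gamma_3$-isometric dilation of $(S_1,S_2,P)$, extend to a $\Gamma_3$-unitary, and conclude via Arveson. The paper's own proof is considerably terser---it simply invokes the construction of Theorem \ref{isometric-dilation} and then finishes exactly as in your final paragraph---whereas you go further and correctly isolate the $\Gamma_2$-contraction condition for $\bigl(\tfrac{2}{3}T_1,\tfrac{1}{3}T_2\bigr)$ as the substantive step, which the paper's short argument does not spell out.
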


\begin{proof}
By Lemma \ref{basicnrlemma1}, we have that
\[
\omega(F_1^*+F_2z)\leq 3 \text{ and } \omega(F_2^*+F_1z)\leq 3 \,.
\]
So, we can construct $T_1,T_2,V$ as in Theorem
\ref{isometric-dilation} so that $(T_1,T_2,V)$ is a
$\Gamma_3$-isometric dilation of $(S_1,S_2,P)$. Since every
$\Gamma_3$-isometry is nothing but the restriction of a
$\Gamma_3$-unitary to a joint invariant subspace, $(T_1,T_2,V)$
can be extended to a $\Gamma_3$-unitary which will become a
$\Gamma_3$-unitary dilation of $(S_1,S_2,P)$. Obviously the
restriction of $(T_1^*,T_2^*,V^*)$ to $\mathcal H$ gives back
$(S_1^*,S_2^*,P^*)$. Since the restriction of a
$\Gamma_3$-contraction to a joint invariant subspace is also a
$\Gamma_3$-contraction, $(S_1^*,S_2^*,P^*)$ is a
$\Gamma_3$-contraction. Therefore, $(S_1,S_2,P)$ is also a
$\Gamma_3$-contraction. Also since $(S_1,S_2,P)$ has normal
$b\Gamma_3$-dilation, $\Gamma_3$ is a complete spectral set for
$(S_1,S_2,P)$.

\end{proof}

\section{A functional model for a class of
$\Gamma_3$-contractions}\label{functional-model}

In this section, with the help of the dilation theorems proved in
the previous section, we construct a concrete and explicit
functional model for the class of $\Gamma_3$-contractions
$(S_1,S_2,P)$ for which the adjoint $(S_1^*,S_2^*,P^*)$ has almost
normal FOP. The following result is necessary for the proof of the
model theorem.

\begin{prop}\label{easyprop1}
If $T$ is a contraction and $V$ is its minimal isometric dilation
then $T^*$ and $V^*$ have defect spaces of same dimension.
\end{prop}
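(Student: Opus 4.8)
The plan is to reduce the statement to a dimension count for $Ker\, V^*$ and then exhibit a quasi-affinity between $Ker\, V^*$ and $\mathcal D_{T^*}$. First I would dispose of the easy half: since $V$ is an isometry, $VV^*$ is the orthogonal projection onto $Ran\, V$, so $I-VV^*$ is the projection onto $(Ran\, V)^\perp=Ker\, V^*$, and hence $\mathcal D_{V^*}=\overline{Ran}\,(I-VV^*)^{1/2}=Ker\, V^*$. Thus it suffices to prove $\dim Ker\, V^*=\dim\mathcal D_{T^*}$. Since the minimal isometric dilation is unique up to unitary equivalence, I may take $V$ in the Sz.-Nagy--Sch\"affer form on $\mathcal K=\mathcal H\oplus l^2(\mathcal D_T)$, namely $V(h,d_0,d_1,\dots)=(Th,D_Th,d_0,d_1,\dots)$; a direct computation then gives
\[
Ker\, V^*=\{(g,e,0,0,\dots)\,:\,g\in\mathcal H,\ e\in\mathcal D_T,\ T^*g+D_Te=0\}.
\]

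Next I would set up two maps: $\Psi:\mathcal D_{T^*}\to Ker\, V^*$, $\Psi(x)=(-D_{T^*}x,\,T^*x,\,0,0,\dots)$, and the compression $\Phi:=P_{\mathcal H}|_{Ker\, V^*}:Ker\, V^*\to\mathcal H$, and show both are bounded, linear, injective, with $Ran\,\Phi$ dense in $\mathcal D_{T^*}$. For $\Psi$ to be well defined one needs $T^*x\in\mathcal D_T$ whenever $x\in\mathcal D_{T^*}$: if $h\in Ker\,D_T$ then $T^*Th=h$, so $D_{T^*}^2(Th)=Th-T(T^*Th)=0$, i.e. $Th\in Ker\,D_{T^*}$, whence $\langle T^*x,h\rangle=\langle x,Th\rangle=0$ for all $h\in Ker\,D_T=(\mathcal D_T)^\perp$. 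That $\Psi(x)\in Ker\, V^*$ is the identity $-D_TT^*x+D_TT^*x=0$, using the intertwining relation $T^*D_{T^*}=D_TT^*$ (a standard consequence of \eqref{nagy-foias} with $P=T$). Injectivity of $\Psi$ and of $\Phi$ are immediate from $\overline{Ran}\,D_{T^*}=(Ker\,D_{T^*})^\perp$ and $\overline{Ran}\,D_T=(Ker\,D_T)^\perp$ respectively. Applying $P_{\mathcal H}$ to $Ran\,\Psi\subseteq Ker\, V^*$ yields $Ran\,D_{T^*}\subseteq Ran\,\Phi$, so $Ran\,\Phi$ is dense in $\mathcal D_{T^*}$; and for $(g,e,0,\dots)\in Ker\, V^*$ and $w\in Ker\,D_{T^*}$ one has $TT^*w=w$ and $\|D_TT^*w\|^2=\|T^*w\|^2-\|TT^*w\|^2=0$, whence $\langle g,w\rangle=\langle g,TT^*w\rangle=\langle T^*g,T^*w\rangle=-\langle e,D_TT^*w\rangle=0$, i.e. $Ran\,\Phi\subseteq\mathcal D_{T^*}$.

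Finally I would invoke the elementary fact that a bounded injective linear map with dense range between Hilbert spaces forces the spaces to have the same Hilbert-space dimension (via polar decomposition the map and its adjoint each majorize an isometry with full initial space, giving inequalities in both directions). Applied to $\Phi:Ker\, V^*\to\mathcal D_{T^*}$ this gives $\dim\mathcal D_{T^*}=\dim Ker\, V^*=\dim\mathcal D_{V^*}$, which is the assertion. Almost everything here is formal bookkeeping; the one step that needs genuine care is the double inclusion $Ran\,D_{T^*}\subseteq Ran\,\Phi\subseteq\mathcal D_{T^*}$, for which the intertwining identity and the observation ``$Th\in Ker\,D_{T^*}$ whenever $h\in Ker\,D_T$'' are precisely what make it work.
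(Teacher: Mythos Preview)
Your argument is correct, but it takes a considerably different route from the paper's. The paper works model-free: it first shows, using only minimality, that $\mathcal D_{V^*}=\overline{\text{Ran}}\,D_{V^*}|_{\mathcal H}$ (because $(I-VV^*)V^nh=0$ for $n\ge1$, so $(I-VV^*)\mathcal K\subseteq\overline{(I-VV^*)\mathcal H}$). Then it defines $L:D_{T^*}h\mapsto D_{V^*}h$ for $h\in\mathcal H$ and checks directly that $\|D_{T^*}h\|^2=\langle P_{\mathcal H}D_{V^*}^2h,h\rangle=\|D_{V^*}h\|^2$, using only that $V^*|_{\mathcal H}=T^*$. Thus $L$ is an isometry with dense range, hence extends to a unitary $\mathcal D_{T^*}\to\mathcal D_{V^*}$.

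Your approach instead fixes the Sz.-Nagy--Sch\"affer model, computes $\text{Ker}\,V^*$ explicitly, and builds a quasi-affinity $\Phi$ between $\text{Ker}\,V^*$ and $\mathcal D_{T^*}$. The payoff of your method is that one sees concretely what the wandering subspace looks like; the cost is more bookkeeping (the kernel description, the two inclusions for $\text{Ran}\,\Phi$, and an appeal to polar decomposition at the end). The paper's proof is shorter and yields an explicit isometry rather than merely a quasi-affinity, and it never needs to name a particular model for $V$.
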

\begin{proof}
Let $T$ and $V$ be defined on $\mathcal H$ and $\mathcal K$. Since
$V$ is the minimal isometric dilation of $T$ we have
\[
\mathcal K=\overline{\text{span}} \{p(V)h:\; h\in\mathcal H \text{
and }p \text{ is any polynomial in one variable }\}.
\]
The defect spaces of $T^*$ and $V^*$ are respectively $\mathcal
D_{T^*}=\overline{\textup{Ran}}\;(I-TT^*)^{\frac{1}{2}}$ and
$D_{V^*}=\overline{\textup{Ran}}\; (I-VV^*)^{\frac{1}{2}}$. Let
$\mathcal N= \overline{\textup{Ran}}\;
(I-VV^*)^{\frac{1}{2}}|_{\mathcal H}$. For $h\in\mathcal H$ and
$n\geq 1$, we have
\[
(I-VV^*)V^nh=V^nh-VV^*V^nh=0, \text{ as } V \text{ is an
isometry}.
\]
Therefore, $(I-VV^*)p(V)h=p(0)(I-VV^*)h$ for any polynomial $p$ in
one variable. So $(I-VV^*)k\in\mathcal N$ for any $k\in\mathcal
K$. This shows that $\overline{\textup{Ran}}(I-VV^*)\subseteq
\mathcal N$ and
hence $\overline{\textup{Ran}}(I-VV^*)=\mathcal D_{V^*}=\mathcal N$.\\

We now define for $h\in\mathcal H$,
\begin{align*}
L:\; & \text{Ran}(I-TT^*)^{\frac{1}{2}}\,\rightarrow\,
\text{Ran}(I-VV^*)^{\frac{1}{2}}
\\& (I-TT^*)^{\frac{1}{2}}h \mapsto
(I-VV^*)^{\frac{1}{2}}h.
\end{align*}
We prove that $L$ is an isometry. Since $V^*$ is co-isometric
extension of $T^*$, $TT^*=P_{\mathcal H}VV^*|_{\mathcal H}$ and
thus we have $ (I_{\mathcal H}-TT^*)=P_{\mathcal H}(I_{\mathcal
K}-VV^*)|_{\mathcal H}$, that is, $D_{P^*}^2=P_{\mathcal
H}D_{V^*}^2|_{\mathcal H}.$ Therefore, for $h\in\mathcal H$,
\begin{align*}
\|D_{T^*}h\|^2=\langle D_{P^*}^2h,h \rangle=\langle P_{\mathcal
H}D_{V^*}^2h,h \rangle =\langle D_{V^*}^2h,h
\rangle=\|D_{V^*}h\|^2,
\end{align*}
and $L$ is an isometry and this can clearly be extended to a
unitary from $\mathcal D_{T^*}$ to $\mathcal D_{V^*}$. Hence
proved.
\end{proof}

\begin{thm}\label{model2}
 Let $(S_1,S_2,P)$ be a $\Gamma_3$-contraction on a Hilbert space $\mathcal
 H$ such that $(S_1^*,S_2^*,P^*)$ has almost normal FOP $({F_1}_*,{F_2}_*)$.
 Let $(\hat T_1,\hat T_2,\hat V)$ on $\mathcal N_*=\mathcal H\oplus \mathcal D_{P^*}\oplus\mathcal D_{P^*}\oplus
 \hdots$ be defined as
 \begin{gather*}
 \hat T_1=\begin{bmatrix}
 S_1 & D_{P^*}{F_2}_* & 0 & 0 & \cdots \\
 0 & {F_1}_*^* & {F_2}_* & 0 & \cdots \\
 0 & 0 & {F_1}_*^* & {F_2}_* & \cdots \\
 0 & 0 & 0 & {F_1}_*^* & \cdots \\
 \vdots & \vdots & \vdots & \vdots & \ddots
 \end{bmatrix}
 \;,\;
 \hat T_2=\begin{bmatrix}
 S_2 & D_{P^*}{F_1}_* & 0 & 0 & \cdots \\
 0 & {F_2}_*^* & {F_1}_* & 0 & \cdots \\
 0 & 0 & {F_2}_*^* & {F_1}_* & \cdots \\
 0 & 0 & 0 & {F_2}_*^* & \cdots \\
 \vdots & \vdots & \vdots & \vdots & \ddots
 \end{bmatrix}
 \;,\\
\text{and}\quad \hat V=\begin{bmatrix}
P & D_{P^*} & 0 & 0 & \cdots\\
0 & 0 & I & 0 & \cdots\\
0 & 0 & 0 & I & \cdots \\
0 & 0 & 0 & 0 & \cdots\\
\vdots & \vdots & \vdots & \vdots & \ddots
\end{bmatrix}\,.
\end{gather*}
 Then
 \begin{enumerate}
 \item $(\hat T_1,\hat T_2,\hat V)$ is a $\Gamma_3$-co-isometry, $\mathcal H$ is a
 common invariant subspace of $\hat T_1,\hat T_2,\hat V$ and $\hat T_1|_{\mathcal H}=S_1$,
 $\hat T_2|_{\mathcal H}=S_2$ and $\hat V|_{\mathcal H}=P$\,;
 \item there is an orthogonal decomposition $\mathcal N_*=\mathcal N_1\oplus \mathcal N_2$
 into reducing subspaces of $\hat T_1$, $\hat T_2$ and $\hat V$ such that
 $(\hat T_1|_{\mathcal N_1},\hat T_2|_{\mathcal N_1},\hat V|_{\mathcal N_1})$ is a
 $\Gamma_3$-unitary and $(\hat T_1|_{\mathcal N_2},\hat T_2|_{\mathcal N_2},\hat V|_{\mathcal N_2})$ is a
 pure $\Gamma_3$-co-isometry\,;
 \item $\mathcal N_2$ can be identified with $H^2(\mathcal D_{\hat V})$, where $D_{\hat V}$ has same dimension
 as of $\mathcal D_P$. The operators $\hat T_1|_{\mathcal N_2}$, $\hat T_2|_{\mathcal N_2}$ and $\hat V|_{\mathcal
 N_2}$ are respectively unitarily equivalent to $T_{B_1+B_2^*\bar z}$, $T_{B_2+B_1^*\bar z}$ and $T_{\bar
 z}$ defined on $H^2(\mathcal D_{\hat V})$, $(B_1,B_2)$ being the FOP of $(\hat T_1,\hat T_2,\hat V)$.
 \end{enumerate}
 \end{thm}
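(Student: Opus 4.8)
The strategy is to identify $(\hat T_1,\hat T_2,\hat V)$ with the adjoint of the $\Gamma_3$-isometric dilation of Theorem \ref{isometric-dilation}, built not from $(S_1,S_2,P)$ but from $(S_1^*,S_2^*,P^*)$. Since $(S_1,S_2,P)$ is a $\Gamma_3$-contraction, so is $(S_1^*,S_2^*,P^*)$; its fundamental operator pair is $({F_1}_*,{F_2}_*)$, and by Theorem \ref{existence-uniqueness} one has $\omega({F_1}_*+{F_2}_*z)\le 3$ for $z\in\mathbb T$, while by hypothesis this pair is moreover almost normal. Hence the construction underlying Theorem \ref{isometric-dilation} --- which, as noted in the remark following Theorem \ref{existence-uniqueness} and as exploited in the proof of Theorem \ref{sufficient1}, requires only the almost normality of the fundamental operator pair feeding the block matrices --- produces a minimal $\Gamma_3$-isometric dilation $(T_1,T_2,V)$ of $(S_1^*,S_2^*,P^*)$ on $\mathcal H\oplus l^2(\mathcal D_{P^*})=\mathcal N_*$. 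Substituting $S_i\mapsto S_i^*$, $F_i\mapsto {F_i}_*$, $D_P\mapsto D_{P^*}$ in the matrices of Theorem \ref{isometric-dilation} and taking formal adjoints, one reads off $T_i^*=\hat T_i$ and $V^*=\hat V$. Consequently, by the minimality of $(T_1,T_2,V)$ and Proposition \ref{dilation-extension}, $(\hat T_1,\hat T_2,\hat V)=(T_1^*,T_2^*,V^*)$ is a $\Gamma_3$-co-isometric extension of $(S_1,S_2,P)$; inspecting the first columns of the matrices shows that $\mathcal H$ is common invariant and $\hat T_i|_{\mathcal H}=S_i$, $\hat V|_{\mathcal H}=P$. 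This proves $(1)$.

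For $(2)$, I would apply the Wold-type decomposition of Theorem \ref{G-isometry} to the $\Gamma_3$-isometry $(T_1,T_2,V)$: there is an orthogonal splitting $\mathcal N_*=\mathcal M_1\oplus\mathcal M_2$ into common reducing subspaces of $T_1,T_2,V$ on which $(T_1,T_2,V)$ decomposes as a $\Gamma_3$-unitary $\oplus$ a pure $\Gamma_3$-isometry. Put $\mathcal N_i=\mathcal M_i$; these reduce $\hat T_1,\hat T_2,\hat V$. The restriction of $(\hat T_1,\hat T_2,\hat V)$ to $\mathcal N_1$ is the adjoint of a $\Gamma_3$-unitary, hence is itself a $\Gamma_3$-unitary: its coordinates are again commuting normal operators, $b\Gamma_3=\pi_3(\mathbb T^3)$ is invariant under coordinatewise complex conjugation, and $\sigma_T$ of an adjoint triple is the complex conjugate of $\sigma_T$ of the triple. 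The restriction to $\mathcal N_2$ is, by definition, a pure $\Gamma_3$-co-isometry. This gives $(2)$.

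For $(3)$, apply Theorem \ref{model1} to the pure $\Gamma_3$-isometry $(T_1|_{\mathcal M_2},T_2|_{\mathcal M_2},V|_{\mathcal M_2})$: the space $\mathcal M_2$ is unitarily equivalent to $H^2(\mathcal D_{(V|_{\mathcal M_2})^*})$, and under this equivalence $T_1|_{\mathcal M_2},T_2|_{\mathcal M_2},V|_{\mathcal M_2}$ become $T_{B_1^*+B_2z}$, $T_{B_2^*+B_1z}$, $T_z$, where $(B_1,B_2)$ is the fundamental operator pair of the $\Gamma_3$-co-isometry $((T_1|_{\mathcal M_2})^*,(T_2|_{\mathcal M_2})^*,(V|_{\mathcal M_2})^*)=(\hat T_1|_{\mathcal N_2},\hat T_2|_{\mathcal N_2},\hat V|_{\mathcal N_2})$. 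Since $V|_{\mathcal M_1}$ is unitary, $\mathcal D_{(V|_{\mathcal M_2})^*}=\mathcal D_{\hat V}$, and since the $\Gamma_3$-unitary summand has fundamental operator pair $(0,0)$ (the Note after Theorem \ref{existence-uniqueness}), $(B_1,B_2)$ is also the fundamental operator pair of $(\hat T_1,\hat T_2,\hat V)$. Moreover $V=\hat V^*$ is the minimal isometric dilation of $P^*$, so Proposition \ref{easyprop1} applied to the contraction $P^*$ yields $\dim\mathcal D_{\hat V}=\dim\mathcal D_{(P^*)^*}=\dim\mathcal D_P$. Finally, taking adjoints and using $T_\varphi^*=T_{\varphi^*}$ on a vectorial Hardy space, $\hat T_1|_{\mathcal N_2}\cong T_{B_1+B_2^*\bar z}$, $\hat T_2|_{\mathcal N_2}\cong T_{B_2+B_1^*\bar z}$, $\hat V|_{\mathcal N_2}\cong T_{\bar z}$, which is $(3)$.

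The computations are routine once the block matrices are matched; the step requiring genuine care is the adjoint/conjugate bookkeeping in $(3)$ --- namely, confirming that the pair $(B_1,B_2)$ produced by Theorem \ref{model1}, which there is the fundamental operator pair of the \emph{co-isometry} and not of the pure isometry, coincides with the fundamental operator pair of $(\hat T_1,\hat T_2,\hat V)$ as asserted. For this, the vanishing of both the defect space and the fundamental operator pair on the $\Gamma_3$-unitary summand is exactly what one needs.
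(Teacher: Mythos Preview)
Your proof is correct and follows essentially the same route as the paper: apply Theorem \ref{isometric-dilation} to $(S_1^*,S_2^*,P^*)$ to recognize $(\hat T_1,\hat T_2,\hat V)$ as the adjoint of the minimal $\Gamma_3$-isometric dilation, invoke Proposition \ref{dilation-extension} for part (1), the Wold decomposition of Theorem \ref{G-isometry} for part (2), and Theorem \ref{model1} together with Proposition \ref{easyprop1} for part (3). Your treatment is in fact slightly more explicit than the paper's in two places --- the verification that the adjoint of a $\Gamma_3$-unitary is again a $\Gamma_3$-unitary, and the argument that the FOP of the pure part coincides with the FOP of the whole co-isometry via the vanishing defect on the unitary summand.
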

 \begin{proof}
 Since the FOP $({F_1}_*,{F_2}_*)$ is almost normal,
 by Corollary \ref{isometric-dilation}, we have that $({\hat T_1}^*,{\hat T_2}^*,{\hat V}^*)$
 is minimal $\Gamma_3$-isometric dilation of $(S_1^*,S_2^*,P^*)$,
 where ${\hat V}^*$ is the minimal isometric dilation of $P^*$.
 Therefore by Proposition \ref{dilation-extension}, $(\hat T_1,\hat T_2,\hat V)$ is
 $\Gamma_3$-co-isometric extension of $(S_1,S_2,P)$. So we have that $\mathcal H$ is a
 common invariant subspace of $\hat T_1,\hat T_2$ and $\hat V$ and
 $\hat T_1|_{\mathcal H}=S_1\,,\; \hat T_2|_{\mathcal H}=S_1\,,\; \hat V|_{\mathcal H}=P$.
 Again since $({\hat T_1}^*,{\hat T_2}^*,\hat V^*)$ is a $\Gamma_3$-isometry, by
 Wold decomposition (see Theorem \ref{G-isometry}, part-(4)),
 there is an orthogonal decomposition $\mathcal N_*=\mathcal N_1\oplus\mathcal N_2$ into
 reducing subspaces of $\hat T_1,\hat T_2$ and $\hat V$ such that
 $(\hat T_1|_{\mathcal N_1},\hat T_2|_{\mathcal N_1},\hat V|_{\mathcal N_1})$ is a
 $\Gamma_3$-unitary and $(\hat T_1|_{\mathcal N_2},\hat T_2|_{\mathcal N_2},\hat V|_{\mathcal N_2})$
 is a pure $\Gamma_3$-co-isometry. If we denote
 $(\hat T_1|_{\mathcal N_1},\hat T_2|_{\mathcal N_1},\hat V|_{\mathcal N_1})
 = (T_{11},T_{12},V_1)$ and $(\hat T_1|_{\mathcal N_2},\hat T_2|_{\mathcal N_2},\hat V|_{\mathcal N_2})
 = (T_{21},T_{22},V_2)$ then with respect to the orthogonal decomposition $\mathcal K_*=\mathcal K_1\oplus \mathcal K_2$
 we have
 \[
 \hat T_1=
 \begin{bmatrix}
 T_{11} & 0 \\
 0 & T_{21}
 \end{bmatrix}\;,\;
 \hat T_2=
 \begin{bmatrix}
 T_{12} & 0 \\
 0 & T_{22}
 \end{bmatrix}\;
 \text{ and }
 \hat V=
 \begin{bmatrix}
 V_1 & 0 \\
 0 & V_2
 \end{bmatrix}.
 \]
 The fundamental equations
\begin{align*}
\hat T_1-{\hat T_2}^*\hat V &=D_{\hat V}X_1D_{\hat V}\,,\\
\hat T_2-{\hat T_1}^*\hat V &=D_{\hat V}X_2D_{\hat V}
\end{align*}
 of $(\hat T_{1},\hat T_{2},\hat V)$ clearly become
 \begin{align*}
 \begin{bmatrix}
 T_{11}-T_{12}^*V_1 & 0 \\
 0 & T_{21}-T_{22}^*V_2
 \end{bmatrix}
 &=\begin{bmatrix}
 0 & 0 \\
 0 & D_{V_2}X_{12}D_{V_2}
 \end{bmatrix},\quad
 X_1=\begin{bmatrix} X_{11}\\X_{12}\end{bmatrix}\\
 \text{ and }\quad
\begin{bmatrix}
 T_{12}-T_{11}^*V_1 & 0 \\
 0 & T_{22}-T_{21}^*V_2
 \end{bmatrix}
 &=\begin{bmatrix}
 0 & 0 \\
 0 & D_{V_2}X_{22}D_{V_2}
 \end{bmatrix},\quad
 X_2=\begin{bmatrix} X_{21}\\X_{22}\end{bmatrix}.
 \end{align*}
 Since $\mathcal D_{\hat V}=\mathcal D_{V_2}$, $(\hat T_1,\hat T_2,\hat V)$ and $(T_{21},T_{22},V_2)$ have the
 same FOP. Now we apply Theorem \ref{model1} to
 the pure $\Gamma_3$-isometry $(T_{21}^*,T_{22}^*,V_2^*)$ and get the following:
 \begin{enumerate}
 \item $\mathcal N_2$ can be identified with $H^2(\mathcal D_{V_2})=H^2(\mathcal
 D_{\hat V})$;
 \item The operators $T_{21}^*,T_{22}^*$ and $V_2^*$ are respectively unitarily equivalent to
 $T_{B_1^*+B_2 z}$, $T_{B_2^*+B_1 z}$ and $T_z$ defined on $H^2(\mathcal D_{\hat V})$,
 $(B_1,B_2)$ being the FOP of $(\hat T_1,\hat T_2,\hat V)$.
 \end{enumerate}
 Therefore, $(\hat T_1|_{\mathcal N_2},\hat T_2|_{\mathcal N_2},\hat V|_{\mathcal
 N_2})$ is unitarily equivalent to $(T_{B_1+B_2^*\bar z},T_{B_2+B_1^*\bar z},T_{\bar
 z})$ defined on $H^2(\mathcal D_{\hat V})$. Also since $\hat V^*$ is the minimal
 isometric dilation of $P^*$ by Proposition \ref{easyprop1}, $\mathcal D_{\hat V}$
 and $\mathcal D_P$ have same dimension.
\end{proof}

\section{Some important classes of $\Gamma_3$-contractions and their
dilations}\label{further-dilation}

In the previous section we saw that almost normality of the
fundamental operator pair is sufficient for a
$\Gamma_3$-contraction to possess rational dilation. Also in the
section before that, where we saw that some
$\Gamma_3$-contractions did not dilate because their FOPs were not
almost normal. Here we shall see that there are
$\Gamma_3$-contractions which dilate even
without having an almost normal FOP.\\

Before going to the examples, we need to say a few words about
operator theory on the symmetrized bidisc because throughout this
section we shall explore a connection between the operator theory
on $\Gamma_2$ and $\Gamma_3$. We mention here that in stead of
denoting the closed symmetrized bidisc by $\Gamma_2$, we shall
follow notations from the existing literature and denote it by
$\Gamma$. So, the closed symmetrized bidisc is defined as
\[
\Gamma_2=\Gamma=\{(z_1+z_2,z_1z_2)\in\mathbb C^2\,:\, |z_i|\leq
1\,, i=1,2\}.
\]
Operator theory on the symmetrized bidisc has been extensively
studied in \cite{ay-jfa, ay-jot, tirtha-sourav, tirtha-sourav1,
sourav, sarkar}.

\begin{defn}A pair of commuting operators $(S,P)$ on a Hilbert space
$\mathcal H$ for which $\Gamma$ is a spectral set is called a
$\Gamma$-\textit{contraction}.
\end{defn}

Let us consider the map
\begin{align*}
\varrho \,:\,& \mathbb C^2 \rightarrow \mathbb C^3 \\&
(z_1,z_2)\mapsto (z_1,z_2,0).
\end{align*}
This map embeds $\Gamma$ inside $\Gamma_3$ in the following way.
\begin{lem}
Let $\Gamma_3^0=\{(s_1,s_2,p)\in\Gamma_3: p=0 \}$. Then
$\varrho(\Gamma)=\Gamma_3^0$.
\end{lem}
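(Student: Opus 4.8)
The statement to prove is that $\varrho(\Gamma)=\Gamma_3^0$, where $\Gamma_3^0=\{(s_1,s_2,p)\in\Gamma_3:p=0\}$ and $\varrho(z_1,z_2)=(z_1,z_2,0)$. My plan is to prove the two inclusions separately, translating everything through the symmetrization maps $\pi_2$ and $\pi_3$ and using the elementary relationship between roots of a quadratic and roots of a cubic with a zero root.

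First I would show $\varrho(\Gamma)\subseteq\Gamma_3^0$. Take $(s_1,s_2)\in\Gamma$; then there are $z_1,z_2\in\overline{\mathbb D}$ with $s_1=z_1+z_2$ and $s_2=z_1z_2$. Setting $z_3=0\in\overline{\mathbb D}$, one computes directly that $\pi_3(z_1,z_2,0)=(z_1+z_2+0,\,z_1z_2+z_2\cdot 0+0\cdot z_1,\,z_1z_2\cdot 0)=(s_1,s_2,0)$, so $(s_1,s_2,0)\in\Gamma_3$ and the last coordinate is $0$, hence $(s_1,s_2,0)=\varrho(s_1,s_2)\in\Gamma_3^0$.

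For the reverse inclusion $\Gamma_3^0\subseteq\varrho(\Gamma)$, take $(s_1,s_2,0)\in\Gamma_3$. By definition of $\Gamma_3$ there exist $w_1,w_2,w_3\in\overline{\mathbb D}$ with $s_1=w_1+w_2+w_3$, $s_2=w_1w_2+w_2w_3+w_3w_1$, and $w_1w_2w_3=0$. The vanishing of the product forces at least one of the $w_i$ to be zero; by the symmetry of the elementary symmetric functions I may relabel so that $w_3=0$. Then $s_1=w_1+w_2$ and $s_2=w_1w_2$ with $w_1,w_2\in\overline{\mathbb D}$, so $(s_1,s_2)\in\Gamma$ by definition, and therefore $(s_1,s_2,0)=\varrho(s_1,s_2)\in\varrho(\Gamma)$. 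Combining the two inclusions gives $\varrho(\Gamma)=\Gamma_3^0$.

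I do not anticipate a genuine obstacle here; the only point requiring a moment's care is the relabeling step in the second inclusion, which is justified because $\Gamma_3$ and $\Gamma$ are symmetrizations and the symmetric functions are invariant under permuting the $w_i$ — equivalently, one uses that $p=w_1w_2w_3=0$ is an integral domain condition in $\mathbb C$. Alternatively, one could phrase the second inclusion purely in terms of polynomials: $(s_1,s_2,0)\in\Gamma_3$ means the cubic $z^3-s_1z^2+s_2z$ has all roots in $\overline{\mathbb D}$; since $z=0$ is automatically a root, the remaining factor is $z^2-s_1z+s_2$, whose roots coincide with the nonzero roots of the cubic and hence also lie in $\overline{\mathbb D}$, so $(s_1,s_2)\in\Gamma$. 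Either formulation closes the argument with no real difficulty.
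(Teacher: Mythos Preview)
Your proof is correct and follows essentially the same approach as the paper: both inclusions are handled by writing points of $\Gamma$ and $\Gamma_3$ via the symmetrization maps, appending or removing a zero coordinate, and using that $w_1w_2w_3=0$ forces one $w_i=0$ (with relabeling by symmetry). Your alternative polynomial formulation for the reverse inclusion is a nice variant but not needed.
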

\begin{proof}
We have that $\varrho(z_1,z_2)=(z_1,z_2,0)$ for all $(z_1,z_2)$ in
$\mathbb C^2$. Let $(s,p)\in \Gamma$. Then there are points
$\lambda_1,\lambda_2$ in the closed unit disc $\overline{\mathbb
D}$ such that $(s,p)=(\lambda_1+\lambda_2,\lambda_1\lambda_2)$.
Now clearly the point $(s,p,0)$, which is the image of $(s,p)$
under $\varrho$, is the symmetrization of the points
$\lambda_1,\lambda_2,0$ of $\overline{\mathbb D}$. Therefore,
$(s,p,0)\in\Gamma_3$ and in particular $(s,p,0)$ is in
$\Gamma_3^0$.

Conversely, let $(s_1,s_2,0)\in\Gamma_3^0$. Since $(s_1,s_2,0)$ is
a point of $\Gamma_3$, there are points $z_1,z_2,z_3$ in
$\overline{\mathbb D}$ such that $
\pi_3(z_1,z_2,z_3)=(s_1,s_2,0)$. Now $z_1z_2z_3=0$ implies that at
least one of $z_1,z_2,z_3$ is $0$. Let us assume without loss of
generality that $z_3=0$. Then $s_1=z_1+z_2$ and $s_2=z_1z_2$. This
shows that $(s_1,s_2)\in\Gamma$. Hence the proof is complete.

\end{proof}

\begin{lem}\label{implem}
If $(S,P)$ is a $\Gamma$-contraction then $(S,P,0)$ is a
$\Gamma_3$-contraction.
\end{lem}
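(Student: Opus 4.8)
The plan is to reduce the statement to a simple fact about spectral sets under embedding, namely that if $\Gamma$ is a spectral set for $(S,P)$ then $\Gamma_3^0 = \varrho(\Gamma)$ is a spectral set for $(S,P,0)$, and then use $\Gamma_3^0 \subseteq \Gamma_3$ to conclude. First I would note that $(S,P,0)$ is a commuting triple whenever $(S,P)$ is a commuting pair, which is immediate. Next I would handle the Taylor joint spectrum: since $P = 0$ here contributes the eigenvalue $0$, one expects $\sigma_T(S,P,0) \subseteq \sigma_T(S,P) \times \{0\} \subseteq \Gamma \times \{0\} = \Gamma_3^0 \subseteq \Gamma_3$; alternatively, and more safely given the polynomial convexity of $\Gamma_3$, I would invoke the lemma proved earlier in the excerpt that a commuting triple is a $\Gamma_3$-contraction if and only if $\|p(S_1,S_2,P)\| \le \|p\|_{\infty,\Gamma_3}$ for all holomorphic polynomials $p$, which sidesteps any delicate spectral computation.

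Given that reformulation, the core of the argument is an inequality between supremum norms of polynomials. Let $p(z_1,z_2,z_3)$ be a holomorphic polynomial in three variables, and set $q(z_1,z_2) = p(z_1,z_2,0)$, a holomorphic polynomial in two variables. Then $p(S,P,0) = q(S,P)$, and since $(S,P)$ is a $\Gamma$-contraction, $\|q(S,P)\| \le \|q\|_{\infty,\Gamma}$. The remaining step is to observe
\[
\|q\|_{\infty,\Gamma} = \sup_{(z_1,z_2)\in\Gamma}|p(z_1,z_2,0)| = \sup_{(z_1,z_2,0)\in\Gamma_3^0}|p(z_1,z_2,0)| \le \sup_{(w_1,w_2,w_3)\in\Gamma_3}|p(w_1,w_2,w_3)| = \|p\|_{\infty,\Gamma_3},
\]
where the middle equality uses the preceding lemma $\varrho(\Gamma) = \Gamma_3^0$ and the inequality uses $\Gamma_3^0 \subseteq \Gamma_3$. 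Chaining these gives $\|p(S,P,0)\| \le \|p\|_{\infty,\Gamma_3}$ for every holomorphic polynomial $p$, which by the cited lemma means $(S,P,0)$ is a $\Gamma_3$-contraction.

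I do not anticipate a genuine obstacle here; the only point requiring a little care is the Taylor-spectrum condition, and that is precisely what the polynomial-convexity lemma of the excerpt is designed to absorb, so I would lean on it rather than argue the spectral inclusion $\sigma_T(S,P,0) \subseteq \Gamma_3$ by hand. If one preferred a direct argument, one would use the polynomial spectral mapping theorem together with the fact that $\sigma_T(S,P,0)$ lies in $\sigma_T(S,P)\times\{0\}$ (which follows from the structure of the Koszul complex when one coordinate is the zero operator), but invoking the lemma is cleaner and is exactly the strategy the paper has set up. The whole proof is therefore short: verify commutativity, reduce to the polynomial norm inequality via the lemma, substitute $z_3 = 0$, and apply the $\Gamma$-contractivity of $(S,P)$ together with $\Gamma_3^0 \subseteq \Gamma_3$.
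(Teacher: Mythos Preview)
Your proof is correct and follows essentially the same route as the paper's: substitute $z_3=0$ to reduce to a two-variable polynomial, apply the $\Gamma$-contraction hypothesis, and then use $\varrho(\Gamma)=\Gamma_3^0\subseteq\Gamma_3$ to bound the supremum norm. Your explicit invocation of the polynomial-convexity lemma to bypass the Taylor-spectrum check is exactly what the paper does implicitly.
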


\begin{proof}
Let $p$ be a polynomial in 3-variables $z_1,z_2,z_3$ and let
$p_1(z_1,z_2)=p(z_1,z_2,0)$. Then $p_1$ is a polynomial in
2-variables $z_1,z_2$ and $p_1(z_1,z_2)=p\circ \varrho(z_1,z_2)$
Now
\begin{align*} \|p(S,P,0)\|=\| p_1(S,P)\| & \leq \|p_1\|_{\infty,
\Gamma}, \quad \textup{since }(S,P) \textup{ is a }\Gamma\textup{-contraction},\\
& =\|p\circ \varrho\|_{\infty,\Gamma}\\ & =\|p\|_{\infty,
\varrho(\Gamma)}\\& \leq \|p\|_{\infty, \Gamma_3}.
\end{align*}
Therefore $(S,P,0)$ is a $\Gamma_3$-contraction.
\end{proof}

We recall here a remarkable result of Agler and Young about
$\Gamma$-contractions which will be useful.
\begin{thm}\label{nicethm}
Let $(S,P)$ be a pair of commuting operators such that $\|P\|<1$
and the spectral radius of $S$ is less than $2$. Then $(S,P)$ is a
$\Gamma$-contraction if and only if
$\omega(D_P^{-1}(S-S^*P)D_P^{-1})\leq 1. $
\end{thm}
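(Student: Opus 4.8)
The plan is to connect both implications to the positivity, for every $\beta\in\mathbb T$, of the operator pencil
\[
\Psi_\beta(S,P)\ :=\ (2-\beta S)^*(2-\beta S)\ -\ (2\beta P-S)^*(2\beta P-S),
\]
which is the symmetrized–bidisc analogue of the pencils $\Phi_{ik}$ of Section \ref{scalarcase} (taken at $k=2$). First I would record a purely algebraic reduction: since $\|P\|<1$, the defect operator $D_P=(I-P^*P)^{1/2}$ is invertible, $\mathcal D_P=\mathcal H$, and $F:=D_P^{-1}(S-S^*P)D_P^{-1}$ is the unique element of $\mathcal L(\mathcal H)$ with $S-S^*P=D_PFD_P$; expanding $\Psi_\beta(S,P)$ and cancelling the common term $S^*S$ (legitimate because $|\beta|=1$) shows that $\Psi_\beta(S,P)\geq0$ is equivalent to $D_P^2\geq D_P(\mathrm{Re}\,\beta F)D_P$, hence --- on inverting $D_P$ --- to $\mathrm{Re}\,\beta F\leq I$, and by Lemma \ref{basicnrlemma} this holds for all $\beta\in\mathbb T$ exactly when $\omega(F)\leq1$. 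So it suffices to prove that $\Psi_\beta(S,P)\geq0$ for all $\beta\in\mathbb T$ if and only if $\Gamma_2$ is a spectral set for $(S,P)$.

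For the ``only if'' part, assume $(S,P)$ is a $\Gamma_2$-contraction. From Theorem \ref{thm:21} one deduces, by a short computation using the identity $|p\bar s-s|=|s-\bar sp|$, that $|2\alpha p-s|\leq|2-\alpha s|$ for every $(s,p)\in\Gamma_2$ and every $\alpha\in\overline{\mathbb D}$; hence for $\alpha\in\mathbb D$ the function $f_\alpha(s,p)=(2\alpha p-s)/(2-\alpha s)$ lies in $\mathcal R(\Gamma_2)$ with $\|f_\alpha\|_{\infty,\Gamma_2}\leq1$. Because $r(S)<2$, the operator $2-\alpha S$ is invertible, so the spectral-set hypothesis yields $\|(2\alpha P-S)(2-\alpha S)^{-1}\|=\|f_\alpha(S,P)\|\leq1$, i.e. $(2-\alpha S)^*(2-\alpha S)\geq(2\alpha P-S)^*(2\alpha P-S)$; letting $\alpha\to\beta\in\mathbb T$ gives $\Psi_\beta(S,P)\geq0$, and therefore $\omega(F)\leq1$ by the reduction above.

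For the ``if'' part, suppose $\omega(F)\leq1$. Running the reduction backwards gives $\Psi_\beta(S,P)\geq0$ for every $\beta\in\mathbb T$, i.e. $\|(2\beta P-S)(2-\beta S)^{-1}\|\leq1$; the $\mathcal L(\mathcal H)$-valued function $g(z)=(2zP-S)(2-zS)^{-1}$ is analytic on a neighbourhood of $\overline{\mathbb D}$ (again since $r(S)<2$), so by the maximum principle $\|g(z)\|\leq1$ on all of $\overline{\mathbb D}$, and evaluation at $z=0$ gives $\|S\|=2\|g(0)\|\leq2$. Thus $(S,P)$ is a commuting pair with $\|S\|\leq2$, $\|P\|\leq1$, $S-S^*P=D_PFD_P$ and $\omega(F)\leq1$, which is precisely the setting of the symmetrized–bidisc version of Theorem \ref{sufficient1}; I would then construct a $\Gamma_2$-unitary dilation of $(S,P)$ exactly as in Theorems \ref{main-dilation-theorem}--\ref{isometric-dilation}. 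Concretely, on $\mathcal N=\mathcal H\oplus l^2(\mathcal D_P)$ let $V$ be the minimal isometric dilation of $P$ and put
\[
T=\begin{bmatrix} S & 0 & 0 & \cdots\\ F^*D_P & F & 0 & \cdots\\ 0 & F^* & F & \cdots\\ 0 & 0 & F^* & \cdots\\ \vdots & & & \ddots\end{bmatrix};
\]
using only $S-S^*P=D_PFD_P$ and $SP=PS$ one verifies $TV=VT$, $T=T^*V$ and $P_{\mathcal H}T^mV^n|_{\mathcal H}=S^mP^n$, while the needed norm control is supplied by $\|S\|\leq2$, by the elementary observation that $\omega(F)\leq1$ forces $\sup_{z\in\mathbb T}\|F^*+Fz\|\leq2$ (if $z^{1/2}$ is a square root of $z$, then $\|F^*+zF\|=\|z^{1/2}F+(z^{1/2}F)^*\|=2\|\mathrm{Re}(z^{1/2}F)\|\leq2\,\omega(z^{1/2}F)=2\omega(F)$), and by the block-triangular spectral-radius estimate of Lemma 1 of \cite{hong}. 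By the symmetrized–bidisc forms of Theorems \ref{G-isometry} and \ref{G-unitary} (Agler and Young, \cite{ay-jot,ay-jfa}) $(T,V)$ is then a $\Gamma_2$-isometry, which extends to a $\Gamma_2$-unitary; hence $\|p(S,P)\|\leq\|p\|_{\infty,\Gamma_2}$ for every polynomial $p$, and polynomial convexity of $\Gamma_2$ gives $\sigma_T(S,P)\subseteq\Gamma_2$, so that $\Gamma_2$ is a complete spectral set for $(S,P)$.

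The only step that is not bookkeeping is the last one above: that $\Psi_\beta(S,P)\geq0$ for all $\beta\in\mathbb T$ --- equivalently $\|(2zP-S)(2-zS)^{-1}\|\leq1$ throughout $\overline{\mathbb D}$ --- is genuinely \emph{sufficient} for $(S,P)$ to be a $\Gamma_2$-contraction. This is exactly where the success of rational dilation on the symmetrized bidisc is invoked, and the analogous implication fails for $\Gamma_3$ (which is the main point of the paper); one may take it as the Agler--Young theorem \cite{ay-jot,ay-jfa}, or prove it via the dilation just described, in which case the remaining checks are routine (the identities $TV=VT$ and $T=T^*V$ reduce, as in Lemma \ref{funda-properties} and in the proof of Theorem \ref{main-dilation-theorem}, to manipulations with $S-S^*P=D_PFD_P$), except that if one uses the fuller dilation matrix of Theorem \ref{main-dilation-theorem} in place of the single-operator $(T,V)$ one must also verify $\omega\!\big(D_{P^*}^{-1}(S^*-SP^*)D_{P^*}^{-1}\big)\leq1$.
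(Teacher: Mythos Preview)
The paper does not prove Theorem \ref{nicethm}; it merely quotes it and refers the reader to Corollary 1.9 of Agler--Young \cite{ay-jot}. So there is no ``paper's own proof'' to compare against.

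Your argument is essentially correct. The reduction $\Psi_\beta(S,P)\geq0\ \forall\beta\in\mathbb T\Longleftrightarrow \omega(F)\leq1$ (via $D_P$ invertible and Lemma \ref{basicnrlemma}) is clean, and the ``only if'' direction via the fractional transforms $f_\alpha(s,p)=(2\alpha p-s)/(2-\alpha s)$ is the standard one. For the ``if'' direction you build the minimal $\Gamma_2$-isometric dilation $(T,V)$ on $\mathcal H\oplus l^2(\mathcal D_P)$ out of the single fundamental operator $F$; this is precisely the construction of \cite{tirtha-sourav,sourav} (the $\Gamma_2$ precursor of Theorem \ref{isometric-dilation}), and the verifications $TV=VT$, $T=T^*V$, together with $\sup_{z\in\mathbb T}\|F^*+Fz\|\leq 2\omega(F)\leq2$, do go through. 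One small caution: the block-triangular spectral bound from \cite{hong} gives $r(T)\leq2$, not $\|T\|\leq2$; to conclude that $(T,V)$ is a $\Gamma_2$-isometry you should invoke the Agler--Young characterisation in the form ``$V$ isometry, $T=T^*V$, $r(T)\leq2$'' (Theorem 2.6 of \cite{ay-jot}), rather than a norm condition.

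As to approach: the original Agler--Young proof in \cite{ay-jot} derives the equivalence from their realisation/Schur-class description of $\Gamma_2$ and does not construct an explicit dilation. Your route---extract $F$, build the Sch\"affer-type dilation, then read off the von Neumann inequality---is the alternative of \cite{tirtha-sourav,sourav}. Both rely, at bottom, on the same fact (rational dilation succeeds on $\Gamma_2$); your version has the advantage of being exactly parallel to the $\Gamma_3$ machinery developed in this paper, which makes transparent why no ``almost normal'' side condition is needed in dimension two: with a single fundamental operator the commutation obstruction simply does not arise.
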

For details of the above result one can see Corollary 1.9 in
\cite{ay-jot}.

\begin{eg}
If $(S,P)$ is a $\Gamma$-contraction then $(S,P,0)$ is a
$\Gamma_3$-contraction. We know that if $\Gamma$ is a spectral set
for $(S,P)$ then $\Gamma$ is a complete spectral set for $(S,P)$
too. We now show that $\Gamma_3$ is a complete spectral set for
$(S,P,0)$. Let $\boldsymbol f =[f_{ij}]_{m\times n}$ be a
matricial polynomial in three variables $z_1,z_2,z_3$. Let
$f_{ij}^{\prime}(z_1,z_2)=f_{ij}(z_1,z_2,0)$ and
$\boldsymbol{f^{\prime}}=[f_{ij}^{\prime}]_{m\times n}$. Now
\begin{align*}
\|\boldsymbol f(S,P,0) \|=\|[f_{ij}(S,P,0)]_{m\times n} \|
&=\|[f_{ij}^{\prime}(S,P)]_{m\times n} \| \\& \leq
\sup_{(z_1,z_2)\in\Gamma}\|\boldsymbol{f^{\prime}}(z_1,z_2)\| \\&
=\sup_{(z_1,z_2)\in\Gamma}\|[f_{ij}^{\prime}(z_1,z_2)] \| \\&
=\sup_{(z_1,z_2,0)\in\Gamma_3}\|[f_{ij}^{\prime}(z_1,z_2,z_3)]\|
\\& \leq \|\boldsymbol f \|_{\infty, \Gamma_3}\,.
\end{align*}
Thus $\Gamma_3$ is a complete spectral set for $(S,P,0)$. So we
get a class of $\Gamma_3$-contractions which always have
$\Gamma_3$-unitary dilation.\\

Note that the FOP for such $\Gamma_3$-contractions are just
$(S,P)$ which may or may not be almost normal. Indeed, if we
choose $P$ to be non-normal with $\|P\|<1$ and $S$ to be normal
with norm of $S$ being sufficiently small so that the norm of
$D_P^{-1}(S-S^*P)D_P^{-1}$ is less than $1$. Then by Theorem
\ref{nicethm}, $(S,P)$ is a $\Gamma$-contraction. Since $S$ is
normal and $P$ is non-normal we have
\[
S^*S-SS^*\neq P^*P-PP^*
\]
 and hence the FOP $(S,P)$ is not almost
 normal. So we get a class of $\Gamma_3$-contractions that dilate
 to the distinguished boundary despite the fact that their FOPs are not almost normal. Thus, the almost
 normality of the FOP of a $\Gamma_3$-contraction is not necessary to
have a $\Gamma_3$-unitary dilation.
\end{eg}

\begin{eg}
In \cite{BSR}, Biswas and Shyam Roy described a technique of
obtaining $\Gamma_3$-contractions from $\Gamma$-contractions.
Indeed, Lemma 2.10 in \cite{BSR} shows that we can obtain a
$\Gamma_3$-contraction from a $\Gamma$-contraction $(S,P)$ by
symmetrizing a scalar times identity operator with the existing
$\Gamma$-contraction in the following way.\\

\noindent \textbf{Lemma.} {\em Let $(S,P)$ be a
$\Gamma$-contraction, then $(\alpha I +S,\alpha S+P,\alpha P)$ is
a $\Gamma_3$-contraction for all
$\alpha \in\overline{\mathbb D}$.}\\

We now start with a $\Gamma$-contraction $(S,P)$. By the above
lemma $(I+S,S+P,P)$ is a $\Gamma_3$-contraction. Let $F$ be the
fundamental operator of $(S,P)$. We now compute the FOP
$(F_1,F_2)$ of $(I+S,S+P,P)$.
\begin{align*}
(I+S)-(S+P)^*P=(I-P^*P)+(S-S^*P) &=D_P^2+D_PFD_P \\& =
D_P(I+F)D_P\,.
\end{align*}
Also
\[
(S+P)-(I+S)^*P = S-S^*P=D_PFD_P\,.
\]
Therefore, $(F_1,F_2)=(I+F,F)$. Clearly $(F_1,F_2)$ is almost
normal. Therefore, by Theorem \ref{main-dilation-theorem},
$(I+S,S+P,P)$ has normal $b\Gamma_3$-dilation.
\end{eg}

\begin{eg}
In \cite{JH}, Holbrook has shown that the multivariate von
Neumann's inequality holds for any number of $2\times 2$ commuting
contraction matrices, i.e, if $C_1,\hdots,C_n$ are commuting
$2\times 2$ matrices with $\|C_k\|\leq 1$ for all $k$ and if
$f:\mathbb D^n\rightarrow \mathbb D$ is analytic, then
$\|f(C_1,\hdots,C_n) \|\leq 1$. Moreover, any $n$-tuple of
commuting $2\times 2$ contractions has simultaneous commuting
unitary dilation. See Proposition 2 and Proposition 3 in \cite{JH}
for a proof to these results.

So, unlike the general case, $\mathbb D^3$ is a spectral set for
any three $2\times 2$ commuting contractions $C_1,C_2,C_3$. Let
$(U_1,U_2,U_3)$ be a commuting unitary dilation of
$(C_1,C_2,C_3)$. It is obvious that the symmetrization of
$U_1,U_2,U_3$, i.e, $\pi_3(U_1,U_2,U_3)$ is a $\Gamma_3$-unitary
dilation of the $\Gamma_3$-contraction $\pi_3(C_1,C_2,C_3)$.
\end{eg}

The almost normality of FOP of a $\Gamma_3$-contraction is
sufficient but not necessary for rational dilation. We have
success of dilation in cases when FOPs are not almost normal. We
could not determine the whole class of $\Gamma_3$-contractions
which dilate without having almost normal FOPs. So, determining
the entire class of $\Gamma_3$-contractions which dilate needs
further investigations.\\

\noindent \textbf{Acknowledgement.} The author greatly appreciates
Orr Shalit's help with his invaluable knowledge of multivariable
operator theory without which this article would never be
completed.\\

\vspace{0.72cm}


\end{document}